\newcommand{\RT}{{R-\!T}}
\newcommand{\dg}[1]{{\widetilde{#1}}}
\newcommand{\TT}{{\mathsf Z}}
\newcommand{\Gr}{{G}}
\newcommand{\V}{\mathscr{V}}
\newcommand{\E}{\mathscr{E}}
\newcommand{\lk}{\operatorname{lk}}
\newcommand{\lkG}{{\lk^\Gr}}
\newcommand{\ev}{\operatorname{ev}}
\newcommand{\brk}[1]{{\left\langle{#1}\right\rangle}}
\newcommand{\p}[1]{{\left({#1}\right)}}
\newcommand{\FK}{\ensuremath{\mathbb{K}} }
\newcommand{\ve}{\varepsilon}
\newcommand{\pow}[1]{^{\bullet#1}}
\newcommand{\eg}{\stackrel\bullet=}
\newcommand{\ro}{r}
\newcommand{\g}{\ensuremath{\mathfrak{g}}}
\newcommand{\h}{\ensuremath{\mathfrak{h}}}
\newcommand{\rk}{{n}}
\newcommand{\roots}{R}
\newcommand{\Uqg}{\ensuremath{\mathcal U}}
\newcommand{\UqgH}{\ensuremath{\Uqg^{H}}}
\newcommand{\La}{{L_W}}
\newcommand{\Xr}{{X_r}}
\newcommand{\X}{{\dg X}}
\newcommand{\srcol}{{\C\setminus \Xr}}
\newcommand{\A}{{\mathsf A}}
\newcommand{\AZ}{{\mathsf{A}_{Jones}}}
\newcommand{\cG}{{\mathscr L}}
\newcommand{\cGZ}{{\mathscr{L}_{Jones}}}
\newcommand{\cM}{{\mathscr M}}
\newcommand{\cMZ}{{\mathscr{M}_{RT}}}
\newcommand{\Nr}{{\mathsf N}_\ro}
\newcommand{\Nrd}{{\mathsf N}_{\ro,\delta}}
\newcommand{\PP}{{\mathsf P}}
\newcommand{\Hr}{H_r}
\newcommand{\qr}{{q}}
\newcommand{\coh}{\omega}
\newcommand{\vp}{\varphi}
\newcommand{\e}{{\operatorname{e}}}
\newcommand{\slt}{{\mathfrak{sl}(2)}}
\newcommand{\UsltH}{{U_q^{H}\slt}}
\newcommand{\Ubar}{{\wb U_q^{H}\slt}}
\newcommand{\unit}{\ensuremath{\mathbb{I}}}
\newcommand{\cat}{\mathscr{C}}
\newcommand{\catd}{\mathcal{D}}
\newcommand{\D}{\mathscr{D}}
\newcommand{\Id}{\operatorname{Id}}
\newcommand{\bp}[1]{{\left(#1\right)}}
\newcommand{\bk}[1]{{\left\langle#1\right\rangle}}
\newcommand{\qn}[1]{{\left\{#1\right\}}}
\newcommand{\qN}[1]{{\left[#1\right]}}
\newcommand{\qd}{{\mathsf d}}
\newcommand{\qdim}{\operatorname{qdim}}
\newcommand{\End}{\operatorname{End}}
\newcommand{\Hom}{\operatorname{Hom}}
\newcommand{\Int}{\operatorname{Int}}
\newcommand{\sign}{\operatorname{sign}}
\newcommand{\tr}{\operatorname{tr}}
\newcommand{\C}{\ensuremath{\mathbb{C}} }
\newcommand{\Z}{\ensuremath{\mathbb{Z}} }
\newcommand{\R}{\ensuremath{\mathbb{R}} }
\newcommand{\N}{\ensuremath{\mathbb{N}} }
\newcommand{\wt}{\widetilde}
\newcommand{\wb}{\overline}
\newcommand{\gcolor}{\varphi}
\newcommand{\ms}[1]{\mbox{\tiny$#1$}}
\newcommand{\nsimeq}{{\not\simeq}}
\newcommand{\qbin}[2]{\left[\begin{array}{c}
      #1 \\
      #2 \end{array}\right]}
\newcommand{\sjtop}[6]{\left|\begin{array}{ccc}#1 & #2 & #3 \\#4 & #5 &
      #6\end{array}\right|} 
\newcommand{\epsh}[2]
         {\begin{array}{c} \hspace{-1.3mm}
        \raisebox{-4pt}{\epsfig{figure=#1,height=#2}}
        \hspace{-1.9mm}\end{array}}
\newtheorem{teo}{Theorem}[section]
\newtheorem{lemma}[teo]{Lemma}
\newtheorem{prop}[teo]{Proposition}
\newtheorem{question}[teo]{Question}
\theoremstyle{definition}
\newtheorem{defi}[teo]{Definition}
\newtheorem{rem}[teo]{Remark}
\newtheorem{example}[teo]{Example}
\theoremstyle{remark}
\renewcommand{\qedsymbol}{\fbox{\theteo}}
\newcounter{exo} \newcounter{numexercice}
\renewcommand{\theexo}{\arabic{exo}} 
\begin{document}
\title[Non-semi-simple surgery invariants of 3--manifolds]
{Quantum invariants of 3--manifolds via link surgery presentations and
  non-semi-simple categories }

\author[Costantino]{Francesco Costantino}
\address{Institut de Recherche Math\'ematique Avanc\'ee\\
  Rue Ren\'e Descartes 7\\
 67084 Strasbourg, France}
\email{costanti@math.unistra.fr}

\author{Nathan Geer}
\address{Mathematics \& Statistics\\
  Utah State University \\
  Logan, Utah 84322, USA} 
\thanks{The first author's research was supported by French ANR
  project ANR-08-JCJC-0114-01. Research of the second author was
  partially supported by NSF grants DMS-0968279 and DMS-1007197. 
  %N4 The first and second authors would like to thank LMAM, Universit\'e de  Bretagne-Sud, for their generous hospitality during May of 2011.  The first author thanks that Utah State State University in Logan  for the generous hospitality in june 2010.
  The authors would like to thank Utah State University, LMAM, Universit\'e de  Bretagne-Sud and Institut de Recherche Math\'ematique Avanc\'ee for all their  generous hospitality.}\
\email{nathan.geer@usu.edu}

\author{Bertrand Patureau-Mirand}
\address{UMR 6205, LMBA, universit\'e de Bretagne-Sud, universit\'e
  europ\'eenne de Bretagne, BP 573, 56017 Vannes, France }
\email{bertrand.patureau@univ-ubs.fr}

\begin{abstract}
  In this paper we construct invariants of 3--manifolds ``\`a la
  Reshetikhin-Turaev'' in the setting of non-semi-simple ribbon tensor
  categories.  We give concrete examples of such categories which lead
  to a family of 3--manifold invariants indexed by the integers.  We
  prove that this family of invariants has several notable features,
  including: they can be computed via a set of axioms, they
  distinguish homotopically equivalent manifolds that the standard
  Witten-Reshetikhin-Turaev invariants do not, and they allow the
  statement of a version of the Volume Conjecture and a proof of this
  conjecture for an infinite class of links.
\end{abstract}

\maketitle

%\tableofcontents
\section{Introduction}

\subsection{Historical Overview} The
Witten-Reshetikhin-Turaev 3--manifold invariants are extremely
important and intriguing objects in low-dimensional topology.  These
invariants can be computed combinatorially and lead to Topological
Quantum Field Theories (TQFTs) and representations of mapping class
groups.  They have been studied extensively but still have a
mysterious topological significance.  Reshetikhin and Turaev \cite{RT}
gave the first rigorous construction of these invariants which have
become known as quantum invariants of 3--manifolds.  Their proof uses
surgery to reduce the general case to the case of links in $S^3$ then
applies certain quantum invariants of links associated to quantum
$\slt$ at a root of unity
 (see \cite{RT0}).
The reduction of the topology of 3--manifolds to the theory of links in
$S^3$ is well-known: any closed orientable connected 3--manifold is
obtained by surgery on some framed link in $S^3$.  Two manifolds $M_L$
and $M_{L'}$ obtained by surgery on $L$ and $L'$, respectively, are
homeomorphic if and only if the framed links $L$ and $L'$ may be
related by a series of Kirby moves (see \cite{Ki}).

Roughly speaking, the construction of the quantum invariant of
3--manifolds defined by Re\-she\-tikhin and Turaev can be described as
follows (for more details see \cite{RT}).  Consider the quotient
$\overline{U}$ of $U_q(\slt)$ defined by setting $E^r=F^r=0$ and
$K^r=1$, where $q$ is a root of unity of order $2r$ and $E,F$ and $K$
are the generators of $U_q(\slt)$.  Any finite dimensional
$\overline{U}$-module $V$ decomposes as $V\cong \oplus_{i=1}^n V_i
\oplus W$ where $V_i$ is a simple $\overline{U}$-module with non-zero
quantum dimension and $W$ is a $\overline{U}$-module with zero quantum
dimension.  By quotienting the category of finite dimensional
$\overline{U}$-modules by $\overline{U}$-modules with zero quantum
dimension one obtains a modular category $\catd$.  Loosely speaking, a
modular category is a semi-simple ribbon category with a finite number
of isomorphism classes of simple objects satisfying some axioms.  Let
$M$ be a manifold obtained by surgery on $L$.  If the $i^{\text{th}}$
component of $L$ is labeled by a simple module $V_i$ of $\catd$ then
consider the weighted link invariant
\begin{equation}\label{E:WeightedSum}
  \bp{\prod_i\qdim_\catd(V_i)}F(L),
\end{equation}
where $\qdim_\catd$ is the quantum dimension and $F$ is the
Reshetikhin-Turaev link invariant associated to $\catd$, see
\cite{RT0}.  The invariant of $M$ is the finite sum of such weighted
link invariants over all possible labelings of $L$.  The Kirby moves
correspond to certain algebraic identities.  Using the semi-simplicity
of $\catd$ it can be shown that the weight sum is preserved by the
Kirby moves.  
The Reshetikhin-Turaev 3--manifold invariant  construction can be generalized to semi-simple (modular) ribbon categories.   Obstructions to applying this construction to any ribbon  
category  $\cat$ include: 

\begin{enumerate}[\hspace{14pt}  (Ob1)]
\item \label{I:ObPiv1A} zero quantum dimensions, 
\item  \label{I:ObPiv4A}  infinitely many isomorphism classes of simple objects in $\cat$, 
\item  \label{I:ObPiv3A}  non-semi-simplicity of $\cat$.
\end{enumerate}

Most of the research on quantum invariants is based on semi-simple
 categories.  Apart from a few exceptions (\cite{He,Ku,Vz}),
the work that has been done on quantum invariants of links and
3--manifolds coming from non-semi-simple representation theory has
been centered on examples related to quantum $\slt$.  Such work has
been initiated by the independent and seminal works of Akutsu,
Deguchi, Ohtsuki \cite{ADO}, Kashaev \cite{Ka}, Viro \cite{Vi} and
others.  In \cite{MM}, Murakami and Murakami showed that the
Akutsu-Deguchi-Ohtsuki (ADO) invariants \cite{ADO} are related to Kashaev's
invariants \cite{Ka}.  The ADO invariants are also related to the multivariable
Alexander polynomials (see \cite{jM}).  More recently, Murakami and
Nagatomo \cite{MN} defined ``logarithmic invariants,'' Benedetti and
Baseilhac \cite{BB} extended Kashaev's construction to a ``quantum
hyperbolic field theory'', Kashaev and Reshetikhin \cite{KR} defined
tangle invariants from non semi-simple categories and Andersen and
Kashaev \cite{AK} constructed a TQFT out of quantum Teichm\"uller
theory.

This body of work is deep and requires new techniques involving
algebra, topology, geometry and mathematical physics.  Moreover, these
invariants are related to well known problems, including the Volume
Conjecture (see \cite{Ka, MM}).  

\subsection{Purpose of this paper}
To our knowledge no one has constructed a quantum 3--manifold invariant based on link surgery presentations arising from the non-semi-simple categories of representations of quantum groups.  The purpose of this paper is to fill this gap 
by defining Reshetikhin-Turaev type 3-manifold invariants from categories with Obstructions (Ob\ref{I:ObPiv1A})--(Ob\ref{I:ObPiv3A}).  
In particular, for each integer $r\geq 2$, we define two invariants $\Nr$ and $\Nr^0$ of triples  (a closed oriented 3-manifold $M$,
a colored link $T$ in $M$, an element $\coh$ of $H^1(M\setminus T; \C/2\Z)$), under certain admissibility conditions.  As explained in the next subsection these invariants have the following general interpretations (see Figures \ref{F:VisRepLink} and \ref{F:VisRep3man}):

\begin{itemize}
\item The invariants $\Nr(S^3,T,\coh)$ contain the multivariable
  Alexander polynomial, Kashaev's invariant and the ADO invariant of
  $T$. 
  Thus, $\Nr$ can be considered an \emph{extension} of these
  invariants to 3-manifolds other than $S^3$, via a modified version
  of the quantum invariant construction.\vspace{5pt}

\item The invariant $\Nr^0(S^3,T,0)$ is the colored Jones polynomial
  of $T$ at a root of unity.  Thus, $\Nr^0$ can be considered an
  extension of this invariant to general 3-manifolds.  It is an open
  question if $\Nr^0$ is related to the standard Reshetikhin-Turaev
  quantum invariant.
\end{itemize}

This approach is useful and powerful for several reasons, including:
\begin{description}
\item[Computable and new]  We compute our invariants for a number of examples.  In particular,  we show that $\Nr$ distinguishes
homotopically equivalent manifolds that the standard
Reshetikhin-Turaev invariants do not
(see Subsections \ref{SS:DistLens} and \ref{SS:torusKnots}).  
\item[Volume Conjecture]  We formulate a version of the Volume Conjecture for links in arbitrary manifolds and give a proof
of this conjecture for the so called fundamental hyperbolic links (see
Subsection \ref{SS:VolCon}).  When $M=S^3$ this conjecture is the usual Volume Conjecture.
\item[Recovers standard {\RT}] Our construction applied to modular
categories arising from simple Lie algebras yields invariants which
are equal to the usual Reshetikhin-Turaev invariants (see
Remark \ref{R:RTrecoved}).
\item[Properties]  The invariants of this paper have useful properties including formulas for the connected sum (see Subsection \ref{S:consum}).
\item[General] The construction produces  invariants from
non-semi-simple representation theory associated to any quantum simple
Lie algebra (see Section \ref{S:otherQG}).
\end{description}

\subsection{Summary of results}
We will now describe the results of this paper in the context of
quantum $\slt$.
Fix a root of unity $q=\exp(\frac{i\pi}{r})$ 
where $r\geq 2$ is an odd or twice an odd integer.
Let $\UsltH$ be the quantization of $\slt$ defined in
Subsection \ref{SS:QUantSL2H}.  This quantization has five generators:
$E,F, H, K, K^{-1}$ where $H$ can be considered as the logarithm of
$K$.  Let $\Ubar$ be $\UsltH$ modulo the relations $E^\ro=F^\ro=0$.
We consider the category $\cat$ of finite dimensional $H$-weight
modules over $\Ubar$ such that $K$ acts as the operator $q^H$.  Unlike
the modular category $\catd$ discussed above we do not require that
$K^r=1$ instead $K^r$ acts as a scalar on a simple module of $\cat$.
Also unlike $\catd$ here we do not take a quotient of the category of
$\Ubar$-modules.
The category $\cat$ is a ribbon category 
with Obstructions (Ob\ref{I:ObPiv1A})--(Ob\ref{I:ObPiv3A}).  
The isomorphism classes of simple modules of $\cat$ are indexed by $\C$.   We will consider two subsets of simple modules.  First, let $\A$ be the set of simple modules indexed by $(\C \setminus \Z) \cup \{kr
: k\in \Z\}\subset \C$.  Each module in $\A$ has a vanishing quantum dimension.  
Note that in this paper we use a non-standard indexing (middle weight notation) of the simple modules of $\cat$; in particular, the module of $\A$ indexed by $0\in \C$ is known as ``Kashaev's module'' and is crucial in Murakami and
Murakami's reformulation of the Volume Conjecture \cite{MM}.  
Second,  let $\AZ$ be the set of simple modules in $\cat$ indexed by $\{1,2,...,r-1\}$.  
The set $\AZ$ coincides with the set of ``standard'' simple modules $\{V_i\}$ over $\overline{U}$ with non-zero quantum dimension used in the construction of the Reshetikhin-Turaev invariants (see Equation \ref{E:WeightedSum}).  

To explain our approach, let us first 
discuss the underlying link invariants.  Let $\cGZ$ be the sets of isotopy classes of oriented, framed links in $S^3$ whose components are colored by modules in $\AZ$.  Similarly, let $\cG$ be the sets of isotopy classes of oriented, framed links in $S^3$ whose components are colored by modules in $\A\sqcup \AZ$ such that at least one color is in $\A$.  
Let $F_r:\cGZ \sqcup \cG\to \C$ be the Reshetikhin-Turaev link invariant  (see \cite{RT0, Tu}).  
By restricting $F_r$ to $\cGZ$ one obtains 
the colored Jones polynomial at the root of unity $q=\exp(\frac{i\pi} {r})$.  On the other hand, $F_r$ vanishes on its restriction to $\cG$ (since the quantum dimension of a module in $\A$ is zero). 
In \cite{GPT}, the second two authors and Turaev give a construction to overcome this vanishing: we show that one can replace the quantum dimension $\qdim$ with a new modified dimension $\qd$.   
The resulting invariant, denoted $F_r':\cG\to \C$ is not defined on $\cGZ$ but it is related to other interesting invariants: (1) if $L$ is a  link whose components are colored by the Kashaev module then $F_r'(L)$ is Kashaev's invariant \cite{Ka} (a key ingredient of the Volume Conjecture), (2) for general links it coincides with the ADO invariant and (3) if $r=2$ it coincides with the multivariable Alexander polynomial. 

We can see $F'_r$ as a kind of first order extension of $F_r$ as follows:
consider the invariant $\wb F_r$ on $\cGZ\sqcup\cG$ with values in $\C[h]/(h^2)$ defined by $$\wb F_r(L)=\left\{
  \begin{array}{l}
    F_r(L)\text{ if }L\in\cGZ\\
    hF'_r(L)\text{ if }L\in\cG
  \end{array}\right. .$$  Then, for all $L_1,L_2\in \cGZ\sqcup\cG$, we have
for the split link $L_1\sqcup L_2$:
\begin{equation}
  \label{eq:barF}
  \wb F_r(L_1\sqcup L_2)
  =\wb F_r(L_1)\wb F_r(L_2).
\end{equation}
In particular, the invariant $F'_r$ recovers $F_r$ as follows: $F_r$
is equal to the
map 
\begin{equation}\label{E:F'recF}
\cGZ\sqcup \cG\to \C \;\;\;\; \text{ defined by } \;\;\;\; L\mapsto \frac{F'_r(L\sqcup o)}{F'_r(o)}
\end{equation}
where $o$ is an unknot colored by any element of $\A$.  
Note here that if $L\in  \cG$ then $F'_r(L\sqcup o)=0$.  
The relationship between the link invariants discussed in this
paragraph are visually represented in Figure \ref{F:VisRepLink}.
\begin{figure}[t]
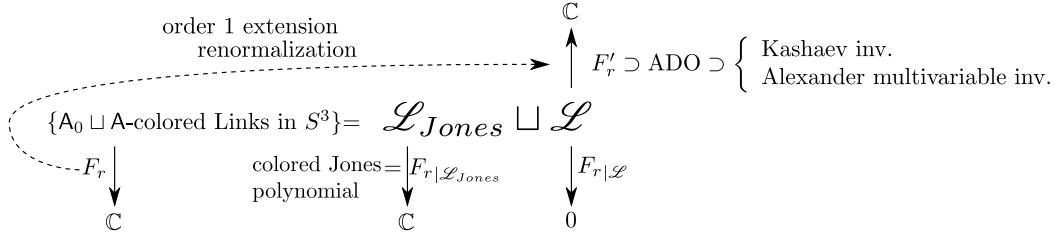
  
 \centering
 $\epsh{visuL}{20ex}$
  \caption{Visual representation of underlying link invariants}
  \label{F:VisRepLink}
\end{figure}
One of the main points of this paper is to develop 3-manifold invariants $\Nr^0$ and $\Nr$ with an analogous relationship (see Figure \ref{F:VisRep3man}).   

With this in mind consider the following sets:  
%---------------------------------------------------------
$$\cMZ=\{(M,\coh)|M\ {\rm closed\ oriented\ 3-manifold\ and}\ \coh\in H^1(M;\Z/2\Z)\}$$ 
$$\cM=\{(M,\coh)|M\ {\rm closed\ oriented\ 3-manifold\ and}\ \coh\in H^1(M;\C/2\Z)\setminus H^1(M;\Z/2\Z)\}.\footnote{The informal notation here means ``cohomology classes which are not in the image of the natural map $H^1(M;\Z/2\Z)\to H^1(M;\C/2\Z)$ induced by the universal coefficient theorem''. Also pairs are considered up to the action induced by the set of oriented preserving diffeomorphisms.}$$
The standard {\RT} invariants are refined by Kirby and Melvin in \cite{KM} to a map $Z_r:\cMZ \to \C$.   This map can be considered as a map $Z_r:\cMZ\sqcup \cM \to \C$ which is $0$ on $\cM$. 
One of the starting point of this paper is the observation that this is not an artificial ``extension by $0$'', but rather that it is natural that $(Z_r)|_\cM=0$ and that a suitable modification recovers new and interesting information on $\cM$ exactly as $F'_r$ does on $\cG$.

Let us explain this in more detail.     The  standard {\RT} 3-manifold invariant  can be extended to pairs $(M,\coh)\in\cMZ\sqcup \cM$ as follows.  First, $\coh$ induces a $\C/2\Z$-valued coloring on a surgery presentation $L$ of $M$:  the color of a component $L_i$ is $\coh([m_i])$ where $m_i$ is a meridian of $L_i$ and $[m_i]$ is the corresponding homology class  (see Subsection \ref{SS:CohomGcoloring} for details).   Using the quotient map $\C\to \C/2\Z$ such a $\C/2\Z$-valued coloring on $L$ can be ``lifted'' to a $\C$-valued coloring of $L$ or equivalently a $\A\sqcup \AZ$-valued coloring of $L$.  For such a coloring one can consider the  weighted  
link invariant given in Equation \eqref{E:WeightedSum}.   If $(M,\coh)\in\cMZ$ then there are only a finite number of lifts and the sum over all possible lift is the standard {\RT} 3-manifold invariant of $M$.  On the other hand, if $(M,\coh)\in\cM$ then there are infinite number of lifts.  For any of these lifts the weighted link invariant given in Equation \eqref{E:WeightedSum} is zero, since all modules in $\A$ have $0$ quantum dimension.  Thus, the natural extension of {\RT}  invariant to $\cM$ is trivial.  This phenomenon is a generalization of what happens when extending $F_r$ to $\cG$ for links in $S^3$. 

To overcome this problem and extend the {\RT} invariants to $\cM$ in a non-trivial way, we start by replacing $F_r$ and $\qdim$ in Equation \eqref{E:WeightedSum} with $F_r'$ and the modified dimension $\qd$, respectively (see Subsection \ref{SS:RibbonGraphs}).  Suppose $\coh$ induces a $(\C\setminus\Z)/2\Z$-valued coloring of the components of $L$.  As explained above, such a coloring can be lifted to an infinite number of $\A$-valued colorings of~$L$.  For one of these $\A$-valued colorings consider the weighted link invariant
\begin{equation}\label{E:WeightedSumWithdF'}
  \bp{\prod_i\qd(V_i)}F'_r(L),
\end{equation}
where the $i^{\text{th}}$ component of $L$ is labeled by a simple module $V_i\in\A$.  
We choose a finite number of such weighted invariants and prove that a normalization of their sum leads to  
an invariant $\Nr:\cM\to \C$.  Here the choice of the weighted invariants is not canonical, however we prove that the invariant is independent of the choice.   The invariant is analogous to $F'$ in the case of links in $S^3$.  In particular, it  is not defined on $\cMZ$.  

We also define a second invariant $\Nr^0:\cM\sqcup \cMZ\to
\C$ as a kind of order $0$ extension of $\Nr$: $$\Nr^0(M,\coh)=\frac{\Nr \big(
  (M,\coh)\#(M',\coh')\big)}{\Nr(M',\coh')}$$ where $\Nr$ does not vanish on
$(M',\coh')\in\cM$  (for further details on the notion of connected sum of elements of $\cM$ see the discussion preceding Theorem \ref{T:N0sl2}).
This definition is analogous to the extension of the {\RT} link invariant
expressed in Equation \eqref{E:F'recF}.  In particular, $\Nr^0$ restricted to
$\cM$ is zero and  one can consider the invariant $\wb \Nr$ on $\cMZ\sqcup\cM$ with values in $\C[h]/(h^2)$ defined by $$\wb \Nr=\left\{
  \begin{array}{l}
    \Nr^0(M,\omega)\text{ if }\omega\in H^1(M,\Z/2\Z)\\
    h\Nr(M,\omega)\text{ else }
  \end{array}\right..$$
   Then for all $(M_1,\omega_1),(M_2,\omega_2)\in \cMZ\sqcup\cM$ we have
\begin{equation}
  \label{eq:barF}
  \wb 
  \Nr((M_1,\coh_1)\#(M_2,\coh_2))=\wb \Nr(M_1,\coh_1)\wb \Nr(M_2,\coh_2).
\end{equation}  
It is open question if $\Nr^0$ is equal to the standard {\RT} 3-manifold
invariant (see \cite{CGP2} for some results in this direction).
\begin{figure}[t]
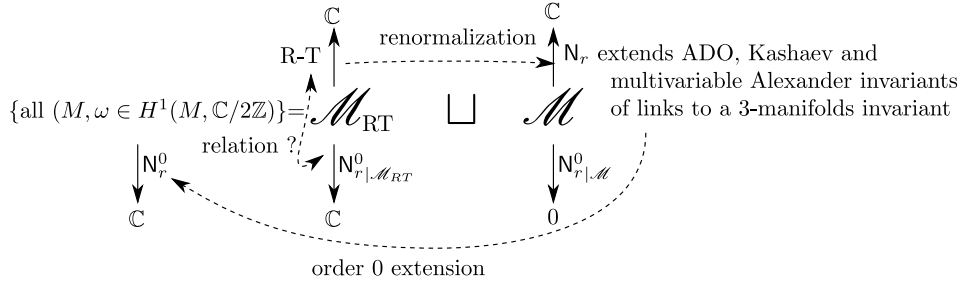
  %h= here , t=top, b=bottom
 \centering
% $ \epsh{diagraRTNN}{5.7in}$
% \vspace{-2in}
 $\epsh{visuMNEW}{24ex}$
  \caption{Visual representation of 3-manifold invariants}
  \label{F:VisRep3man}
\end{figure}  The 3-manifold invariants discussed above are represented in Figure \ref{F:VisRep3man}.  Also, the 
existence of the main invariants of this paper, in the case when $M$ contains no link (or more generally trivalent graph), can be expressed in the following theorem:
\begin{teo}\label{T:NrNr0Summary}
There exists maps $\Nr:\cM\to \C$ and $\Nr^0:\cM\cup \cMZ\to \C$ with the following properties:
\begin{enumerate}
\item  If $(M,\coh)\in \cM$, then there exists a link surgery presentation $L$ of $M$ such that $\coh$ induces a coloring on $L$ whose values are all in  $(\C\setminus \Z)/2\Z$.  As described above such a coloring leads to an invariant $\Nr(M,\coh)$ which is defined as a normalization of a sum of a finite number of the weighted link invariants given in Equation \eqref{E:WeightedSumWithdF'}.
\item If $(M,\coh)\in \cM$ then $\Nr^0(M,\coh)=0$.  In addition, if $(M',\coh')\in \cM\sqcup \cMZ$ then
  $$\Nr((M,\coh)\#(M',\coh'))=\Nr(M,\coh)\Nr^0(M',\coh').$$
\end{enumerate}
\end{teo}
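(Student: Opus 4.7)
The plan is to build $\Nr$ by a normalized finite weighted sum over $\A$-valued lifts of $\coh$ applied to a carefully chosen surgery presentation, and then to build $\Nr^0$ by inverting the connected-sum formula appearing in (2). First, for $(M,\coh)\in \cM$ I will start from any surgery link $L\subset S^3$ representing $M$ and use the hypothesis $\coh \notin H^1(M;\Z/2\Z)$ to conclude that at least one meridional color $\coh([m_i])\in \C/2\Z$ lies in $(\C\setminus\Z)/2\Z$. Handle-sliding every other component of $L$ over this non-integral one produces a new surgery presentation of $M$, still Kirby-equivalent to $L$, in which every induced color is in $(\C\setminus\Z)/2\Z$. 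Each such color lifts to $\C\setminus\Z$ modulo $2\Z$, and because the simple modules in $\A$ are periodic (up to isomorphism) in their parameter, only finitely many distinct $\A$-valued colorings of $L$ arise; this finite set indexes the weighted sum.

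Second, to each such $\A$-valued coloring I associate the weighted invariant $\bigl(\prod_i \qd(V_i)\bigr) F'_r(L)$ from Equation~\eqref{E:WeightedSumWithdF'}, where $V_i\in\A$ is the module determined by the lift on the $i$-th component. Summing these invariants over the finite set of lifts and dividing by a signature-dependent normalization factor (which corrects for stabilization by $\pm 1$-framed unknots) produces a candidate $\Nr(M,\coh)$. To prove well-definedness one must verify invariance under the Kirby moves. Stabilization invariance follows from a direct evaluation of a $\pm 1$-framed unknot colored by any $\A$-module, absorbed into the normalization. The main obstacle is handle-slide invariance: the classical Reshetikhin--Turaev argument relies on semi-simplicity, which fails in $\cat$, and must be replaced by a local identity in which the role of the Kirby color is played by the finite sum of $\A$-modules with weights $\qd(V_i)$. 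The needed identity should follow from the fact that $\qd$ extends to a modified trace on the ideal of projective modules of $\cat$, combined with the $2r$-periodicity of the $\A$-parameter; together these make the finite sum over lifts behave exactly like the Kirby color sum in the semi-simple theory.

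Third, to define $\Nr^0$ I fix an auxiliary pair $(M_0,\coh_0)\in \cM$ on which an explicit computation (for instance a lens space carrying a suitable non-integral cohomology class) gives $\Nr(M_0,\coh_0)\neq 0$, and set
\[
\Nr^0(M,\coh) := \frac{\Nr\bigl((M,\coh)\#(M_0,\coh_0)\bigr)}{\Nr(M_0,\coh_0)}.
\]
The multiplicativity formula in (2) then reduces to the observation that a surgery presentation of a connected sum is the split disjoint union of surgery presentations of the summands, on which $F'_r$ factors multiplicatively and the normalization splits as a product over the summands; this factorization also shows that $\Nr^0$ is independent of the chosen $(M_0,\coh_0)$. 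The vanishing claim $\Nr^0(M,\coh)=0$ for $(M,\coh)\in \cM$ is the three-manifold analog of $F'_r(L\sqcup o)=0$ for $L\in\cG$: in a split presentation of a connected sum of two elements of $\cM$, performing the modified trace on one side forces the \emph{ordinary} quantum dimension to appear on an $\A$-colored component of the other side, which vanishes. The whole argument thus rests on a small number of graphical identities in $\cat$ together with the structural properties of the modified trace, and the Kirby handle-slide invariance is the single hard step.
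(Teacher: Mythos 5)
Your overall architecture (computable presentations, finite Kirby-color sums weighted by $\qd$, signature normalization, handle-slide invariance as the crux) matches the paper's, but three of your key steps fail as stated. The most serious is your treatment of $\Nr^0$. You claim that ``a surgery presentation of a connected sum is the split disjoint union of surgery presentations of the summands, on which $F'_r$ factors multiplicatively.'' This is false: $F'_r$ \emph{vanishes} identically on split unions of $\A$-colored links (Axiom \eqref{I:zeroSplitGraphs}; this is exactly the vanishing you yourself invoke one sentence later to prove $\Nr^0|_{\cM}=0$, and it is the reason $\Nr$ kills connected sums of two generic pairs). Moreover, in the interesting case $(M',\coh')\in\cMZ$ the split presentation is not even computable: the components presenting $M'$ carry integral degrees $g\in\Z/2\Z=\X$, where $\cat_g$ is not semi-simple and no Kirby color exists, so Equation \eqref{E:WeightedSumWithdF'} cannot be applied to that part; a computable presentation of the sum only arises after slides that destroy the split structure. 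Your claimed independence of the auxiliary pair $(M_0,\coh_0)$ also cannot be obtained this way: the natural comparison triple $(M,\coh)\#(M_0,\coh_0)\#(M_1,\coh_1)$ is, however you associate it, a connected sum of two generic pairs, so its $\Nr$ vanishes and yields only $0=0$. The paper escapes all of this by working with nonempty graphs: $\Nr^0$ is defined by summing with $(S^3,u_\alpha,\coh_\alpha)$ (a colored unknot, Theorem \ref{T:N0sl2}), independence of $\alpha$ is proved via a Hopf link using multiplicativity of $F'$ under connected sum \emph{along graph edges}, $F'(T\#_{(e,e')}T')=\qd(\alpha)^{-1}F'(T)F'(T')$ (Proposition \ref{P:Ndirctsum}), and the multiplicativity in (2) is proved in Proposition \ref{prop:connsumone0} by moving a sublink of the surgery presentation into the graph part to create a $T$-admissible triple and then $H$-stabilizing. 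None of these ingredients appear in your plan, and without them both claims in (2) are unproved.

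The construction of $\Nr$ itself also has gaps. First, your slides in part (1) go the wrong way: sliding a component $e$ over $K$ changes the color of $K$, the \emph{slid-over} component, to $g_\coh(e)g_\coh(K)$ (Lemma \ref{L:handle-slide}), so sliding the integrally colored components over the non-integral one leaves their colors integral; you must instead slide the non-integral component over each integral one (Proposition \ref{P:ExistCompSurgSl2}). Second, your finiteness argument via ``periodicity up to isomorphism'' of $\A$-modules is wrong: the action $V_\alpha\mapsto V_\alpha\otimes\ve^t\cong V_{\alpha+2rt}$ is \emph{free}, so each $\C/2\Z$-color admits infinitely many inequivalent finite Kirby colors; the choice is non-canonical, and independence of the choice is a theorem, proved using $\lkG(L_i,g_\coh)=1$ --- which holds precisely because the coloring satisfies $\lk.\phi=0$, i.e.\ comes from a cohomology class (Lemma \ref{L:lifts2}). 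Mere $2r$-periodicity of $\qd$ does not suffice, since $F'_r$ itself is not $2r$-periodic in the colors (framing and Hopf-pairing factors $q^{\alpha^2/2}$, $q^{\alpha\beta}$). Third, your invariance plan ignores that a Kirby sequence joining two computable presentations may pass through non-computable ones, where the handle-slide identity has no meaning; the paper's proof blows up an auxiliary meridian colored by a generically chosen element of $\A$ (possible because $\C/2\Z$ is not a finite union of translates of $\Z/2\Z$), slides it so every intermediate stage stays computable, slides it back at the end, and relies on the refined Kirby theorem of Gompf--Stipsicz (Theorem \ref{teo:refinedkirby}) to guarantee the sequence realizes a diffeomorphism preserving $\coh$. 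Without these steps the ``single hard step'' you identify remains open.
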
 

\subsection{Subtleties of the construction.} In this subsection we discuss
three technical and difficult points of our construction.  
We now consider the full invariant $\Nr$ defined on some triple $(M,T,\coh)$
where $T$ is a complex colored framed link or trivalent graph in $M$.
First, not all
surgery presentations can be used to compute $\Nr(M,T,\coh)$.  For this reason
we introduce the notion of a \emph{computable} surgery presentation.  In the
case when $T=\emptyset$ this means the coloring takes values in $(\C\setminus
\Z)/2\Z$, see part 1 of Theorem \ref{T:NrNr0Summary}.  If $T\neq \emptyset$,
it may happen that a triple does not admit a computable presentation, however
we can modify the construction as follows to obtain a non-trivial invariant.
A \emph{$H$-stabilization} is a connected sum of the Hopf link with a
component of $T$.  The definition of $\Nr$ can naturally be extended by first
doing a $H$-stabilization then re-normalizing the invariant of the new triple
to account for such a stabilization.  Thus, if $T\neq \emptyset$ the invariant
$\Nr$ is defined for all triples $(M,T,\coh)$.

The second delicate point of the construction is how to select finitely many modules to color the components of a computable surgery presentation $L$.   In the standard case the set of modules $V_i$ very loosely depends on $\coh\in H^1(M;\Z/2\Z)$: basically the parity of the weights in $V_i$ should coincide with the parity of $\coh$ on the meridians of the link. This allows one to select a finite set of modules for each component of $L$ and to color the component with a formal linear combination all these possible choices, called the \emph{Kirby color}.
In our case the weights of the modules belong to $\C$ and they should coincide modulo $2\Z$ with the value of $\coh$ on the meridians of the link. 
The problem here is  there is not a unique Kirby color but rather we can define an infinite set of Kirby colors for each weight in $\C/2\Z$. Hence we need to prove that the value of the invariant does not depend on which of these Kirby colors one choses on each component of the link. 

The third technical point in our construction is that while performing Kirby calculus moves relating two computable surgery presentations one may pass through some non-computable presentations. So a suitable work must be done to avoid  non-computable presentations.  Moreover the presence of the cohomology classes forces us to consider more seriously the topological meaning of a sequence of handle slides relating two presentations and in particular to deal with the (isotopy class of) diffeomorphisms it induces. Fortunately as already remarked by Gompf and Stipsicz, Kirby's Theorem already contains all the informations concerning this problem: see Subsection \ref{sub:kirbygompf} for further discussion of this issue.

\subsection{Open questions and further developments}

The work of this paper leads to a number of interesting open problems.
First, it is natural to expect that our invariants extend to certain
kinds of Topological Quantum Field Theories as the {\RT} invariants
extend to the TQFTs first constructed in \cite{BHMV}.  If such a TQFT
exists then it should give rise to a new set of quantum
representations of the Mapping Class Groups of surfaces.  Second, as
shown in Section \ref{sec:examples}, the invariants may be used to
prove a version of the Volume Conjecture for certain pairs $(M,T)$
where $T$ is a link in a manifold $M$.  Thus, it is natural to ask if
this conjecture is true for other pairs $(M,T)$, in particular, in the
case when $T$ is empty.  Finally, a natural problem is to relate the
invariants of this paper to those defined in \cite{GPT2}; an analogous
relationship exists between the usual {\RT} invariants and Turaev-Viro
invariants.  Giving such a relationship would be interesting also
because it would relate the invariants of this paper with the
generalized Kashaev invariants of \cite{GKT}, via the results of
\cite{GP2}.

\subsection{Organization of the paper.}
To make the paper accessible to
more readers, in Section~\ref{S:sl2NrInv} we provide a definition of our invariant in the case of $\Ubar$ using a set of axiomatic properties defining $F'_r$ on links in $S^3$ and then we detail its properties. This allows the
reader to be able to compute and use our invariants immediately.  Then
in Section~\ref{sec:examples} we give several computations and
properties of these invariants.
In Section~\ref{sec:generalconstruction} we introduce a general
categorical construction and state the main results of the paper.
Section~\ref{S:ProofsT-coh-adm} contains the proofs of the results
stated in Section~\ref{sec:generalconstruction}.  In
Section~\ref{S:ProofsOfNrManInv} we prove that the combinatorial
invariants of Section~\ref{S:sl2NrInv} are examples of the general
theory defined in Section~\ref{sec:generalconstruction}.
Section~\ref{S:otherQG} is devoted to showing that all quantized
simple Lie algebras lead to examples of the general theory of this
paper.

\section{The invariants $\Nr$ and $\Nr^0$}\label{S:sl2NrInv}
\subsection{Notation} \label{SS:NotationNr} All manifolds in the
present paper are oriented, connected and compact and all the
diffeomorphisms preserve the orientations unless explicitly stated.
By a graph we always mean a finite graph with oriented edges (we allow
loops and multiple edges with the same vertices).  Given a set $Y$, a
graph is said to be \emph{$Y$-colored} if it is equipped with a map
from the set of its edges to $Y$.  A \emph{framed} graph $\Gamma$ in
an oriented manifold $M$ is an embedding of $\Gamma$ into $M$ together
with a vector field on $\Gamma$ which is nowhere tangent to $\Gamma$,
called the \emph{framing}
We assume that all edges of a vertex are tangent to the same plane
which does not contain the vector of the framing. The framing is seen
up to homotopy of vector fields constantly transverse to these tangent
planes and so, together with the orientation of the manifold, gives a
cyclic ordering of the edges of any vertex.

Let $r$ be an integer greater or equal to $2$ and let
$\qr=\e^{i\pi/r}$.  For $x\in \C$, we use the notation $q^x$ for
$\e^{x i\pi/r}$ and let $\qn x=q^{x}-q^{-x}$.  For all $ x,y\in \C$
with $x-y\in \{0,1,\ldots, r-1\}$, let $\qbin{x}{y}=\prod_{j=1}^{x-y}
\frac{\{x+1-j\}}{\{j\}}$.  Set $\Xr=\Z\setminus r\Z \subset \C$ and
define the \emph{modified dimension} $\qd:\srcol\to\C$ by
\begin{equation}\label{E:Def_qd}
  \qd(\alpha)=(-1)^{r-1}{\qbin{\alpha+r-1}{\alpha}}^{-1}
  =(-1)^{r-1}r\frac{\sin(\frac{\pi \alpha}r)}{\sin(\pi \alpha)}.
\end{equation}
Finally, let 
\begin{equation}
  \label{eq:Hr}
  \Hr=\{1-\ro,3-\ro,\ldots,\ro-3,\ro-1\}
\end{equation}
and for any $x\in \C$ let $\wb{x}\in \C/2\Z$ be the class of $x$
modulo $2\Z$.

\subsection{Axiomatic definition of the invariant $\Nr$ of graphs in
  $S^3$}\label{S:axiom}
In \cite{ADO}, Akutsu, Deguchi and Ohtsuki define generalized
multivariable Alexander invariants, which contain Kashaev's
invariants (see \cite{Kv,MM}).  In \cite{jM}, Jun Murakami gives a
framed version of these link invariants using the universal $R$-matrix
of quantum $\slt$ and calls them the colored Alexander invariant.
Here we consider a generalization of these invariants (when $r=2$,
such a generalization was already considered by Viro in \cite{Vi}).

As above, let $r\in \Z$ with $r\geq 2$ and let $\qr=\e^{i\pi/r}$.  Let
$\cG$ be the set of oriented trivalent framed graphs in $S^{3}$ whose
edges are colored by element of $\srcol$.  In Subsection
\ref{SS:NfromSl2} we will show that the following axioms define an
invariant $\Nr :\cG\to\C$.  Let $T, T'\in\cG$.
\begin{subequations}
\renewcommand{\theequation}{$\mathsf N$ \alph{equation}}
\begin{enumerate}
\item Let $e$ be an edge of $T$ colored by $\alpha$.  If $T'$ is
  obtained from $T$ by changing the orientation of $e$ and its color
  to $-\alpha$ then $\Nr(T')=\Nr(T)$.  In other words,
  \begin{equation}
    \label{eq:ch-orient}
    \Nr\left(\epsh{fig10}{8ex}\put(-11,3){\ms{\alpha}}\right)=
    \Nr\left(\,\epsh{fig11}{8ex}\put(-17,0){\ms{-\alpha}}\right).
  \end{equation}
\item Let $\alpha, \beta, \gamma$ be the colors of a vertex $v$ of
  $T$.  If all the orientations of edges of $v$ are incoming and
  $\alpha+ \beta+ \gamma$ is not in $\Hr$ then $\Nr(T)=0$, i.e.
  \begin{equation}
    \label{eq:heights}
    \Nr\left(\epsh{fig8}{6ex}
      \put(-25,7){\ms{\alpha}}\put(-4,7){\ms{\beta}}
      \put(-11,-5){\ms{\gamma}}\right)
    =0\text{ if }\alpha+\beta+\gamma\notin \Hr.
  \end{equation}
\item If $ T''$ denotes the connected sum of $T$ and $T'$ along
  an edge colored by $\alpha$ then $\Nr(T'')=\qd(\alpha)^{-1}\Nr(T)\Nr(T')$:
  \begin{equation}\label{eq:con-sum}
    \Nr\left(\epsh{fig18}{6ex}\put(-43,0){$T$}\put(-12,0){$T'$}
      \put(-26,12){\ms{\alpha}}\put(-26,-10){\ms{\beta}}\right)
    =\delta_\alpha^{\beta}\,\qd(\alpha)^{-1}\,    
    \Nr\left(\epsh{fig19}{6ex}\put(-21,0){$T$}\put(0,1){\ms{\alpha}}
      \,\right)
    \Nr\left(\epsh{fig20}{6ex}\put(-12,0){$T'$}\put(-31,1){\ms{\alpha}}
      \,\right).
  \end{equation}
\item The invariant $\Nr$ has the following normalizations:
  \begin{equation}\label{eq:unknot-Theta}
    \Nr\left(\epsh{fig12}{6ex}\put(-31,3){\ms{\alpha}}\right)=\qd(\alpha),
    \qquad \Nr\left(\epsh{fig13}{6ex}\right)=1,
    \qquad \Nr\left(\epsh{fig6}{6ex}\right)=0,
  \end{equation}
  here we assume that the ``$\Theta$'' graph is colored with any
  coloring which is not as in \eqref{eq:heights}.
\item If $T''$ denotes the connected sum of $T$ and $T'$ along a
  vertex with compatible incident colored edges then
  $\Nr(T'')=\Nr(T)\Nr(T')$:
  \begin{equation}\label{eq:con-sumv}
    \Nr\left(\epsh{fig22}{6ex}\put(-42,0){$T$}\put(-12,0){$T'$}
    \right)=  
    \Nr\left(\epsh{fig23}{6ex}\put(-23,0){$T$}
    \,\right)
    \Nr\left(\epsh{fig24}{6ex}\put(-12,0){$T'$}
    \,\right).
  \end{equation}
\item \label{I:zeroSplitGraphs} $\Nr$ is zero on split graphs:
  $\Nr(T\sqcup T')=0$.
\item The following relations hold whenever all appearing colors are
  in $\srcol$:
   \begin{equation}
    \label{eq:twist}
    \Nr\left(\,\epsh{fig16}{8ex}\put(-13,-7){\ms{\alpha}}\right)=
    \qr^{\frac{\alpha^{2}-(\ro-1)^{2}}2}\Nr\left(\epsh{fig10}{8ex}
      \put(-11,3){\ms{\alpha}}\right),
  \end{equation}
  \begin{equation}
    \label{eq:twistv}
    \Nr\left(\,\epsh{fig21}{6ex}
      \put(-21,16){\ms{\alpha}}\put(-7,17){\ms{\beta}}
      \put(-9,-7){\ms{\gamma}}\right)
    =q^{\frac{\gamma^2-\alpha^2-\beta^2+(\ro-1)^2}4}\Nr\left(\epsh{fig8}{6ex}
      \put(-25,7){\ms{\alpha}}\put(-4,7){\ms{\beta}}
      \put(-11,-5){\ms{\gamma}}\right),
  \end{equation}
  \begin{equation}
    \label{eq:hopf}
   \Nr\left(\,\epsh{fig7}{6ex}
      \put(-36,-2){\ms{\alpha}}\put(-7,2){\ms{\beta}}\right)
    =(-1)^{r-1}rq^{\alpha\beta},
  \end{equation}
  \begin{equation}\label{eq:fusion}
    \Nr\left(\epsh{fig10}{8ex}\put(-11,3){\ms{\alpha}}
      \epsh{fig10}{8ex}\put(-11,3){\ms{\beta}}\right)=
    \sum_{\gamma\in\alpha+\beta+\Hr}\qd(\gamma)\Nr\left(
      \epsh{fig14}{8ex}\put(-16,3){\ms{\gamma}}\,\right),
  \end{equation}
  \begin{equation}\label{eq:6j}
    \Nr\left(\epsh{newfig15}{8ex}
      \put(-13,4){\ms{j_3}}\put(-18,13){\ms{j_2}}\put(-8,13){\ms{j_4}}
      \put(-18,-8){\ms{j_1}}\put(-8,-8){\ms{j_5}}\,\right)=
    \sum_{j_6\in j_1+j_5+\Hr} \qd(j_6)
    \sjtop{j_1}{j_2}{j_3}{j_4}{j_5}{j_6}
    \Nr\left(\epsh{newfig14}{8ex}
      \put(-18,3){\ms{j_6}}\put(-21,13){\ms{j_2}}\put(-4,13){\ms{j_4}}
      \put(-21,-8){\ms{j_1}}\put(-4,-8){\ms{j_5}}\,\right).
  \end{equation}
\end{enumerate}
\end{subequations}
Here the $6j$-symbol
$\sjtop{j_1}{j_2}{j_3}{j_4}{j_5}{j_6}=\Nr\left(\epsh{fig5}{12ex}
  \put(-36,10){\ms{j_1}}\put(-28,2){\ms{j_2}}\put(-28,-15){\ms{j_3}}
  \put(-12,-7){\ms{j_4}}\put(-18,14){\ms{j_6}}\put(-8,3){\ms{j_5}}\right)$
is given by:
\begin{eqnarray*}
\sjtop{j_1}{j_2}{j_3}{j_4}{j_5}{j_6}=
(-1)^{r-1+B_{165}} \, 
\dfrac{\{B_{345}\}! \, \{B_{123}\}!}
{\{B_{246}\}! \, \{B_{165}\}!} \, 
\qbin{j_3+r-1}{A_{123}+1-r} \, 
{\qbin{j_3+r-1}{B_{354}}}^{-1}\times 
\\
\times\sum_{z = m}^{M}\,
(-1)^z\qbin{A_{165}+1}{j_5+z+r} \, 
\qbin{B_{156}+z}{B_{156}} \,
\qbin{B_{264}+B_{345}-z}{B_{264}} \,
\qbin{B_{453}+z}{B_{462}}
\end{eqnarray*}
where $A_{xyz}=\frac{j_x+j_y+j_z+3(r-1)}{2}$,
$B_{xyz}=\frac{j_x+j_y-j_z+r-1}{2}$,
$m=\rm{max}(0,\frac{j_3+j_6-j_2-j_5}{2})$ and
$M=\rm{min}(B_{435},B_{165})$.

\begin{rem}
  It can be shown that the above axioms determine the value of $\Nr(T)$ for
  any $T\in \cG$ (cf \cite[Proposition 4.6]{GP0} for a similar proof).  In
  particular, the axioms can be used to reduce $\Nr(T)$ to a linear
  combination of $6j$-symbols which are in turn determined by the above
  formula.  In Subsection \ref{SS:NfromSl2} we will prove that these axioms
  are consistent.
\end{rem}
\begin{rem}\label{R:admcol}
  The $\srcol$-coloring $c$ of $T\in\cG$ can be seen as a complex
  $1$-chain.  Its boundary $\delta c$ is a $0$-chain, i.e. a map from
  the set $\V$ of trivalent vertices of $T$ to $\C$.  If $\delta c$
  has a value in $\C\setminus\Hr$, then $\Nr(T)=0$ because of Axioms
  \eqref{eq:ch-orient} and   \eqref{eq:heights}.  For a trivalent graph
  $T$ and  map $\delta:\V\to\Hr$ let $C(T,\delta)$ be the set of
  colorings of $T$ with boundary $\delta$.  Then $C(T,\delta)$ is a
  Zariski open subset of an affine subspace of
  $\C^{\{\text{edges(T)}\}}$.  As a function on
  $C(T,\delta)$, $\Nr$ is holomorphic.  Finally, if the number of
  edges of $T$ is greater or equal 2 then $\Nr$ extends continuously
  to the closure of $C(T,\delta)$.
\end{rem}
\begin{rem}
  The formulas for the $6j$-symbols above have been computed in
  \cite{CM}.  In particular, one can recover these symbols from
  Formula 1.16 of \cite{CM} where a color $a\in \mathbb{C}$ in
  \cite{CM} corresponds to the color $2a+1-r$ above.  In the case when
  $r$ is odd, these $6j$-symbols are computed in \cite{GP} but have a
  different normalization.  There are several choices which give
  different normalizations of this invariant.  One of them concerns
  the choice of $\qd$: For any complex number $c$, one can replace
  $\qd$ with $\wt\qd=c\qd$ and define $\wt\Nr$ on a non empty graph
  $T$ with $v$ vertices by $\wt\Nr(T)=c^{1-v/2}\Nr(T)$.  Then $\wt\Nr$
  satisfies an equivalent set of axioms.
\end{rem}

We consider the following computation which will be useful later.  Let
$\epsilon=1$ if $r$ is even and $0$ otherwise and let $\sigma$ be a formal
linear combination of colors, $\sigma=\sum_{k\in
  \Hr}\qd(\alpha+k)[\alpha+k]$.  For a graph with an edge colored
by $\sigma$, we formally expand such a color as
\begin{align*}
  \Nr\left(\epsh{fig1}{12ex}\put(-33,-2){\ms{\sigma}}
    \put(-16,-20){\ms{\alpha+\epsilon}}\ \right)&= \sum_{k\in
    \Hr}\qd(\alpha+k)\Nr\left(\epsh{fig1}{12ex}\put(-37,-3){\ms{\alpha\!+\!k}}
    \put(-16,-20){\ms{\alpha+\epsilon}}\ \right)\\
  &=\sum_{k\in \Hr}\qd(\alpha+k)(-1)^{r-1}r\left(\qr^{(\ro-1)^{2}}
    \qr^{-\frac12(\alpha+\epsilon)^{2}}\qr^{-\frac12(\alpha+k)^{2}}\right)
  \qr^{(\alpha+\epsilon)(\alpha+k)}\qd(\alpha+\epsilon)^{-1}
  \Nr\left(\epsh{fig10}{8ex}\put(-3,3){\ms{\alpha+\epsilon}}\  \ \ \right)\\
  &=\sum_{k\in
    \Hr}(-1)^{r-1}r\frac{\qn{\alpha+k}}{\qn{\alpha+\epsilon}}\qr^{(\ro-1)^{2}}
  \qr^{\epsilon k-\frac12 k^{2}-\frac12 \epsilon^{2}}
  \Nr\left(\epsh{fig10}{8ex}\put(-3,3){\ms{\alpha+\epsilon}}\ \ \ \right)\\
  &=\frac{(-1)^{r-1}r\qr^{(\ro-1)^{2}-\frac12
      \epsilon^2}}{\qn{\alpha+\epsilon}} \left(\qr^{\alpha}\sum_{k\in
      \Hr}\qr^{\epsilon k-\frac12 k^{2}+k} -\qr^{-\alpha}\sum_{k\in
      \Hr}\qr^{\epsilon k-\frac12 k^{2}-k}\right)
  \Nr\left(\epsh{fig10}{8ex}\put(-3,3){\ms{\alpha+\epsilon}}\ \ \ \right)\\
  &=\frac{(-1)^{r-1}r\qr^{(\ro-1)^{2}-\frac12
      \epsilon^2}}{\qn{\alpha+\epsilon}} \left(\qr^{\alpha} S_+
    -\qr^{-\alpha} S_- \right)
  \Nr\left(\epsh{fig10}{8ex}\put(-3,3){\ms{\alpha+\epsilon}}\ \ \
  \right).
\end{align*}
where $S_\pm=\sum_{k\in \Hr} \qr^{\epsilon k-\frac12 k^2\pm k}$. Setting $l=\frac{k+(r-1)}{2}$ one has $$S_\pm=\sum_{l=0}^{\ro-1}\qr^{\epsilon(2l-(r-1))-2l^2-\frac12 (\ro-1)^2+2(\ro-1)l\pm (2l-(r-1))}=q^{-\frac12 (\ro-1)^2\mp (\ro-1)-\epsilon(r-1)}\sum_{l=0}^{\ro-1}\qr^{-2l^{2}+(2\epsilon-2\pm 2)l}$$ which is a generalized quadratic Gauss sum and
is computed for example in \cite[Chapter 1]{BEW}.  Since $-l^2+\epsilon l=-(l-1)^2-2(l-1)-1+\epsilon(l-1)+\epsilon$ and $q=\exp(\frac{i\pi}{r})$ then
$$S_+=q^{-\frac12 (r-1)^2-(r-1)-\epsilon(r-1)}\sum_{l=0}^{\ro-1}\qr^{-2l^{2}+2\epsilon l}=q^{-\frac12 (r-1)^2+(r+1)-\epsilon(r-1)}\sum_{l=0}^{\ro-1}\qr^{-2(l-1)^{2}-4(l-1)-2+2\epsilon( l-1)+2\epsilon}$$
The last quantity equals $q^{2\epsilon}S_-$, thus we have:
$$\Nr\left(\epsh{fig1}{12ex}\put(-33,-2){\ms{\sigma}}
  \put(-16,-20){\ms{\alpha+\epsilon}}\right)=
(-1)^{\ro-1}\ro\qr^{(\ro-1)^{2}-\frac12
  \epsilon^2}\left(\frac{q^{\alpha+2\epsilon}-q^{-\alpha}}
  {\qn{\alpha+\epsilon}}S_-\right)\
\Nr\left(\epsh{fig10}{8ex}\put(-3,3){\ms{\alpha+\epsilon}}\ \ \
\right)
=\Delta_-\Nr\left(\epsh{fig10}{8ex}\put(-3,3){\ms{\alpha+\epsilon}}\ \
  \ \right)$$ where \begin{equation}\label{eq:delta}
  \Delta_-=\left\{
  \begin{array}{lll}
    0&&\text{ if $\ro\equiv0$ mod }4\\
    i(r\qr)^{\frac32}&&
    \text{ if $\ro\equiv1$ mod }4\\
  (i-1)(r\qr)^{\frac32}&&
    \text{ if $\ro\equiv2$ mod }4\\ 
    - (r\qr)^{\frac32}&&
    \text{ if $\ro\equiv3$ mod }4\\
  \end{array}\right.\end{equation}
  
Similarly,
$$\Nr\left(\epsh{fig2}{12ex}\put(-33,-2){\ms{\sigma}}
    \put(-16,-20){\ms{\alpha+\epsilon}}\right)
  =\Delta_+\Nr\left(\epsh{fig10}{8ex}\put(-3,3)
    {\ms{\alpha+\epsilon}}\ \ \ \right)$$
where $\Delta_+$ is the complex conjugate of $\Delta_-$.

\subsection{Cohomology classes}\label{SS:CohomGcoloring}
Here we give a characterization of a cohomology group which will be
used throughout this paper.
Let $(\Gr,+)$ be an abelian group.  Let $M$ be a compact connected
oriented 3--manifold and $T$ a framed graph in $M$.  Let $L$ be an
oriented framed link in $S^3$ which represents a surgery presentation
of $M$.  Consider the cohomology group $H^{1}(M,\Gr)\simeq
\Hom(H_1(M,\Z),\Gr)$.
The meridians $\{m_i\}_{i=1\ldots n_L}$ of the components of $L$
generate $H_1(M,\Z)$ and their relations are given in these generators
by the columns of the symmetric linking matrix $\lk=(\lk_{ij})$.
Consequently, $H^{1}(M,\Gr)=\{(\phi_i)\in \Gr^{n_L}: \lk.\phi=0\}$.

As we will now explain the cohomology group of $M\setminus T$ can be
described in terms of the homology of $L\cup T$.  Let $e_1,\ldots,
e_{n_L}$ be the oriented edges of $L$, $e_{n_L+1},\ldots, e_{n_L+n_T}$
be the oriented edges of $T$ and $m_i$ be the oriented meridian of
$e_i$.  We have $H^{1}(M\setminus T,\Gr)\simeq \Hom(H_1(M\setminus
T,\Z),\Gr)$ where $H_1(M\setminus T,\Z)$ is generated by the meridians
of all edges of $L\cup T$. Given a regular planar projection of $L\cup
T$, we can define a linking matrix for the $n_L+n_T$ edges of $L\cup
T$ (which is not an isotopy invariant) as follows: for $i,j \in
\{1,\ldots, {n_L+n_T}\}$ let $\lk_{ij}$ be the algebraic number of
crossings between the edges $e_i$ and $e_j$ with the edge $e_j$ above
the edge $e_i$.  Graphically, this can be represented by
$$\lk_{ij}\left(\epsh{fig26}{6ex}
  \put(-20,-5){\ms{e_j}}\put(-3,-5){\ms{e_i}}\right)=1\qquad
\lk_{ij}\left(\epsh{fig27}{6ex}\put(-20,-5){\ms{e_i}}\put(-3,-5){\ms{e_j}}
\right)=-1\qquad
\lk_{ij}\left(\epsh{fig26}{6ex}\put(-20,-5){\ms{e_i}}\put(-3,-5){\ms{e_j}}
\right)=\lk_{ij}\left(\epsh{fig27}{6ex}
  \put(-20,-5){\ms{e_j}}\put(-3,-5){\ms{e_i}}\right)=0$$
Thus we may present:
$$H_1(M\setminus T,\mathbb{Z})=\langle[m_i]| \sum_{j=1}^{n_L+n_T} 
\lk_{ij}[m_j]=0\ {\rm{and}}\ r_v=0\ {\text{\rm{for all}}} \ i\in
\{1,\ldots n_L\}\ {\rm{and}}\ v\rangle$$ 
where $v$ ranges over all the vertices of $T$ and $r_v$ is the sum of
the meridians of the edges incoming to $v$ minus the sum of the
meridians of the edges outgoing from $v$.

Let 
\begin{equation}\label{eq:Phi}
\Phi:H^{1}(M\setminus T,\Gr)\to H_1(L\cup
T,\Gr)=H_1(L,\Gr) \oplus H_1(
T,\Gr)
\end{equation} 
be the map sending a cohomology class $\coh$ to the chain $\sum_i
\coh(m_i) e_i$; clearly $\Phi$ is injective.  A cycle $x=\sum_i x_i
e_i $ representing a class in $H_1(L\cup T,{\Gr})$ is in $\operatorname{Im}(\Phi)$ if
and only if $\sum_{j=1}^{n_L+n_T} \lk_{ij} x_j=0\in\Gr,$ for all $i\in
\{1,\ldots n_L\}$.

For $\coh\in H^{1}(M\setminus T,\Gr)$ and $L\cup T$ as above, the map
$g_\coh$ defined on the set of edges of $L\cup T$ with values in $\Gr$
defined by $g_\coh(e_i)=\coh(m_i)$ is called the \emph{$\Gr$-coloring}
of $L\cup T$ induced by $\coh$.

\subsection{The 3--manifolds invariants $\Nr$ and $\Nr^0$}\label{SS:sl2NandN^0}
In the rest of this section we use the notation of the last subsection
with the additive group $\Gr=\C/2\Z$.  We also assume that the integer
$r$ is not congruent to $0$ mod $4$.  Let $\X=\Z/2\Z\subset \C/2\Z$.
Let $M$ be a compact, connected oriented 3--manifold and $T$ a framed
trivalent graph in $M$ whose edges are colored by elements of
$\srcol$.  Let $\coh\in H^{1}(M\setminus T,\C/2\Z)$.  If $M$ is
presented as an integral surgery over a link $L$ in $S^3$ then $\coh$
induces a $\C/2\Z$-coloring $g_\coh$ on $L\cup T$.  We say that
$(M,T,\coh)$ is {\em a compatible triple} if for each edge $e$ of $T$
its $\srcol$-color $c(e)$ satisfies $c(e)+r-1\equiv g_\coh(e)\ {\it
  mod}\ (2\Z)$.  Note that this definition does not depend on the
surgery presentation of $M$.  A surgery presentation via $L\subset
S^3$ for a compatible tuple $(M,T,\coh)$ is \emph{computable} if one
of the following two conditions holds: (1) $g_\coh(e) \in \C/2\Z
\setminus \X $ for all edges $e$ of $L$, or (2) $L=\emptyset$ and
$T\neq \emptyset$.
 
\newcommand{\Span}{\operatorname{Span}}
Recall the set $\Hr=\{1-r,3-r,\ldots,r-1\}$ defined in \eqref{eq:Hr}.
For $\alpha\in \C\setminus\Z$ we define the Kirby color
$\Omega_{{\alpha}}\in \Span_\C<[x] \;|\;x\in \C>$ by
\begin{equation}
  \label{eq:Om}
  \Omega_{{\alpha}}=\sum_{k\in \Hr}\qd(\alpha+k)[\alpha+k].
\end{equation}
If $\wb{\alpha}$ is the image of $\alpha$ in $\C/2\Z$ we say
that $\Omega_{{\alpha}}$ has degree $\wb{\alpha}$.  We can ``color'' a
knot $K$ with a Kirby color $\Omega_{{\alpha}}$: let
$K({\Omega_{{\alpha}}})$ be the formal linear combination of knots
$\sum_{k\in \Hr} \qd(\alpha+k) K(\alpha+k)$ where $K(\alpha+k)$ is the
knot $K$ colored with $\alpha+k$.  If
$\wb{\alpha}\in\C/2\Z\setminus\Z/2\Z$, by $\Omega_{\wb{\alpha}}$, we
mean any Kirby color of degree $\wb\alpha$.

We will prove all of the theorems and propositions of this section in Section
\ref{S:ProofsOfNrManInv}.
 
\begin{teo}\label{T:sl2CompSurgInv}
  If $L$ is a link which gives rise to a computable surgery
  presentation of a compatible triple $(M,T,\coh)$ then
  $$\Nr(M,T,\coh)=\dfrac{\Nr(L\cup T)}{\Delta_+^{p}\ \Delta_-^{s}}$$
  is a well defined topological invariant (i.e. depends only of the
  homeomorphism class of the triple $(M,T,\coh)$), where $(p,s)$ is
  the signature of the linking matrix of the surgery link $L$ and for
  each $i$ the component $L_i$ is colored by a Kirby color of degree
  $g_{\coh}(L_i)$.
\end{teo}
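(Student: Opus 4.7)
The plan is to verify, using Kirby's theorem, that the right-hand side of the claimed formula is invariant under the moves relating any two computable surgery presentations of the compatible triple $(M,T,\coh)$.

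First, I would check well-definedness for a fixed computable presentation $L$: the value must not depend on the representatives $\alpha_i$ chosen for the $\C/2\Z$-classes $g_\coh(L_i)$ when forming the Kirby colors $\Omega_{\alpha_i}$. Two lifts $\alpha_i$ and $\alpha_i+2m$ (with $m\in\Z$) produce Kirby colors supported on shifted copies of $\alpha_i + \Hr$; using the fusion identity \eqref{eq:fusion} to re-expand one in terms of the other and invoking the holomorphicity from Remark \ref{R:admcol}, the two formal sums evaluate to the same complex number.

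Second, I would verify stabilization invariance. The calculation at the end of Subsection \ref{S:axiom}, which combines \eqref{eq:hopf}, \eqref{eq:twist} and the definition \eqref{eq:Om}, gives
\[
\Nr\bigl((L \sqcup O_\pm) \cup T\bigr) \;=\; \Delta_\pm \, \Nr(L \cup T),
\]
where $O_\pm$ denotes a disjoint $\pm 1$-framed unknot carrying a Kirby color of non-integral degree (such a color is available and the scalar is nonzero because $r \not\equiv 0 \pmod{4}$ guarantees $\Delta_\pm \neq 0$ by \eqref{eq:delta}). Since this move changes the signature of the linking matrix by $(1,0)$ or $(0,1)$, the ratio $\Nr(L\cup T)/(\Delta_+^p \Delta_-^s)$ is preserved. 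For handle-slide invariance, if $L_1$ is slid over $L_2$ the meridian of $L_1$ becomes $m_1 \pm m_2$, so the induced $\C/2\Z$-coloring of $L_1$ shifts by $\pm g_\coh(L_2)$ while the signature of the linking pairing is unchanged. Expanding the Kirby color on $L_2$ via \eqref{eq:fusion}, absorbing the framing changes using the twist relations \eqref{eq:twist}-\eqref{eq:twistv} and repackaging via \eqref{eq:con-sum}, I would verify that $\Nr(L \cup T)$ is unchanged.

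The main obstacle is the computability constraint. Kirby's theorem (in its Gompf-Stipsicz refinement, which also keeps track of the induced diffeomorphism and of the cohomology class $\coh$) says that two computable surgery presentations of $(M,T,\coh)$ are related by a finite sequence of stabilizations and handle slides, but nothing prevents an intermediate presentation from failing computability in the sense that one of its components might inherit a color in $\X = \Z/2\Z$. The strategy is to show that any such sequence can be rerouted inside the class of computable presentations: whenever an integer-colored component would appear, I would first perform an auxiliary $\pm 1$-stabilization by a non-integrally-colored unknot, use a handle slide onto it to push the offending color off $\X$, carry out the originally desired move, and finally unstabilize. By the calculations of the previous two paragraphs, these auxiliary insertions cancel out and leave the candidate invariant unchanged, thereby yielding the claimed topological invariance.
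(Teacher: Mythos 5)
Your top-level plan (Kirby calculus via the Gompf--Stipsicz refinement, plus rerouting through computable presentations) is the paper's plan, but three of your concrete steps fail as stated. First, the stabilization step: in a surgery presentation of $(M,T,\coh)$ the degree of \emph{every} surgery component is dictated by $\coh$ through the relation $\lk\cdot\phi=0$, so a genuinely disjoint $\pm1$-framed unknot $O_\pm$ is forced to have degree $0\in\X=\Z/2\Z$; a ``Kirby color of non-integral degree'' on it is simply not available, and in any case $\Nr$ vanishes on split diagrams by Axiom (f), so your identity $\Nr((L\sqcup O_\pm)\cup T)=\Delta_\pm\Nr(L\cup T)$ is false for a split unknot. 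This is exactly why the paper defines blow-ups as $\pm1$-framed \emph{meridians} linked with an existing component (the computation at the end of Subsection \ref{S:axiom} that you cite is for such a meridian encircling a strand, and Lemma \ref{L:Stab} is its categorical form). Second, your handle-slide transformation law is backwards: sliding $L_1$ over $L_2$ leaves $g_\coh(L_1)$ unchanged and shifts the color of the component slid \emph{over}, $g_\coh(L_2)\mapsto g_\coh(L_2)\mp g_\coh(L_1)$ (Lemma \ref{L:handle-slide}); consequently, to repair a component with color in $\X$ one must slide an auxiliary generic meridian \emph{over it}, not slide it onto the auxiliary. Third, your lift-independence argument is done with the wrong tool: two lifts $\alpha_i$, $\alpha_i+2m$ give Kirby colors whose supports agree only modulo $2r\Z$, and the needed fact is $V_{\alpha+2rt}\cong V_\alpha\otimes\ve^{t}$ together with $\qd$ being constant on $\TT$-orbits and Lemma \ref{L:lifts2}, whose hypothesis $\lkG(L_i,g_\coh)=1$ holds precisely because the coloring comes from a cohomology class (Remark \ref{R:colorlkG}); the fusion rule \eqref{eq:fusion} and holomorphicity do not yield this.

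The deeper missing idea is how the rerouting is actually organized. Your local ``stabilize--slide--move--unstabilize'' scheme cannot be carried out move by move: the auxiliary component's color is constrained by $\coh$ at insertion, and after it has been slid over other components it is no longer removable by a blow-down, while each use permanently shifts colors elsewhere in the diagram. The paper instead performs \emph{one} blow-up at the start, creating a meridian $m$ colored by a Kirby color $\Omega_{g_0}$, expands $\Omega_{g_0}$ into its finitely many typical colors $\alpha$, and treats each summand as an $H$-stabilization of a triple with non-empty graph $T\cup m_\alpha$ --- reducing Theorem \ref{T:sl2CompSurgInv} (via the general Theorem \ref{T:coh_adm}) to Theorem \ref{T:T_adm}, where the meridian is part of the graph and its color $\alpha$ is \emph{free}. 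That color is then chosen generically against the entire finite sequence of moves at once: each potentially non-computable intermediate step contributes finitely many conditions of the form $g\dg{\alpha}^{x}\notin\X$, all simultaneously satisfiable because $\C/2\Z$ is not a finite union of translates of $\Z/2\Z$ (Condition \eqref{I4:DefGmodularCat} of Definition \ref{D:GmodularCat}); finally $m$ must be slid back to its original position, which imposes further genericity conditions. Without this single global auxiliary meridian, the generic choice of its color, and the return isotopy, your sketch does not close up into a proof.
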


Next we show that computable surgery presentations exist in several
situations.  To do this we need the following definition.  The
inclusion $\Z/2\Z\subset\C/2\Z$ induces an injective map
$$H^1(M\setminus T,\Z/2\Z)\cong\Hom(H_1(M\setminus
T,\Z),\Z/2\Z)\hookrightarrow\Hom(H_1(M\setminus T,\Z),\C/2\Z)\cong
H^1(M\setminus T,\C/2\Z).$$ We say that a cohomology class $\coh\in
H^1(M\setminus T,\C/2\Z)$ is {\em integral} if it is in the image of
this map.

\begin{prop}\label{P:ExistCompSurgSl2}
  Let $(M,T,\coh)$ be a compatible triple where the cohomology class
  $\coh$ is not integral.  Then there exists a surgery presentation of
  $(M,T,\coh)$ which is computable.  In particular, the triple
  $(M,T,\coh)$ has a computable surgery presentation if $T\neq
  \emptyset$ and $T$ has an edge whose color is in $\C\setminus \Z$.
\end{prop}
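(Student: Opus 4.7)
The plan is to begin with an arbitrary surgery presentation $L\subset S^3$ of $M$ and modify it via Kirby moves until every component of the resulting surgery link has $g_\coh$-value in $(\C\setminus \Z)/2\Z$. The underlying observation is that the meridians of the edges of $L\cup T$ generate $H_1(M\setminus T,\Z)$, so that $\coh$ is integral if and only if $g_\coh(m_e)\in \X$ for every edge $e$ of $L\cup T$; hence the assumption that $\coh$ is not integral forces at least one such meridian to take a value outside $\X$.

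First I would treat the case in which some component $L_{i_0}$ of $L$ already satisfies $g_\coh(m_{i_0})=\alpha\in(\C\setminus\Z)/2\Z$. For each other component $L_j$ with $g_\coh(m_j)\in\X$, I would perform a handle slide of $L_{i_0}$ over $L_j$. Under this standard Kirby move the meridian of $L_j$ in the new presentation becomes $m_j\pm m_{i_0}$, so its $g_\coh$-value changes to $g_\coh(m_j)\pm\alpha$, which lies in $(\C\setminus\Z)/2\Z$. Iterating over all components originally having integral value produces a surgery presentation of $(M,T,\coh)$ in which every link component has non-integral meridian value, that is, a computable presentation.

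The main technical step is the opposite case, in which every component of $L$ has integral $g_\coh$-value. By the observation above some edge $e$ of $T$ then satisfies $g_\coh(m_e)=\alpha\in(\C\setminus\Z)/2\Z$. I would modify $L$ by inserting a cancelling Hopf pair $(U_1,U_2)$ localized in a small ball through which only the edge $e$ passes: $U_1$ is a $0$-framed unknotted meridian circle of $e$, and $U_2$ is a $(\pm1)$-framed unknot linking $U_1$ once and unlinked from everything else. Because $(U_1,U_2)$ with these framings is a cancelling pair in $S^3$ and the ball is disjoint from $L$, the link $L\cup U_1\cup U_2$ is still a surgery presentation of the pair $(M,T)$. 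The new surgery relations read $m_{U_2}+m_e=0$ (from $U_1$) and $m_{U_1}\pm m_{U_2}=0$ (from $U_2$), so both $m_{U_1}$ and $m_{U_2}$ acquire $g_\coh$-values in $\{\pm\alpha\}\subset(\C\setminus\Z)/2\Z$, while the values on the original components of $L$ remain unchanged. Applying the first case to this augmented link finishes the argument.

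The ``in particular'' assertion is then immediate: if $T$ has an edge $e$ whose color $c(e)$ lies in $\C\setminus\Z$, the compatibility condition $c(e)+r-1\equiv g_\coh(m_e)\pmod{2\Z}$ forces $g_\coh(m_e)\in(\C\setminus\Z)/2\Z$, so $\coh$ is automatically non-integral and the main claim applies. The subtlest point I anticipate in writing out the details is justifying the cancelling-pair insertion as a legitimate Kirby move on the pair $(L,T)$ in $S^3$, and verifying that the meridian values are correctly computed from the extended linking matrix in the presence of $T$; this is precisely the sort of issue addressed by the Gompf--Stipsicz viewpoint on Kirby calculus referenced in Subsection~\ref{sub:kirbygompf}.
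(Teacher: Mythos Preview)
Your overall strategy—find an edge of $L\cup T$ with non-integral $g_\coh$-value and use it to repair the bad components by handle slides—is exactly the paper's approach, and your Case~1 is essentially identical to the paper's argument. The paper, however, does not split into two cases: it observes that the Kirby calculus for pairs $(L,T)$ (Section~\ref{sub:kirbygompf}, citing \cite{Ro}) allows one to slide an edge of $T$ over a component of $L$, and such a slide changes the $\Gr$-color of that component just as a slide of a link component would. So when the non-integral edge $e$ lies in $T$, the paper simply slides $e$ itself over each bad $L_i$, changing $g_\coh(L_i)$ to $g_\coh(L_i)+g_\coh(e)\notin\X$. This makes your Case~2 unnecessary.

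Your Hopf-pair detour is a natural idea if one is reluctant to move $T$, but as written it does not quite work: the pair $(U_1,U_2)$ with framings $(0,\pm1)$ is not a cancelling pair \emph{relative to the framed arc $e$} threading $U_1$. Blowing down $U_2$ leaves $e$ untouched but turns $U_1$ into a $\mp1$-framed meridian of $e$; blowing down $U_1$ then alters the framing of $e$ by $\pm1$. Hence $L\cup U_1\cup U_2$ is a surgery presentation of $(M,T')$ with $T'$ differing from $T$ by a framing twist on $e$, not of $(M,T)$. This is precisely the subtlety you flagged at the end, and it is a genuine gap. The cleanest repair is the paper's: allow slides of $T$-edges over $L$-components and dispense with Case~2 altogether.
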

The proposition (whose proof is constructive) implies that  when $\coh$
is not integral one may construct a computable presentation of
$(M,T,\coh)$ and hence apply Theorem \ref{T:sl2CompSurgInv}. 
The proposition does not apply to the following case: when $\coh$ is integral,
$T$ is non-empty and all the edges of $T$ are colored by a multiple of
$r$.  The following construction allows one to extend the definition of $\Nr$ to
this case and provides a method for computing $\Nr$ 
when computable presentations exist.  With this in mind, we consider
the situation when $T\neq \emptyset$.
\begin{defi}[$H$-Stabilization]
  Recall the notation $\Xr=\Z\setminus r\Z \subset \C$.  Let
  $H(\alpha,\beta)$ be a long Hopf link in $\mathbb{R}^3$ whose circle
  component is colored by $\alpha\in \srcol$ and whose long component
  is colored by $\beta \in \C \setminus \Xr$.  Let $(M,T,\coh)$ be a
  compatible triple where $T\neq \emptyset$, $e$ is an edge of $T$
  colored by $\beta$ and $m$ is the meridian of $e$.  A
  $H$-\emph{stabilization} of $(M,T,\coh)$ along $e$ is a compatible
  triple $(M,T_H,\coh_H)$ such that
  \begin{itemize}
  \item $T_H=T\cup m$ and $m$ is colored by $\alpha \in \C \setminus
    \Xr$,
  \item $\coh_H$ is the unique element of $H^1(M\setminus (T\cup
    m);\C/2\Z)$
    such that $\coh_H(m)=\wb{\alpha+r-1}$ is equal to the
    image of $\alpha+r-1$ in $\C/2\Z$ and $(\coh_H)|_{M\setminus
      (T\cup D)}=\coh$ where $D$ is a disc bounded by $m$ and
    intersecting once $e$.
  \end{itemize}
\end{defi}

\begin{teo}\label{T:T_adm-Nr}
  If $(M,T,\coh)$ is a compatible triple and $T\neq \emptyset$ then
  there exists a $H$-stabilization of $(M,T,\coh)$ admitting a
  computable surgery presentation.  Let $(M,T_H,\coh_H)$ be such a
  $H$-stabilization and let $L$ be a link which gives rise to a
  computable surgery presentation of $(M,T_H,\coh_H)$ then
  $$\Nr(M,T,\coh)=\dfrac{\qd(\beta)\Nr(L\cup T_H)}
  {(-1)^{\ro-1}rq^{\alpha\beta}\Delta_+^{p}\ \Delta_-^{s}}$$ 
  is a well defined topological invariant (i.e. depends only of the
  homeomorphism class of the triple $(M,T,\coh)$), where a component
  $L_i$ of $L$ is colored by a Kirby color of degree $g_{\coh}(L_i)$
  and $(p,s)$ is the signature of the linking matrix of the surgery
  link $L$.  Moreover, if an edge of $T$ is colored by an element of
  $\C\setminus \Z$ then the above quantity coincides with the
  invariant of Theorem \ref{T:sl2CompSurgInv}.
\end{teo}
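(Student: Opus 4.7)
The plan is to reduce Theorem \ref{T:T_adm-Nr} to Theorem \ref{T:sl2CompSurgInv} applied to the stabilized triple $(M,T_H,\coh_H)$, together with a single skein identity that expresses $\Nr(L\cup T_H)$ in terms of $\Nr(L\cup T)$.

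For existence, pick any edge $e$ of $T$ with color $\beta\in\srcol$ and any $\alpha\in\C\setminus\Z$ (hence also $\alpha\in\C\setminus\Xr$ since $\Xr\subset\Z$). The triple $(M,T_H,\coh_H)$ is compatible: the condition on $m$ reduces to $\alpha+r-1\equiv\wb{\alpha+r-1}\pmod{2\Z}$, while for edges of $T$ it is inherited from $(M,T,\coh)$ because a meridian of such an edge, chosen disjoint from $m$, is unaffected by passing from $M\setminus T$ to $M\setminus T_H$ and because $\coh_H$ agrees with $\coh$ on $M\setminus(T\cup D)$. Moreover $\coh_H(m)=\wb{\alpha+r-1}\notin \X$, so $\coh_H$ is not integral, and Proposition \ref{P:ExistCompSurgSl2} supplies a computable surgery presentation $L$ for $(M,T_H,\coh_H)$.

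The key identity is
\begin{equation*}
\Nr(L\cup T_H)=\frac{(-1)^{r-1}r\,q^{\alpha\beta}}{\qd(\beta)}\,\Nr(L\cup T).
\end{equation*}
It holds because $m$ sits in a small ball meeting the rest of the graph only in two subarcs of $e$, both colored $\beta$; this realises $L\cup T_H$ as the connected sum, along a $\beta$-colored edge, of $L\cup T$ with a Hopf link whose $\beta$-labeled component has been cut open. Axioms \eqref{eq:con-sum} and \eqref{eq:hopf} then give the identity. Substituting it into the proposed formula yields
\begin{equation*}
\frac{\qd(\beta)\,\Nr(L\cup T_H)}{(-1)^{r-1}r\,q^{\alpha\beta}\,\Delta_+^{p}\Delta_-^{s}}=\frac{\Nr(L\cup T)}{\Delta_+^{p}\Delta_-^{s}},
\end{equation*}
an expression in which $\alpha$ and the choice of $e$ no longer appear.

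Well-definedness splits into three pieces. Computability of $L$ depends only on the values of $g_\coh$ on meridians of components of $L$, and these coincide for every $H$-stabilization, so a single $L$ can be used to compare all choices of $(e,\alpha)$; independence from $(e,\alpha)$ is then immediate from the displayed simplification. Independence from $L$ (among computable surgeries for the stabilized triple) follows from Theorem \ref{T:sl2CompSurgInv} applied to $(M,T_H,\coh_H)$, which identifies $\Nr(L\cup T_H)/(\Delta_+^{p}\Delta_-^{s})$ with the topological invariant $\Nr(M,T_H,\coh_H)$, while the multiplicative prefactor $\qd(\beta)/((-1)^{r-1}r\,q^{\alpha\beta})$ is $L$-independent. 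For the last assertion, when $T$ has an edge colored by $\C\setminus\Z$, Proposition \ref{P:ExistCompSurgSl2} gives $(M,T,\coh)$ itself a computable presentation $L$, and the same $L$ is simultaneously computable for $(M,T_H,\coh_H)$; the key identity then matches the stabilized formula with that of Theorem \ref{T:sl2CompSurgInv}. The main difficulty is the rigorous application of \eqref{eq:con-sum} in the second paragraph: one must verify that cutting $e$ out of $L\cup T$ and the $\beta$-component out of the Hopf link produces matching oriented, framed arcs with compatible cyclic orderings, so that the connected sum axiom applies verbatim.
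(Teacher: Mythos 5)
Your reduction is circular relative to this paper's logical architecture, and the circularity hides a substantive gap. In the paper, Theorem \ref{T:sl2CompSurgInv} is the $\slt$ instance of Theorem \ref{T:coh_adm}, whose proof explicitly invokes Theorem \ref{T:T_adm} --- the general form of the very statement you are proving: there one blows up to create a meridian colored by a Kirby color $\Omega_{g_0}$, expands it into typical colors, and applies Theorem \ref{T:T_adm} to each resulting $H$-stabilization. So quoting Theorem \ref{T:sl2CompSurgInv} for $(M,T_H,\coh_H)$ assumes the hard content of Theorem \ref{T:T_adm-Nr} rather than proving it. The paper's actual proof is direct: two computable presentations of a stabilization are joined by a sequence of Kirby moves realizing the given homeomorphism (Theorem \ref{teo:refinedkirby}); since intermediate stages may be non-computable, the color $\alpha$ of the stabilizing meridian $m$ is chosen generically (possible because $\C/2\Z$ is not a finite union of translates of $\Z/2\Z$) and $m$ is slid over offending components to keep every stage computable (Lemmas \ref{L:handle-slide} and \ref{L:Stab}), and at the end $m$ is slid back to meridian position under further genericity conditions. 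Nothing in your proposal replaces this mechanism.

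Moreover, even granting Theorem \ref{T:sl2CompSurgInv} as a black box, your independence-of-$(e,\alpha)$ step fails in exactly the case the theorem was designed for. Your key identity $\Nr(L\cup T_H)=\brk{H}\,\Nr(L\cup T)$ with $\brk{H}=(-1)^{r-1}rq^{\alpha\beta}/\qd(\beta)$ requires $m$ to sit in a ball as a small meridian of $e$; a presentation with $m$ in that position is computable if and only if $g_\coh(L_i)\notin\X$ for every component $L_i$, i.e.\ if and only if $(M,T,\coh)$ itself admits a computable presentation. When $\coh$ is integral and every edge of $T$ is colored by a multiple of $r$ (the case motivating $H$-stabilization, as noted after Proposition \ref{P:ExistCompSurgSl2}), no such $L$ exists: every computable presentation of the stabilized triple has $m$ slid over, hence linked with, surgery components, so $L\cup T_H$ is not a connected sum of $L\cup T$ with a Hopf link, the axioms \eqref{eq:con-sum} and \eqref{eq:hopf} do not apply, and the cancellation of $q^{\alpha\beta}$ that you call ``immediate'' is unavailable; independence of $\alpha$ and of the edge $e$ is then unproven. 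Your correct observations --- compatibility of the stabilized triple, non-integrality of $\coh_H$, the value of the Hopf factor, and the consistency claim when an edge of $T$ has color in $\C\setminus\Z$ --- are peripheral; the missing idea is precisely the generic-$\alpha$, slide-the-meridian argument of the paper's proof of Theorem \ref{T:T_adm}.
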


Theorems \ref{T:sl2CompSurgInv} and \ref{T:T_adm-Nr} do not apply when
$\coh$ is integral and $T$ is empty. To cover this case we define a
second invariant $\Nr^0$.  This invariant naturally completes the
definition of $\Nr$ in the following sense: $\Nr^0(M,T,\coh)$ is
defined for any triple $(M,T,\coh)$ but it is zero if a computable
surgery presentation of $(M,T,\coh)$ exists.  To do so, we define the
connected sum of two triples.

Let $(M_1, T_1, \coh_1)$ and $(M_2, T_2, \coh_2)$ be compatible
triples. 
Let  $M_3=M_1\#M_2$ is the connected sum along balls not intersecting $T_1$ and
$T_2$.  Let $T_3=T_1\sqcup T_2$.  For $i=1,2$, let $B_i^3$ be a 3--ball in
$M_i\setminus T_i$.  Then we have  
$$
H_1(M_3\setminus T_3;\Z)\cong H_1(M_1\setminus (B_1^3\sqcup
T_1);\Z)\oplus H_1(M_2\setminus (B_2^3\sqcup T_2);\Z)\cong
H_1(M_1\setminus T_1;\Z)\oplus H_1(M_2\setminus T_2;\Z)
$$ 
where the first isomorphism is induced by a Mayer-Vietoris sequence
and the second isomorphism comes from excision.  These maps induce an
isomorphism
$$
H^1(M_3\setminus T_3,\C/2\Z)\to H^{1}(M_1\setminus T_1,\C/2\Z)\oplus
H^{1}(M_2\setminus T_2,\C/2\Z).
$$
Let $\coh_3$ be the unique element of $H^{1}(M_3\setminus T_3,\C/2\Z)$
such that $\coh_3$ restricts through the above isomorphism to both
$\coh_1$ and $\coh_2$.  Define the connected sum of $(M_1, T_1,
\coh_1)$ and $(M_2, T_2, \coh_2)$ as
\begin{equation}
  \label{eq:consum}
  (M_1, T_1, \coh_1)\#(M_2, T_2, \coh_2)=(M_3,T_3,\coh_3).
\end{equation}

For $\alpha \in \srcol$, let $u_\alpha$ be the unknot in $S^3$ colored
by $\alpha$.  Let $\coh_\alpha$ be the unique element of
$H^1(S^3\setminus u_\alpha;\C/2\Z)$ such that
$(S^3,u_\alpha,\coh_\alpha)$ is a compatible triple.  Remark that
$\Nr(S^3,u_\alpha,\coh_\alpha)=\qd(\alpha)$.
\begin{teo}\label{T:N0sl2} Let $(M,T,\coh)$ be a compatible triple.  
  Define
  $$
  \Nr^{0}(M,T,\coh)=\frac{\Nr((M,T,\coh)\#(S^3,u_\alpha,\coh_\alpha))}
  {\qd(\alpha)}.
  $$ 
  Then $\Nr^{0}(M,T,\coh)$ is a well defined topological invariant
  (i.e. depends only of the homeomorphism class of the compatible
  triple $(M,T,\coh)$).  Moreover, if $(M,T,\coh)$ or a
  $H$-stabilization of $(M,T,\coh)$ has a computable surgery
  presentation then $\Nr^0(M,T,\coh)=0$.
\end{teo}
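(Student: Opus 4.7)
The theorem has two claims: (A) $\Nr^0(M,T,\coh)$ vanishes whenever $(M,T,\coh)$ or an $H$-stabilization of it admits a computable surgery presentation, and (B) the expression defining $\Nr^0$ is independent of $\alpha\in\srcol$, hence gives a well-defined topological invariant. The plan is to prove (A) first as a direct consequence of the split-graph axiom~\ref{I:zeroSplitGraphs}; (A) then reduces (B) to a single nontrivial case that I would handle by exploiting the axiomatic structure of the graph invariant $\Nr$.

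For (A), suppose $L$ is a computable surgery presentation of $(M,T,\coh)$. Since $M\#S^3\cong M$ and the cohomology $\coh_3$ on $(M,T,\coh)\#(S^3,u_\alpha,\coh_\alpha)=(M,T\sqcup u_\alpha,\coh_3)$ restricts through the Mayer--Vietoris isomorphism to $\coh$ on the meridians of $L$, the same $L$ remains a computable surgery presentation of the connected-sum triple. The unknot $u_\alpha$ lies in the 3-ball of $M\#S^3$ coming from the $S^3$ summand, disjoint from $L\cup T$, so in the $S^3$ surgery picture it is a split component of $L\cup T\cup u_\alpha$. Theorem~\ref{T:sl2CompSurgInv} gives $\Nr(M,T\sqcup u_\alpha,\coh_3)=\Nr(L\cup T\cup u_\alpha)/(\Delta_+^p\Delta_-^s)$, and axiom~\ref{I:zeroSplitGraphs} forces this to vanish; dividing by $\qd(\alpha)$ yields $\Nr^0=0$. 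If only an $H$-stabilization $(M,T\cup m_T,\coh_{H,T})$ has a computable presentation, perform the same $H$-stabilization on the corresponding edge of $T\subset T\sqcup u_\alpha$ inside the connected sum; $u_\alpha$ remains split from $L\cup T\cup m_T$, and Theorem~\ref{T:T_adm-Nr} together with axiom~\ref{I:zeroSplitGraphs} again yields $\Nr^0=0$.

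For (B), by (A) both sides of the desired equality $\Nr/\qd(\alpha_1)=\Nr/\qd(\alpha_2)$ vanish and agree unless neither $(M,T,\coh)$ nor any $H$-stabilization admits a computable surgery presentation. Since $H$-stabilization requires $T\neq\emptyset$, this excluded case forces $T=\emptyset$, $M\neq S^3$, and $\coh$ integral. For any $\alpha\in\C\setminus\Z$ the triple $(M,u_\alpha,\coh_3)$ has non-empty graph; starting from any surgery link $L_0$ of $M$, I would $H$-stabilize around $u_\alpha$ (adding a meridian $m$ of color $\delta$) and then handle-slide every $L_i$ with $\coh(m_i)\in\X$ over $u_\alpha$, which shifts $\coh(m_i)$ by $\wb{\alpha+r-1}\in\C/2\Z\setminus\X$. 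The resulting slid link $L^{\mathrm{sl}}$ satisfies condition~(1) of computability, and Theorem~\ref{T:T_adm-Nr} gives
$$\Nr^0(M,\emptyset,\coh)=\frac{\Nr(L^{\mathrm{sl}}\cup u_\alpha\cup m)}{(-1)^{r-1}r\,q^{\alpha\delta}\,\Delta_+^p\Delta_-^s}.$$
The handle-slide sequence $L_0\leadsto L^{\mathrm{sl}}$ is topologically determined independently of $\alpha$, so the $\alpha,\delta$-dependence of the right-hand side resides entirely in the Kirby colors on $L^{\mathrm{sl}}$ and the Hopf factor $q^{\alpha\delta}$. Applying the Hopf-link evaluation~\eqref{eq:hopf}, the fusion rule~\eqref{eq:fusion}, and the $\sigma$-color/Hopf computation producing~\eqref{eq:delta} to collapse the local Kirby-color picture around $u_\alpha$ should cancel these dependencies exactly. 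The hardest step will be this final cancellation, which requires detailed bookkeeping of the Kirby-color coefficients produced by the handle slides against the $q^{\alpha\delta}$ and $\Delta_\pm$ factors; the cleanest route, which Section~\ref{S:ProofsOfNrManInv} adopts, is to identify $\Nr^0$ as a specialization of the general invariant of Section~\ref{sec:generalconstruction}, whose well-definedness follows abstractly from properties of modified traces on non-semi-simple ribbon categories.
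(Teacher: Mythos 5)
Your part (A) is correct and is in substance the paper's own argument (Proposition \ref{prop:connsumone0} specialised to $\slt$, where the mechanism is that $F'$ vanishes on split $\A$-colored graphs): a computable presentation of $(M,T,\coh)$, or of an $H$-stabilization of it, remains computable after the split unknot $u_\alpha$ is adjoined, and Axiom \eqref{I:zeroSplitGraphs} kills the numerator. A minor slip: the residual case does not force $M\neq S^3$ --- the triple $(S^3,\emptyset,0)$ also admits no computable presentation (this is precisely where $\Nr^0=1$), so it cannot be excluded from the case you must treat.

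The genuine gap is in (B). In the residual case $T=\emptyset$, $\coh$ integral --- which after your reduction carries the entire content of well-definedness --- you never actually prove independence of $\alpha$: you write that the Kirby-color bookkeeping ``should cancel'' and then defer to the general invariant of Section \ref{sec:generalconstruction}; but the general Theorem \ref{T:N0} \emph{is} the statement being proved (the paper derives Theorem \ref{T:N0sl2} from it), so invoking it is circular rather than a proof. There is also a concrete error in your construction: you propose to ``handle-slide every $L_i$ with $\coh(m_i)\in\X$ over $u_\alpha$''. That is not a legal move --- $u_\alpha$ (like the stabilization meridian $m$) belongs to the graph, not to the surgery link, and sliding a surgery component over a non-surgered curve changes the diffeomorphism type of the pair. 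The legal move, used in the proofs of Theorem \ref{T:T_adm} and Proposition \ref{P:ExistCompSurgSl2}, goes the other way: slide $m$ (or $u_\alpha$) over the offending components $L_i$, which shifts the degree of the Kirby color on $L_i$. What you are missing is the paper's short trick that makes all bookkeeping unnecessary and needs no case analysis: for $\alpha,\beta\in\srcol$ let $H$ be the Hopf link with edges colored $\alpha$ and $\beta$; since $(M,T,\coh)\#(S^3,H,\coh_{\alpha,\beta})$ is the connected sum of $(M,T,\coh)\#(S^3,u_\beta,\coh_\beta)$ with $(S^3,H,\coh_{\alpha,\beta})$ along the $\beta$-colored edges, and likewise along the $\alpha$-colored edges, Proposition \ref{P:Ndirctsum} yields
$$\frac{\Nr\big((M,T,\coh)\#(S^3,u_\beta,\coh_\beta)\big)}{\qd(\beta)}\,\Nr(S^3,H,\coh_{\alpha,\beta})=\frac{\Nr\big((M,T,\coh)\#(S^3,u_\alpha,\coh_\alpha)\big)}{\qd(\alpha)}\,\Nr(S^3,H,\coh_{\alpha,\beta}),$$
and $\Nr(S^3,H,\coh_{\alpha,\beta})=(-1)^{r-1}rq^{\alpha\beta}\neq 0$ by \eqref{eq:hopf}, so the two candidate definitions of $\Nr^0$ agree for every compatible triple at once. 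Without this (or some completed substitute for your ``final cancellation''), the well-definedness claim of the theorem remains unproved.
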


Notice that the above theorems can be used to define a possibly
non-zero invariant for every compatible triple $(M,T,\coh)$.  In
particular, if $T\neq \emptyset$ or if $\coh$ is not integral then the
invariant $\Nr$ is well defined (in this case $\Nr^0$ is defined but
zero), otherwise, $\Nr^0$ is well defined and possibly non-zero (for
example it is non-zero on the Poincar\'e sphere, see Subsection
\ref{SS:Non-trivPoincare}).
In Section \ref{S:ProofsOfNrManInv} we will prove all of the theorems of this
section.  In Section \ref{sec:examples} we give several examples that show the
invariants $\Nr$ and $\Nr^0$ are computable and non-trivial.  For other 
properties of these invariants see Section \ref{SS:MainResults}.

\section{Examples and applications}\label{sec:examples}

%****************************************************************************
\subsection{Distinguishing lens spaces}\label{SS:DistLens}
In Remark 3.9 of \cite{Je} it was observed that the standard
Reshetikhin-Turaev 3--manifold invariants arising from quantum $\slt$
cannot distinguish the lens spaces $L(65,8)$ and $L(65,18)$.  In this
subsection, we show that for $r=3$ and $T=\emptyset$, the invariant
$\Nr$ can distinguish these two manifolds.  Thus, the invariants
defined in this paper are independent of the standard
Reshetikhin-Turaev 3--manifold invariants.  Interestingly, this result
depends on the cohomological data of the invariant.  Since it can be
difficult to compare two elements of the isomorphic spaces
$H^1(L(65,8);\C/2\Z)$ and $ H^1(L(65,18);\C/2\Z)$ we consider the sum
over the whole cohomological space (except the zero element):
$$
S_\ro(L(p,p'))=\sum_{\coh \in H^1(L(p,p');\C/2\Z),\ \coh\neq 0}
\Nr(L(p,p'),\emptyset,\coh)
$$
where this sum is finite since $H^1(L(p,p');\C/2\Z)$ is isomorphic to
$\mathbb{Z}/p\mathbb{Z}$.

Let $C(a_1,\ldots, a_n)$ be the chain link whose components are
oriented unknots $L_1,\ldots, L_n$ with framings $a_1,\ldots, a_n$
(i.e. $L_i$ and $L_{j}$ form a Hopf link if $j=i\pm 1$ and a split
link otherwise; see the two exemples of Figure \ref{F:Lens}) and let
\begin{figure}
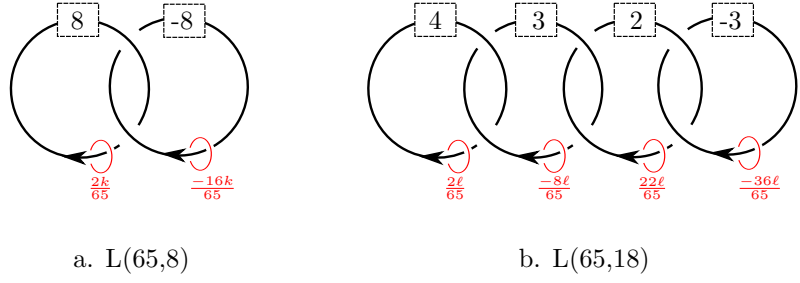

 \begin{minipage}[b]{.4\linewidth}
  \centering
    $\epsh{fig40}{15ex} \put(-29,25){-8}\put(-67,25){8} $
    {\color{red}\put(-64,-37){\ms{\frac{2k}{65}}}\put(-26,-37)
      {\ms{\frac{-16k}{65}}}}
  \\[4ex]{a. L(65,8)}
 \end{minipage} %\hfill
 \begin{minipage}[b]{.4\linewidth}
  \centering
  $\epsh{fig39}{15ex}  \put(-29,25){-3}\put(-63,25){2}\put(-100,25){3}
  \put(-139,25){4}$
    {\color{red}\put(-137,-37){\ms{\frac{2\ell}{65}}}\put(-102,-37)
      {\ms{\frac{-8\ell}{65}}}\put(-64,-37){\ms{\frac{22\ell}{65}}}
      \put(-26,-37){\ms{\frac{-36\ell}{65}}}}
  \\[4ex]{b. L(65,18)}
 \end{minipage}
 \caption{Surgery presentation of lens spaces.  A dashed box with an
   integer $n$ represents $n$ full positive twists.  The values in
   $\C/2\Z$ of a cohomology class on meridians of the link are shown
   in red. \label{F:Lens}}
\end{figure}
$m_i$ be the oriented meridian of $L_i$. It is a standard fact that
surgery on $C(a_1,\ldots, a_n)$ provides $L(p,p')$ when
$\frac{p}{p'}=a_1-\frac{1}{a_2-\cdots -\frac{1}{a_n}}$.  

Suppose that the components of $C(a_1,\ldots, a_n)$ are colored by the
complex numbers $\beta_1,\ldots,\beta_n$.  Using the axiom
\eqref{eq:twist} to simplify the framing, then axioms
\eqref{eq:con-sum}, \eqref{eq:hopf} one computes by induction
$$\Nr(C(a_1,\ldots, a_n))=\prod_{j=2}^{n-1}\qd(\beta_j)^{-1}
\prod_{j=1}^{n}q^{a_j\frac{\beta_j^2-(r-1)^2}2}\prod_{j=1}^{n-1}(-1)^{\ro-1}\ro q^{\beta_j\beta_{j+1}}$$

Let $\lk$ be
the linking matrix of $C(a_1,\ldots, a_n)$ with respect to the
components $L_i$.
Color $L_i$ with $\Omega_{\alpha_i}$ where $\alpha_i\in \mathbb{C}$ is such
that $\sum_{j} \lk_{ij}\wb{\alpha_j}=0\in\C/2\Z$ for all $i$.
The coloring $\{\wb\alpha_i\}_i$ represents the cohomology class $\coh\in
H^1(L(p,p');\C/2\Z)$ determined by
$\coh([m_i])=\wb{\alpha_i}$. 
Expanding the Kirby color $\Omega_{\alpha_i}$ defined by Equation
\eqref{eq:Om} one gets that $\Nr(L(p,p'),\emptyset,\coh)$ is equal to:
\begin{equation}\label{eq:chain}
  \sum_{k_1,\ldots k_n\in\Hr}\frac{d(\alpha_1+k_1)d(\alpha_n+k_n)
    \prod_{j=1}^{n}q^{a_i\p{\frac{(\alpha_i+k_i)^2-(\ro-1)^2}{2}}}
    \prod_{j=1}^{n-1}(-1)^{\ro-1}\ro q^{(\alpha_j+k_j)(\alpha_{j+1}+k_{j+1})}}
  {\Delta_+^{n_+}\Delta_-^{n_-}} 
\end{equation}
where $n_+$ (resp. $n_-$) is the number of positive (resp. negative)
eigenvalues of $\lk$.

Since $H_1(L(p,p');\Z)$ is isomorphic to $\mathbb{Z}/p\mathbb{Z}$
and $[m_1]$ is a generator of this homology group then for any
$\coh\in H^1(L(p,p');\C/2\Z)$ the value $\coh([m_1])\in \C/2\Z$ is an
integer multiple of $\frac{2}{p}$.  Conversely, if ${\alpha_1}\in
\C$ is any integer multiple of $\frac{2}{p}$ we can find a unique
cohomology class $\coh\in H^1(L(p,p');\C/2\Z)$ such that
$\coh([m_1])=\wb{\alpha_1}$.  Thus, elements of
$H^1(L(p,p');\C/2\Z)$ are in one to one correspondence with integer
multiples of $\frac{2}{p}$ in $\C/2\Z$.  Moreover, for $\coh\in
H^1(L(p,p');\C/2\Z)$ one can compute the values
$\wb{\alpha}_i=\coh([m_i])$ by solving the linear equations $\sum
lk_{i,j}\wb {\alpha_j}=0$ (in $\C/2\Z$) in terms of $\wb{\alpha}_1$.
Combining this fact with Equation \eqref{eq:chain} it is easy to use
computer algebra software to numerically compute $S_\ro(L(p,p'))$.  In
particular, one can show that $S_3(L(65,8))\neq S_3(L(65,18))$ (for
instance we use the surgery presentations over $C(8,-8)$ and
$C(4,3,2,-3)$, 
as in Figure \ref{F:Lens}).  
This was checked independently by the first and third authors.
 
%
%***************************************************************
%
\subsection{On the volume conjecture}\label{SS:VolCon}
In \cite{Co} the first author proved a version of the volume
conjecture for an infinite class of hyperbolic links called
``fundamental hyperbolic links''.  In this section we show that a
similar conjecture holds for the invariants introduced in this paper.
The structure of our proof follows very closely that of \cite{Co} but we are dealing with different invariants; in particular to relate the asymptotic behavior of our invariants to hyperbolic volumes we use the results proved in \cite{CM}. 
The main advantage of this new approach is that now one may formulate
a version of the volume conjecture for any link or knot in a
3--manifold using our invariants. This was not possible using the
approach of \cite{Co} where the fact that the ambient manifold was a
connected sum of $S^1\times S^2$ or a sphere was used in an essential way.

\newcommand{\Ga}{\Gamma}
\begin{defi}[Fundamental hyperbolic links]\label{def:fundlinks}
  A \emph{fundamental hyperbolic link} $L$ is a link contained in a
  3--manifold $M$ diffeomorphic to a connected sum of $k\geq 2$ copies
  of $S^2\times S^1$ and obtained by the following procedure:
\begin{enumerate}
\item let $\Ga$ be a graph in $S^3$ with $k-1$ four-valent vertices and
  let $T\subset {\Ga}$ be a maximal tree;
\item in a diagram of $\Gamma$ replace each vertex of ${\Ga}$ by the
  diagram\
  \raisebox{-0.3cm}{\includegraphics[width=0.7cm]{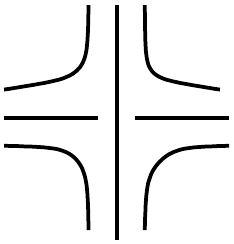}};
\item connect the new three-uples of boundary points (with any
  permutation) following the edges of ${\Ga}$ and let $L$ be the
  resulting link in $S^3$;
\item put $0$-framed meridians around each of the three-uple of
  strands passing along the edges of ${\Ga}\setminus T$;
\item perform surgery on the $0$-framed meridians to realize $L$ as a
  link in the connected sum of $k$ copies of $S^2\times S^1$.
\end{enumerate}
\end{defi}

A fundamental hyperbolic link has no natural orientation but such a
link may be equipped with a natural framing (see \cite{CT} for more
details).  However, for our purposes we can choose arbitrarily an
orientation and a framing.  Indeed, we will consider the norm of the
invariants of triples $(M,L,\coh_r)$ where the induced coloring on $L$
is $0$ and so by Axiom \eqref{eq:ch-orient} the choice of an
orientation is irrelevant and by Axiom \eqref{eq:twist} the framing
changes the value of the invariant by a multiplicative factor of the
form $q^{\pm\frac{(r-1)^2}{2}}$ and so does not change the norm of the
invariant.
\begin{figure}
\includegraphics[width=7cm]{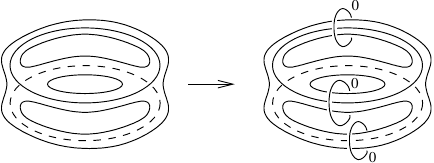}
\caption{An instance of construction of fundamental hyperbolic
  link.}\label{fig:link}
\end{figure}
\begin{example}
  Let ${\Ga}$ be the graph with $2$ vertices and $4$ edges connecting
  them.  Apply the procedure of Definition \ref{def:fundlinks} to
  $\Ga$.  In this procedure one can connect the three-uples of
  boundary points in such a way that one obtains the link in left side
  of Figure \ref{fig:link}. In the right side of the figure the
  0-framed meridians have been added.  After surgery the resulting
  link is contained in the connected sum of $3$ copies of $S^2\times
  S^1$.
\end{example}

The following theorem was proved in \cite{CT}.

\begin{teo}[\cite{CT}]\label{teo:CT}
  Let $L$ be a fundamental hyperbolic link in $M=\#_{k} S^2\times
  S^1$.  Then $M\setminus L$ admits a complete hyperbolic structure
  whose volume is
  $$2(k-1){\rm VolOct}=16(k-1) \Lambda(\frac{\pi}{4})$$
  where ${\rm VolOct}$ is the volume of a regular ideal octahedron and
  $\Lambda(x)=\int_{0}^{x} -\log|2\sin(t)|dt$ is the Lobatchevsky
  function.  Moreover, for any link $T$ in an oriented 3--manifold $N$
  there exists a fundamental hyperbolic link $L=L'\cup L''\subset \#_k
  S^2\times S^1$ (for some $k$) such that the result of an integral
  surgery on $L'$ is the manifold $N$ and the image of $L''$ is $T$.
\end{teo}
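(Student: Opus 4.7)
The plan is to split the two assertions of Theorem~\ref{teo:CT} and handle them essentially independently, since the volume/hyperbolicity claim is a structural statement about the specific construction of Definition~\ref{def:fundlinks} while the universality assertion is a general-position plus Kirby-calculus argument.

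For the hyperbolic structure and volume, my approach is to exhibit an explicit ideal polyhedral decomposition of $M\setminus L$ into regular ideal hyperbolic octahedra. The building block that replaces each $4$-valent vertex of $\Gamma$ in step (2), together with the $0$-framed meridians introduced in step (4) and the subsequent surgeries in step (5), is engineered so that each vertex of $\Gamma$ contributes exactly two regular ideal octahedra to the decomposition of the complement; this mirrors the classical decomposition of Borromean and chain-mail type complements. With $k-1$ vertices of $\Gamma$ we get $2(k-1)$ octahedra, and since the volume of a regular ideal octahedron is ${\rm VolOct}=8\Lambda(\pi/4)$, the claimed total $16(k-1)\Lambda(\pi/4)$ follows immediately. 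First I would check that, after identifying the faces as prescribed by the edge-gluings of $\Gamma$, the dihedral angles sum to $2\pi$ around every edge class; then I would verify that each cusp cross-section inherits a Euclidean structure, so the resulting hyperbolic structure is complete. Mostow--Prasad rigidity then guarantees that this is the unique complete hyperbolic structure on $M\setminus L$.

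For the universality statement, I would proceed constructively. Given an arbitrary link $T$ in a closed oriented $3$-manifold $N$, Lickorish--Wallace produces an integrally framed link $L'\subset S^3$ such that surgery on $L'$ yields $N$; put $T$ in general position with respect to $L'$ and draw a generic planar diagram of $L'\cup T$. The key observation is that the three-strand building block, combined with the freedom to choose an arbitrary permutation when reconnecting boundary triples along edges in step (3) of Definition~\ref{def:fundlinks}, is a universal gadget: any crossing or local tangle in the diagram can be realised by stringing together finitely many building blocks along a suitable edge of a chosen graph $\Gamma$. Adding $0$-framed meridians on the non-tree edges of $\Gamma$ and performing the surgeries in step (5) turns $S^3$ into $\#_k S^2\times S^1$ (with $k$ determined by the combinatorial complexity of the diagram) and produces a fundamental hyperbolic link $L=L'\cup L''$ in which the components of $L'$ carry their original framings and the image of $L''$ is $T$, as required.

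The main obstacle will be the first part: one has to check carefully that the combinatorial face-pairing dictated by $\Gamma$ and by the permutations in step (3) can actually be realised by a geometric gluing of regular ideal octahedra giving a \emph{complete} structure. This requires a finite case analysis of the local behaviour at each edge class and at the cusps introduced by the $0$-framed meridians, and makes essential use of the symmetry of the building block to absorb the choice of permutation in step (3). The universality statement is combinatorially cleaner, but still demands careful bookkeeping of framings to ensure that rewiring components of $L'$ through building blocks does not alter the surgery coefficients that produce $N$.
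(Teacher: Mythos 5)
You should first note that the paper does not actually prove this theorem: it is stated with the citation \cite{CT} and used as a black box, so the relevant comparison is with the proof in Costantino--Thurston. Measured against that proof, your second half is in the right spirit, but your first half has a genuine gap at exactly the point you flag as "the main obstacle". The known argument does not verify angle sums around edge classes of a global octahedral decomposition. Instead, it is local and structural: the complement $M\setminus L$ decomposes into $k-1$ blocks, one per $4$-valent vertex of $\Gamma$, and each block is identified with a fixed complete finite-volume hyperbolic manifold obtained from two regular ideal octahedra whose boundary consists of four \emph{totally geodesic thrice-punctured spheres} (one for each triple of strands running along an edge of $\Gamma$). The blocks are then glued along these thrice-punctured spheres, and the decisive fact is the rigidity of the thrice-punctured sphere: it carries a unique complete hyperbolic structure and every self-homeomorphism is isotopic to an isometry (Adams). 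Hence \emph{any} permutation chosen in step (3) of the construction can be realized by an isometric gluing, completeness is automatic (no separate cusp check is needed when gluing complete structures along geodesic boundary), and volumes add to give $2(k-1)\,{\rm VolOct}=16(k-1)\Lambda(\frac{\pi}{4})$. Your proposed substitute --- a "finite case analysis of the local behaviour at each edge class" plus "the symmetry of the building block" --- does not work as stated: the combinatorics of $\Gamma$ and the permutations in step (3) are unbounded, so without the localization provided by the geodesic thrice-punctured spheres there is no finite list of configurations to check; and the failure mode you must exclude, namely shearing along the gluing loci producing an incomplete structure, is precisely what thrice-punctured-sphere rigidity rules out --- symmetry of the block is not a substitute, since the relevant rigidity lives on the boundary surface, not on the block. (Your appeal to Mostow--Prasad only yields uniqueness after existence and carries no weight in the construction.)

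For the universality statement your sketch is closer to a correct proof, and indeed one can argue diagrammatically as you propose (Costantino--Thurston obtain it via shadows of $4$-manifolds, encoding a surgery presentation of $(N,T)$ into a shadowed polyhedron with only $4$-valent vertices, which is a repackaged version of the same idea). Two points of bookkeeping deserve attention in your write-up. First, the theorem only asserts that $N$ is the result of \emph{some} integral surgery on $L'$, so you need not preserve the Lickorish--Wallace framings on the nose, only check that the framing shifts forced by threading $L'$ through the blocks are integral. Second, you must actually verify the structural constraints of Definition~\ref{def:fundlinks}: every edge of $\Gamma$ carries a triple of strands, the $0$-framed meridians sit only on the edges of $\Gamma\setminus T$ for a maximal tree $T$, and after the meridian surgeries the image of $L''$ is isotopic to the given link in $N$. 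These are routine but not free; the genuinely missing idea in your proposal is the thrice-punctured-sphere rigidity argument in the first part.
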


The following theorem is the main result of this subsection.
\begin{teo}[A version of the Volume Conjecture]\label{T:OurVolCon}
  Let $L$ be a fundamental hyperbolic link in $M= \#_k S^2\times S^1$
  colored by $0\in\srcol$.  For each odd integer $r$, let $\coh_r\in
  H^1(M\setminus L;\C/2\Z)$ be a cohomology class such that its associated
  $\C/2\Z$-coloring satisfies $g_{\coh_r}(m_i)=0,$ for all $i$ (where
  $m_i$ is the oriented meridian of the $i^{th}$-component of $L$).
  Then
  $$\lim_{
    \begin{array}{c}
      r\to \infty\\r\  \text{odd}
    \end{array}
  } \frac{2\pi}{r}\log(|\Nr(M,L,\coh_r)|) =Vol(M\setminus L).
  $$
\end{teo}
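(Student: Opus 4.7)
The plan is to compute $\Nr(M,L,\coh_r)$ from a surgery presentation produced directly by Definition \ref{def:fundlinks}, and to reduce the asymptotic question to the known asymptotic behavior of the Costantino-Murakami $6j$-symbols attached to the vertices of the defining graph $\Gamma$.

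First, I would present $M$ as integral surgery in $S^3$ along the $0$-framed meridians $L_0$ encircling the strand triples running along the edges of $\Gamma\setminus T$. Applying Theorem \ref{T:sl2CompSurgInv}, one has $\Nr(M,L,\coh_r)=\Nr(L\cup L_0)/(\Delta_+^{p}\Delta_-^{s})$, where the components of $L_0$ are colored by Kirby colors whose degrees are prescribed by $\coh_r$. Because the $0$-framed meridians are mutually unlinked in this presentation, the linking matrix of $L_0$ has trivial signature and the $\Delta_\pm$ normalization contributes only a factor of size $O(1)$ on the exponential scale, hence does not affect the leading growth rate as $r\to\infty$.

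Second, using the fusion axiom \eqref{eq:fusion} along each triple of strands and the vertex axiom \eqref{eq:con-sumv} together with the explicit local picture at each $4$-valent vertex of $\Gamma$, I would rewrite $\Nr(L\cup L_0)$ as a finite multi-sum over admissible internal colorings, each summand being a product of local factors. Each vertex of $\Gamma$ then contributes, up to bounded prefactors coming from $\qd$-weights, framing twists \eqref{eq:twist}--\eqref{eq:twistv} and Hopf evaluations \eqref{eq:hopf}, a single $6j$-symbol $\sjtop{j_1}{j_2}{j_3}{j_4}{j_5}{j_6}$. The hypothesis $g_{\coh_r}(m_i)=0$ for every meridian forces each internal coloring $j_a$ to range over integers near the central weight $(r-1)/2$, which is precisely the regime where the volume asymptotics apply.

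Third, I invoke the asymptotic expansion of these $6j$-symbols established in \cite{CM}: at the symmetric central coloring corresponding to a regular ideal octahedron, the $6j$-symbol has logarithmic growth rate governed by twice the volume of that octahedron, while all other admissible colorings give strictly smaller rates. Summing the contributions of the $k-1$ vertices of $\Gamma$ and matching constants via the volume identity $\mathrm{Vol}(M\setminus L)=2(k-1)\,\mathrm{VolOct}$ of Theorem \ref{teo:CT} yields the predicted growth rate $\tfrac{r}{2\pi}\,\mathrm{Vol}(M\setminus L)$ for $|\Nr(M,L,\coh_r)|$, which is exactly the statement of the theorem after taking logarithms.

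The hard part will be a careful saddle-point analysis of the finite sum over internal colorings: one must show that the sum localizes around the central configuration up to contributions that are exponentially subdominant, that interference between neighboring colorings does not accidentally cancel the leading term, and that boundary colorings where the $6j$-symbol degenerates or the $\qd$-weights vanish remain negligible. A related obstacle is passing from the single-vertex Costantino-Murakami estimate to a uniform asymptotic estimate for the full product, which requires controlling the coupling between local $6j$-factors through the shared internal edges; once this uniform control is achieved, the matching of rates with $\mathrm{Vol}(M\setminus L)$ is essentially automatic.
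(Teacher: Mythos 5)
Your overall scaffolding (surgery presentation from Definition \ref{def:fundlinks}, Theorem \ref{T:sl2CompSurgInv}, trivial signature of the $0$-framed meridians, final matching via Theorem \ref{teo:CT}) agrees with the paper, but at the central step you take a wrong turn, and the ``hard part'' you defer is precisely where your plan would fail. The paper does \emph{not} expand the Kirby colors into a multi-sum over internal fusion channels and then localize by a saddle-point argument. Instead it uses Proposition \ref{prop:encircling}: when a triple of strands with colors $\alpha,\beta,\gamma$, $\alpha+\beta+\gamma=0$, is encircled by a $0$-framed meridian carrying a Kirby color, the image of the associated morphism is forced into the invariant part of $V_\alpha\otimes V_\beta\otimes V_\gamma$, giving the \emph{exact} identity that the encircled triple equals $r^3$ times the fused pair of trivalent vertices. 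Applied at every meridian this converts $\Nr(M,L,\coh_r)$ exactly into $r^{3k}\Nr(\tilde\Gamma)$ for a trivalent graph $\tilde\Gamma$ all of whose edges are colored $0$; Axiom \eqref{eq:con-sumv} along the internal edges of the maximal tree then factors this \emph{exactly} into a product of $k-1$ tetrahedra, and Axioms \eqref{eq:twist}, \eqref{eq:twistv} show each has modulus $\left|\sjtop{0}{0}{0}{0}{0}{0}_r\right|$. No sum over colorings survives, so there is no localization, no interference, and no boundary-degeneracy issue to control.

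Concretely, your multi-sum approach faces obstructions you have no tools for: fusing two $0$-colored strands via \eqref{eq:fusion} produces channels $\gamma\in\Hr$, and every nonzero element of $\Hr$ lies in $\Xr=\Z\setminus r\Z$, exactly where $\qd$ degenerates and the graph invariants are only defined by continuous extension (Remark \ref{R:admcol}); moreover the summands would be complex and oscillatory, so ruling out cancellation of the putative leading term is a genuinely hard analytic problem that the paper never confronts. After the exact reduction, the only asymptotics needed is for the single symbol $\sjtop{0}{0}{0}{0}{0}{0}_r$, which by Lemma 1.15 of \cite{CM} equals $(-1)^{r-1}r^{2}\sum_{z=0}^{(r-1)/2}\bigl(\{z\}!\{\tfrac{r-1}{2}-z\}!\bigr)^{-4}$ \emph{with all summands positive real}, so the leading term at $z=\lfloor\frac{r-1}{4}\rfloor$ with rate $\exp\bigl(\frac{8r}{\pi}\Lambda(\frac{\pi}{4})\bigr)$ is extracted by elementary means --- the positivity that makes this trivial is only visible after the exact collapse, not in your general multi-sum. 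Finally, you silently assume the presentation is computable; the hypothesis constrains only the colors on the meridians of $L$, and the paper handles the case where a surgery component acquires an integral $\C/2\Z$-color by the holomorphy/continuity argument of Subsection \ref{S:hol}, a point your proposal omits.
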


Before we prove Theorem \ref{T:OurVolCon} we state the following
proposition whose proof is given in Subsection~\ref{S:NrMsl2}.
\begin{prop}\label{prop:encircling}
  Let $r$ be an odd integer and $a\in \C\setminus\Z$.  Let
  $\alpha,\beta,\gamma\in \srcol$ such that $\alpha+\beta+\gamma=0$.
  Then
  \begin{equation}\label{eq:encircling}
    \Nr\left(\epsh{fig31}{8ex}\put(-25,-4){\ms{\Omega_{a}}}
      \put(-17,-18){\ms{\alpha}}\put(-13,-18){\ms{\beta}}
      \put(-9,-18){\ms{\gamma}}\right)=r^3 \Nr\left(\epsh{fig32}{6ex}\right).
  \end{equation}
\end{prop}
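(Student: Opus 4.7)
The plan is to reduce the three-strand encirclement to a single-strand encirclement by applying the fusion axiom \eqref{eq:fusion} iteratively inside the $\Omega_a$ loop, and then invoke the Hopf link formula \eqref{eq:hopf}. First I would apply \eqref{eq:fusion} to the two parallel strands $\alpha \| \beta$ that live just below the trivial vertex (at a height inside the loop), replacing them with a sum over $\delta \in \alpha+\beta+\Hr$ of fused configurations weighted by $\qd(\delta)$. Then I would apply \eqref{eq:fusion} a second time to the resulting pair $\delta \| \gamma$ that sit inside the loop, obtaining a sum over $\epsilon \in \delta+\gamma+\Hr$ weighted by $\qd(\epsilon)$. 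After these two steps the loop $\Omega_a$ encircles a single strand of color $\epsilon$, while the structure above the loop consists of the original vertex $V_1$ (with $\alpha,\beta,\gamma$) together with two fusion vertices $V_2$ and $V_4$ (a mirror binary tree sits below the loop).

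Next, for each fixed pair $(\delta,\epsilon)$, I would evaluate the $\Omega_a$ loop around the single strand of color $\epsilon$. Expanding $\Omega_a=\sum_{k\in \Hr}\qd(a+k)[a+k]$ and applying \eqref{eq:con-sum} together with \eqref{eq:hopf} gives that this loop around the strand acts as the scalar $\frac{r}{\qd(\epsilon)}\sum_{k\in\Hr}\qd(a+k)\, q^{(a+k)\epsilon}$ on the strand (using $(-1)^{r-1}=1$ since $r$ is odd). The remaining diagram, with the loop now removed and the $\epsilon$-strand reduced to the identity, consists of the trivial vertex $V_1$ composed with the two fusion vertices and their mirror images; this top/bottom ``triangle'' structure can be simplified using theta-graph/bigon identities (i.e.\ a $6j$-move via \eqref{eq:6j}), with the constraint $\alpha+\beta+\gamma=0$ at $V_1$ selecting specific combinations of $\delta,\epsilon$ that give non-zero contribution and producing a $\qd(\epsilon)$-like factor that cancels the denominator in the scalar above.

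Summing everything, the ratio $\Nr(\text{fig31})/\Nr(\text{fig32})$ reduces to a sum over $k,\delta,\epsilon$ of $\qd(\delta)$-weighted Gauss-type sums $\sum_k\qd(a+k)q^{(a+k)\epsilon}$. The main obstacle is showing that this combined sum collapses to exactly $r^3$: this is the key combinatorial identity, and I would prove it by directly invoking the Gauss sum computation of $\Delta_\pm$ given earlier in the text and the observation that $\Delta_+\Delta_-=r^3$ for odd $r$ (since $\Delta_+=\overline{\Delta_-}$ and $|\Delta_-|^2=r^3$ when $|q|=1$). One would reorganize the triple sum so that the $\delta$- and $\epsilon$-sums each produce a copy of the Gauss sum computed at the end of Section \ref{S:axiom}, after which the normalization $r$ from \eqref{eq:hopf} combines with $\Delta_+\Delta_-$ to yield $r^3$. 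The chief technical difficulty is matching the $q$-phase factors from the two fusion steps and the theta-graph simplification with the phases appearing inside $\Delta_\pm$, which is a bookkeeping computation with $q$-binomials analogous to the $\sigma$-calculation in Section~\ref{S:axiom}.
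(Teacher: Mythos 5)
There is a genuine gap at the very first step of your plan, and it is not a repairable technicality: the constraint $\alpha+\beta+\gamma=0$ places your fusion channels exactly in the non-semi-simple locus where Axiom \eqref{eq:fusion} is unavailable. After the first fusion the intermediate color is $\delta=\alpha+\beta+k=-\gamma+k$, so the second fusion would run over $\epsilon\in\delta+\gamma+\Hr=k+\Hr\subset 2\Z$. Apart from $\epsilon=0$, these even integers lie in $\Xr=\Z\setminus r\Z$, i.e.\ outside $\srcol$: there $\qd(\epsilon)$ is not even defined, the decomposition $V_\delta\otimes V_\gamma\simeq\bigoplus_{k'}V_{\delta+\gamma+k'}$ fails (because $\delta+\gamma=k\in\Z$, the tensor product contains non-semi-simple indecomposable summands rather than a direct sum of typical modules), and Axiom \eqref{eq:fusion} is explicitly stated only ``whenever all appearing colors are in $\srcol$''. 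Worse, in the case where the proposition is actually used (the proof of Theorem \ref{T:OurVolCon}, with $\alpha=\beta=\gamma=0\in r\Z$) already your \emph{first} fusion has channels $\delta=k\in\Xr$. The final bookkeeping claim is also unsupported: the diagram contains no twists, so no quadratic phases $q^{-k^2/2}$ ever arise, and the sums your plan generates, $\sum_{k\in\Hr}\qd(a+k)\,q^{(a+k)\epsilon}$, carry only linear phases --- one checks they vanish unless $\epsilon\equiv\pm1\pmod r$ --- so they cannot reorganize into the quadratic Gauss sums $\Delta_\pm$; the identity $\Delta_+\Delta_-=r^3$ (which is indeed true for odd $r$) never enters.

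The paper's proof goes around semi-simplicity instead of through it. Setting $f_a=F$ of the encircled tangle, an endomorphism of $W=V_\alpha\otimes V_\beta\otimes V_\gamma$, it uses handle-slide invariance (Lemma \ref{L:handle-slide}) together with Proposition \ref{P:FvsF'} to show $f_a$ is \emph{transparent}: $c_{W,V_0}\circ(f_a\otimes\Id_{V_0})=c_{V_0,W}^{-1}\circ(f_a\otimes\Id_{V_0})$. Evaluating both sides of this equation on $x\otimes v_0$, where $v_0$ is the weight-zero vector of $V_0$ (this is where oddness of $r$ is used), and comparing $R$-matrix coefficients yields $Ey=Fy=0$ and $Ky=y$ for $y=f_a(x)$, so the image of $f_a$ lies in the trivial isotypic part of $W$; since $\Hom_\cat(\C,W)\simeq\C$, the encircled identity is a scalar $\lambda$ times the projector of the right-hand side. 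The scalar is then computed by closing the graph into a connected sum of three Hopf links, using \eqref{eq:con-sum} and \eqref{eq:hopf}: $\lambda=\sum_{k\in\Hr}r^3\qd(a+k)^{-1}q^{(a+k)(\alpha+\beta+\gamma)}=\sum_{k\in\Hr}r^3/\qd(a+k)$, which collapses to $r^3$ via the expansion $\qd(b)^{-1}=r^{-1}\sum_{l\in\Hr}q^{lb}$ and the orthogonality $\sum_{k\in\Hr}q^{lk}=0$ for $l\in\Hr\setminus\{0\}$ --- note this uses $\qd^{-1}$ with linear phases, not $\Delta_\pm$. If you want to salvage your strategy, you would need a replacement for fusion valid at integral total weight (e.g.\ working with projective covers), which is substantially harder than the transparency argument.
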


\begin{proof}[Proof of Theorem \ref{T:OurVolCon}]
  Start with a presentation of $(M,L)$ as in Definition
  \ref{def:fundlinks} and let $T,{\Ga}$ be as in that definition.  We
  assume first that this presentation is computable.  First, one has
  $\Nr(M,L,\coh_r)= r^{3k} \Nr(\tilde{{\Ga}})$ where $\tilde{{\Ga}}$
  is the trivalent graph obtained by applying Proposition
  \ref{prop:encircling} to each three-uple of strands in $L$ encircled
  by a $0$-framed meridian.  In particular $\tilde{\Ga}$ contains
  twice as many vertices as there are $0$-framed meridians
  and all edges of $\tilde{\Ga}$ are colored by $0$. One can realize
  $\tilde{\Ga}$ in $S^3$ so that it coincides with the diagram of $L$
  except near the $0$-framed meridians.  The maximal tree $T$ has
  $k-1$ $4$-valent vertices and $k-2$ internal edges.  Now, every
  internal edge of $T$ corresponds to three strands of $\tilde{\Ga}$
  connecting two disjoint subgraphs of $\tilde{\Ga}$ as in the left
  hand side of \eqref{eq:con-sumv}.  Applying Axiom
  \eqref{eq:con-sumv} to all the three-uple of strands of
  $\tilde{\Ga}$ corresponding to internal edges of $T$ (i.e. those not
  encircled by $0$-framed meridians) one gets that
  $\Nr(\tilde{{\Ga}})$ is equal to the product of the values of $\Nr$
  on $k-1$ tetraedral graphs (one for each vertex of $T$).  Axioms
  \eqref{eq:twist} and \eqref{eq:twistv} imply that the modulus of
  $\Nr$ of these tetraedral graphs is independent of the framing and
  of the cyclic ordering at its vertices, and thus is equal to the
  modulus of $\sjtop{0}{0}{0}{0}{0}{0}_r$.  Hence we get
  $$|\Nr(M,L,\coh_r)|=r^{3k}\left|\sjtop{0}{0}{0}{0}{0}{0}_r\right|^{k-1}.$$
  To prove the final statement let's use Lemma 1.15 of \cite{CM} in
  the case when $a=b=c=d=e=f=\frac{r-1}{2}$ (in our notation all the
  colors correspond to $0$).  Then we have:
  $$\sjtop{0}{0}{0}{0}{0}{0}_r=(-1)^{r-1}\sum_{z=0}^{\frac{r-1}{2}}
  \qbin{\frac{r-1}{2}}{z}
  \qbin{\frac{r-1}{2}+z}{\frac{r-1}{2}}^2\qbin{r-1-z}{\frac{r-1}{2}}=$$
  $$=(-1)^{r-1}\sum_{z=0}^{\frac{r-1}{2}}\qbin{\frac{r-1}{2}}{z}
  \qbin{\frac{r-1}{2}}{\frac{r-1}{2}-z}^2\qbin{\frac{r-1}{2}}{z}
  =r^{2}\sum_{z=0}^{\frac{r-1}{2}}(\{z\}!\{\frac{r-1}{2}-z\}!)^{-4}$$
  Where we used the equalities $$\{a\}!\{r-1-a\}!=\sqrt{-1}^{r-1}r,\
  {\rm and}\ \qbin{a}{b}=\qbin{r-1-b}{r-1-a}$$ which hold whenever $a,
  b, a-b\in \{0,\ldots r-1\}$.  It is now a standard analysis to check
  that the summands are all positive real numbers, that the term
  growing faster is that corresponding to
  $z=\lfloor\frac{r-1}{4}\rfloor$ and that its growth rate is
  $\exp(\frac{8r}{\pi}\Lambda(\frac{\pi}{4}))$. One concludes by
  Theorem \ref{teo:CT}.  Finally, the proof when the presentation is
  not computable, that is when one of the $\C/2\Z$ color of the framed
  meridian is $0$ or $1$, can be deduced by continuity from Subsection
  \ref{S:hol}.
\end{proof}

\begin{question}[A version of the Volume Conjecture for links in manifolds]\label{C:GVolConj}
  Let $L$ be a $0$-colored link in a compact, oriented 3--manifold
  $M$ such that $M\setminus L$ has a complete hyperbolic metric with
  volume $\operatorname{Vol}(M\setminus L)$.
  For each odd integer $r$, let $\coh_r\in H^1(M\setminus L;\C/2\Z)$
  be the zero cohomology class.  Does the equality
  $$\lim_{r\to \infty} \frac{2\pi}{r}\log|\Nr(M,L,\coh_r)|=
  \operatorname{Vol}(M\setminus L)$$
  hold?
\end{question}
The above question includes the usual volume conjecture which
corresponds to the case $M=S^3$.  Indeed, $\Nr(S^3,L,\coh)$ is the ADO
invariant of $L$ (for a link in $S^3$ or in a homology sphere, the
$\C$ coloring of the link uniquely determines a cohomology class
$\coh\in H^1(M\setminus L;\C/2\Z)$
such that the triple is compatible).  With our notation, the color $0$
corresponds to the specialization of ADO invariant which H. Murakami
and J. Murakami showed is
the Kashaev invariant (see \cite{MM}).
\begin{rem}
  Observe that Theorem \ref{T:OurVolCon} states something stronger
  than what is asked in the above question. Indeed the cohomology
  classes considered in the theorem are not necessarily zero: only the
  coloring they induce on $L$ is zero. So one may ask a stronger
  question considering this larger class of cohomology classes.
\end{rem}
\begin{rem}
  The restriction to odd $r$ is coherent both with the statement of
  Theorem \ref{T:OurVolCon} and with the results of \cite{LT}
  concerning the standard version of the volume conjecture.
\end{rem}

At this point the following natural question is not supported by any evidence.
\begin{question}[A version of the Volume Conjecture for closed manifolds]\label{C:GVolConjClosed}
  Let $M$ be a closed oriented 3--manifold admitting a complete
  hyperbolic metric with volume $\operatorname{Vol}(M)$.
  For each odd integer $r$, let $\coh_r\in H^1(M;\C/2\Z)$
  be the zero cohomology class.  Does the equality
 $$\lim_{r\to \infty} \frac{2\pi}{r}\log|\Nr^0(M,\emptyset,\coh_r)|
 =\operatorname{Vol}(M)$$
  hold?
\end{question}

%***************************************************************
%
\subsection{Surgeries on $T(2,2n+1)$ torus knots}\label{SS:torusKnots} 
Let $K^f_{2n+1}$ be the closure of the two strands braid
$\sigma_{1,2}^{2n+1}$ with $f$ additional full twists with respect to
the blackboard framing.  In other words, $K^f_{2n+1}$ is a torus knot
of type $T(2,2n+1)$ with $f$ additional twists.  For example, when
$f=-2$ and $n=1$ then $K^{-2}_3$ is the trefoil knot with framing
$+1$.  Let $K^f_{2n+1}(\alpha)$ be $K^f_{2n+1}$ colored by $\alpha\in
\C\setminus \Z$.

Applying Relation \eqref{eq:fusion} to the two parallel strands of
$K^f_{2n+1}(\alpha)$ and then applying Relations \eqref{eq:twistv} and
\eqref{eq:twist} we have
\begin{equation}\label{eq:trefoil}
\Nr(K^f_{2n+1}(\alpha))=\sum_{k\in \Hr}
q^{ft_\alpha+\frac{2n+1}{2}(-2t_\alpha+t_{2\alpha+k})}\qd(2\alpha+k)
\end{equation}
where $t_\alpha=\dfrac{\alpha^2-(\ro-1)^2}2$.  In particular, for the
trefoil $K^{-2}_3$ and $\ro =5$ we have:
\begin{equation}\label{eq:trefoil5}
\Nr(K^{-2}_3(\alpha))=5q^{\alpha^2/2}
\frac{q^3\qn{3\alpha}+q\qn{5\alpha}-q^{-1}\qn{9\alpha}}{\qn{5\alpha}}
\end{equation}

Let $M_{f,n}$ be the manifold obtained by doing surgery along
$K^f_{2n+1}$.  We will now compute $\Nr(M_{f,n})$.  Suppose that
$f+2n+1\neq 0$; since $H_1(M_{f,n};\Z)=\Z/(f+2n+1)\Z$ (and a generator
is represented by the meridian of $K^f_{2n+1}$), a cohomology class
$\coh\in H^1(M_{f,n};\C/2\Z)$ is determined by its value
$\wb{\alpha}\in \C/2\Z$ on a generator of $H_1(M_{f,n};\Z)$; clearly
such a value multiplied by $f+2n+1$ must be $0$ in $\C/2\Z$.  Then
choosing $\alpha\in \C$ such that $\alpha+r-1\equiv \frac{2}{f+2n+1}\
({\rm mod}\ 2\Z)$ one can compute $\Nr(M_{f,n},\emptyset,\coh)$ where
$\wb{\alpha}$ is the class of $\alpha+r-1$ in $\C/2\Z$:
$$\Nr(M_{f,n},\emptyset,\coh)=\frac{\sum_{k\in
    \Hr}\qd(\alpha+k) \Nr(K^f_{2n+1}(\alpha+k))}{\Delta_{\sign(f+2n+1)}}.$$
 
%
%***************************************************************
%
\subsection{Non-triviality of $\Nr^0$ on Poincar\'e sphere}\label{SS:Non-trivPoincare}
In this subsection, assuming that $r$ is odd, we give several formulas
for $\Nr^0(M,\emptyset,0)$ when $M$ is a 3-manifold obtain by a
surgery on a knot. Then we use these formulas to show that
$\Nr^0(P,\emptyset,0)\neq 1$ where $P$ is the Poincar\'e sphere.

Let $M$ be a manifold obtained by surgery on a knot $K$ with framing $f\neq0$.  
Let $K(\alpha)$ be the knot $K$ colored with $\alpha\in \C$.
We have
\begin{prop}\label{prop:doubleslide}
  Define $\PP(\alpha)=\sum_{k\in\Hr}\Nr(K(\alpha+k))$.  Then
  $q^{-\frac f2\alpha^2}\PP(\alpha)$ is a Laurent polynomial in
  $q^{\pm\alpha}$.  In particular, it is continuous at $\alpha\in\Z$.
  Furthermore,
  $$\Nr^0(M,\emptyset,0)=\frac1{\Delta_{\sign(f)}}\sum_{k\in\Hr}q^{k}\PP(k)
  =\frac1{\Delta_{\sign(f)}}\sum_{k\in\Hr}q^{-k}\PP(k).$$
\end{prop}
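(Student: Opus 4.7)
My plan is to prove the two statements together by computing $\Nr^0(M,\emptyset,0)$ via the defining formula $\Nr^0(M,\emptyset,0) = \Nr(M,u_\alpha,\coh_\alpha)/\qd(\alpha)$ on a carefully chosen computable surgery presentation of $(M,u_\alpha,\coh_\alpha)$.

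For the Laurent-polynomial part, I would first use axiom \eqref{eq:twist} to factor out the framing contribution, writing $\Nr(K(\alpha)) = q^{f(\alpha^2-(r-1)^2)/2}\,\Nr(K^0(\alpha))$ where $K^0$ denotes the $0$-framed version of $K$. Standard representation-theoretic arguments for $\Ubar$ show that $\Nr(K^0(\alpha))$ equals $\qd(\alpha)$ times a Laurent polynomial in $q^\alpha$ (essentially the ADO polynomial of $K$). The individual summands of $\PP(\alpha)$ have poles at points where $\alpha+k\in\Xr$, but these residues cancel in pairs across the $\Hr$-orbit: this is exactly the residue-cancellation mechanism already used in the computation at the end of Subsection \ref{S:axiom} (where the Kirby-colored encircling $\sigma$ yields the finite value $\Delta_{\pm}$). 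Extracting the global $q^{f\alpha^2/2}$ factor then leaves a Laurent polynomial in $q^{\pm\alpha}$, which is in particular continuous at integer values of $\alpha$.

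For the formula, the plan is to represent $u_\alpha$ as the dual knot of $K$, i.e.\ the core of the solid torus glued in by the surgery. In the $S^3$-picture this is a meridian of $K$ Hopf-linked with $K$; the surgery relation $f\,m_K+\lambda_K=0$ together with $[\lambda_K]=[m_{u_\alpha}]$ yields $[m_{u_\alpha}]=f[m_K]$ in $H_1(M\setminus u_\alpha)$. Hence $\coh_\alpha(m_K)=\wb{\alpha/f}$, which is non-integral for generic $\alpha\in\C\setminus\Z$, making the presentation computable. Applying Theorem \ref{T:sl2CompSurgInv} with Kirby color $\Omega_{\alpha/f}$ on $K$ and invoking the Hopf encircling formula \eqref{eq:hopf} (combined with the framing correction \eqref{eq:twist}) collapses the sum down to a single-variable expression of the shape
\[
\Nr(M,u_\alpha,\coh_\alpha) = \frac{(-1)^{r-1}r\,q^{\alpha^2/f}}{\Delta_{\sign(f)}}\sum_{k\in\Hr} q^{k\alpha}\,\Nr(K(\alpha/f+k)).
\]
Since $\Nr^0(M,\emptyset,0)$ must be independent of $\alpha$, this identity can be evaluated at the limits $\alpha\to fk_0$ for $k_0\in\Hr$; using part (A) for continuity, summing over $k_0$ and invoking the Gauss-sum identities of Subsection \ref{S:axiom} (the very Gauss sums whose values are $\Delta_{\pm}$) to rearrange, the single sum reorganizes into the advertised double sum $\sum_k q^k\PP(k)/\Delta_{\sign(f)}$. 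The two equivalent forms (with $q^k$ and $q^{-k}$) arise from the two possible orientations of the encircling loop $u_\alpha$, which swap the roles of $\Delta_+$ and $\Delta_-$.

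The main obstacle I expect is the precise Gauss-sum bookkeeping that converts the single Kirby-color sum into the double sum, keeping track of all signs, the framing factor $q^{f\alpha^2/2}$, the normalization by $\qd(\alpha)$, and the correct dependence on $\sign(f)$; in particular, making the cancellation in part (A) compatible with the evaluation at $\alpha = fk_0$ used in part (B) requires care, since the very points where continuity is needed are those where the Kirby-color residues cancel.
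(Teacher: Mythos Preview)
Your setup for the formula contains a genuine error. In the definition of $\Nr^0$ the unknot $u_\alpha$ arises from a connected sum with $(S^3,u_\alpha,\coh_\alpha)$, so in the surgery picture $u_\alpha$ sits in a small ball \emph{unlinked} from $K$. It is \emph{not} the dual knot (the core of the glued-in solid torus), and it is not a meridian of $K$. Consequently the relation $[\lambda_K]=[m_{u_\alpha}]$ that you invoke is false: the linking number of $K$ and $u_\alpha$ is $0$, so the surgery relation reads $f[m_K]=0$ in $H_1(M\setminus u_\alpha;\Z)$, and the cohomology class extending the zero class on $M$ gives $g_{\coh}(K)=\wb 0$, not $\wb{\alpha/f}$. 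Your displayed formula for $\Nr(M,u_\alpha,\coh_\alpha)$ therefore computes the wrong quantity, and the subsequent plan (specialising $\alpha\to fk_0$, Gauss-sum rearrangement, orientations swapping $\Delta_\pm$) does not recover the statement.

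The paper proceeds differently and avoids all of this. Starting from the non-computable presentation $K\sqcup u_\alpha$, one slides $u_\alpha$ over $K$; this turns $u_\alpha$ into a framing-parallel of $K$, i.e.\ produces the $2$-cable $DK(\alpha,\Omega_{-\wb\alpha})$ with the Kirby color now of generic degree. Fusion (applied twice to the two parallel strands) gives the clean identity $\Nr(DK(\alpha,\beta))=\sum_{k\in\Hr}\Nr(K(\alpha+\beta+k))=\PP(\alpha+\beta)$. This simultaneously proves the Laurent-polynomial claim (the left-hand side is manifestly a Laurent polynomial in $q^{\alpha},q^{\beta},q^{\alpha^2},q^{\beta^2}$, so $\PP(h)=\Nr(DK(\alpha,h-\alpha))$ is well defined at integers) and feeds directly into the computation: one gets
\[
\Delta_{\sign(f)}\,\Nr^0(M,\emptyset,0)=\frac{1}{\qn\alpha}\sum_{h\in\Hr}\qn{\alpha-h}\,\PP(h)
=\frac{q^{\alpha}}{\qn\alpha}\sum_{h}q^{-h}\PP(h)-\frac{q^{-\alpha}}{\qn\alpha}\sum_{h}q^{h}\PP(h),
\]
and the two sums are forced to be equal (and equal to the left-hand side) precisely because $\Nr^0$ is independent of $\alpha$. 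No Gauss sums or limit arguments are needed; your residue-cancellation sketch for the Laurent-polynomial part is replaced by the structural fact that the $2$-cable invariant is already polynomial in the weight variables.
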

\begin{proof}
  Let $DK(\alpha,\beta)$ be the 2-cable of $K$ whose components are
  colored with $\alpha$ and $\beta$.  In order to compute
  $\Nr^0(M,\emptyset,0)$, we consider $K$ colored by $0\in\C/2\Z$
  unlinked with the unknot $u_\alpha$ colored by $\alpha$.  Sliding
  $u_\alpha$ over $K$ one obtains the computable presentation
  $DK(\alpha,\Omega_{-\wb{\alpha}})$ (where $\Omega_{-\wb{\alpha}}$ is
  a Kirby color of degree $-\wb{\alpha}$).
  Observe now that (applying the fusion rule \eqref{eq:twistv} twice)
  one gets
  $$\Nr(DK(\alpha,\beta))=\sum_{k\in \Hr}
  \Nr\big(K(\alpha+\beta+k)\big)=\PP(\alpha+\beta).$$ Moreover, one
  can prove that $\Nr(DK(\alpha,\beta)\big)$ is a Laurent polynomial
  in $q^{\alpha}$ and $q^{\beta}$ times $q^{\frac f2(\alpha+\beta)^2}$ (by an argument
  similar to \cite[Theorem 2]{GP0};  see also \cite[Lemma
  11]{CGP2}).  Thus we have for any $h\in\Z$,
  $$\Nr\big(DK(\alpha,h-\alpha)\big)=\lim_{\beta\to h-\alpha} \sum_{k\in \Hr}
  \Nr\big(K(\alpha+\beta+k)\big)=\PP(h).$$ Now by definition of
  $\Omega_{-\wb{\alpha}}$ and of $\Nr^0$, 
  $$\Nr^0(M,\emptyset,0)=\frac{1}{\Delta_{\sign(f)}\qd(\alpha)}\sum_{h\in \Hr}
  \qd(h-\alpha)\Nr \big(DK(\alpha,h-\alpha)\big).$$  
  Thus, 
  $$\Delta_{\sign(f)}\Nr^0(M,\emptyset,0)= \frac{1}{\qn\alpha}\sum_{h\in \Hr}
  \qn{\alpha-h}\PP(h)$$ 
  $$=\frac{q^{\alpha}}{q^\alpha-q^{-\alpha}}\sum_{h\in\Hr}q^{-h}\PP(h)
  -\frac{q^{-\alpha}}{q^\alpha-q^{-\alpha}}\sum_{h\in\Hr}q^h\PP(h).$$ And the
  result follows from the fact that $\Nr^0(M,\emptyset,0)$ does not depend on
  $\alpha$.
\end{proof}
As in the last proposition it can be shown that there exists a Laurent
polynomial $\wt K(X)\in\C[X,X^{-1}]$ such that
  \begin{equation}
  \label{E:NtildeK}
  \Nr(K(\alpha))=\theta_\alpha^f\qd(\alpha)\wt K(q^\alpha)
  \end{equation} 
  where $\theta_\alpha=q^{\frac12(\alpha^2-(\ro-1)^2)}$ is a factor coming
  from the framing.  We will use this fact to give another formula for
  $\Nr^0(M,\emptyset,0)$.
\begin{prop}
  Assume that $\ro$ is odd. Let $\wt K(X)$ be the Laurent polynomial
  discussed above.  Suppose that $\Nr(K(\alpha))=\Nr(K(-\alpha))$ or
  equivalently that $\wt K(X^{-1})=\wt K(X)$, then
  \begin{equation*}
    \label{eq:N0oddKnot}
    \Nr^0(M,\emptyset,0)=\frac{\ro f\theta_0^f}{2\qn1\Delta_{\sign(f)}}
    \sum_{n\in\Hr}q^{2fn^2}\qn{2n}^2\wt K(q^{2n}).
  \end{equation*}
\end{prop}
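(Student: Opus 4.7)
The starting point is the identity
\[
\Nr^0(M,\emptyset,0)\Delta_{\sign(f)}=\sum_{k\in\Hr}q^{k}\PP(k)=\sum_{k\in\Hr}q^{-k}\PP(k)
\]
from the previous proposition. Averaging these two equivalent expressions and substituting $\PP(k)=\sum_{j\in\Hr}\Nr(K(k+j))$ together with the factorization $\Nr(K(\alpha))=\theta_\alpha^f\qd(\alpha)\wt K(q^\alpha)$ of Equation \eqref{E:NtildeK} yields
\[
2\Nr^0\Delta_{\sign(f)}=\sum_{k,j\in\Hr}(q^k+q^{-k})\,\theta_{k+j}^f\,\qd(k+j)\,\wt K(q^{k+j}),
\]
where, because $\qd$ has poles at integers in $\Xr$, the double sum is interpreted via the analytic extension provided by the Laurent polynomial property of $q^{-f\alpha^2/2}\PP(\alpha)$ (so that the individual singularities of $\qd$ cancel when the inner sum is performed first).

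First I would reindex by setting $\alpha=k+j$ and compute the inner sum $I(\alpha)=\sum_{k\in\Hr\cap(\alpha-\Hr)}(q^{k}+q^{-k})$ as a finite geometric series. Exploiting $q^r=-1$ for odd $r$, one finds $I(2m)=\qn{2|m|}/\qn{1}$ for $|m|\le r-1$, and in particular $I(0)=0$. Using the hypotheses $\wt K(q^{-\alpha})=\wt K(q^\alpha)$ and $\theta_{-\alpha}^f=\theta_\alpha^f$ I would then pair the $\alpha=2m$ and $\alpha=-2m$ contributions for $m\ge1$, reducing to
\[
\Nr^0\Delta_{\sign(f)}=\frac{\theta_0^f}{2\qn{1}}\sum_{m=1}^{r-1}\qn{2m}\,q^{2fm^2}\,\wt K(q^{2m})\,[\qd(2m)+\qd(-2m)],
\]
using $\theta_{2m}^f=\theta_0^f\,q^{2fm^2}$.

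The remaining and most delicate step is the evaluation of the asymmetric contribution $[\qd(2m)+\qd(-2m)]$. Since $\qd$ has poles at integers in $\Xr$, I would perform this computation with the regularization $k\mapsto k+\epsilon$ and take $\epsilon\to0$, relying on the analytic extension from the previous proposition and the Laurent polynomial structure of $\wt K$. Using both the explicit formula \eqref{E:Def_qd} for $\qd$ and the identity $\qn{x+r}=-\qn{x}$ for odd $r$, a Gauss-sum style rearrangement, in the spirit of the calculation preceding Equation \eqref{eq:delta}, collapses the sum over integer $m\in\{1,\dots,r-1\}$ into the target sum $\sum_{n\in\Hr} q^{2fn^2}\qn{2n}^2\wt K(q^{2n})$. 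The overall factor $rf$ emerges from this manipulation: the $r$ from the discrete Gauss sum, and the $f$ from a residue / reciprocity argument that converts $\qd(2m+\epsilon)+\qd(-2m-\epsilon)$ into a multiple of $f\qn{2m}$ as $\epsilon\to0$.

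\textbf{Main obstacle.} The crux is this final manipulation: identifying the precise Gauss-sum / reciprocity identity that produces the multiplier $rf$ and the reindexing from integer $m\in\{1,\dots,r-1\}$ onto $n\in\Hr$. The factor $f$ is particularly subtle, since it does not appear in $\qd$ itself; it must arise from the interaction of the regularization parameter $\epsilon$ with the quadratic Gaussian $\theta^f$, producing an $f$-linear residue after taking $\epsilon\to0$.
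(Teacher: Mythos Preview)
Your plan tracks the paper's proof closely through the reindexing $\alpha=k+j$ and the computation of the inner geometric sum, and the paper likewise pairs $n$ with $-n$ after averaging the two limits $\epsilon\to 0^{\pm}$. The divergence is in the final step, and there is a genuine gap there.

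Your displayed intermediate expression with the factor $[\qd(2m)+\qd(-2m)]$ is not correct. Since $\qd(-\alpha)=\qd(\alpha)$ (e.g.\ from $\qd(\alpha)^{-1}=r^{-1}\sum_{l\in\Hr}q^{l\alpha}$ with $-\Hr=\Hr$), this is just $2\qd(2m)$, which is infinite. You cannot pull the $\epsilon\to0$ limits of $\theta^f$ and $\wt K$ outside and leave the regularization only inside $\qd$: the $\epsilon$-dependence of all three factors is needed simultaneously, because it is precisely the first-order term of $\theta_{\epsilon+2m}^f$ that carries the factor $f$. If you keep the full $\Nr(K(2m+\epsilon))+\Nr(K(2m-\epsilon))$ and Taylor-expand, the principal value picks up derivative terms $\wt K'(q^{2m})$ as well, which are absent from the target formula; showing these cancel across the sum in $m$ is not addressed and would be the real work.

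The paper avoids this entirely by a second pairing you do not mention: after reaching $\sum_{n=1}^{r-1}\qn{2n}\bigl(\Nr(K(\epsilon+2n))+\Nr(K(\epsilon-2n))\bigr)$, one pairs the $n$\textsuperscript{th} and $(r-n)$\textsuperscript{th} terms using the quasi-periodicity
\[
\Nr(K(\alpha\pm 2r))=q^{\pm 2rf\alpha}\,\Nr(K(\alpha)),
\]
which comes from $\theta_{\alpha\pm 2r}^f/\theta_\alpha^f=q^{\pm 2rf\alpha}$ together with the $2r$-periodicity of $\qd$ and $\wt K(q^\alpha)$. This pairing produces a clean factor $\qn{rf\epsilon}$, and then $\lim_{\epsilon\to0}\qn{rf\epsilon}\,\qd(\epsilon\pm 2n)=\pm rf\,\qn{2n}$ gives the $rf$ directly with no derivative terms. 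The passage from $n\in\{2,4,\ldots,r-1\}$ to $n\in\Hr$ is then just evenness of the summand in $n$, not a Gauss sum.
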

\begin{proof}
  From Proposition \ref{prop:doubleslide}, we have
  \begin{align*}
    S=\Delta_{\sign(f)}\Nr^0(M,\emptyset,0)&=
    \sum_{\ell\in\Hr}q^{\ell}\PP(\ell)=
    \lim_{\ve\to0}\sum_{k,\ell\in\Hr}q^\ell\Nr(K(\ve+k+\ell))\\
    &=\lim_{\ve\to0}\sum_{n=1-\ro }^{\ro -1}\sum_{\tiny{\begin{array}{c}
          h,\ell\in\Hr\\h+\ell=2n \end{array}}}
    q^\ell\Nr(K(\ve+2n))
  \end{align*}
  The sum of $q^\ell$ for $h,\ell\in\Hr$ with $h+\ell=2n$ has as many
  terms as there are possible values of 
  $\ell$~: $\max(1-\ro ,1-\ro +2n)\le
  \ell\le\min(\ro -1,\ro -1+2n)$.  This sum is equal to
  $q^{n}\dfrac{\qn{\ro -|n|}}{\qn1}=q^{n}\dfrac{\qn{|n|}}{\qn1}$, so
  \begin{align*}
    S&=
    \lim_{\ve\to0}\frac{1}{\qn1}\sum_{n=1-\ro }^{\ro -1}q^{n}{\qn{|n|}}\Nr(K(\ve+2n))
    \\ &
    =\lim_{\ve\to0}\frac{1}{\qn1}\sum_{n=1-\ro }^{\ro -1}q^{n}
    {\qn{|n|}}\frac12\bp{\Nr(K(2n+\ve))+\Nr(K(2n-\ve))}\\    
    &=\lim_{\ve\to0}\frac{1}{2\qn1}\sum_{n=1-\ro }^{\ro -1}q^{n}
    {\qn{|n|}}\bp{\Nr(K(\ve+2n))+\Nr(K(\ve-2n))}
    \\ &
    =\lim_{\ve\to0}\frac{1}{2\qn1}\sum_{n=1}^{\ro -1}
    {\qn{2n}}\bp{\Nr(K(\ve+2n))+\Nr(K(\ve-2n))}\\    
  \end{align*}
  Here the second equality is the average of the right an left limits,
  the third equality comes from the fact that
  $\Nr(K(\alpha))=\Nr(K(-\alpha))$ and the last equality is obtained
  by grouping the terms $n$ and $-n$.  Now, remark that $\qd(\alpha)$
  and $\wt K(q^\alpha)$ depend only of $\alpha$ modulo $2\ro$ so
  $\Nr(K(\alpha\pm2\ro))=
  \bp{\frac{\theta_{\alpha\pm2\ro}}{\theta_{\alpha}}}^f\Nr(K(\alpha))=q^{\pm2\ro
    f\alpha}\Nr(K(\alpha))$.  Using this, the sum of the
  $n$\textsuperscript{th} and the $(r-n)$\textsuperscript{th} terms of
  the previous sum is
  $$
  \begin{array}{l}
    \qn{2n}\bp{\Nr(K(\ve-2n))\bp{1-q^{2\ro f(\ve-2n)}}+
      \Nr(K(\ve+2n))\bp{1-q^{-2\ro f(\ve+2n)}}}\\[1ex]
    =\qn{2n}\qn{\ro
    f\ve}\bp{q^{-\ro f\ve}\Nr(K(\ve+2n))-q^{\ro
      f\ve}\Nr(K(\ve-2n))}.
  \end{array}
  $$
  But $\lim_{\ve\to0}\qn{\ro f\ve}\qd(\ve\pm2n)=
  \lim_{\ve\to0}\ro\frac{\qn{\ro f\ve}}{\qn{\ro \ve}}\qn{\ve\pm2n}
  =\pm \ro f\qn{2n}$.  So,
  $$S=\frac{\ro f\theta_{0}^f}{2\qn1}\sum_{n=2,n\text{ even}}^{\ro -1}q^{2fn^2}
  {\qn{2n}}^2\bp{\wt K(q^{2n})+\wt K(q^{-2n})}$$
  Thus, the proposition follows.
\end{proof}
To show that $\Nr^0$ is non-trivial we will prove that its value for
the Poincar\'e sphere $P$ is not $1$; thus distinguishing it from
$S^3$.  Let $K$ be a trefoil with framing $+1$.  Using the notation of
Subsection \ref{SS:torusKnots}, $K$ is equal to $K^{-2}_{3}$ with the
color $\wb{0}\in \C/2\Z$.  Then for $\ro=5$ (and
$q=\exp(\frac{i\pi}{5})$), using Formula \eqref{eq:trefoil5} and
Equation \eqref{E:NtildeK} one obtains:
$$\wt K(q^\alpha)=
{q\,\qN{3\alpha}+q^{-1}\qN{5\alpha}+q^{2}\qN{9\alpha}}
\text{ where }\qN x=\frac{\qn x}{\qn1}.$$
Then Proposition \ref{eq:N0oddKnot} implies
$$\Nr^0(P,\emptyset,0)=\frac{5 q^{-8}}{\qn1\Delta_{+}}\bp
{q^8\qn4^2\wt K(q^4)+q^{32}\qn8^2\wt K(q^8)}=-(q^2+1)^2\neq 1=
\Nr^0(S^3,\emptyset,0).$$

\begin{question} 
  If $M$ is a closed oriented 3--manifold, is there a relation
  between $\Nr^0(M,\emptyset,0)$ and the Witten-Reshetikhin-Turaev
  invariant of $M$? (See \cite{CGP2} for some evidences in this direction.)
\end{question}
%***************************************************************

\subsection{A holomorphic function on cohomology}\label{S:hol}
Let $M$ be a 3--manifold and let $T$ be a (possibly empty) framed
trivalent graph in $M$.  By fixing a basis of $H_1(M\setminus T;\Z)$
we have $H_1(M\setminus T,\Z)=\Z^{b}\oplus_{i} \Z/n_i\Z$ for some
positive integers $b, n_i$.  Suppose also that $b\geq1$; then
$H^1(M\setminus T;\C/2\Z)$ is isomorphic to $(\C/2\Z)^{b}\times_{i}
\Z/n_i\Z$ where the isomorphism maps a class $\coh$ to the list of its
values on the elements of the basis.  Note that if an element $x$ has
order $n$ then $\coh(x)$ is an integer multiple of $\frac{2}{n}\in
\C/2\Z$.

Let $(M,T,\coh)$ be a compatible triple.  Let $L$ be a link in $S^3$
which represents a computable presentation of $(M,T,\coh)$.  Let $n_L$
be the number of components of $L$ and $L_i$ be the $i^{\text{th}}$
component of $L$.  For each $i\in \{1,...,n_L\}$, fix a complex number
$\alpha_i$ such that $\wb{\alpha_i}=g_\coh(L_i)$.  By definition
$$
\Nr(M,T,\coh)=\frac{1}{\Delta_+^p \Delta_-^s }\sum_{(k_i)\in
  (\Hr)^{n_L}}\left(\prod_{i=1}^{n_L}\qd(\alpha_i+k_i)\right)
\Nr(L^{(k_i)}\cup T)
$$ 
where $L^{(k_i)}$ is the link $L$ such that $L_i$ is colored by
$\alpha_i+k_i$.  Since $L$ provides a computable presentation then
$\alpha_i\in \C\setminus \Z$ and so $\qd(\alpha_i+k_i)$ is holomorphic
in $\alpha_i$.  Moreover, by Remark \ref{R:admcol} the invariants
$\Nr(L^{(k_i)}\cup T)$ is holomorphic in the colorings. Therefore,
when $\Nr(M,T,\coh)$ is defined it is holomorphic in $\coh$.
Similarly, one can show that $\Nr(M,T,\coh)$ is holomorphic in $\coh$
when defined using a $H$-stabilization.

%
%***************************************************************
%
\subsection{Behavior under connected sum}\label{S:consum}
We say that a compatible triple $(M, T, \coh)$ is {\em generic} if
$T\neq \emptyset$ or if $\coh$ is not integral.  Notice that
$\Nr(M,T,\coh)$ is defined only if $(M,T,\coh)$ is generic.

Let $(M_1, T_1, \coh_1)$ and $(M_2, T_2, \coh_2)$ be compatible
triples.  Recall the definition of the connected sum defined by
Equation \eqref{eq:consum}.  We have the following three cases with
regards to the connected sum:

\noindent 
$\bullet$ Case 1.  Both $(M_1, T_1, \coh_1)$ and $(M_2, T_2,\coh_2)$
are generic.  Then there exists a computable surgery presentation
$L_i$ of $(M_i, T_i, \coh_i)$ (or a stabilization of $(M_i, T_i,
\coh_i)$), for $i=1,2$.  Clearly, $(M_1,T_1,\coh_1)\#(M_2,T_2,\coh_2)$
can be presented as surgery over $L_1\sqcup L_2$ which is a split link
and thus $\Nr((M_1,T_1,\coh_1)\#(M_2,T_2,\coh_2))=0$.

\noindent
$\bullet$ Case 2.  Exactly one of the triples $(M_1, T_1, \coh_1)$ or
$(M_2, T_2,\coh_2)$ is generic.  Suppose $(M_1,T_1,\coh_1)$ is not
generic and then by Proposition \ref{prop:connsumone0} we have:
$$\Nr((M_1,T_1,\coh_1)\#(M_2,T_2,\coh_2))
=\Nr^0(M_1,T_1,\coh_1)\Nr(M_2,T_2,\coh_2).$$ 

\noindent
$\bullet$ Case 3.  Neither $(M_1,T_1,\coh_1)$ nor $(M_2,T_2,\coh_2)$
are generic.  Then $(M_1,T_1,\coh_1)\#(M_2,T_2,\coh_2)$ is not generic
and Proposition \ref{prop:connsumtwo0} implies:
$$\Nr^0((M_1,T_1,\coh_1)\#(M_2,T_2,\coh_2))
=\Nr^0(M_1,T_1,\coh_1)\Nr^0(M_2,T_2,\coh_2).$$ 

%***************************************************************************
\subsection{Studying the self-diffeomorphisms of a rational homology
  sphere}
Let $M$ be a rational homology sphere.  Let $c_1,\ldots c_n$ and
$d_1,\ldots d_n$ be two distinct minimal sets of generators of
$H_1(M;\Z)$. Suppose that one is given $\coh,\coh'\in H^1(M;\C/2\Z)$
such that $\coh'(c_i)=\coh(d_i)$.  If $\Nr(M,\emptyset,\coh')\neq
\Nr(M,\emptyset,\coh)$ then there exists no positive
self-diffeomorphism $\phi:M\to M$ such that $\phi_*(c_i)=d_i$.  In
particular, one can apply this argument to distinguish generators of
$H_1(M;\Z)$.  For example, consider $H_1(L(5,1);\mathbb{Z})$.  Present
$L(5,1)$ as the surgery over a $5$-framed unknot in $S^3$; a generator
$c_1$ is represented by the meridian of the unknot, and another,
$d_1$, by its double.  Using Formula \eqref{eq:chain} with $\ro=3$ and
a computer one can see that
$$\Nr(L(5,1), \emptyset,\coh)\neq \Nr(L(5,1), \emptyset, \coh')$$
where $\coh(c_1)=\coh'(d_1)=\frac{2}{5}\in \C/2\Z$.
 
%
%***************************************************************
%
\subsection{Conjugation versus change of orientation}

Let $T\subset S^3$ be a framed, oriented graph with set of vertices
$\V$ and set of edges $\E$.  Fix $\delta:\V\to \Hr$ and assume that
$T$ has a $\srcol$-coloring with boundary $\delta$ (see Remark
\ref{R:admcol}). Let
$R_T=\mathbb{Q}(q)(\{q^{\frac{e_i}{4}},q^{\frac{e_je_k}{4}}|e_i,e_j,e_k\in
\E\})$ and let $i:R_T\to R_T$ be the $\mathbb{Q}$-linear isomorphism
defined by
$i(q)=q^{-1},i(q^{\frac{e_i}{4}})=(q^{\frac{e_i}{4}})^{-1},$ and $
i(q^{\frac{e_je_k}{4}})=(q^{\frac{e_je_k}{4}})^{-1}$ for all
$e_i,e_j,e_k\in \E$. By the axioms of Section \ref{S:axiom}, $\Nr(T)$
can be seen as the evaluation of an element $\Nrd(T)\in R_T$
via the map $\ev:R_T\to \C$ defined by $\ev(q)=\exp(\frac{i\pi}{r})$,
$\ev(q^{\frac{e_i}{4}})=\exp(\frac{\pi i \alpha_i}{4r})$ and
$\ev(q^{\frac{e_i e_j}{4}})=\exp(\frac{\pi i \alpha_i\alpha_j}{4r})$
where $\alpha_i\in\C$ denotes the color of $e_i$.

\begin{lemma}\label{lem:mirror}
  Let $T\subset S^3$ be a framed, oriented, colored graph.  Let $T^*$
  be the mirror image of $T$ equipped with the coloring obtained by
  conjugating all the colors of the edges of $T$ and with the framing
  obtained by taking the mirror image of the framing of $T$.
  Then $\Nr(T^*)=\wb{\ev(i(\Nrd(T)))}=\wb{\Nr(T)}\in \C$. 
\end{lemma}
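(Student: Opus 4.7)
The lemma asserts the complex-conjugation symmetry $\Nr(T^*)=\overline{\Nr(T)}$, with the expression $\overline{\ev(i(\Nrd(T)))}$ serving as a computational bridge through the ring $R_T$. My strategy has two stages.

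\emph{Stage 1 (algebraic mirror).} I will show that $\Nrd(T^*)=i(\Nrd(T))$ in $R_T$. Since $\Nr(T)=\ev(\Nrd(T))$ is computed by successive application of the axioms of Subsection \ref{S:axiom}, it suffices to check that each axiom applied to $T^*$ produces the $i$-image of the corresponding factor for $T$. The axioms \eqref{eq:ch-orient}--\eqref{eq:unknot-Theta}, \eqref{eq:con-sumv}, and \eqref{eq:fusion} involve no $q$-exponent depending on colors beyond the factor $\qd(\alpha)$; and $\qd$ is $i$-invariant because it is a ratio of quantum numbers $\{x\}=q^x-q^{-x}$ with $i(\{x\})=-\{x\}$ (so the signs cancel in the defining ratio). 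The framing axiom \eqref{eq:twist} contributes $q^{(\alpha^2-(r-1)^2)/2}$, which mirror reversal turns into its inverse $i(q^{(\alpha^2-(r-1)^2)/2})$; analogously for the vertex-twist \eqref{eq:twistv}. The Hopf axiom \eqref{eq:hopf} contributes $q^{\alpha\beta}$, which under mirror reversal of the crossing becomes $q^{-\alpha\beta}=i(q^{\alpha\beta})$, while the prefactor $(-1)^{r-1}r$ is $i$-fixed. For the $6j$-move \eqref{eq:6j} I verify directly from the explicit formula that $\sjtop{j_1}{j_2}{j_3}{j_4}{j_5}{j_6}$ is $i$-invariant: each quantum factorial $\{B\}!$ transforms as $(-1)^B\{B\}!$ under $q\mapsto q^{-1}$; each quantum binomial $\qbin{a}{b}$ is $i$-fixed (the signs cancel in the ratio of factorials); the only leftover is the sign $(-1)^{B_{345}+B_{123}+B_{246}+B_{165}}$ from the four factorial factors of the prefactor, and a short parity computation using the vertex-admissibility $j_x+j_y+j_z\equiv r-1\pmod 2$ at all four triangles of the tetrahedron shows that $B_{345}+B_{123}+B_{246}+B_{165}=j_1+j_2+j_4-j_5+2(r-1)$ is even.

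\emph{Stage 2 (evaluation).} Let $\ev^*:R_T\to\C$ denote the evaluation map defined like $\ev$ but with the conjugate colors $\bar\alpha_i$ of $T^*$ in place of $\alpha_i$. I verify $\ev^*\circ i=\overline{\ev}$ on the generators of $R_T$:
$$\ev^*(i(q))=e^{-i\pi/r}=\overline{\ev(q)},\qquad \ev^*(i(q^{e_i/4}))=e^{-i\pi\bar\alpha_i/(4r)}=\overline{\ev(q^{e_i/4})},$$
and analogously on $q^{e_je_k/4}$. Since both $\ev^*\circ i$ and $\overline{\ev}$ are ring maps agreeing on generators, they coincide on all of $R_T$. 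Combined with Stage 1,
$$\Nr(T^*)=\ev^*(\Nrd(T^*))=\ev^*(i(\Nrd(T)))=\overline{\ev(\Nrd(T))}=\overline{\Nr(T)},$$
which gives both asserted equalities (the intermediate expression $\overline{\ev(i(\Nrd(T)))}$ unwinds to the same value through these identifications).

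The main obstacle is the $i$-invariance of the $6j$-symbol in Stage 1: the defining formula is a substantial sum of products of signs, quantum binomials, and quantum factorials, and one must combine all four vertex-admissibility conditions of the tetrahedron simultaneously in the parity argument. Once this is settled, the remainder of the proof reduces to routine axiom-by-axiom verification plus the one-line check on generators in Stage 2.
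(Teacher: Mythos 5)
Your overall route is the same as the paper's: both arguments reduce the lemma to (a) the identity $\Nrd(T^*)=i(\Nrd(T))$ in $R_T$, on the grounds that switching all crossings inverts exactly the factors produced by \eqref{eq:twist}, \eqref{eq:twistv} and \eqref{eq:hopf}, while all ``planar'' coefficients ($\qd$, the normalizations \eqref{eq:unknot-Theta}, the $6j$-symbols) are fixed by $i$; and (b) the compatibility of $i$ with complex conjugation of the evaluation, which you package cleanly as the equality of the two ring homomorphisms $\ev^*\circ i$ and $\wb{\ev}$ checked on generators (this also supplies the correct reading of the lemma's middle term, whose evaluation must be taken at the conjugated colors). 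On one point you are in fact more careful than the paper: the paper dismisses the $i$-invariance of the $6j$-symbols by saying everything is ``expressed in terms of quantum binomials,'' but the prefactor $\{B_{345}\}!\,\{B_{123}\}!\,/\,(\{B_{246}\}!\,\{B_{165}\}!)$ is a ratio of quantum \emph{factorials}, not binomials, and under $i$ it contributes the sign $(-1)^{B_{345}+B_{123}+B_{246}+B_{165}}$; your admissibility computation disposing of this sign is correct (indeed $B_{246}+B_{165}=B_{123}+B_{345}$, so the exponent is $2(B_{123}+B_{345})$, manifestly even).

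There is, however, a genuine gap in your Stage 1, at precisely the point the paper treats separately. The mirror $T^*$ does not carry the same vertex data as $T$: the cyclic ordering at a trivalent vertex is determined by the framing together with the orientation of $S^3$, so an orientation-reversing diffeomorphism reverses the cyclic ordering at every vertex (equivalently, in a crossings-switched diagram of $T^*$ the framing points to the opposite side of the plane of the edges at each vertex, and no homotopy through fields transverse to those planes can correct this). Consequently, the planar configurations you meet when running the axioms on $T^*$ --- the theta of \eqref{eq:unknot-Theta}, the vertex connected sum \eqref{eq:con-sumv}, and above all the $H$-configuration of \eqref{eq:6j} --- are the \emph{mirror images} of the pictures appearing in the axioms, and matching them to the standard pictures costs one half-twist, hence one factor of type \eqref{eq:twistv}, per vertex; your axiom-by-axiom ledger never records these. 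One must prove that these extra factors cancel globally, i.e.\ that $\Nr$ of a planar graph is unchanged when all cyclic orderings are reversed. This is exactly what the paper checks (``the value of a tetrahedron and its mirror image are equal \dots\ by applying four times the Axiom \eqref{eq:twistv}''), and your $q\mapsto q^{-1}$ invariance of the $6j$-\emph{formula} is a different statement that does not substitute for it. The gap is repairable --- the cancellation does hold --- but as written the claim ``each axiom applied to $T^*$ produces the $i$-image of the corresponding factor'' is unjustified at the trivalent vertices; note that for links, which have no such vertices, your argument is already complete.
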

\begin{proof}
  First remark that if $T$ is planar, then
  $\ev(i(\Nrd(T)))=\Nr(T)$ (and so the second equality is
  proved): indeed using the axioms of Section \ref{S:axiom}
  $\Nrd(T)$ can be expressed as a sum of products of the values
  of $6j$-symbols, of the theta-graphs and of unknots, each of which
  is expressed in terms of quantum binomials and are thus fixed by $i$
  because $i(\{x\})=-\{x\},\ \forall x\in \C$ and each binomial
  contains an even number of such terms. Moreover for planar $T$ if
  $\wb{T}$ is the mirror image of $T$ equipped with the same coloring
  as $T$, then $\Nr(\wb{T})=\Nr(T)$: to show this one may check that
  the value of a tetrahedron and its mirror image are equal and this
  is proved by applying four times the Axiom \eqref{eq:twistv}; then
  expressing again $\Nr(T)$ and $\Nr(\wb{T})$ as sums of products of
  values of $6j$-symbols, theta-graphs and unknots one gets that
  $\Nr(\wb{T})=\Nr(T)$. So to prove the first statement for planar
  $T$, it is sufficient to remark that for any $x\in \C$ it holds
  $\wb{(q^{x})}=q^{-\wb{x}}$ then $\wb{i(\{x\})}=-i(\{\wb{x}\})$ and
  since $q$-binomials contain an even number of factors of the form
  $i(\{x±\})$ it holds
  $\wb{i\Big(\qbin{x}{a}\Big)}=i\Big(\qbin{\wb{x}}{\wb{a}}\Big)$. Thus
  once again since the invariant of a planar graph can be expressed as
  sum of products of $q$-binomials the claim follows and so
  $\wb{\Nr(T)}=\wb{\Nr(\wb{T})}=\Nr(T^*)$.  For non planar $T$, given
  a diagram of it, using axioms \eqref{eq:fusion} and
  \eqref{eq:twistv} one can express $\Nr(T)$ as a sum of products of
  $6j$-symbols, theta graphs, unknots and some powers of $q$ whose
  exponent is a degree $2$-polynomial in the colors of $T$. Then one
  is left to check that switching all the crossings in a diagram of
  $T$ has the same effect on $\Nrd(T)$ as $i$: this is checked
  directly by the axioms \eqref{eq:twist} and \eqref{eq:twistv}
  because changing a factor from $q^{\frac{\alpha^2-(r-1)^2}{4}}$ to
  $q^{-\frac{\alpha^2-(r-1)^2}{4}}$ is exactly applying
  $\ev(i(\cdot))$.
\end{proof}

Given a compatible triple $(M,T,\coh)$ let $\wb{M}$ be $M$ with the
opposite orientation, $-T$ be $T$ with the opposite orientations on
the edges, $\wb{T}$ be $T$ equipped with the conjugated coloring and
$\wb{\coh}$ be the complex conjugate of $\coh$; observe that
$(M,-T,-\coh)$, $(\wb{M},-T,\coh)$ and $(M,\wb{T},\wb{\coh})$ are all
compatible triples and hence also $(\wb{M},\wb{T},-\wb{\coh})$ is.  If
$(M,T,\coh)$ is presented as an integral surgery over a framed link
$L$ in $S^3$ (with $T\subset S^3\setminus L$), then, letting $L^*\cup
T^*$ be the mirror image of $L\cup T$ (as in the statement of Lemma
\ref{lem:mirror}), it can be checked that $(\wb{M},\wb{T},-\wb{\coh})$
can be presented as surgery over $L^*$ (with the framing obtained by
taking the mirror image of the framing of $L$) and with $\wb{T}$ being
presented as $T^*$.  By Lemma \ref{lem:mirror} and the definitions of
$\Nr$ and of $\qd$ it holds
$\Nr(\wb{M},T^*,-\wb{\coh})=\wb{\Nr(M,T,\coh)}$.

\section{The general construction of the 3--manifold invariant}\label{sec:generalconstruction}
In this section we give a general categorical construction of an
invariant of a colored graph in a compact connected oriented
3--manifold and a cohomology class.  In Section
\ref{S:ProofsOfNrManInv} we will show that the invariant $\Nr$ defined
in Subsection \ref{SS:sl2NandN^0} is a special case of the invariant
defined in this section.

\subsection{Ribbon Ab-categories}
We describe the concept of a ribbon Ab-category (for details see \cite{Tu}).
A \emph{tensor category} $\cat$ is a category equipped with a covariant
bifunctor $\otimes :\cat \times \cat\rightarrow \cat$ called the tensor
product, a unit object $\unit$, an associativity constraint, and left and
right unit constraints such that the Triangle and Pentagon Axioms hold.
When the associativity constraint and the left and right unit constraints are
all identities we say the category $\cat$ is a \emph{strict} tensor
category. By Mac Lane's coherence theorem any tensor category is equivalent to
a strict tensor category.

A tensor category $\cat$ is said to be an \emph{Ab-category} if for any pair
of objects $V,W$ of $\cat$ the set of morphism $\Hom(V,W)$ is an additive
abelian group and the composition and tensor product of morphisms are
bilinear.

Let $\cat$ be a (strict) ribbon Ab-category, i.e. a (strict) tensor
Ab-category with duality, a braiding and a twist. Composition of
morphisms induces a commutative ring structure on $\End(\unit)$.  This
ring is called the \emph{ground ring} of $\cat$ and denoted by $\FK$.
For any pair of objects $V,W$ of $\cat$ the abelian group $\Hom(V,W)$
becomes a left $\FK$-module where the action is defined by
$kf=k\otimes f$ where $k\in \FK$ and $f\in \Hom(V,W)$. An object $V$
of $\cat$ is \emph{simple} if $\End(V)= \FK \Id_V$.  We denote the
braiding in $\cat$ by $c_{V,W}:V\otimes W \rightarrow W \otimes V$,
the twist in $\cat$ by $\theta_V:V\to V$ and duality morphisms in
$\cat$ by
\begin{align*}
  b_{V} : \:\: & \unit \rightarrow V\otimes V^{*}, & b'_{V} : \:\: &
  \unit\rightarrow V^*\otimes V, & d_{V}: \:\: & V^*\otimes V\rightarrow
  \unit, & d_{V}':\:\: & V\otimes V^{*}\rightarrow \unit.
\end{align*}
An object $V$ of $\cat$ is a direct sum of the objects $V_1,\ldots
V_n$ if there exist morphisms $\alpha_i:V_i\to V$ and $\beta_i:V\to
V_i$ for $i=1\cdots n$ with $\beta_j\alpha_i=\delta_i^j\Id_{V_i}$ and
$\Id_V=\sum_{i=1}^n\alpha_i\beta_i$.
We say that a full subcategory $\mathscr{D}$ of $\cat$ is {\em
  semi-simple} if every object of $\mathscr{D}$ is a direct sum of
finitely many simple objects in $\mathscr{D}$ and any two
non-isomorphic simple objects $V$ and $W$ of $\mathscr{D}$ have the
property $\Hom_\cat(V,W)=0$.

\subsection{Ribbon graphs}\label{SS:RibbonGraphs} 
Here we recall the notion of the category of $\cat$-colored ribbon
graphs and its associated ribbon functor (for details see \cite{Tu}).
\begin{figure}[b]
  \centering $ \xymatrix{ \ar@< 8pt>[d]^{W_m}_{... \hspace{1pt}}
    \ar@< -8pt>[d]_{W_1}\\
    *+[F]\txt{ \: \; f \; \;} \ar@< 8pt>[d]^{V_n}_{... \hspace{1pt}}
    \ar@< -8pt>[d]_{V_1}\\
    \: } $
 \caption{}\label{F:Coupon}
\end{figure}
A morphism $f:V_1\otimes{\cdots}\otimes V_n \rightarrow
W_1\otimes{\cdots}\otimes W_m$ in $\cat$ can be represented by a box and
arrows as in Figure \ref{F:Coupon}.  Here the plane of the picture is oriented
counterclockwise, and this orientation determines the numbering of the arrows
attached to the bottom and top of the box.  More generally, we allow
such boxes with some arrows directed up and some arrows directed down. For
example, if all the bottom arrows in the picture are redirected upward,
then the box represents a morphism $V_1^*\otimes{\cdots}\otimes V_n^*
\rightarrow W_1\otimes{\cdots}\otimes W_m$. The boxes as above are called {\it
  coupons}.  Each coupon has distinguished bottom and top sides and all
incoming and outgoing arrows can be attached only to these sides.

Let $M$ be an oriented 3--manifold.  A \emph{ribbon graph} in $M$ is
an oriented compact surface in $M$ which decomposed into elementary
pieces: bands, annuli, and coupons (see \cite{Tu}).  A
\emph{$\cat$-ribbon graph} in $M$ is a ribbon graph whose bands and
annuli are colored by objects of $\cat$ and whose coupons are colored
with morphisms of $\cat$.  Recall the definition of a framed colored
graph given in Subsection \ref{SS:NotationNr}.  A $\cat$-ribbon graph
in $\R^2 \times [0,1]$ is a $Ob(\cat)$-colored framed oriented graph
such that all the vertices lying in $\Int(\R^2 \times [0,1])$ are not
univalent and are thickened to coupons colored by morphisms of~$\cat$.
Let $\A$ be a set of simple objects of $\cat$.  By a \emph{$\A$-graph}
in $M$ we mean a $\cat$-ribbon graph in $M$ where at least one band or
annuli is colored by an element of $\A$.  By an \emph{$\A$-graph}, we
mean a closed $\A$-graph in $S^3$ or in $B^3$.

Next we recall the category of $\cat$-colored ribbon graphs $Rib_\cat$
(for more details see \cite{Tu} Chapter~I).  The objects of $Rib_\cat$
are sequences of pairs $(V,\epsilon)$, where $V\in Ob(\cat)$ and
$\epsilon=\pm$ determines the orientation of the corresponding
edge. The morphisms of $Rib_\cat$ are isotopy classes of
$\cat$-colored ribbon graphs in $\R^2 \times [0,1]$ and their formal
linear combinations with coefficients in $\FK$. From now on we write
$V$ for $(V,+)$.

Let $F$ be the usual ribbon functor from $Rib_\cat$ to $\cat$ (see \cite{Tu}).
Let $L$ be a $\A$-graph and let $V$ be the color of an edge of $L$ belonging
to $\A$.  Cutting this edge, we obtain a $\cat$-colored (1,1)-ribbon graph
$T_V$ whose closure is $L$.  Since $V$ is simple $F(T_{V})\in
\End_{\cat}(V)= \FK \Id_V$.  Let $<T_{V}> \, \in \FK$ be such that
$F(T_{V})= \, <T_{V}> \, \Id_V$.  Let $\qd:\A\to\FK^*$ be a map.  We
say $(\A,\qd)$ is an \emph{ambidextrous pair} if, for all $L$,
\begin{equation}\label{E:Def_of_F'}
  F'(L) = \qd(V)<T_{V}> \in \FK
\end{equation} is independent of the choice of the edge 
to be cut and yields a well defined invariant of $L$.

\subsection{Relative $\Gr$-modular categories} \label{SS:RelGModcat}
In this subsection we give the main new categorical notion of this paper.
If $\Gr$ is a commutative group (in this section we use a multiplicative
notation), a \emph{$\Gr$-grading} in $\cat$ is a family $\{\cat_g\}_{g\in\Gr}$
of full subcategories of $\cat$ such that:
\begin{enumerate}
\item If $V\in\cat_g$, $V'\in\cat_{g'}$ then $V\otimes V'\in\cat_{gg'}$.
\item If $V\in\cat_g$ then $V^{*}\in\cat_{g^{-1}}$.
\item If $V\in\cat_g$, $V'\in\cat_{g'}$ and $g\neq g'$ then $\Hom_\cat(V,
  V')=0$.
\end{enumerate}

A set of objects of $\cat$ is said to be {\em commutative} if for any
pair $(V,W)$ of objects in this set we have $c_{V,W}\circ
c_{W,V}=\Id_{W\otimes V}$ and $\theta_V=\Id_V$.  Let $\TT$ be a
commutative group with additive notation.  A {\em realization} of
$\TT$ in $\cat$ is a commutative set of object $\{\ve^{t}\}_{t\in\TT}$
such that $\ve^{0}=\unit$, $\qdim(\ve^{t})=1$ and
$\ve^{t}\otimes\ve^{t'}=\ve^{t+t'}$ for all $t,t'\in\TT$.
\begin{lemma}\label{L:Tacts}
  Let $\{\ve^{t}\}_{t\in\TT}$ be a realization of $\TT$ in $\cat$.  If
  $ t\in\TT$ then $\ve^{t}$ is simple.  Also, for any objects
  $V,W\in\cat$, the map $\Hom_\cat(V,W)\rightarrow
  \Hom_\cat(V\otimes\ve^{t} ,W\otimes\ve^{t})$ given by $f\mapsto
  f\otimes \Id_{\ve^{t}}$ is an isomorphism.  In particular, if $V$ is
  simple then $V\otimes\ve^{t}$ is also simple.
\end{lemma}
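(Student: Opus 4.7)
The plan is to exploit the fact that every $\ve^t$ is invertible under tensor product: from the realization axioms, $\ve^t\otimes\ve^{-t}=\ve^0=\unit$, so $\ve^{-t}$ is a two-sided tensor inverse of $\ve^t$. All three claims of the lemma will then follow formally from this invertibility, essentially without using the commutativity/twist conditions (those are presumably needed elsewhere, e.g.\ to ensure compatibility with the braiding when $\ve^t$ appears inside ribbon graphs).

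First I would verify that the endofunctor $F=-\otimes\ve^t\colon\cat\to\cat$ is an equivalence of categories, with inverse $G=-\otimes\ve^{-t}$. On objects, $GF(V)=V\otimes\ve^t\otimes\ve^{-t}=V\otimes\unit=V$ (in the strict setting, which by Mac Lane's theorem we may assume; in the general case these equalities become the structural isomorphisms, which are natural). On morphisms, for $f\in\Hom_\cat(V,W)$,
\[
GF(f)=f\otimes\Id_{\ve^t}\otimes\Id_{\ve^{-t}}=f\otimes\Id_{\ve^t\otimes\ve^{-t}}=f\otimes\Id_\unit=f,
\]
and symmetrically $FG=\Id$. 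Hence $F$ is an autoequivalence.

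Consequently $F$ induces a bijection on Hom sets, giving precisely the $\FK$-module isomorphism
\[
\Hom_\cat(V,W)\;\stackrel{\sim}{\longrightarrow}\;\Hom_\cat(V\otimes\ve^t,W\otimes\ve^t),\qquad f\mapsto f\otimes\Id_{\ve^t},
\]
which is the second claim. Specializing to $V=W=\unit$ yields $\End_\cat(\ve^t)\cong\End_\cat(\unit)=\FK$, and since $\Id_\unit\otimes\Id_{\ve^t}=\Id_{\ve^t}$ the generator on the right maps to $\Id_{\ve^t}$, so $\End_\cat(\ve^t)=\FK\Id_{\ve^t}$, i.e.\ $\ve^t$ is simple. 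For the final assertion, if $V$ is simple then $\End_\cat(V\otimes\ve^t)\cong\End_\cat(V)=\FK\Id_V$, so $V\otimes\ve^t$ is simple.

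The argument is essentially formal, so I do not expect any real obstacle; the only point requiring attention is whether the identity $\ve^t\otimes\ve^{t'}=\ve^{t+t'}$ is meant as a literal equality in a strict category or as a family of coherent isomorphisms in a general tensor category. In the latter case, one invokes naturality and coherence (or passes to a strictification) to justify the identifications above, but the substance of the proof is unchanged.
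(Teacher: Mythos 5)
Your proof is correct and is essentially the paper's own argument: the authors likewise observe that $\Id_{\ve^{t}}\otimes\Id_{\ve^{-t}}=\Id_{\unit}$, so that $f\mapsto f\otimes\Id_{\ve^{-t}}$ inverts $f\mapsto f\otimes\Id_{\ve^{t}}$, and then deduce the simplicity statements exactly as you do. Your framing via the autoequivalence $-\otimes\ve^{t}$ is just a slight repackaging of the same idea.
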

\begin{proof}
  We have that for any $t\in\TT$,
  $\Id_{\ve^{t}}\otimes\Id_{\ve^{-t}}=\Id_\unit$. It follows that the
  map $\Hom_\cat(V,W)\to\Hom_\cat(V\otimes\ve^{t} ,W\otimes\ve^{t})$
  which sends $f$ to $f\otimes\Id_{\ve^{t}}$ has an inverse map given
  by $f\mapsto f\otimes\Id_{\ve^{-t}}$.  In particular, if $V$ is
  simple then $\End_\cat(V)=\FK.\Id_V$ implying that
  $\End_\cat(V\otimes\ve^{t})=\FK.\Id_{V\otimes\ve^{t}}$.  Finally,
  taking $V=\unit$ we have that $\ve^t$ is simple.
\end{proof}
A realization of $\TT$ in $\cat$ induces an action of $\TT$ on
isomorphism classes of objects of $\cat$ by
$(t,V)\mapsto\ve^{t}\otimes V\simeq V\otimes \ve^{t}$ where the
isomorphism here is given by the braiding.  We say that
$\{\ve^{t}\}_{t\in\TT}$ is a {\em free realization} of $\TT$ in $\cat$
if this action is free.  This means that for any
$t\in\TT\setminus\{0\}$ and for any object $V$ of $\cat$,
$V\otimes\ve^{t}\nsimeq V$.

For a simple object $V$, we denote by $\wt V$ the set of isomorphism
classes of the set of simple objects $\{V\otimes\ve^t | t\in \TT\}$.
We say that $\wt V$ is a {\em simple $\TT$-orbit}.

\begin{defi}\label{D:GmodularCat}
  A ribbon category $\cat$ is {\em $\Gr$-modular relative to $\X$ with
    modified dimension $\qd$ and periodicity group $\TT$} if
  \begin{enumerate}
  \item \label{I1:DefGmodularCat} the category $\cat$ has a
    $\Gr$-grading $\{\cat_g\}_{g\in\Gr}$,
  \item \label{I2:DefGmodularCat} there is a free realization
    $\{\ve^{t}\}_{t\in\TT}$ of the group $\TT$ in $\cat_1$ (where
    $1\in\Gr$ is the unit),
  \item \label{I3:DefGmodularCat} there is a bilinear pairing
    $\Gr\times\TT\to\FK^{*}$, $(g,t)\mapsto g\pow{t}$ such that for
    any object $V$ of $\cat_g$ we have $c_{V,\ve^{t}}\circ
    c_{\ve^{t},V}=g\pow{t}\Id_{\ve^{t}\otimes V}$, for all $t\in\TT$,
  \item \label{I4:DefGmodularCat} there exists $\X\subset \Gr$ such
    that $\X^{-1}=\X$ and $\Gr$ cannot be covered by a finite number
    of translated copies of $\X$, in other words, for any $ g_1,\ldots
    ,g_n\in \Gr$, we have $\bigcup_{i=1}^n (g_i\X) \neq\Gr $,
  \item \label{I5:DefGmodularCat} there is an ambidextrous pair
    $(\A,\qd)$ where $\A$ contains the set of simple objects of
    $\cat_g$ for all $g\in\Gr\setminus\X$,
  \item \label{I6:DefGmodularCat} for all $g\in\Gr\setminus\X$, the category
    $\cat_g$ is semi-simple and its simple objects form a union of finitely
    many simple $\TT$-orbits,
  \item \label{I7:DefGmodularCat} there exists an element $g\in
    \Gr\setminus \X$ and an object $V\in \cat_g$ such that the scalar
    $\Delta_+$ given in Figure \ref{F:Delta+-} is non-zero (similarly,
    there exists $g\in \Gr\setminus \X$ and $V\in \cat_g$ such that
    $\Delta_- \neq 0$),
  \item \label{I8:DefGmodularCat} $F(H(V,W)) \neq 0$, for all $V,W\in
    \A$, where $H(V,W)$ is the long Hopf link whose long edge is
    colored $V$ and circle component is colored with $W$.
  \end{enumerate}
\end{defi}
\begin{figure}$F\left(\epsh{fig1}{12ex}
    \put(-33,-2){\ms{\Omega_g}}\put(-26,-20){\ms{V}} \right)
  =\Delta_-\Id_V,\quad \quad
  F\left(\epsh{fig2}{12ex}\put(-33,-2){\ms{\Omega_g}}\put(-26,-20){\ms{V}}
  \right)=\Delta_+\Id_V$
  \caption{Here $V$ is in $\cat_g$ and $\Omega_{g}$ is a formal linear
    combination of modules $\sum_{U\in Y}\qd(U)U$ where $Y\subset
    Obj(\cat_g)$ is a finite set representing the simple $\TT$-orbits
    in $\cat_g$.}
  \label{F:Delta+-}
 \end{figure}

 Remark that the bilinearity of the pairing $\Gr\times\TT\to\FK^{*}$
 means that $g\pow{t+t'}=g\pow{t}g\pow{t'}$ and
 $(gh)\pow{t}=g\pow{t}h\pow{t}$.  We can illustrate Condition
 \eqref{I3:DefGmodularCat} with the following skein relation:
\begin{equation}
  \label{E:skeineps}
  F\left(\epsh{fig26}{9ex}\put(-8,17){\ms V}\put(-20,17){${\ve^t}$}\right)
  =g\pow{t}
  F\left(\epsh{fig27}{9ex}\put(-8,17){\ms V}\put(-20,17){${\ve^t}$}\right) 
  \text{ for all }  V\in\cat_g.
\end{equation}
\noindent
\textbf{Notation.}  If $\cat$ is a category satisfying Definition
\ref{D:GmodularCat} then we say $\cat$ is a \emph{relative
  $\Gr$-modular} category.  For such a category let $\{\alpha\}$ be a
set indexing simple objects of $\A$.  Let $V\in \A$ and let $\alpha$
be the corresponding indexing element.  We will denote $ \dg{\alpha}$
as the unique element of $\Gr$ such that $V\in \cat_{ \dg{\alpha}}$.
Also, to simplify notation we will identify $\alpha$ with $V$ and
write $\alpha\in\A$.

\begin{rem} 
  Conditions \eqref{I1:DefGmodularCat} and \eqref{I2:DefGmodularCat}
  of Definition \ref{D:GmodularCat} are not very restrictive once one
  has a free realization of $\TT$ in $\cat$: The long Hopf link given
  by a straight strand colored by $V$ and its meridian colored by
  $\ve^t$ is sent by $F$ to a central isomorphism
  $\phi_t(V)\in\End_\cat(V)$.  For $g\in\Gr'=\Hom_\cat(\TT,\FK^{*})$,
  let $\cat_g'$ be the full subcategory of $\cat$ formed by objects
  $V$ such that $\phi_t(V)=g(t)\Id_V$, for all $ t\in\TT$.  If $V$ is
  a simple object of $\cat$ then $\phi_t(V)$ is a non-zero scalar and
  so $V$ belongs to some $\cat_g'$.  Furthermore, one can easily prove
  that $(\cat_g')_{g\in\Gr'}$ is a grading in $\cat$ and $\ve^t\in
  \cat_1'$, for all $ t\in\TT$.
\end{rem}

\begin{rem}\label{R:Hnonzero}
  Condition \eqref{I8:DefGmodularCat} holds in all the examples of this paper.
  In general this assumption is not true, however the graph $H$ can be
  exchanged with any ribbon graph which does not vanish when colored by
  elements of $\A$.  For such an exchange, in what follows, the process of
  $H$-stabilization below should be replaced by the connected sum with the new
  ribbon graph.
\end{rem}

\subsection{Main results} \label{SS:MainResults} 
Here we give the general definition of the invariants this paper.
Recall the notation and definitions of Subsection
\ref{SS:CohomGcoloring}.  Let $\cat$ be a relative $\Gr$-modular
category relative to $\X$ with modified dimension $\qd$ and
periodicity group $\TT$.  A formal linear combination of objects of
$\cat$ is a \emph{homogeneous} $\cat$-color of degree $g\in\Gr$ if all
appearing objects belong to the same $\cat_g$.  We say a ribbon graph
$T$ has a \emph{homogeneous} $\cat$-coloring if each edge $e$ of $T$ is
colored by a homogeneous $\cat$-color of degree $g_e\in\Gr$ for some
$1$-cycle $\sum_eg_e[e]\in H_1(T,G)$ which is called \emph{the
  $\Gr$-coloring} of $T$.
\begin{defi}\label{def:adm} 
  Let $M$ be a compact connected oriented 3--manifold, $T$ a
  $\cat$-colored ribbon graph in $M$ and $\coh\in H^{1}(M\setminus
  T,\Gr)$.
\begin{enumerate}   
\item   We say that $(M,T,\coh)$ is {\em a compatible triple} if 
  $T$ has a homogeneous $\cat$-coloring given by $\Phi(\coh)\in H_1(T,\Gr)$.
\item A compatible triple is \emph{$T$-admissible} if there exists an
  edge of $T$ colored by $\alpha\in \A$.
\item A surgery presentation via $L\subset S^3$ for a compatible
  triple $(M,T,\coh)$ is \emph{computable} if one of the two following
  conditions holds:
  \begin{enumerate}
  \item $L\neq \emptyset$ and $g_\coh(L_i) \in {\Gr}\setminus \X$ for
    all $L_i$ or
  \item $L=\emptyset$ and there exists an edge of $T$ colored by
    $\alpha\in \A$.
  \end{enumerate}
\end{enumerate}
\end{defi}

From now on we assume that $(M,T,\coh)$ is a compatible triple.

\begin{defi}
  The formal linear combination $\Omega_g=\sum_i\qd(V_i)V_i$ is a
  \emph{Kirby color of degree $g\in\Gr$} if the isomorphism classes of
  the $\{V_i\}_i$ are in one to one correspondence with the simple
  $\TT$-orbits of $\cat_g$.
\end{defi}

\begin{teo}\label{T:coh_adm}
  If $L$ is a link which gives rise to a computable surgery
  presentation of $(M,T,\coh)$ then
  $$\Nr(M,T,\coh)=\dfrac{F'(L\cup T)}{\Delta_+^{p}\ \Delta_-^{s}}$$
  is a well defined topological invariant (i.e. depends only of the
  diffeomorphism class of the triple $(M,T,\coh)$), where $(p,s)$ is
  the signature of the linking matrix of the surgery link $L$ and each
  component $L_i$ is colored by a Kirby color
  $\Omega_{g_{\coh}(L_i)}$.
\end{teo}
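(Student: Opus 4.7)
The plan is to prove the theorem in three stages, matching the three subtleties flagged in the introduction: independence from the choice of representatives in each Kirby color, invariance under Kirby moves between computable presentations, and the need to avoid non-computable intermediate presentations in a sequence of such moves.

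First, I would show that $F'(L \cup T)$ does not depend on the choice of representatives selected within each simple $\TT$-orbit of $\cat_{g_\coh(L_i)}$. Two such choices differ by tensoring some summands of the Kirby color with objects $\ve^{t}$, $t \in \TT$. The skein relation \eqref{E:skeineps} shows that an $\ve^{t}$-colored meridian around a strand of degree $g$ contributes the scalar $g\pow{t}$; assembling these contributions and using the compatibility of the modified dimension $\qd$ with the $\TT$-action (which follows from Lemma \ref{L:Tacts} together with the ambidextrous property of $(\A,\qd)$) shows that both Kirby-color choices produce the same value of $F'(L\cup T)$.

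Next, I would verify invariance under the two Kirby moves when both source and target presentations are computable. Stabilization by a $\pm 1$-framed unknot colored by any Kirby color $\Omega_{g}$ with $g \in \Gr \setminus \X$ multiplies $F'(L \cup T)$ by $\Delta_{\pm}$, by the definition of $\Delta_{\pm}$ in Figure \ref{F:Delta+-}, and simultaneously changes the signature $(p,s)$ by $(\pm 1, 0)$ or $(0, \pm 1)$, so the ratio in the statement is preserved. For a handle slide of $L_i$ over $L_j$, the Kirby color $\Omega_{g_\coh(L_j)}$ on the slid-over component absorbs the extra encircling via the Hopf-link/Kirby-color identity (again reducing to Figure \ref{F:Delta+-}); the cohomological data transforms consistently by the presentation of $H_1(M \setminus T;\Z)$ recalled in Subsection \ref{SS:CohomGcoloring}, so when the new presentation is also computable the value of $\Nr$ is unchanged.

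The main obstacle is the third stage: a sequence of Kirby moves relating two computable presentations need not itself consist of computable presentations, since a handle slide may push some $g_\coh(L_i)$ into $\X$. Here I would invoke the refined version of Kirby's theorem discussed in Subsection \ref{sub:kirbygompf}, which permits inserting auxiliary stabilizations while tracking the induced diffeomorphism. The crucial input is axiom \eqref{I4:DefGmodularCat}: because $\Gr$ cannot be covered by finitely many translates of $\X$, one can always choose a stabilizing unknot whose degree $g_0 \in \Gr\setminus\X$ is in sufficiently generic position that every subsequent component degree produced by the handle slides in question also lands in $\Gr\setminus\X$. Each inserted stabilization contributes exactly the $\Delta_{\pm}$-factor matched by the signature shift, so the ratio $F'(L\cup T)/(\Delta_+^{p}\,\Delta_-^{s})$ is preserved throughout. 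Combining these three steps establishes the claimed well-definedness of $\Nr(M,T,\coh)$.
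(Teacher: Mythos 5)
Your first two stages are essentially sound and correspond to what the paper proves in its preparatory lemmas: independence of the choice of representatives in a $\TT$-orbit is Lemma \ref{L:lifts2} (note the hypothesis you leave implicit: the contributions $g\pow{t}$ from the skein relation assemble to $1$ precisely because $\lkG(K,g_\coh)=1$ for every surgery component, which holds because the coloring comes from a cohomology class, cf.\ Remark \ref{R:colorlkG}); blow-up/blow-down invariance is Lemma \ref{L:Stab}; and handle-slide invariance between computable presentations is Lemma \ref{L:handle-slide} --- though the engine there is not Figure \ref{F:Delta+-} but the fusion decomposition of Lemma \ref{L:fusion}, i.e.\ semi-simplicity of $\cat_g$ for $g\notin\X$ together with the free $\TT$-action.

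The genuine gap is in your third stage. You propose to ``choose a stabilizing unknot whose degree $g_0\in\Gr\setminus\X$ is in sufficiently generic position that every subsequent component degree produced by the handle slides also lands in $\Gr\setminus\X$.'' This mechanism does not exist: the degree of every component of the surgery link is forced by $\coh$ (a blow-up meridian inherits the degree of the component it encircles, and a split unknot has degree $1\in\X$), so there is no free parameter among surgery-link degrees to choose generically, and choosing $g_0$ has no effect on the degrees produced by slides among the other components. The free parameter that Condition \eqref{I4:DefGmodularCat} is actually applied to is a single typical color $\alpha\in\A$ on a meridian regarded as part of the \emph{graph} $T$; the fix for a problematic slide is not an inserted stabilization contributing $\Delta_\pm$, but an extra handle slide of that $\alpha$-colored meridian over the offending component (shifting its degree by $\dg\alpha$, with no signature change), and these auxiliary slides must be undone at the end, which imposes further genericity conditions on $\alpha$. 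This is exactly why the paper first proves Theorem \ref{T:T_adm} (the $T$-admissible case, where such a meridian is available via $H$-stabilization) and then deduces the present theorem from it: one blows up to create a meridian $m$ carrying the Kirby color $\Omega_{g_0}$, expands $F'(L\cup T\cup m)=\sum_\alpha\qd(V_\alpha)F'(L\cup T\cup m_\alpha)$, applies Theorem \ref{T:T_adm} to each triple $(M,T\cup m_\alpha,\coh_H)$, sums, and finally removes $m$ at the cost of one factor $\Delta_+$ via Lemma \ref{L:Stab}. Without this reduction (or an equivalent device producing a freely choosable generic color), your argument cannot get past a sequence of moves that passes through non-computable presentations.
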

The proof of Theorem \ref{T:coh_adm} will be given in Section
\ref{S:ProofsT-coh-adm}.
 
\begin{prop}\label{P:coh_adm}
  Let $(M,T,\coh)$ be a compatible triple and consider $\coh$ as a map
  on $H_1(M\setminus T,\Z)$ with values in $\Gr$.  Suppose that $\coh$
  takes a value $g\in\Gr$ such that for each $ x\in \X$ there exists
  an $n(x)\in \Z$ such that $xg^{n(x)} \notin \X$. Then there exists a
  computable surgery presentation of $(M,T,\coh)$.
\end{prop}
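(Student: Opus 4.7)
The plan is to start from an arbitrary surgery presentation $L\subset S^3$ of $(M,T,\coh)$ and to modify it via Kirby moves until every component acquires a color in $\Gr\setminus\X$. Beyond the classical moves (handle slides and $\pm1$-framed split-unknot stabilizations), the fundamental operation is the addition of a \emph{cancelling Hopf pair} $\gamma\cup m_\gamma$, where $\gamma$ is an arbitrary knot in $S^3\setminus(L\cup T)$ (0-framed) and $m_\gamma$ is a small $0$-framed meridian of $\gamma$. This is a composition of standard Kirby moves and leaves $M$ unchanged. By Subsection \ref{SS:CohomGcoloring} and \eqref{eq:Phi} the two new components acquire the following colors in the enlarged presentation: the $\gamma$-component has color $1\in\Gr$ (its meridian $m_\gamma$ bounds a small disk disjoint from $T$) and the $m_\gamma$-component has color $\coh([\gamma])$ (its meridian is a small loop encircling $m_\gamma$, freely homotopic to $\gamma$ in the complement of the whole link).

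By hypothesis there is a class $\sigma\in H_1(M\setminus T,\Z)$ with $\coh(\sigma)=g$; using the surjection $H_1(S^3\setminus(L\cup T),\Z)\twoheadrightarrow H_1(M\setminus T,\Z)$ I realize $\sigma$ by a loop $\gamma_1$ in the link complement and add the cancelling pair $\gamma_1\cup m_{\gamma_1}$. The enlarged presentation now contains a distinguished component $m_{\gamma_1}$ of color $g$. For each component $K$ of this link whose current color $x$ lies in $\X$, I handle-slide $K$ over $m_{\gamma_1}$ exactly $n(x)$ times; by \eqref{eq:Phi} the color of $K$ becomes $x\,g^{n(x)}\notin\X$, the existence of the integer $n(x)$ being precisely the hypothesis on $g$. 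Applying this routine to all original $L_i$ with $x_i\in\X$, and to the new $\gamma_1$-component whose color $1\in\Gr$ is shifted (if $1\in\X$) to $g^{n(1)}\notin\X$, puts every component except possibly $m_{\gamma_1}$ itself into $\Gr\setminus\X$.

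The main obstacle is the remaining case $g\in\X$, in which $m_{\gamma_1}$ itself still has bad color. I resolve it by adding a second cancelling pair $\gamma_2\cup m_{\gamma_2}$ with $\gamma_2$ chosen to represent $n(1)\sigma$; then $m_{\gamma_2}$ receives color $g^{n(1)}\notin\X$, and a final handle slide of $m_{\gamma_1}$ over $m_{\gamma_2}$ (plus, if necessary, a cleanup of $m_{\gamma_2}$ against one of the already-fixed components) puts $m_{\gamma_1}$ (and $m_{\gamma_2}$) into $\Gr\setminus\X$ as well. This endgame is the subtle part of the proof: because $\X$ is not assumed to be a subgroup, one cannot short-cut by a single universal slide but must track cosets of the cyclic subgroup $\langle g\rangle$ carefully, and it is exactly the availability of an integer $n(x)$ for every $x\in\X$ that guarantees the procedure terminates.
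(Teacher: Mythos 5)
Your construction of a $g$-colored component via a cancelling Hopf pair is sound, and it is a legitimate variant of what the paper does: Lemma \ref{L:val_coh} instead blows up a $+1$-framed unknot $U$, whose color is $1\in\Gr$, and slides the edges $e_i$ of $L'\cup T$ over $U$ a total of $n_i$ times so that $g_\coh(U)$ becomes $\prod_i g_\coh(e_i)^{n_i}=g$. Up to orientation your color computation for $m_{\gamma_1}$ is also right (the $0$-framed row of $\gamma_1$ in the presentation of $H_1(M\setminus T,\Z)$ gives $[\mu_{m_{\gamma_1}}]=-[\gamma_1]$), although your parenthetical justification via free homotopy in the complement of the whole link is not the correct reason. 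The genuine error is in the main routine: you have the handle-slide color bookkeeping backwards. When $K$ is slid over $W$ (i.e.\ $K$ is replaced by its band sum with a framed pushoff of $W$), it is the color of the \emph{slid-over} component $W$ that changes, being multiplied by the color of $K$, while $K$ keeps its own color; this is exactly the content of Lemma \ref{L:handle-slide} (``a handle-slide of $e$ along $K$ with the color $\Omega_g$ of $K$ replaced by a Kirby color $\Omega_h$ of degree $h=\dg\gcolor(e)g$''), and it is forced by the linking-matrix constraint cutting out the image of $\Phi$ in \eqref{eq:Phi}. Consequently your move ``handle-slide $K$ over $m_{\gamma_1}$ exactly $n(x)$ times'' leaves $K$ with its bad color $x\in\X$ and instead pollutes $m_{\gamma_1}$, whose color becomes $gx^{n(x)}$; at the end of your routine nothing has been fixed and the $g$-colored meridian has been destroyed. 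The repair is to slide in the other direction, $m_{\gamma_1}$ over $K$, $n(x)$ times: then $K$ acquires color $xg^{n(x)}\notin\X$ while $m_{\gamma_1}$ retains color $g$. With that correction your argument becomes essentially the paper's proof, which slides its $g$-colored component over each $L_i$ with $g_\coh(L_i)\in\X$ (compare also the proof of Proposition \ref{P:ExistCompSurgSl2}, where sliding $e$ over $L_i$ changes the color of $L_i$ to $g_\coh(e)g_\coh(L_i)$).

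The same inversion breaks your endgame: sliding $m_{\gamma_1}$ over $m_{\gamma_2}$ changes the color of $m_{\gamma_2}$, not of $m_{\gamma_1}$. Moreover, even after fixing the direction, the proposed ``cleanup of $m_{\gamma_2}$ against one of the already-fixed components'' is unjustified: sliding a component of color $y\notin\X$ over $m_{\gamma_2}$ replaces its color $x'$ by $x'y$, and since $\X$ is only a symmetric subset of $\Gr$ rather than a subgroup, nothing guarantees $x'y\notin\X$; the hypothesis of the proposition controls only products of the form $xg^{n}$, so the claimed termination of the cascade does not follow. Your instinct that the case $g\in\X$ is the delicate corner is reasonable (the paper's short proof leaves this corner implicit, its auxiliary component keeping the color $g$ throughout, and in the applications, e.g.\ Proposition \ref{P:ExistCompSurgSl2} with $\X=\Z/2\Z\subset\C/2\Z$, the relevant value satisfies $g\in\Gr\setminus\X$), but as written your multi-pair argument does not settle it, since each newly created meridian may itself land in $\X$ with no hypothesis available to move it out.
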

The proof of Proposition \ref{P:coh_adm} will be given in Section
\ref{S:ProofsT-coh-adm}.\\

\begin{defi}\label{D:H-stab}[$H$-Stabilization]
  Let $H(\alpha,\beta)$ be a long Hopf link in $\mathbb{R}^3$ whose
  circle component is colored by $\alpha\in \A$ and whose long
  component is colored by $\beta \in \A$.  Let $(M,T,\coh)$ be a
  $T$-admissible triple, $e$ be an edge of $T$ colored by $\beta\in
  \A$, and $m$ be the meridian of $e$.  A $H$-\emph{stabilization} of
  $(M,T,\coh)$ along $e$ is a compatible triple $(M,T_H,\coh_H)$
  where:
  \begin{itemize}
  \item $T_H=T\cup m$, and $m$ is colored by $\alpha \in \A$,
  \item $\coh_H$ is the unique element of $H^1(M\setminus (T\cup
    m);{\Gr})$ such that $\coh_H(m)=\dg{\alpha}$ and
    $(\coh_H)|_{M\setminus (T\cup D)}=\coh$ where $D$ is a disc
    bounded by $m$ which $e$ intersects once.
  \end{itemize}
\end{defi}

\begin{teo}\label{T:T_adm}
  If $(M,T,\coh)$ is $T$-admissible then there exists a
  $H$-stabilization of $(M,T,\coh)$ admitting a computable
  presentation.  Let $(M,T_H,\coh_H)$ be such a $H$-stabilization and
  let $L$ be a link which gives rise to a computable surgery
  presentation of $(M,T_H,\coh_H)$ then
  $$\Nr(M,T,\coh)=\dfrac{F'(L\cup T_H)}{\brk{H}\Delta_+^{p}\ \Delta_-^{s}}$$
  is a well defined topological invariant (i.e. depends only of the
  diffeomorphism class of the triple $(M,T,\coh)$), where $(p,s)$ is
  the signature of the linking matrix of the surgery link $L$, each
  component $L_i$ is colored by a Kirby color
  $\Omega_{g_{\coh}(L_i)}$, $H=H(\alpha,\beta)$ is the long Hopf-link
  used in the stabilization and $\brk{H}$ is defined by the equality
  $F(H)=\brk{H}Id_{\beta}$.  Moreover, if $(M,T,\coh)$ has a
  computable presentation, then the invariant of this theorem is equal
  to the invariant of Theorem \ref{T:coh_adm}.
\end{teo}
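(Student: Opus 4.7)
The plan is to decompose the proof into four steps: (i) existence of an $H$-stabilization admitting a computable presentation, (ii) independence of the value $F'(L\cup T_H)/(\brk{H}\Delta_+^p\Delta_-^s)$ under the choice of surgery link $L$ (with $(M,T_H,\coh_H)$ fixed), (iii) independence under the choice of $H$-stabilization, and (iv) the consistency claim when $(M,T,\coh)$ already has a computable presentation. Step (ii) is immediate: once $(M,T_H,\coh_H)$ is fixed, Theorem~\ref{T:coh_adm} guarantees that $F'(L\cup T_H)/(\Delta_+^p\Delta_-^s)$ depends only on the diffeomorphism class of that triple, and dividing by the fixed scalar $\brk{H}$ is inert.

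For (iv), the key is a local computation. If $L$ is a computable presentation of $(M,T,\coh)$, then it remains one for $(M,T_H,\coh_H)$, since the new meridian $m$ lies in a tubular neighborhood of $e$ disjoint from $L$ and so $g_{\coh_H}(L_i)=g_\coh(L_i)$ for all $i$. To compare $F'(L\cup T_H)$ and $F'(L\cup T)$, I will cut the edge $e$ (colored $\beta\in\A$): in a small ball around the cut, the picture is precisely the long Hopf link $H(\alpha,\beta)$, which factors off as a $(1,1)$-tangle evaluating to $\brk{H}\,\Id_\beta$ by the very definition of $\brk{H}$. The complementary $(1,1)$-tangle is exactly what one gets from $L\cup T$ by cutting $e$, so $F'(L\cup T_H)=\brk{H}\,F'(L\cup T)$, and the two candidate invariants agree after dividing by $\brk{H}\,\Delta_+^p\Delta_-^s$.

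For (iii), given two $H$-stabilizations $(M,T_{H_i},\coh_{H_i})$ with computable presentations $L_i$ ($i=1,2$), I will form the double stabilization $(M,T\cup m_1\cup m_2,\coh_{H_1,H_2})$. It admits both $L_1$ and $L_2$ as computable presentations, by the same argument as in (iv). Applying (iv) to $(M,T_{H_1},\coh_{H_1})$ and its $H_2$-stabilization yields $F'(L_1\cup T_{H_1,H_2})=\brk{H_2}\,F'(L_1\cup T_{H_1})$, and symmetrically $F'(L_2\cup T_{H_1,H_2})=\brk{H_1}\,F'(L_2\cup T_{H_2})$. Since Theorem~\ref{T:coh_adm} ensures that $F'(L_i\cup T_{H_1,H_2})/(\Delta_+^{p_i}\Delta_-^{s_i})$ is the same whether computed with $L_1$ or $L_2$, combining these identities forces the two candidate values for $\Nr(M,T,\coh)$ to coincide.

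For (i), which I expect to be the main obstacle, the idea is to exploit the freedom in the color $\alpha\in\A$ of the new meridian $m$. Since $\coh_H(m)=\dg\alpha$ by Definition~\ref{D:H-stab}, the goal is to choose $\alpha$ so that $\dg\alpha$ satisfies the hypothesis of Proposition~\ref{P:coh_adm}, namely that for each $x\in\X$ there is an $n(x)\in\Z$ with $x(\dg\alpha)^{n(x)}\notin\X$. Condition \ref{I4:DefGmodularCat} of Definition~\ref{D:GmodularCat} (that $\Gr$ is not covered by finitely many translates of $\X$), together with the fact that $\A$ contains every simple object of $\cat_g$ for $g\in\Gr\setminus\X$, should make such a choice possible; Proposition~\ref{P:coh_adm} applied to the stabilized triple then yields the desired computable presentation. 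The subtlety is that the hypothesis of Proposition~\ref{P:coh_adm} involves infinitely many conditions (one per $x\in\X$), so the argument will likely need to invoke the structure of the pairing $\Gr\times\TT\to\FK^*$ to guarantee that such an $\alpha$ exists in general; in the running $\slt$ example of Theorem~\ref{T:T_adm-Nr} this is automatic because $\X=\Z/2\Z$ is finite.
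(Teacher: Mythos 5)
Your proposal is circular with respect to the paper's logical architecture, and the circularity hides the entire content of the theorem. Steps (ii) and (iii) invoke Theorem \ref{T:coh_adm} to get independence of the value on the choice of computable presentation $L$ of the (singly or doubly) stabilized triple. But in this paper Theorem \ref{T:coh_adm} is proved \emph{after} Theorem \ref{T:T_adm} and \emph{by means of} it: its proof blows up an auxiliary meridian colored by a Kirby color, expands that color into $\A$-colored summands, and then appeals verbatim to ``Theorem \ref{T:T_adm} (proved above)''. Nor can Theorem \ref{T:coh_adm} easily be proved first: there the degree of any auxiliary meridian is dictated by $\coh$ and the linking matrix, whereas the whole point of $T$-admissibility is that the stabilizing meridian $m$ carries a color $\alpha\in\A$ that is \emph{free} to be chosen. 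What your outline never confronts is the actual difficulty: by Theorem \ref{teo:refinedkirby}, two computable presentations are related by a sequence of handle-slides, blow-ups and blow-downs realizing the given diffeomorphism, but the intermediate presentations may fail to be computable, since a slide multiplies the degree of a component by the degree of the sliding edge (Lemma \ref{L:handle-slide}). The paper's proof chooses $\alpha$ generically with respect to the sequence (a \emph{finite} list of conditions of the form $\dg{\alpha}\,g\notin\X$, satisfiable by Condition \eqref{I4:DefGmodularCat} of Definition \ref{D:GmodularCat}), slides $m$ over an offending component before each problematic move, and at the end slides $m$ back to its position as a meridian of $e$, imposing the further finitely many conditions $g_{\coh'}(L'_i)\dg{\alpha}^{x^i_s}\notin\X$, before peeling $m$ off via Equation \eqref{E:F'stabl} and $\brk{H}\neq0$. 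In your write-up, ``the invariant is well defined once the stabilized triple is fixed'' is exactly the hard statement, not an available input.

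There is a secondary gap in step (i): you route existence through Proposition \ref{P:coh_adm}, whose hypothesis --- for each $x\in\X$ some $n(x)$ with $x(\dg{\alpha})^{n(x)}\notin\X$ --- imposes infinitely many conditions and, as you yourself observe, need not be satisfiable in a general relative $\Gr$-modular category. It is also unnecessary: only the finitely many components $L_i$ with $g_\coh(L_i)\in\X$ need repairing, so it suffices to choose $\alpha$ with $\dg{\alpha}\,g_\coh(L_i)\notin\X$ for those $i$ (i.e.\ $\dg{\alpha}$ outside finitely many translates of $\X$, again Condition \eqref{I4:DefGmodularCat}) and slide $m$ once over each such $L_i$; this is the paper's argument. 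Your step (iv) and the factorization $F'(L\cup T_H)=\brk{H}\,F'(L\cup T)$ are correct (this is Equation \eqref{E:F'stabl}), and the double-stabilization idea in (iii) does match the device the paper uses to compare stabilizations along different edges --- but both rest on the invariance statement you left unproved.
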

  
The proof of Theorem \ref{T:T_adm} will be given in Section
\ref{S:ProofsT-coh-adm}.  Next we define another invariant which can
be non-zero when $\Nr$ is zero.  Before we do this we need the
following result.

Let $T$ and $T'$ be $\cat$-colored ribbon graphs and let $e\subset T,e'\subset
T$ be edges colored by $\alpha \in \A$.  Let $T\#_{(e,e')} T'$ be the
connected sum of $T$ and $T'$ along $e$ and $e'$, then
\begin{equation}\label{E:F'connectedSum}
F'(T\#_{(e,e')}T')=\frac{1}{\qd(\alpha)}F'(T)F'(T').
\end{equation}

Let $T$ be a $\cat$-colored ribbon graph with an edge $e$ colored by
$\alpha\in \A$.  Let $m$ be a meridian of $e$ colored with $\beta\in
\A$.  If $T'=T\cup m$ then
\begin{equation}
  \label{E:F'stabl}
  F'(T')=F'(T)\brk{H}  
\end{equation} 
where $H$ is the long Hopf link whose long edge is colored $\alpha$
and circle component is colored with $\beta$ and $\brk H$ is defined
by the equality $F(H)=\brk H \Id_\alpha$.
\begin{prop}\label{P:Ndirctsum}
  If $(M,T,\coh)$ is $T$-admissible and $(M',T',\coh')$ is
  $T'$-admissible and $e\subset T,e'\subset T$ are edges colored by
  $\beta \in \A$, then
  $$
  \Nr(M\# M',T\#_{(e,e')} T',\coh\#_{(e,e')}
  \coh')=\frac{1}{\qd(\beta)}\Nr(M,T,\coh)\Nr(M',T',\coh')
  $$ 
  where the connected sum is taken over balls intersecting $T$ and
  $T'$ along $e$ and $e'$, $T\#_{(e,e')} T'$ is the connected sum of
  $T$ and $T'$ along $e$ and $e'$ and $\coh\#_{(e,e')}\coh'$ is the
  cohomology class acting as $\coh$ and $\coh'$ on the images through
  the natural maps from $H_1(M\setminus T,\mathbb{Z})$ and
  $H_1(M'\setminus T',\mathbb{Z})$ in $H_1(M\#M' \setminus
  T\#_{(e,e')}T',\mathbb{Z})$.
\end{prop}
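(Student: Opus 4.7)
The plan is to reduce the statement to the ribbon-graph multiplicativity property \eqref{E:F'connectedSum}, combined with the elementary fact that a split disjoint union of surgery presentations presents the connected sum.

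First I would choose presentations. Since both $(M,T,\coh)$ and $(M',T',\coh')$ are $T$-admissible, Theorem \ref{T:T_adm} lets me pick $H$-stabilizations $(M,T_H,\coh_H)$ and $(M',T'_{H'},\coh'_{H'})$ with computable surgery presentations $L,L' \subset S^3$ and associated Hopf links $H,H'$. (If one of the triples already has a computable presentation, I omit the corresponding stabilization.) The key observation is that $L\sqcup L'$, viewed as a split link in $S^3$, is a surgery presentation of $M\# M'$, its linking matrix is block-diagonal with signature $(p+p',\,s+s')$, and since the colors on $L$ and $L'$ all lie in $\Gr\setminus\X$, the link $L\sqcup L'$ is a computable presentation of the stabilized connected sum
\begin{equation*}
\bigl(M\# M',\;(T\#_{(e,e')}T')_{H,H'},\;(\coh\#_{(e,e')}\coh')_{H,H'}\bigr).
\end{equation*}

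Next I would apply the multiplicativity of $F'$. Because the balls in which the connected sum is performed lie in $M\setminus L$ and $M'\setminus L'$, the ribbon graph sitting inside the surgery picture,
\begin{equation*}
L\sqcup L'\cup (T\#_{(e,e')}T')_{H,H'},
\end{equation*}
is literally the connected sum of $L\cup T_H$ with $L'\cup T'_{H'}$ along the edges $e,e'$, both colored by $\beta\in\A$. Applying \eqref{E:F'connectedSum} gives
\begin{equation*}
F'\bigl((L\cup T_H)\#_{(e,e')}(L'\cup T'_{H'})\bigr)
=\frac{1}{\qd(\beta)}\,F'(L\cup T_H)\,F'(L'\cup T'_{H'}).
\end{equation*}

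Finally I would assemble everything inside the formula of Theorem \ref{T:T_adm}. Dividing by $\Delta_+^{p+p'}\Delta_-^{s+s'}$ and by the two Hopf factors $\brk{H}\brk{H'}$ (one per stabilization), the expression factorizes as
\begin{equation*}
\frac{1}{\qd(\beta)}\cdot\frac{F'(L\cup T_H)}{\brk{H}\Delta_+^{p}\Delta_-^{s}}\cdot\frac{F'(L'\cup T'_{H'})}{\brk{H'}\Delta_+^{p'}\Delta_-^{s'}}=\frac{1}{\qd(\beta)}\,\Nr(M,T,\coh)\,\Nr(M',T',\coh').
\end{equation*}
The one point that needs checking is that Theorem \ref{T:T_adm} extends to the case where two independent $H$-stabilizations are performed simultaneously (one on each side of the connected sum): this is immediate by iterating \eqref{E:F'stabl}, since each added Hopf meridian multiplies $F'$ by $\brk{H}$, and dividing by one $\brk{H}$ per stabilization exactly compensates. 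This is the only mildly delicate step; everything else is assembling invariances already established earlier in the section.
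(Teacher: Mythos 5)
Your proposal is correct and follows essentially the same route as the paper's proof: stabilize both triples, present the connected sum by the split link $L\sqcup L'$, apply the multiplicativity \eqref{E:F'connectedSum} of $F'$ under connected sum of graphs, and use \eqref{E:F'stabl} to account for the fact that the definition of $\Nr(M\#M',T\#_{(e,e')}T',\coh\#_{(e,e')}\coh')$ involves only a single $H$-stabilization while your presentation carries two. The ``mildly delicate step'' you flag at the end is exactly the point the paper also resolves with \eqref{E:F'stabl}, so nothing is missing.
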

\begin{proof}
  Let $(M,T_H,\coh_H)$ and $(M',T'_{H'},\coh'_{H'})$ be
  $H$-stabilizations of $(M,T,\coh)$ and $(M',T',\coh')$ along $e$ and
  $e'$, respectively (here $H=H(\alpha,\beta)$ and
  $H'=H(\alpha',\beta)$ are long Hopf links and $\alpha, \alpha'\in
  \A$).  By definition
  $$
  \Nr(M,T,\coh)=\dfrac{F'(L\cup T_H)}{\brk{H}\Delta_+^{p}\
    \Delta_-^{s}}\;\; \text{ and }\;\;
  \Nr(M,T',\coh')=\dfrac{F'(L'\cup T'_{H'})}{\brk{H'}\Delta_+^{p'}\
    \Delta_-^{s'}}
  $$
  where $L$ and $L'$ are links colored as in Theorem \ref{T:T_adm} and
  $(p,s)$ (resp. $(p',s')$) is the signature of the linking matrix of
  $L$ (resp. $L'$).

  Using Equation \eqref{E:F'connectedSum} we have
  $$
  \dfrac{F'(L\cup T_H)}{\brk{H}\Delta_+^{p}\ \Delta_-^{s}}\cdot
  \dfrac{F'(L'\cup T'_{H'})}{\brk{H'}\Delta_+^{p'}\ \Delta_-^{s'}}
  =\dfrac{\qd(\beta)F'\left((L\cup T_H)\#_{(e,e')}(L'\cup
      T'_{H'})\right)}{\brk{H}\brk{H'}\Delta_+^{p+p'}\
    \Delta_-^{s+s'}}.
  $$
  Now
  \begin{align*}
    F'\left((L\cup T_H)\#_{(e,e')}(L'\cup T'_{H'})\right)
    &= \brk{H'}F'\left((L\cup T_H)\#_{(e,e')}(L'\cup T')\right)\\
    &=\brk{H'}F'\left((L\cup L')\cup (T\#_{(e,e')}T')_H\right).
  \end{align*}
  where the first equality follows from Equation \eqref{E:F'stabl} and
  the second from the fact that the two graphs in $F'$ are isotopic.
  By definition
  $$
  \Nr(M\# M',T\#_{(e,e')} T',\coh\#_{(e,e')}
  \coh')=\dfrac{F'\left((L\cup L')\cup
      (T\#_{(e,e')}T')_H\right)}{\brk{H}\Delta_+^{p+p'}\
    \Delta_-^{s+s'}}
  $$ 
  thus the result follows from the last two equations.
\end{proof}
  
Let $(M,T,\coh)$ be compatible triple.  Let $u_\alpha$ be an unknot in
$S^3$ colored by $\alpha\in \A$.  Let $\coh_\alpha$ be the unique
element of $H^1(S^3\setminus u_\alpha,\Gr)$ such that
$(S^3,u_\alpha,\coh_\alpha)$ is a compatible triple.  Recall the
definition of the connected sum given by Equation \eqref{eq:consum}.
\begin{teo}\label{T:N0}
  Define 
  $$\Nr^{0}(M,T,\coh)=\frac{\Nr((M,T,\coh)\#(S^3,u_\alpha,\coh_\alpha))}
  {\qd(\alpha)}.$$ Then $\Nr^{0}(M,T,\coh)$ is a well defined
  topological invariant (i.e. depends only of the diffeomorphism class
  of the compatible triple $(M,T,\coh)$).
\end{teo}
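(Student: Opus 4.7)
The quantity $\Nr^{0}(M,T,\coh)$ involves only one a priori choice, namely $\alpha\in\A$, because the inner $\Nr$ is already a topological invariant of $(M,T,\coh)\#(S^{3},u_\alpha,\coh_\alpha)=(M,T\sqcup u_\alpha,\coh')$ by Theorem \ref{T:T_adm}: the new component $u_\alpha$ is colored by $\alpha\in\A$, so the combined triple is $T$-admissible. The entire content of the theorem is therefore to prove independence from $\alpha$.

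Fix $\alpha,\alpha'\in\A$. The plan is to compute both $\Nr(M,T\sqcup u_\alpha,\coh'_\alpha)$ and $\Nr(M,T\sqcup u_{\alpha'},\coh'_{\alpha'})$ via $H$-stabilization in a symmetric fashion. For the first triple, $H$-stabilize along $u_\alpha$ by attaching a meridian $m$ colored by $\alpha'$; this produces a new triple $\sigma=(M,T\sqcup H,\coh''')$ in which $u_\alpha\cup m$ forms a Hopf link $H$ in a small ball of $M$ with components colored by $\alpha$ and $\alpha'$. For the second triple, $H$-stabilize $u_{\alpha'}$ by a meridian $m'$ colored by $\alpha$; by construction this yields the same triple $\sigma$, since two Hopf links in balls of $M$ with components colored by the same pair of objects, with the same cohomology values $\dg{\alpha}$ and $\dg{\alpha'}$ on the respective meridians, are equivalent as labeled framed oriented subgraphs of $M$. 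Fix any computable surgery presentation $L$ of $\sigma$; should the stabilization just described not directly admit one, perform cascading $H$-stabilizations at $\A$-colored edges until it does, noting that the resulting additional $\brk{H}$ factors appear identically on both sides of the comparison below and hence cancel.

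Applying Theorem \ref{T:T_adm} to each of the original $T$-admissible triples with the stabilization just chosen yields
\[
\Nr(M,T\sqcup u_\alpha,\coh'_\alpha)=\frac{F'(L\cup T\cup H)}{\brk{H(\alpha',\alpha)}\,\Delta_+^{p}\Delta_-^{s}},\qquad
\Nr(M,T\sqcup u_{\alpha'},\coh'_{\alpha'})=\frac{F'(L\cup T\cup H)}{\brk{H(\alpha,\alpha')}\,\Delta_+^{p}\Delta_-^{s}},
\]
with identical numerators since the colored ribbon graph $L\cup T\cup H$ and the signature $(p,s)$ are determined by the common triple $\sigma$ and presentation $L$. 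Dividing respectively by $\qd(\alpha)$ and $\qd(\alpha')$, the independence of $\Nr^{0}(M,T,\coh)$ from $\alpha$ reduces to the identity
\[
\qd(\alpha)\,\brk{H(\alpha',\alpha)}=\qd(\alpha')\,\brk{H(\alpha,\alpha')}.
\]
But this is exactly the ambidextrousness of the pair $(\A,\qd)$ in Condition \eqref{I5:DefGmodularCat} of Definition \ref{D:GmodularCat}, applied to the closed Hopf link with components colored by $\alpha$ and $\alpha'$: cutting the $\alpha$-strand produces the $(1,1)$-tangle $H(\alpha',\alpha)$ so that $F'$ of the closed Hopf link equals $\qd(\alpha)\brk{H(\alpha',\alpha)}$, while cutting the $\alpha'$-strand produces $H(\alpha,\alpha')$ and the value $\qd(\alpha')\brk{H(\alpha,\alpha')}$. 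Topological invariance of $\Nr^{0}$ then follows from that of $\Nr$ combined with topological invariance of the connected-sum construction on compatible triples.

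The main obstacle is a careful book-keeping issue: one must verify that the two $H$-stabilizations really yield the same labeled, framed, oriented Hopf link in $M$ with matching cohomology class on each meridian, and arrange a common computable surgery presentation of $\sigma$, possibly via cascading further $H$-stabilizations controlled by Proposition \ref{P:coh_adm}, so that the additional $\brk{H}$ factors appear symmetrically and cancel in the final ratio. The argument is then purely formal.
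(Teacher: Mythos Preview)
Your proof is correct and follows essentially the same idea as the paper's: introduce a Hopf link colored by both $\alpha$ and $\alpha'$ as a common intermediate, and reduce independence from $\alpha$ to the ambidextrous identity $\qd(\alpha)\brk{H(\alpha',\alpha)}=\qd(\alpha')\brk{H(\alpha,\alpha')}$, i.e.\ to well-definedness of $F'$ on the closed Hopf link.

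The paper packages this one level higher, via Proposition~\ref{P:Ndirctsum} (the connected-sum formula for $\Nr$): it writes $\Nr\bigl((M,T,\coh)\#(S^3,H,\coh_{\alpha,\alpha'})\bigr)$ in two ways, as $\qd(\alpha)^{-1}\Nr\bigl((M,T,\coh)\#(S^3,u_\alpha,\coh_\alpha)\bigr)\,\Nr(S^3,H,\coh_{\alpha,\alpha'})$ and the analogous expression with $\alpha'$, then cancels the nonzero factor $\Nr(S^3,H,\coh_{\alpha,\alpha'})$. Your argument is essentially the same computation with Proposition~\ref{P:Ndirctsum} unfolded back to $H$-stabilization and Theorem~\ref{T:T_adm}. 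One small point: the cascading-stabilization remark is slightly informal, since Theorem~\ref{T:T_adm} as stated covers a single $H$-stabilization. The cleanest fix is transitivity: given arbitrary $\alpha,\alpha'$, choose $\alpha''$ with $\dg{\alpha''}$ generic enough (Condition~\eqref{I4:DefGmodularCat}) that both $\sigma(\alpha,\alpha'')$ and $\sigma(\alpha',\alpha'')$ admit computable presentations, and compare each of $\alpha,\alpha'$ to $\alpha''$.
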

\begin{proof}
  We must show the definition of $\Nr^0$ does not depend on the color
  $\alpha\in \A$.  Let $\alpha, \beta \in \A$.  Let $u_\beta$ be the
  unknot with color $\beta$ and edge $e_\beta$.  Let $\coh_\beta$ be
  the unique element of $ H^{1}(S^3\setminus u_\beta,\Gr)$ such that
  $(S^3, u_\beta,\coh_{\beta})$ is a compatible triple.  Let $H$ be
  the Hopf link whose edges $e_1$ and $e_2$ are colored with $\alpha$
  and $\beta$, respectively.  Let $\coh_{\alpha,\beta}$ be the unique
  element of $ H^{1}(S^3\setminus H,\Gr)$ such that $(S^3,
  H,\coh_{\alpha,\beta})$ is a compatible triple.  Consider the
  cohomology class $\coh\sqcup \coh_\beta$ of
  $(M,T,\coh)\#(S^3,u_\beta,\coh_\beta)$.  Then
  \begin{align*}
     \Nr((M,T,\coh)\#(S^3,H,\coh_{\alpha,\beta}))
     &=\Nr(M \#S^3,(T\sqcup u_\beta)\#_{(e_\beta,e_2)}H, 
     (\coh\sqcup \coh_\beta) \#_{(e_\beta,e_2)} \coh_{\alpha,\beta})\\
     &=\frac{\Nr((M,T,\coh)\#(S^3,u_\beta,\coh_\beta))
       \Nr(S^3,H,\coh_{\alpha,\beta})}{\qd(\beta)}
   \end{align*}
   where the first equality from follows the fact that the two triples
   are diffeomorphic and the second equality comes from Proposition
   \ref{P:Ndirctsum}.  Similarly,
   $$ \Nr((M,T,\coh)\#(S^3,H,\coh_{\alpha,\beta}))=
   \frac{\Nr((M,T,\coh)\#(S^3,u_\alpha,\coh_\alpha))
     \Nr(S^3,H,\coh_{\alpha,\beta})}{\qd(\alpha)}.$$ Thus, the theorem
   follows from the last two equations.
\end{proof}

\begin{rem}\label{R:RTrecoved} Any modular category satisfying
  Condition \eqref{I8:DefGmodularCat} of Definition
  \ref{D:GmodularCat} give rise to trivial examples of relative
  $\Gr$-modular categories where $\Gr$ and $\TT$ are both the trivial group,
  $\X=\emptyset$, $\A$ is the set of simple objects and $\qd=\qdim$ is
  the quantum dimension.  In this case the construction proposed in
  this paper reduces to the original Reshetikhin-Turaev construction
  of \cite{RT, Tu} and then $\Nr=\Nr^0$ is the usual
  Witten-Reshetikhin-Turaev invariant. 
\end{rem}

Next we show how $\Nr$ and $\Nr^0$ behave under the connected sum
defined in Equation \eqref{eq:consum}.
If $T$ is a $\A$-graph in $\R^3$ and $T'$ is any $\cat$-colored ribbon graph
with coupons, then 
\begin{equation}\label{E:F'T'T}
  F'(T\sqcup T')=F'(T)F(T').
\end{equation}
Recall that the categorical dimension of an object $V$ in $\cat$ is
the element $d'_V \circ b_V\in \End(\unit)=\FK$.  There are many
interesting examples where the categorical dimension is zero on all of
the objects of $\A$ (see Section \ref{S:otherQG} and the examples of
\cite{GP2, GPT}).  For such a category we have $F(T)=0$ for any
$\A$-graph and $F'$ vanishes on a disjoint union of $\A$-graphs (see
Lemma 16 and Proposition 19 of \cite{GPT}).
Similarly, we have the following proposition:
\begin{prop}\label{prop:connsumone0}
  Suppose that the categorical dimension of any object in $\A$ is
  zero.
  If $(M,T,\coh)$ is $T$-admissible or has a computable surgery presentation
  then $\Nr^0(M,T,\coh)=0$.  In addition, if $(M',T',\coh')$ is a compatible
  triple, then
  $$\Nr((M,T,\coh)\#(M',T',\coh'))=\Nr(M,T,\coh)\Nr^0(M',T',\coh').$$   
\end{prop}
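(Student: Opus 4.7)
The strategy for both assertions is to reduce everything to the splitting identity $F'(X \sqcup Y) = F'(X)F(Y)$ from Equation \eqref{E:F'T'T}, applied to a surgery presentation of the connected sum that is naturally split in $S^3$. For the vanishing $\Nr^0(M,T,\coh) = 0$: by definition $\Nr^0$ is a normalization of $\Nr((M,T,\coh)\#(S^3,u_\alpha,\coh_\alpha))$, and the ambient connected sum is performed along a ball disjoint from every ribbon graph. A surgery presentation of the connected sum is therefore obtained by taking any surgery presentation $L$ of $M$ together with the split unknot $u_\alpha$. If $(M,T,\coh)$ is computable then this presentation with graph $T \sqcup u_\alpha$ is again computable, and otherwise an $\A$-edge of $T$ (or of $u_\alpha$) lets us $H$-stabilize to a computable presentation. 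After Kirby-expanding the colors on $L$, the splitting identity applied with $Y = u_\alpha$ (or with $Y$ containing the stabilization Hopf link and $u_\alpha$), combined with the hypothesis $\qdim(\alpha)=0$ for all $\alpha \in \A$, forces $F'=0$, hence $\Nr^0(M,T,\coh)=0$.

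For the product formula, I first express $\Nr^0(M',T',\coh')$ in terms of an arbitrary surgery presentation $L_2$ of $M'$: unpacking the definition of $\Nr^0$ and applying the splitting identity to peel off the auxiliary split unknot yields
$$\Nr^0(M',T',\coh') = \frac{F(L_2^c \cup T')}{\Delta_+^{p'}\Delta_-^{s'}},$$
where $L_2^c$ denotes $L_2$ with each component colored by the appropriate Kirby color and $(p',s')$ is the signature of the linking matrix of $L_2$. Now choose a computable surgery presentation $L_1$ of $M$ for $(M,T,\coh)$ (possibly after $H$-stabilizing along an $\A$-edge of $T$), so that $\Nr(M,T,\coh) = F'(L_1^c \cup T_H)/(\brk{H}\Delta_+^{p}\Delta_-^{s})$, with the convention $\brk{H}=1$ and $T_H=T$ if no stabilization is needed. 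The ball connected sum $(M,T,\coh)\#(M',T',\coh')$ then admits $L_1 \sqcup L_2$ as a split surgery presentation with colored graph $T_H \sqcup T'$, and the splitting identity factorizes
$$F'\bigl((L_1^c \cup T_H) \sqcup (L_2^c \cup T')\bigr) = F'(L_1^c \cup T_H)\cdot F(L_2^c \cup T').$$
Dividing by $\brk{H}\Delta_+^{p+p'}\Delta_-^{s+s'}$ gives exactly $\Nr(M,T,\coh)\cdot\Nr^0(M',T',\coh')$.

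The main obstacle is the case bookkeeping around $T$-admissibility versus direct computability. One has to verify that an $H$-stabilization needed for $(M,T,\coh)$ inside the connected sum can be performed inside the $M$-factor using the same $\A$-edge of $T$, so that the $\brk{H}$ factor it introduces matches the corresponding one in the standalone $\Nr(M,T,\coh)$ and cancels. One also needs to confirm that the alternative formula for $\Nr^0(M',T',\coh')$ above is independent of the auxiliary color $\alpha \in \A$ and of the choice of $L_2$; both follow from the well-definedness, via Theorems~\ref{T:coh_adm} and~\ref{T:T_adm}, of $\Nr$ applied to the admissible triple $(M',T'\sqcup u_\alpha,\coh'\sqcup \coh_\alpha)$.
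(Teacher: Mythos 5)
Your argument for the vanishing $\Nr^0(M,T,\coh)=0$ is essentially sound: the paper instead deduces it from the product formula applied to $(M',T',\coh')=(S^3,u_\alpha,\coh_\alpha)$, using that $F'$ kills split $\A$-links, but your direct use of $F'(X\sqcup u_\alpha)=F'(X)\,\qdim(V_\alpha)=0$ on a computable split presentation is a legitimate variant. The product formula, however, has a genuine gap. Your pivotal intermediate claim
$$\Nr^0(M',T',\coh')=\frac{F(L_2^c\cup T')}{\Delta_+^{p'}\Delta_-^{s'}}$$
is neither established in the paper nor derivable the way you sketch. The only interesting case of the proposition is when $(M',T',\coh')$ is \emph{not} generic, i.e.\ some components of $L_2$ carry $\Gr$-colors in $\X$. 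Then: (i) ``$L_2^c$'' does not typecheck, because Kirby colors $\Omega_g$ are only defined for $g\notin\X$ --- for $g\in\X$ the subcategory $\cat_g$ need not be semi-simple nor have finitely many simple $\TT$-orbits, and $\qd$ need not be defined on its simple objects (for $\slt$, $\qd$ is undefined on the atypical modules indexed by $\Xr$); (ii) the split presentation $L_2\sqcup u_\alpha$ of $(M',T',\coh')\#(S^3,u_\alpha,\coh_\alpha)$ is not computable, so to evaluate $\Nr$ on it one must first slide $u_\alpha$ (or a stabilizing meridian) over the bad components of $L_2$, after which $u_\alpha$ is linked with $L_2$ and the splitting identity \eqref{E:F'T'T} can no longer peel it off. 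The same defect sinks your final step: $L_1\sqcup L_2$ is not a computable presentation of the connected sum and cannot be made computable while remaining split, so dividing your factorized $F'$ by the $\Delta$'s is not licensed by Theorems \ref{T:coh_adm} or \ref{T:T_adm} to equal $\Nr$ of the connected sum.

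The paper circumvents this by never splitting. It takes a computable presentation $L\cup T_H$ of an $H$-stabilization of $(M,T,\coh)$, with stabilizing meridian $m$ colored by $\alpha\in\A$, and a computable presentation $L'\cup T'\cup u_\alpha$ of $(M',T',\coh')\#(S^3,u_\alpha,\coh_\alpha)$ (in which $u_\alpha$ has been slid over the bad components of $L'$), and forms the connected sum of the two ribbon graphs along $m$ and $u_\alpha$. Equation \eqref{E:F'connectedSum} gives
$$F'\bigl((L\cup T_H)\#_{(m,u_\alpha)}(L'\cup T'\cup u_\alpha)\bigr)=\qd(\alpha)^{-1}F'(L\cup T_H)\,F'(L'\cup T'\cup u_\alpha),$$
the left-hand side is a computable presentation of an $H$-stabilization of the connected-sum triple, and the factor $\qd(\alpha)^{-1}$ is exactly the normalization in the definition of $\Nr^0$. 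A separate reduction handles the case where $(M,T,\coh)$ is computable but not $T$-admissible, by transferring a sublink of its surgery link into the graph part. To repair your outline you would need to replace the disjoint-union step by this graph-connected-sum step.
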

\begin{proof}
We first prove the last equality.

First if $(M,T,\coh)$ is $T$-admissible then let $(M,T_H,\coh_H)$ be a
$H$-stabilization of $M$ with computable presentation given by $L\cup
T_H$.  As in Definition \ref{D:H-stab}, we call $\alpha$ the color of
the new meridian $m\subset T_H$.  If $\alpha$ as been chosen
appropriately then $(M',T',\coh')\#(S^3,u_\alpha,\coh_\alpha)$ as in
Theorem \ref{T:N0} admits a computable presentation given by $L'\cup
T'\cup u_\alpha$.  Take a connected sum in $S^3$ of these two
presentations along $m$ and $u_\alpha$.  The property of $F'$ given in
Equation \eqref{E:F'connectedSum} implies that $F'((L\cup
T_H)\#_{(m,u_\alpha)}(L'\cup T'\cup u_\alpha))
=\frac1{\qd(\alpha)}F'(L\cup T_H)F'(L'\cup T'\cup u_\alpha)$.  But
$(L\cup T_H)\#_{(m,u_\alpha)}(L'\cup T'\cup u_\alpha)$ is a computable
presentation of a $H$-stabilization of $(M,T,\coh)\#(M,T',\coh')$.
Thus $\Nr((M,T,\coh)\#(M',T',\coh'))=\Nr(M,T,\coh)\Nr^0(M',T',\coh').$

Now if $(M,T,\coh)$ has a computable surgery presentation, let $L$ be
a link which gives this surgery presentation.  Let $K$ be any sublink
of $L$ and let $L_1=L\setminus K$ then clearly $F'(L\cup
T)=F'(L_1\cup(T\cup K))$.  If $S^3_{L_1}$ is the manifold given by
surgery on $L_1$ then there exists a $T$-admissible triple
$(S^3_{L_1}, T\cup K,\coh)$ (where $K$ is colored by the Kirby color
$\Omega_{g_{\coh}(K)}$) and $a,b\in \Z$ such that
\begin{equation}\label{E:MM'K1}
  \Nr(M,T,\coh)=\frac{\Nr(S^3_{L_1}, T\cup K,\coh'')}{\Delta_+^{b}\Delta_-^{a}}.
\end{equation}
This process can be thought of as taking the sublink $K$ out of the
surgery presentation and ``adding'' it to the graph $T$.  To do this
one changes the original manifold and so the signature changes; this
is reflected in the constant mentioned above.  Similarly, for the
$a,b\in \Z$ used in Equation \eqref{E:MM'K1} we have
\begin{equation}\label{E:MM'K2}
  \Nr((M, T,\coh)\# (M',T',\coh'))= \frac{\Nr((S^3_{L_1}, T\cup K,\coh'')\#
    (M',T',\coh'))}{\Delta_+^{b}\Delta_-^{a}}. 
\end{equation}
Now since $(S^3_{L_1}, T\cup K,\coh'')$ is $T$-admissible then the
argument in the previous paragraph shows that
$$ \Nr((S^3_{L_1}, T\cup K,\coh'')\# (M',T',\coh'))=\Nr(S^3_{L_1},
T\cup K,\coh'')\Nr^0(M',T',\coh').$$ The desired result follows from
Equations \eqref{E:MM'K1} and \eqref{E:MM'K2}.

Finally if $(M,T,\coh)$ is $T$-admissible or has a computable surgery
presentation, consider the previous equality with
$(M',T',\coh')=(S^3,u_\alpha,\coh_\alpha)$ the unknot in the
3--sphere colored by $\alpha\in\A$.  Then we have
$$\Nr(M,T,\coh)\Nr^0(S^3,u_\alpha,\coh_\alpha)
=\Nr((M,T,\coh)\#(S^3,u_\alpha,\coh_\alpha))
=\Nr^0(M,T,\coh)\Nr(S^3,u_\alpha,\coh_\alpha).$$  But
$\Nr(S^3,u_\alpha,\coh_\alpha)=\qd(\alpha)\neq0$ whereas
$\Nr^0(S^3,u_\alpha,\coh_\alpha)=0$ because $F'$ vanishes on $\A$-colored
split links. Thus $\Nr^0(M,T,\coh)=0$. 
\end{proof}
\begin{prop}\label{prop:connsumtwo0}
If $(M,T,\coh)$  and $(M',T',\coh')$ are compatible triples, then 
$$\Nr^0((M,T,\coh)\#(M',T',\coh'))=\Nr^0(M,T,\coh)\Nr^0(M',T',\coh')$$ 
where the connected sum is taken over balls not intersecting $T$ and $T'$. 
\end{prop}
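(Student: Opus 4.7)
The plan is to reduce this statement to Proposition \ref{prop:connsumone0} by introducing an auxiliary $\A$-colored unknot. Concretely, fix $\alpha\in\A$ and let $(S^3,u_\alpha,\coh_\alpha)$ be the colored unknot used in the definition of $\Nr^0$. By definition,
\[
\Nr^0\bigl((M,T,\coh)\#(M',T',\coh')\bigr)=\frac{1}{\qd(\alpha)}\Nr\Bigl(\bigl((M,T,\coh)\#(M',T',\coh')\bigr)\#(S^3,u_\alpha,\coh_\alpha)\Bigr).
\]

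The first step is to use associativity and commutativity of connected sum (the balls are chosen disjoint from the graphs, so this is standard) to rewrite the right-hand side as the connected sum of $(M,T,\coh)$ with the triple $(M',T',\coh')\#(S^3,u_\alpha,\coh_\alpha)$. The key observation is that this second triple is automatically $T$-admissible in the sense of Definition \ref{def:adm}, because the unknot $u_\alpha$ contributes an edge colored by $\alpha\in\A$. This is the whole reason the construction of $\Nr^0$ was engineered with a free $\A$-colored unknot.

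Now I can apply Proposition \ref{prop:connsumone0} (with the roles of the two factors swapped, which is legitimate since the statement is symmetric in its interpretation of the connected sum): the factor that is $T$-admissible is $(M',T',\coh')\#(S^3,u_\alpha,\coh_\alpha)$, and the result yields
\[
\Nr\Bigl((M,T,\coh)\#\bigl((M',T',\coh')\#(S^3,u_\alpha,\coh_\alpha)\bigr)\Bigr)=\Nr^0(M,T,\coh)\cdot\Nr\bigl((M',T',\coh')\#(S^3,u_\alpha,\coh_\alpha)\bigr).
\]
By the very definition of $\Nr^0$, the last factor equals $\qd(\alpha)\,\Nr^0(M',T',\coh')$. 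Substituting and cancelling the $\qd(\alpha)$ in the denominator finishes the proof.

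The only potentially delicate point is checking that Proposition \ref{prop:connsumone0} genuinely applies in the form needed: it is stated as $\Nr((M,T,\coh)\#(M',T',\coh'))=\Nr(M,T,\coh)\Nr^0(M',T',\coh')$ under the hypothesis that $(M,T,\coh)$ is $T$-admissible or has a computable surgery presentation. Since connected sum of compatible triples is symmetric (one can freely exchange the two summands), the formula may be applied with $(M',T',\coh')\#(S^3,u_\alpha,\coh_\alpha)$ playing the role of the admissible factor and $(M,T,\coh)$ playing the role of the arbitrary one. No further hypothesis on $(M,T,\coh)$ or $(M',T',\coh')$ is needed, so the argument is uniform across all cases and there is no serious obstacle to overcome.
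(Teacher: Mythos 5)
Your argument is correct, but it takes a genuinely different route from the paper's. The paper proves the identity directly at the level of surgery presentations: it takes computable presentations $L\cup T\cup u_\alpha$ and $L'\cup T'\cup u_\alpha$ of the two stabilized triples, uses $u_\alpha\#u_\alpha=u_\alpha$ together with the connected-sum property \eqref{E:F'connectedSum} of $F'$ to get $F'(L\cup L'\cup T\cup T'\cup u_\alpha)=\qd(\alpha)^{-1}F'(L\cup T\cup u_\alpha)F'(L'\cup T'\cup u_\alpha)$, and reads off the result after dividing by the $\Delta_\pm$ normalizations. You instead reassociate $\bigl((M,T,\coh)\#(M',T',\coh')\bigr)\#(S^3,u_\alpha,\coh_\alpha)$ as $(M,T,\coh)\#\bigl((M',T',\coh')\#(S^3,u_\alpha,\coh_\alpha)\bigr)$, observe that the second factor is $T$-admissible thanks to the $\alpha$-colored unknot, and invoke Proposition \ref{prop:connsumone0}. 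This is a clean reduction that avoids re-running the $F'$ computation (which is in any case the engine behind Proposition \ref{prop:connsumone0}); the cost is the appeal to associativity and commutativity of the connected sum of triples, which is indeed immediate from \eqref{eq:consum} and the fact that $\Nr$ depends only on the diffeomorphism class. One caveat worth flagging: Proposition \ref{prop:connsumone0} carries the standing hypothesis that every object of $\A$ has vanishing categorical dimension, whereas Proposition \ref{prop:connsumtwo0} does not. To make your reduction hypothesis-free you should note that the multiplicative identity you quote is established in the proof of Proposition \ref{prop:connsumone0} without using that hypothesis (it is needed there only to show $\Nr^0$ vanishes on admissible triples), or else accept the extra hypothesis, which holds in all the examples of the paper.
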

\begin{proof}
  Let $L\cup T\cup u_\alpha$ and $L'\cup T'\cup u_\alpha$ be
  computable presentations respectively of
  $(M,T,\coh)\#(S^3,u_\alpha,\coh_\alpha)$ and
  $(M',T',\coh')\#(S^3,u_\alpha,\coh_\alpha)$. Then since
  $u_\alpha\#u_\alpha=u_\alpha$, by the property of $F'$ for connected
  sum it holds:
\begin{equation}\label{eq:splitn0}
  F'(L\cup L'\cup T\cup T'\cup u_\alpha)=\qd(\alpha)^{-1}F'(L\cup T\cup
  u_\alpha)F'(L'\cup T'\cup u_\alpha)
\end{equation}
But $L\cup L'\cup T\cup T'\cup u_\alpha$ provides a computable
presentation for
$(M,T,\coh)\#(M',T'\coh')\#(S^3,u_\alpha,\coh_\alpha)$ and thus, using
formula (\ref{eq:splitn0}), $\Nr^0((M,T,\coh)\#(M',T',\coh')) $ can be
computed from this presentation as
$$
\qd(\alpha)^{-1}\Nr((M,T,\coh)\#(M',T'\coh')\#(S^3,u_\alpha,\coh_\alpha))
=\frac{F'(L\cup T\cup u_\alpha)F'(L'\cup T'\cup u_\alpha)}
{\qd(\alpha)^2\Delta_+^{s+s'}\Delta_-^{p+p'}}
$$ 
where $s,p$ (resp. $s',p'$) are the indices of inertia of the linking
matrix of $L$ and of $L'$ respectively.  But the latter is by
definition $\Nr^0(M,T,\coh)\Nr^0(M',T',\coh')$.
\end{proof}

\section{Proofs of Theorems \ref{T:coh_adm} and \ref{T:T_adm} and
  Proposition \ref{P:coh_adm}
}\label{S:ProofsT-coh-adm}

\subsection{Idea of the proofs.}\label{sub:kirbygompf}
Recall that Kirby's theorem \cite{Ki} allows one to relate any two
presentations of an oriented 3--manifold as surgery over a framed
link in $S^3$ by means of handle-slides, blow-up and blow-down
moves. \emph{Handle-slides} are depicted schematically in the proof of
Lemma~\ref{L:handle-slide}: they consist in modifying one component of
a link by replacing a chord of the component by one which is ``slid''
over or follows another component (and lies horizontal with respect to
the framing).  In this paper, \emph{blow-up moves} consist in adding
an unknot with framing $\pm 1$ which is linked with one component of
the link where the framing of this component is changed by $\pm 1$.  A
blow-up move with framing 1 or $-1$ can be depicted by the local
replacement:
$$\epsh{fig10}{12ex}\to \epsh{fig2}{12ex} \;\;\; \text{ or } \;\;\; 
\epsh{fig10}{12ex}\to \epsh{fig1}{12ex}. $$
\emph{Blow-down moves} are the inverse moves. The standard definition
of a blow-up move is simply adding a $\pm 1$-framed unknot which is
unlinked with the rest of the link; it is a standard fact that as soon
as the link is not empty the standard definition is equivalent to our
definition.
 
As remarked by R. Gompf and A. Stipsicz in \cite{GS} (see Theorem
5.3.6 and subsequent comments) every move $L\to L'$ describes an
isotopy class of diffeomorphisms between the surgered manifolds
$S^3_L$ and $S^3_{L'}$.  So one can check that Kirby's theorem
actually proves the following:
\begin{teo}[\cite{Ki}]\label{teo:refinedkirby} Let $M_1$ and $M_2$ be
  oriented 3--manifolds and $f:M_1\to M_2$ be an orientation
  preserving diffeomorphism. Any two surgery presentations $L_1$ and
  $L_2$ of $M_1$ and $M_2$, respectively can be connected by a
  sequence of handle-slides, blow-up moves and blow-down moves such
  that the induced diffeomorphism between $M_1=S^3_{L_1}$ and
  $M_2=S^3_{L_2}$ is isotopic to $f$.
\end{teo}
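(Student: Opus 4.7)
The plan is to reduce to the classical Kirby theorem by absorbing the mismatch between $f$ and a canonically induced diffeomorphism into an extra sequence of Kirby moves performed from one presentation back to itself. First I would invoke the classical Kirby theorem (Gompf-Stipsicz, Theorem 5.3.6) to produce some sequence $\Sigma$ of handle-slides, blow-ups and blow-downs taking $L_1$ to $L_2$. As recalled in the excerpt, each elementary move in this sequence is naturally associated to an isotopy class of diffeomorphism between the intermediate surgered manifolds, so that $\Sigma$ globally induces some orientation-preserving diffeomorphism $g: M_1 \to M_2$ well-defined up to isotopy. Set $h = f \circ g^{-1} \in \mathrm{Diff}^+(M_2)$.

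It is now enough to exhibit a sequence $\Sigma'$ of Kirby moves from $L_2$ to itself whose induced self-diffeomorphism of $M_2$ is isotopic to $h$: concatenating $\Sigma$ with $\Sigma'$ then yields a sequence from $L_1$ to $L_2$ inducing $h \circ g = f$ up to isotopy. So the theorem is equivalent to the assertion that the natural map from (equivalence classes of Kirby sequences $L \to L$) to the mapping class group of $M$ is surjective. This is where the real work lies.

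To establish the surjectivity, I would use a mapping-cylinder construction together with Cerf theory. Let $W_L = B^4 \cup (\text{2-handles attached along } L)$, a 4-manifold with $\partial W_L = M$. Given $h \in \mathrm{Diff}^+(M)$, form $W_L^h$ by pre-composing the boundary identification of $W_L$ with $h$, i.e.\ gluing on a mapping cylinder of $h$. The underlying smooth 4-manifold is unchanged, only the boundary parametrization differs. By Cerf's theorem, the two handle decompositions of $W_L$ coming from $L$ (viewed with its two boundary parametrizations) are related by a finite sequence of handle slides and handle birth/death pairs which do not cross the 3- or 4-handle level; under the handle/link dictionary these moves are precisely handle-slides and blow-ups/blow-downs on the framed link. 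Moreover, because every such elementary modification has a canonical associated boundary diffeomorphism up to isotopy, the composition of these moves induces exactly $h$ up to isotopy on $\partial W_L = M$.

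The main obstacle, and the step where one must be most careful, is ensuring that the Cerf-theoretic motion of handles produces on the boundary the given isotopy class $h$, rather than only some element of the mapping class group that happens to be smoothly isotopic modulo some ambiguity. This amounts to checking that the canonical ``handle move $\to$ diffeomorphism class'' assignment used in Gompf-Stipsicz is compatible with composition and with the identification of the boundary of the mapping cylinder of $h$ with $M$. The required compatibilities are precisely those indicated by Gompf-Stipsicz in the remarks following their Theorem 5.3.6, and once they are verified the statement follows.
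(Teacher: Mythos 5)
Your reduction in the first two paragraphs is sound and is a reasonable way to organize the statement: given the classical Kirby theorem, the refined version is indeed equivalent to the surjectivity of the map from (Kirby sequences $L\to L$) to the mapping class group of $M=S^3_L$. Note, though, that this is not how the result is actually obtained: the paper simply observes, following Gompf--Stipsicz, that Kirby's original proof already starts from the given diffeomorphism $f$ (it builds the closed $4$--manifold $W_{L_1}\cup_f(-W_{L_2})$ and runs Cerf theory against a diffeomorphism extending $f$), so the induced boundary diffeomorphism is $f$ by construction and no correction step is needed.

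The genuine gap is in your third paragraph, which you yourself identify as ``where the real work lies.'' You say that $W_L$ and $W_L^h$ are the same smooth $4$--manifold with different boundary parametrizations and that ``by Cerf's theorem'' the two handle decompositions are related by handle moves. But there is only one handle decomposition in sight: changing the boundary parametrization does not change the handles, so a naive application of Cerf theory outputs the empty sequence of moves, which induces the identity on $M$, not $h$. What you actually need before Cerf theory can be invoked is a diffeomorphism $W_L\to W_L$ restricting to $h$ on the boundary, and such a diffeomorphism need not exist: not every diffeomorphism of $\partial W$ extends over $W$, and this is precisely the obstruction that Kirby's proof must confront. One only obtains the required $4$--dimensional diffeomorphism after showing that $W_L\cup_h(-W_L)$ bounds a $5$--manifold once its signature has been corrected by connected sums with $\pm\mathbb{CP}^2$ (these are the blow-up moves), and then applying Wall's stabilization theorem to produce a diffeomorphism of the stabilized handlebodies extending $h$; only then does Cerf theory apply, and one must still trade away the $1$-- and $3$--handles it creates. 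Your final paragraph locates the delicate point in the compatibility of the ``move $\mapsto$ diffeomorphism'' assignment, but the real missing step is the existence of the extension of $h$ over a stabilization of $W_L$; without it the argument does not get off the ground.
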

 
The above theorem can be refined to the case of manifolds containing
graphs (see \cite{Ro}).  In particular, we have the following:

\begin{teo}
  Let $M_1$ and $M_2$ be oriented, closed 3--manifolds containing
  framed graphs $T_1\subset M_1$ and $T_2\subset M_2$, respectively.
  Let $f:M_1\to M_2$ be an orientation preserving diffeomorphism such
  that $f(T_1)=T_2$ as framed graphs.  Let $L_i$ be a link in $S^3$
  which is a surgery presentation of $M_i$ such that $T_i\subset
  S^3\setminus L_i$.  There exists a sequence of handle-slides,
  blow-up moves and blow-down moves on the components of $L_1$ as well
  as handle slides moving an edge of $T_1$ over a component of $L_1$
  and blow-up and blow-down moves around edges of $T_1$, transforming $L_1\sqcup T_1$ into $L_2\sqcup T_2$ and such that the
  induced diffeomorphism between $M_1=S^3_{L_1}$ and $M_2=S^3_{L_2}$
  is isotopic to $f$.
\end{teo}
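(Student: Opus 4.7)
The plan is to reduce this graph-enriched statement to the closed version just proved (Kirby-Gompf-Stipsicz) by passing to the complement of the framed graphs. Concretely, I would let $N(T_i)\subset M_i$ be a regular neighborhood of the framed trivalent graph $T_i$: because of the framing and of the tangent-plane condition at each vertex, $N(T_i)$ is a handlebody and $\Sigma_i=\partial N(T_i)$ comes with a canonical identification determined by the combinatorics of $T_i$ (edges, vertices, cyclic orderings around vertices) together with its framing. The restriction $f^\circ:=f|_{M_1\setminus\operatorname{int}N(T_1)}$ is then an orientation-preserving diffeomorphism of compact oriented 3-manifolds with boundary carrying $(\Sigma_1,\text{marking})$ to $(\Sigma_2,\text{marking})$; conversely, any such boundary-respecting diffeomorphism extends canonically across $N(T_1)$ to a diffeomorphism of closed pairs $(M_1,T_1)\to(M_2,T_2)$ isotopic to $f$.

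Next, I would observe that $L_i\subset S^3\setminus T_i$ provides a surgery presentation of $M_i\setminus\operatorname{int}N(T_i)$ inside the ambient link complement $S^3\setminus\operatorname{int}N(T_i)$. I would then invoke the version of Kirby's theorem for compact 3-manifolds with boundary cited in \cite{Ro}, proved by the same Cerf-theoretic argument as the closed case: any two such surgery presentations are related by a finite sequence of handle-slides, blow-ups and blow-downs performed entirely inside the ambient link complement, and (as in Theorem~5.3.6 of \cite{GS}) one may further arrange that the diffeomorphism $M_1\setminus\operatorname{int}N(T_1)\to M_2\setminus\operatorname{int}N(T_2)$ produced by the sequence is isotopic rel boundary to $f^\circ$.

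Finally, I would translate the moves in the ambient link complement back into moves on $L_1\sqcup T_1$. Slides of a component of $L_1$ over another component of $L_1$ are the usual Kirby moves. A slide involving a meridian of an edge $e$ of $T_1$ is, in the picture of $L_1\sqcup T_1$, realized as a handle-slide of the edge $e$ of $T_1$ over the corresponding component of $L_1$, since meridians of $e$ generate the homology of a tubular neighbourhood of $e$ in $S^3\setminus N(T_1)$. Blow-ups and blow-downs on meridians of edges appear as the blow-ups and blow-downs around edges of $T_1$ allowed in the statement. Gluing $N(T_i)$ back in then produces the required sequence of moves from $L_1\sqcup T_1$ to $L_2\sqcup T_2$ with the correct isotopy class of induced diffeomorphism.

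The main obstacle is the boundary-relative version of Kirby's theorem: one must verify that Cerf theory extends to 3-manifolds with prescribed boundary, and that the allowed sequence of handle moves can be kept inside $S^3\setminus\operatorname{int}N(T_1)$, so that no slide or blow-up crosses an edge or a vertex of $T_1$. This is exactly the point carried out in \cite{Ro}; once it is granted, the additional framing and cyclic-ordering data at vertices of $T_i$ are automatically preserved, because a handle-slide acts by band sum along an arc in the complement of $T_1$, which is controlled by the framing of $T_1$ and by the marking of $\Sigma_1$.
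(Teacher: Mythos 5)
Your reduction to the graph exterior has a genuine gap at the point where you invoke the relative Kirby theorem of \cite{Ro}. That theorem relates two framed surgery links living in the \emph{same} compact oriented 3--manifold with boundary whose surgeries are diffeomorphic rel boundary. Here the two presentations live in \emph{different} ambient manifolds: $L_1\subset S^3\setminus\operatorname{int}N(T_1)$ and $L_2\subset S^3\setminus\operatorname{int}N(T_2)$, and the graphs $T_1$ and $T_2$ are in general not isotopic in $S^3$ (their exteriors need not even be diffeomorphic --- take $T_1$ an unknot with $L_1=\emptyset$ and $T_2$ a knotted circle that is unknotted by surgery on a $\pm1$-framed unknot $L_2$). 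They only become ``the same'' after the surgeries are performed, so there is no single ``ambient link complement'' in which the whole sequence of moves can take place, and the boundary-relative Kirby theorem does not apply in the form you use it. Relatedly, your translation step is circular: in the relative calculus the boundary, hence the graph, is fixed once and for all, so there is no move ``involving a meridian of an edge $e$'' to translate back; yet the handle-slides of an edge of $T_1$ over a component of $L_1$ are precisely the moves that change the embedding of $T_1$ in $S^3$, and they are indispensable both in the statement and in its applications (e.g.\ in the proof of Theorem \ref{T:T_adm}, where the meridian $m$ is repeatedly slid over surgery components).

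The argument the citation of \cite{Ro} together with Theorem \ref{teo:refinedkirby} is standing in for runs the other way around: first apply the refined closed-manifold Kirby theorem to $L_1$, $L_2$ and the diffeomorphism of the underlying closed manifolds, then track $T_1$ through the resulting sequence of moves. At each intermediate stage $T_1$ sits in the surgered manifold $S^3_{L^{(j)}}$, and an isotopy of $T_1$ there projects to an isotopy in $S^3\setminus L^{(j)}$ except when an edge crosses one of the surgery solid tori --- and such a crossing is exactly a handle-slide of that edge over the corresponding component of $L^{(j)}$, which is where the extra moves in the statement come from (one must also slide $T_1$ off a component before blowing it down). If you want to keep the exterior point of view, you would have to either prove a version of the relative theorem in which the ambient graph exterior is itself allowed to change by such slides, or first reduce both presentations to a common ambient manifold; neither step is free, and neither appears in your proposal.
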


We will now discuss the idea of the proofs of Theorem \ref{T:coh_adm}
and \ref{T:T_adm}.  Suppose $(M,T,\coh)$ and $(M',T',\coh')$ are two
compatible triples which are diffeomorphic through a map $f:
(M,T,\coh)\to (M',T',\coh')$ (i.e. $f(T)=T'$ as framed graph and
$f^*(\coh')=\coh$).  Pick any two computable surgery presentations of
$(M,T,\coh)$ and $(M',T',\coh')$ through links $L$ and $L'$ in $S^3$.
Use Theorem \ref{teo:refinedkirby} to realize the map $f$ through a
sequence of handle-slides, blow-up moves and blow-down moves; then try
to follow the sequence and prove that the values of the invariants
before and after each move does not change. In order to do so, we have
to deal not only with framed links in $S^3$ but also with their
colorings and how they change during the moves. We will show in Lemma
\ref{L:handle-slide} that when a component of a link slides over a
second one, the color of the latter is modified. A similar phenomenon
happens when a blow-up/down move is applied (see Lemma
\ref{L:Stab}). But our invariants are defined only when the colors of
all the components of the links are ``generic'' (for instance in the
case of $\mathfrak{sl}_2$ when they belong to $\C\setminus \Z$); so,
if during a sequence of moves a non-generic color is produced the
corresponding invariant is not defined and the invariance cannot be
proven directly.
To bypass this problem we initially apply a $H$-stabilization along
an edge $e$ of $T$ which is colored by an element of $\A$ and color
the newly created meridian of $e$ by a ``sufficiently generic''
color. Then whenever a move in the sequence would produce a
non-generic color, before doing the move we slide the meridian over it
to change its color to a generic one. Then, at the end of the sequence
we slide back the meridian in its ``original position'' around $e$ and
remove it.

In the proof of Theorem \ref{T:coh_adm} we do not have an edge $e$ of
$T$ to create such a useful meridian, but we may apply a blow-up move
over a component of the surgery link. This produces a meridian colored
by a linear combination of generic colors.  We then consider this
meridian as part of the graph $T$ and our invariant as a linear
combination of invariants where $T$ is non-empty and has a generically
colored component; we thus may apply the preceding argument.

\subsection{Useful lemmas.}  
Before proving Theorems \ref{T:coh_adm} and \ref{T:T_adm} we prove a
series of lemma which will be used in the proof.  If $U,V,W$ are
objects of $\cat$, the duality morphisms of $\cat$ induce natural
isomorphisms
\begin{equation}
  \label{E:dualHom}
  \begin{array}{c}
  \Hom_\cat(U\otimes V,W)\cong\Hom_\cat(U,W\otimes
  V^{*})\cong\Hom_\cat(\unit,W\otimes V^{*}\otimes
  U^{*})\\\cong\Hom_\cat(\unit,V^{*}\otimes U^{*}\otimes W).  
  \end{array}
\end{equation}
Furthermore, the value of $F'$ on the planar theta-graph with coupons
and edges colored by $U$, $V$ and $W$ (see Figure \ref{F:theta})
\begin{figure}
  \centering
  $\brk{\vp,\vp'}=F'\left(\epsh{fig25}{15ex}
    \put(-54,1){\ms U}\put(-28,1){\ms V}\put(-11,1){\ms W}
    \put(-43,16){$\vp$}\put(-43,-15){{$\vp'$}}\right)$
  \caption{Pairing of morphisms in $\cat$.}
  \label{F:theta}
\end{figure}
gives a pairing
$$\brk{\cdot,\cdot}:\Hom_\cat(U\otimes V,W)\otimes\Hom_\cat(W,U\otimes V)
\rightarrow \FK$$ which is compatible with these isomorphisms.

By definition of $F'$ and this pairing, we have 
if $W\in\A$,
$$\vp\circ\vp'=\qd(W)^{-1}\brk{\vp,\vp'}\Id_W$$
for any $\vp\in\Hom_\cat(U\otimes V,W)$ and
$\vp'\in\Hom_\cat(W,U\otimes V)$.

\begin{lemma} Let $V\in\A$ and $t\in\TT$ be such that
  $V\otimes\ve^t\in\A$.  Then $\qd(V\otimes\ve^t)=\qd(V)$.  In other
  words, $\qd$ factors through a map on the simple $\TT$-orbits.
\end{lemma}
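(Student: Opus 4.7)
The plan is to build a single $\A$-graph $L$ containing both an edge colored by $V$ and an edge colored by $V\otimes \ve^t$, and then to compare the two evaluations of $F'(L)$ produced by cutting at each of these edges. By the defining property of the ambidextrous pair $(\A,\qd)$ in Equation~\eqref{E:Def_of_F'}, those two evaluations must agree, and a direct calculation will show that they produce $\qd(V)$ and $\qd(V\otimes \ve^t)$ respectively, each multiplied by $1$, giving the claimed equality.

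Concretely I would take $L$ to be the planar theta graph with three parallel edges colored by $V$, $\ve^t$, and $V\otimes \ve^t$, oriented so that at each of the two coupons two edges are incoming and one is outgoing (say, $V$ and $\ve^t$ upward and $V\otimes \ve^t$ downward). Both coupons are chosen to represent $\Id_{V\otimes \ve^t}\in \End_\cat(V\otimes \ve^t)$ under the identifications of Equation~\eqref{E:dualHom}, using the tautological factorization $V\otimes \ve^t = V\otimes \ve^t$. Because $V$ and $V\otimes \ve^t$ both lie in $\A$, the graph $L$ is an $\A$-graph, and the ambidextrous pair $(\A,\qd)$ provides two legitimate ways to compute $F'(L)$.

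Cutting $L$ along the $V\otimes \ve^t$ strand produces a $(1,1)$ graph $T_{V\otimes \ve^t}$ whose value under $F$ is the composition of the two coupon morphisms, namely $\Id_{V\otimes \ve^t}\circ \Id_{V\otimes \ve^t}=\Id_{V\otimes \ve^t}$, so $\brk{T_{V\otimes \ve^t}}=1$ and hence $F'(L)=\qd(V\otimes \ve^t)$. Cutting instead along the $V$ strand produces a $(1,1)$ graph $T_V$ whose value under $F$ is the partial categorical trace of $\Id_{V\otimes \ve^t}$ over the $\ve^t$ factor; this equals $\qdim(\ve^t)\,\Id_V = \Id_V$ because $\qdim(\ve^t)=1$ is part of the very definition of a realization of $\TT$. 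Therefore $\brk{T_V}=1$ and $F'(L)=\qd(V)$. Equating the two computations yields $\qd(V)=\qd(V\otimes \ve^t)$.

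The main technical point is to keep track of orientations so that both coupons genuinely encode $\Id_{V\otimes \ve^t}$ after applying the duality isomorphisms, and to verify that the resulting partial closure over $\ve^t$ collapses to $\qdim(\ve^t)\,\Id_V$; both checks are routine ribbon-category manipulations, and neither uses any property of $V$ beyond $V,V\otimes \ve^t\in\A$ together with $\ve^t\in\cat_1$ and $\qdim(\ve^t)=1$. This is essentially the classical argument that $\qdim$ is constant along $\TT$-orbits, transported to the modified dimension $\qd$ via the ambidextrous axiom.
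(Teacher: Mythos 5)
Your proposal is correct and is essentially the paper's own proof: the paper evaluates $F'$ on exactly this theta graph, with edges colored by $V$, $\ve^t$ and $V_t=V\otimes\ve^t$ and both coupons colored by the identity, cutting once along the $V_t$ edge to obtain $\qd(V_t)$ and once along the $V$ edge to obtain $\qd(V)\qdim(\ve^t)=\qd(V)$. Nothing further is needed.
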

\begin{proof} Let $V_t=V\otimes\ve^t$ and consider the following
  $\cat$-colored ribbon graph with coupons:
  $$\Gamma=\epsh{fig25}{10ex}\put(-37,0){\ms{V}}\put(-9,0){\ms{V_t}}
  \put(-18,0){\ms{\ve^t}}\put(-28,10){\ms{\Id}}\put(-28,-10){\ms{\Id}}.$$
  To compute $F'(\Gamma)$, one can cut the edge colored by $V_t$.  The
  image under $F$ of the obtained tangle is $\Id_{V_t}$ so
  $F'(\Gamma)=\qd(V_t)$.  On the other hand, one can cut the edge
  colored by $V$ and so
  $$F'(\Gamma)\Id_V=
  \qd(V)F\left(\epsh{fig37}{10ex}\put(-45,-6){\ms{V}}
    \put(-15,2){\ms{V_t}}\put(0,2){\ms{V}}\put(-25,2){\ms{\ve^t}}\
  \right)=\qd(V)F\left(\epsh{fig38}{10ex}\put(-29,2){\ms{V}}
    \put(-13,2){\ms{\ve^t}}\ \right)=\qd(V)\qdim(\ve^t)\Id_V=\qd(V)\Id_V.$$
  Hence $\qd(V_t)=F'(\Gamma)=\qd(V)$.
\end{proof}

The following proposition gives a relationship between vanishings of
$F$ and $F'$.
\begin{prop}\label{P:FvsF'} 
  Let $\wb V=((V_1,\ve_1),\ldots,(V_k,\ve_k))$ and $\wb
  W=((W_1,\ve'_1),\ldots,(W_l,\ve'_l))$ be objects of $Rib_\cat$ such
  that at least one of the $V_i$ belongs to $\A$ and for any $1\leq
  i\leq k$ and $1\leq j\leq l$ the objects $V_i$ and $W_j$ are in
  different graded pieces of $\cat$.  Suppose that $T$ is a
  $\cat$-colored ribbon graph in $\Hom_{Rib_\cat}(\wb V,\wb W)$.  Then
  the following two are equivalent
  \begin{itemize}
  \item $F(T)=0$
  \item For all $T'\in\Hom_{Rib_\cat}(\wb W,\wb V)$, one has
    $F'(\tr(T\circ T'))=0$ where $\tr(T\circ T')$ is the trace in
    $Rib_\cat$, i.e.  the braid closure of the tangle formed from $T'$
    on top of $T$.
  \end{itemize}
\end{prop}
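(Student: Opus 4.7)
My strategy is to reduce both directions to a partial-trace identity. Set $f:=F(T)$ and $g:=F(T')$. Cutting $\tr(T\circ T')$ along the distinguished $V_i$-colored edge (which is present in the composed graph because $V_i\in\A$ is an input strand of $T$ and hence appears as an internal $V_i$-strand of $\tr(T\circ T')$) produces a $(1,1)$-tangle with $F$-value $c_i\Id_{V_i}$, where $c_i$ denotes the partial categorical trace of $g\circ f$ (viewed as an endomorphism of $\wb V$) over all factors except $V_i$. Hence
\[F'(\tr(T\circ T'))=\qd(V_i)\,c_i,\]
which by ambidexterity of $(\A,\qd)$ is independent of the choice of $V_i\in\A$. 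Direction $(1)\Rightarrow(2)$ is then immediate: $f=0$ implies $g\circ f=0$, so $c_i=0$.

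For $(2)\Rightarrow(1)$ I will argue by contrapositive. Assume $f\neq 0$; I shall construct $T'$ with $F'(\tr(T\circ T'))\neq 0$. Every morphism in $\Hom_\cat(\wb W,\wb V)$ is realized as $F$ of a single coupon, so it suffices to find $g\in\Hom_\cat(\wb W,\wb V)$ with $c_i\neq 0$. Using the duality isomorphisms \eqref{E:dualHom} to move every $V_j$ with $j\neq i$ across to the codomain, transport $f$ to a nonzero morphism $\tilde f\colon V_i\to X$, where $X$ is the (suitably reordered) tensor product of $\wb W$ with the $V_j^*$ for $j\neq i$. The grading hypothesis that each $V_i$ lies in a graded piece different from each $W_j$ guarantees that $X$ sits inside a single graded piece, namely $\cat_{g_i}$ where $g_i$ is the grade of $V_i$. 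Since $V_i\in\A$ is simple and $\tilde f\neq 0$, $V_i$ should occur as a direct summand of $X$: when $g_i\in\Gr\setminus\X$ this follows from the semi-simplicity of $\cat_{g_i}$ in Condition~\eqref{I6:DefGmodularCat} together with Schur's lemma, while when $g_i\in\X$ one invokes the nondegeneracy of the modified trace built into the ambidextrous pair $(\A,\qd)$. Taking a projection $\pi\colon X\to V_i$ with $\pi\circ\tilde f=\alpha\Id_{V_i}$ (so $\alpha\neq 0$) and transporting $\pi$ back through \eqref{E:dualHom} yields the desired $g$; the pivotal calculus of duality morphisms then shows $c_i=\alpha$, whence $F'(\tr(T\circ T'))=\qd(V_i)\alpha\neq 0$.

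The main obstacle is the nondegeneracy step in the contrapositive when $g_i\in\X$: the graded piece $\cat_{g_i}$ is not assumed semi-simple, so the summand-projection argument must be replaced by an argument exploiting ambidexterity, which is supposed to guarantee precisely that the modified trace on $\End_\cat(\wb V)$ separates points of the morphism space under composition. The grading hypothesis relating the $V_i$ and the $W_j$ is what keeps $X$ inside a single piece $\cat_{g_i}$, so that the question reduces to one about the simple object $V_i$ inside a single graded piece and the ambidextrous nondegeneracy can be applied cleanly.
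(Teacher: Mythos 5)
Your forward direction ($F(T)=0$ implies vanishing of all the closures) is essentially the paper's argument and is fine. The contrapositive, however, has a genuine gap at exactly the point you flag: when the graded piece $\cat_{g_i}$ containing $V_i$ satisfies $g_i\in\X$, you appeal to ``the nondegeneracy of the modified trace built into the ambidextrous pair $(\A,\qd)$''. No such nondegeneracy is built in. The definition of an ambidextrous pair in this paper (Equation \eqref{E:Def_of_F'}) asserts only that cutting different $\A$-colored edges of a closed graph yields the same scalar, i.e.\ that $F'$ is well defined; it says nothing about the pairing $\Hom_\cat(V_i,X)\otimes\Hom_\cat(X,V_i)\to\FK$ separating points. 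The only nondegeneracy statement available is Lemma \ref{L:fusion}, whose proof uses semi-simplicity of $\cat_g$ and hence requires $g\notin\X$. The case $g_i\in\X$ is not exotic: in the $\slt$ example the typical modules $V_{kr}$ (including the Kashaev module $V_0$) lie in $\cat_{\wb{r-1}}$ with $\wb{r-1}\in\Z/2\Z=\X$, so your argument fails precisely for the colors that matter most in the applications.

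The paper closes this gap with a different device that you should adopt: instead of transporting $f=F(T)$ to a morphism out of $V_i$ and working inside $\cat_{g_i}$, tensor $f$ with the identity of an auxiliary simple object $U\in\cat_h$, where $h$ is chosen in $\Gr\setminus(\X\cup g^{-1}\X)$ (possible by Condition \eqref{I4:DefGmodularCat} of Definition \ref{D:GmodularCat}, since $\Gr$ is not covered by finitely many translates of $\X$; here $g$ is the common degree of $\wb V$ and $\wb W$). Then $\wb V\otimes U$ and $\wb W\otimes U$ live in the semi-simple piece $\cat_{gh}$, one has $f\otimes\Id_U\neq0$, and semi-simplicity produces a simple $U'\in\cat_{gh}\subset\A$ together with maps $i,p$ satisfying $p\circ(f\otimes\Id_U)\circ i=\Id_{U'}$; closing up the resulting graph gives $F'=\qd(U')\neq0$. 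This sidesteps any nondegeneracy question in the non-semi-simple pieces. If you wish to keep your formulation, you must either restrict to $g_i\notin\X$ or actually prove the nondegeneracy you invoke, which is not a consequence of the axioms as stated.
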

\begin{proof}
  Assume that $F(T)=0$ and that $T'$ is as in the proposition.  Then one can
  compute $F'(\tr(T\circ T'))$ by cutting the edge of $T$ colored by $V_i\in
  A$.  This produce a 1-1-tangle containing $T$ as a subgraph and thus the
  functor $F$ vanish on it.  Hence $F'(\tr(T\circ T'))=0$.
  
  Suppose now that $F(T)=f\neq0$.  Let $g\in\Gr$ be such that $F(\wb
  V)=V_1\otimes \cdots \otimes V_k\in\cat_g$.  As $f\neq0$, the
  $\Gr$-grading impose that $F(\wb W)=W_1\otimes \cdots \otimes
  W_l\in\cat_g$.  Let $h\in\Gr\setminus(\Xr\cup g^{-1}\Xr)$ and let
  $U$ be a simple object of $\cat_h$.  Then $\wb V \otimes U$ and $\wb
  W\otimes U$ are semi-simple objects because they belongs to
  $\cat_{gh}$ where $gh\notin\Xr$.  Now $f\otimes\Id_U\neq0$ so there
  is a simple object $U'\in \cat_{gh}$ and maps $i:U'\to \wb V\otimes
  U$, $p:\wb W\otimes U\to U'$ such that $p\circ(f\otimes\Id_U)\circ
  i=\Id_{U'}$.  Let $T''\supset T$ be the ribbon graph with coupons
  colored by $p$ and $i$ corresponding to the expression
  $p\circ(f\otimes\Id_U)\circ i$.  Then $F(T''-\Id_{(U',+)})=0$ hence
  the braid closures satisfy
  $F'(\tr(T''))=F'(\tr(\Id_{(U',+)}))=\qd(U')$.  Now for $T'$ the
  complement of $T$ in $\tr(T'')$, we have $F'(\tr(T\circ
  T'))=\qd(U')\neq0$.
\end{proof}

\begin{lemma}[Fusion lemma] \label{L:fusion} Let $g\in\Gr\setminus
  \X$.  Let $U,V\in \cat$, with $U\otimes V\in\cat_g$ and let $W$ be a
  simple object of $\cat_g$ (in particular $W\in \A$).  Then the pairing
  $\brk{\cdot,\cdot}:\Hom_\cat(U\otimes
  V,W)\otimes\Hom_\cat(W,U\otimes V)\to\FK$ is non degenerate.
  Furthermore,
  \begin{equation}\label{E:DepIDofUoV}
    \Id_{U\otimes V}=\sum_{W}\sum_{i}\qd(W)x^{W,i}\circ x_{W,i}
  \end{equation}
  where $W$ runs through a representative set of isomorphism classes
  of simple objects of $\cat_{g}$, $(x_{W,i})_i$ is a basis of
  $\Hom_\cat(U\otimes V,W)$ and $(x^{W,i})_i$ is the dual basis of
  $\Hom_\cat(W,U\otimes V)$.  Let $(W_j)_{j=1\cdots n}$ be a finite
  set of simple modules of $\cat_g$ whose isomorphism classes
  represent all simple $\TT$-orbits of $\cat_g$.  Then the above sum
  can be rewritten as
  $$\Id_{U\otimes V}=\sum_{j=1}^{n}\qd(W_j)\sum_{t\in\TT}\sum_i x^{j,t,i}\circ
  x_{j,t,i}$$ where $(x_{j,t,i})_i$ is a basis of $\Hom_\cat(U\otimes
  V,W_j\otimes \ve^{t})$ and $(x^{j,t,i})_i$ is the dual basis of
  $\Hom_\cat(W_j\otimes \ve^{t},U\otimes V)$.
\end{lemma}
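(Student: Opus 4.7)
The plan is to reduce everything to the semisimplicity of $\cat_g$ provided by item \eqref{I6:DefGmodularCat} of Definition \ref{D:GmodularCat}, and then to translate the standard Schur-type pairing into the pairing $\brk{\cdot,\cdot}$ coming from $F'$ on the theta graph.

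First, since $g\in\Gr\setminus\X$, item \eqref{I6:DefGmodularCat} guarantees that $U\otimes V\in\cat_g$ decomposes as a finite direct sum $\bigoplus_\alpha W_\alpha$ of simple objects of $\cat_g$, with structure morphisms $i_\alpha\colon W_\alpha\to U\otimes V$ and $p_\alpha\colon U\otimes V\to W_\alpha$ satisfying $p_\beta\circ i_\alpha=\delta_{\alpha,\beta}\Id_{W_\alpha}$ and $\sum_\alpha i_\alpha\circ p_\alpha=\Id_{U\otimes V}$. For each simple $W\in\cat_g$, Schur's lemma identifies $\Hom_\cat(U\otimes V,W)$ with the linear span of those $p_\alpha$ with $W_\alpha\cong W$ (composed with a chosen isomorphism), and symmetrically for $\Hom_\cat(W,U\otimes V)$.

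To obtain non-degeneracy of $\brk{\cdot,\cdot}$, I would apply the identity $\vp\circ\vp'=\qd(W)^{-1}\brk{\vp,\vp'}\Id_W$ stated just before the lemma to these structural bases, which yields $\brk{p_\alpha,i_\beta}=\qd(W_\alpha)\delta_{\alpha,\beta}$. Because simple objects of $\cat_g$ lie in $\A$ and $\qd\colon\A\to\FK^*$ takes values in the units of $\FK$, the pairing matrix is diagonal with invertible entries, so $\brk{\cdot,\cdot}$ is non-degenerate on each isotypic block. For the identity decomposition, pick bases $(x_{W,i})_i$ of $\Hom_\cat(U\otimes V,W)$ and $(x^{W,i})_i$ of $\Hom_\cat(W,U\otimes V)$ dual with respect to $\brk{\cdot,\cdot}$, so $x_{W,j}\circ x^{W,i}=\qd(W)^{-1}\delta_{ij}\Id_W$. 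Composing $\sum_i x^{W,i}\circ x_{W,i}$ on the right with any $\phi\colon W'\to U\otimes V$, with $W'$ simple, gives $0$ when $W'\not\cong W$ (by Schur) and $\qd(W)^{-1}\phi$ when $W'\cong W$ (by duality of bases), so $\sum_i x^{W,i}\circ x_{W,i}$ is $\qd(W)^{-1}$ times the projector onto the $W$-isotypic component. Multiplying by $\qd(W)$ and summing over a set of representatives of isomorphism classes of simples of $\cat_g$ then recovers $\Id_{U\otimes V}$, which is exactly \eqref{E:DepIDofUoV}.

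To obtain the reformulation in terms of $\TT$-orbits, I would use item \eqref{I6:DefGmodularCat} once more: every simple of $\cat_g$ is isomorphic to some $W_j\otimes\ve^{t}$ for a unique orbit representative $W_j$ and some $t\in\TT$, and the lemma above shows $\qd(W_j\otimes\ve^{t})=\qd(W_j)$ is independent of $t$, which allows one to factor $\qd(W_j)$ out of the inner sum. The sum over $t\in\TT$ is then effectively finite since $\Hom_\cat(U\otimes V,W_j\otimes\ve^{t})$ vanishes for all but finitely many $t$. I do not anticipate a serious obstacle; the only subtle point, essentially bookkeeping, is to carefully track the normalization factor $\qd(W)$ which distinguishes the standard Schur pairing valued in $\End(W)=\FK\Id_W$ from the $F'$-theta pairing $\brk{\cdot,\cdot}$, and which is the reason $\qd$ appears in \eqref{E:DepIDofUoV}.
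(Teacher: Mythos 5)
Your proof is correct and follows essentially the same route as the paper: semisimplicity of $\cat_g$ gives the direct-sum decomposition with structural morphisms, the relation $\vp\circ\vp'=\qd(W)^{-1}\brk{\vp,\vp'}\Id_W$ turns Schur orthogonality into non-degeneracy of the theta pairing and into \eqref{E:DepIDofUoV}, and the orbit reformulation uses $\qd(W_j\otimes\ve^{t})=\qd(W_j)$. The one detail worth making explicit in the last step (as the paper does) is that the \emph{freeness} of the realization of $\TT$ is what guarantees that distinct pairs $(j,t)$ give non-isomorphic simples, so the sum over $t\in\TT$ does not double-count isomorphism classes.
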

\begin{proof}
  As $\cat_{g}$ is semi-simple, we can write $U\otimes
  V\simeq\bigoplus_k W_k^{\oplus n_k}$ where $W_k$ are non-isomorphic
  simple objects of $\cat_{g}$.  So there are maps $x_{W_k,i}:U\otimes
  V\to W_k$ and $y^{W_k,i}:W_k\to U\otimes V$ such that
  $x_{W_k,i}\circ y^{W_k,j}=\delta_i^j\Id_{W_k}$ and $\Id_{U\otimes
    V}=\sum_{k}\sum_i y^{W_k,i}\circ x_{W_k,i}$ Now if
  $\vp\in\Hom_\cat(U\otimes V,W)$ is not zero, then
  $\vp=\sum_{k}\sum_i \vp\circ y^{W_k,i}\circ x_{W_k,i}$ so there
  exist $k,i$ with $\vp\circ y^{W_k,i}\neq0\in\Hom_\cat(W_k,W)$.  This
  implies that $W_k$ and $W$ are isomorphic and if
  $\vp'=y^{W_k,i}\circ(\vp\circ y^{W_k,i})^{-1}\in
  \Hom_\cat(W,U\otimes V)$, then we have $\vp\circ\vp'=\Id_W$ implying
  $\brk{\vp,\vp'}=\qd(W)\neq0$.  Hence the pairing is not degenerate.

  Furthermore, $x_{W_k,i}\circ y^{W_k,j}=\delta_i^j\Id_{W_k}$ so
  $\brk{x_{W_k,i}, y^{W_k,j}}=\delta_i^j\qd({W_k})$ which implies that the
  basis $(\qd(W_k)^{-1}y^{W_k,i})_{i=1\cdots n_k}$ is the dual basis of
  $(x_{W_k,i})_{i=1\cdots n_k}$.  Hence the identity of $U\otimes V$ can be
  decomposed as in Equation \eqref{E:DepIDofUoV}.  Remark that for all but a
  finite number of simple modules $W$ the corresponding term in the sum is
  zero so that the sum in Equation \eqref{E:DepIDofUoV} is finite.

  We can then rewrite the sum by grouping the simple objects $W$ that are in
  the same simple $\TT$-orbit.  Here we use that the action of $\TT$ is free
  so that any $W$ is isomorphic to an unique module of the form
  $W_j\otimes\ve^t$.  Then using that $\qd(W_j\otimes\ve^t)=\qd(W_j)$ we
  obtain the second decomposition of $\Id_{U\otimes V}$.
\end{proof}

Let $T$ be a homogeneous $\cat$-colored ribbon graph with coloring
$\gcolor$, whose associated $\Gr$-coloring we denote $\dg\gcolor$.
Let $K$ be an oriented knot in the complement of $T$ and let $\Sigma$
be a Seifert surface for $K$.  Then the homological intersection of
the homology class relative to $K$ represented by $\Sigma$ with
$\dg\gcolor$ is an element of $\Gr$ depending on $K$.  Let
$$\lkG(K,\dg{\gcolor})=\prod_{p\in \Sigma\cap T} \dg{\gcolor}(e_p)^{\sign(p)}$$
where $e_p$ is the edge of $T$ containing $p$ and $sign(p)$ is the
sign of the intersection of $\Sigma$ and $e_p$ at $p$.  Clearly
$\lkG(K,\dg\gcolor)$ depends only on $[K]\in H_1(S^3\setminus T;\Z)$
and it can be computed from a diagram of $T$ by the following rules:
(1) if $e$ is an edge of $T$ which is below $K$ then $e$ does not
contribute to $\lkG(K,\dg\gcolor)$, (2) if $e$ is above $K$ and the
pair $(K,e)$ form a positive crossing then $e$ contributes
$\dg\gcolor(e)$ to $\lkG(K,\dg\gcolor)$, (3) if $e$ is above $K$ and
the pair $(K,e)$ form a negative crossing then $e$ contributes
$\dg\gcolor(e)^{-1}$ to $\lkG(K,\dg\gcolor)$. These rules can be
summarized by:
\begin{equation}
  \label{E:lkG}
  \epsh{fig26}{7ex}\put(-6,16){\small$e$}\put(-18,16){\ms{K}}\to 
  \dg\gcolor(e), \quad
  \epsh{fig27}{7ex}\put(-6,16){\ms{K}}\put(-18,16){\small$e$}\to 
  \dg\gcolor(e)^{-1},\quad
  \epsh{fig27}{7ex}\put(-6,16){\small$e$}\put(-18,16){\ms{K}}\to 1, 
  \quad \epsh{fig26}{7ex}\put(-6,16){\ms{K}}\put(-18,16){\small$e$}\to 1,
\end{equation}
where we are using a multiplicative notation for the group ${\Gr}$. 
We extend the above definition to the case when $K\subset T$ by
defining $\lkG(K,\dg\gcolor)$ to be $\lkG(K_\sslash,\dg\gcolor)$ where
$K_\sslash$ is a parallel copy of $K$ given by the framing of $T$.  In
particular, if $L\cup T$ is a $\cat$-colored ribbon graph with
$\Gr$-coloring $\dg\gcolor$ and linking matrix $\lk$, and if $K_i$ is
the $i$\textsuperscript{th} component of $L$ then
$$\lkG(K_i,\dg\gcolor)=\prod_j\dg\gcolor(K_j)^{\lk_{ij}}
=\prod_j\dg\gcolor(K_j)^{\lk_{ji}}$$
even if $\lk$ is not symmetric.  Thus, if $\dg\gcolor$ is in the
image of $\Phi$ then $\lkG(K_i,\dg\gcolor)=1\in\Gr$.
In particular, we have the following remark.
\begin{rem}\label{R:colorlkG}
  Let $(M,T,\coh)$ be a compatible triple with a computable surgery
  presentation via $L$.  Let $g_\coh$ be the $\Gr$-coloring on $L\cup
  T$ coming from $\coh$.  If $L_i$ is any component of $L$ then
  $\lkG(L_i,g_\coh)=1$.
\end{rem}

\begin{lemma}\label{L:lifts}
  Let $T$ be a $\A$-graph with corresponding $\Gr$-coloring
  $\dg\gcolor$.  Suppose $K$ is a circle component of $T$ colored by
  $\ve^t$ for some $t\in\TT$.  Let $T'$ be the $\A$-graph $T$ with the
  component $K$ removed.  Then
  $$F'(T)=\lkG(K,\dg\gcolor)\pow{t}F'(T').$$
\end{lemma}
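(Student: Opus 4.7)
The strategy is to apply the skein relation~\eqref{E:skeineps} to eliminate all crossings between $K$ and the complementary subgraph $T'$ in a planar diagram, tracking the accumulated scalar factor. The key input is Condition~\eqref{I3:DefGmodularCat} of Definition~\ref{D:GmodularCat}, which says the double braiding of $\ve^t$ with any $V\in\cat_g$ equals $g\pow{t}\,\Id$; the skein relation~\eqref{E:skeineps} is its diagrammatic form and states that swapping a crossing of $\ve^t$ with a strand colored by an object of $\cat_g$ alters $F$ by a factor $g^{\pm1}\pow{t}$, the sign being determined by the orientations.

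First I would choose a generic planar diagram of $T$ in which $K$ meets $T'$ transversally at finitely many crossings. Because $\theta_{\ve^t}=\Id_{\ve^t}$ and $\{\ve^{t'}\}_{t'\in\TT}$ is a commutative set, the framing and self-crossings of $K$ contribute trivially to $F$, so after a preliminary isotopy we may take $K$ to be an unknotted circle in the diagram having only transverse crossings with individual edges of $T'$. Then I would induct on the number of crossings at which an edge $e\subset T'$ passes over $K$: at each such crossing~\eqref{E:skeineps} lets us swap over and under at the cost of the scalar $\dg\gcolor(e)^{\sign(p)}\pow{t}$, whose sign matches exactly the local contribution to $\lkG(K,\dg\gcolor)$ prescribed by the rules~\eqref{E:lkG}.

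After all such crossings have been swapped, $K$ lies entirely above $T'$ in the diagram and can be isotoped to a split circle component disjoint from $T'$. Equation~\eqref{E:F'T'T}, combined with $\qdim(\ve^t)=1$ from the definition of a realization of $\TT$, then yields $F'(T'\sqcup K)=F'(T')\cdot F(K)=F'(T')$. By the bilinearity of the pairing $\Gr\times\TT\to\FK^{*}$, the total accumulated scalar is $\bigl(\prod_p\dg\gcolor(e_p)^{\sign(p)}\bigr)\pow{t}=\lkG(K,\dg\gcolor)\pow{t}$, producing the claimed formula.

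The main obstacle I anticipate is the careful sign bookkeeping in the skein moves: one must verify that at a positive (respectively negative) crossing where $e$ passes over $K$, a single application of~\eqref{E:skeineps} contributes $\dg\gcolor(e)\pow{t}$ (respectively $\dg\gcolor(e)\pow{-t}$), exactly matching the conventions in~\eqref{E:lkG} defining $\lkG$. Once this matching is in place the rest is standard diagrammatic manipulation.
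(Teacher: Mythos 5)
Your proposal is correct and follows essentially the same route as the paper's proof: pull $K$ off the rest of the diagram using the skein relation \eqref{E:skeineps}, accumulate the factor $\lkG(K,\dg\gcolor)\pow{t}$ via the crossing rules \eqref{E:lkG}, and then split off $K$ with Equation \eqref{E:F'T'T} together with $F(K)=1$ (from $\qdim(\ve^t)=1$ and the trivial twist/self-braiding of $\ve^t$). The extra care you take with sign bookkeeping and with the self-crossings of $K$ is exactly what the paper leaves implicit.
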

\begin{proof}
  We consider a diagram representing $T$ and use the skein relation
  \eqref{E:skeineps} to pull the knot $K$ above the rest of the
  diagram, obtaining the disjoint union of $T'=T\setminus K$ with $K$.
  Using Equation \eqref{E:lkG}, one can see that
  $F'(T)=\lkG(K,\dg\gcolor)\pow{t}F'((T\setminus K)\sqcup K)$.
  Combining this equality with Equation \eqref{E:F'T'T} we have
  $$F'(T)=\lkG(K,\dg\gcolor)\pow{t}F'(T'\sqcup K)=
  \lkG(K,\dg\gcolor)\pow{t}F'(T')F(K)$$ and the lemma follows from the
  fact that $F(K)=1$ which can be easily deduced from the braiding and
  modified dimension of $\ve^t$ since $c_{\ve^t,\ve^t}\circ
  c_{\ve^t,\ve^t}=\Id_{\ve^t\otimes \ve^t}$ and $\qdim(\ve^t)=1$.
\end{proof}

\begin{lemma}\label{L:lifts2}
  Let $T$ be a $\A$-graph with $\Gr$-coloring $\dg\gcolor$.  Suppose
  $K$ is a circle component of $T$ colored by $V\in\A$ with
  $\lkG(K,\dg\gcolor)=1$.  Let $T'$ be the $\A$-graph $T$ where the
  color of the component $K$ is changed to $V\otimes\ve^{t}$ for some
  $t\in\TT$ then $F'(T')=F'(T).$
\end{lemma}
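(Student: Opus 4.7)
The plan is to realize the color $V\otimes\ve^t$ on $K$ as two parallel strands and then strip off the $\ve^t$-strand using Lemma \ref{L:lifts}. First, I would introduce the intermediate graph $T''$ obtained from $T'$ by replacing the component $K$ (colored by $V\otimes\ve^t$) by two parallel strands: the original $K$ colored by $V$, and a framing-pushoff $K_\sslash$ colored by $\ve^t$. Because splitting an edge along a tensor product into two parallel cabled strands is a basic move in the ribbon graph calculus (it corresponds to the identification $F(V\otimes\ve^t)=F(V)\otimes F(\ve^t)$ at the level of the functor $F$), the ribbon functor is insensitive to this move, and consequently $F'(T')=F'(T'')$.

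Next, observe that $T''=T\cup K_\sslash$ with $K_\sslash$ a circle colored by $\ve^t$, so Lemma \ref{L:lifts} applies and gives
\begin{equation*}
F'(T'')=\lkG(K_\sslash,\dg\gcolor_{T''})\pow{t}\,F'(T),
\end{equation*}
where $\dg\gcolor_{T''}$ is the $\Gr$-coloring of $T''$. Since $\ve^t\in\cat_1$, the degree of $K_\sslash$ is trivial, and on all other edges $\dg\gcolor_{T''}$ coincides with $\dg\gcolor$ (note also that the $\Gr$-degree of $V$ equals the $\Gr$-degree of $V\otimes\ve^t$, so there is no ambiguity between $T$, $T'$ and $T''$ at the level of $\Gr$-colorings).

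The final step is to compute $\lkG(K_\sslash,\dg\gcolor_{T''})$. Because $K_\sslash$ is the framing-parallel of $K$, its homological intersection with any Seifert surface meets the other edges of $T$ with exactly the same signed multiplicities as the pushoff used in the definition of $\lkG(K,\dg\gcolor)$; moreover the intersection with $K$ itself is controlled by the framing number of $K$, which is precisely what the $K$-diagonal entry of $\lk$ contributes in the formula for $\lkG(K,\dg\gcolor)$. Hence
\begin{equation*}
\lkG(K_\sslash,\dg\gcolor_{T''})=\lkG(K,\dg\gcolor)=1
\end{equation*}
by hypothesis, so $1\pow{t}=1$ and we conclude $F'(T')=F'(T'')=F'(T)$. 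The only delicate point, which I would want to verify carefully, is the bookkeeping in step~3 that the self-linking of $K$ is correctly absorbed when we cable $K$ into the pair $(K,K_\sslash)$; this is where the convention $\lkG(K,\dg\gcolor):=\lkG(K_\sslash,\dg\gcolor)$ for $K\subset T$ is tailor-made to make the argument go through.
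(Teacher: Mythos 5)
Your proof is correct and follows essentially the same route as the paper: introduce $T''=T\cup K_\sslash$ with the parallel copy $K_\sslash$ colored by $\ve^t$, use the compatibility of $F$ with cabling to get $F'(T')=F'(T'')$, apply Lemma \ref{L:lifts}, and identify $\lkG(K_\sslash,\dg{\gcolor'})$ with $\lkG(K,\dg\gcolor)=1$ via the paper's convention for components of $T$. The "delicate point" you flag about the self-linking of $K$ is handled exactly as you suspect, by the definition $\lkG(K,\dg\gcolor):=\lkG(K_\sslash,\dg\gcolor)$ for $K\subset T$.
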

\begin{proof}
  Let $T''=T\cup K_\sslash$ where $K_\sslash$ is a parallel copy of
  $K$ colored with $\ve^t$.  Then basic properties of $F$ imply
  $F'(T')=F'(T'')$.  The $\Gr$-coloring $\dg\gcolor$ of $T$ extends to a
  $\Gr$-coloring $\dg{\gcolor'}$ of $T''$ with value $1$ on $K_\sslash$.
  Now one can apply Lemma \ref{L:lifts} to $T''$ which gives
  $F'(T'')=\lkG(K_\sslash,\dg{\gcolor'})\pow{t}F'(T)$.  But
  $\lkG(K_\sslash,\dg{\gcolor'})=\lkG(K,\dg\gcolor)=1$ so
  $F'(T')=F'(T)$.
\end{proof}

\begin{lemma}[Handle slide]\label{L:handle-slide}
  Let $T$ be a $\cat$-colored ribbon graph with a homogeneous
  $\cat$-coloring $\gcolor$.  Suppose $K$ is a circle component of $T$
  colored by a Kirby color $\Omega_g$ of degree $g\in\Gr\setminus\X$.
  Let $e$ be an oriented edge of $T\setminus K$ homogeneously colored
  by $\gcolor(e)\in \cat$.  Let $T'$ be a $\cat$-colored ribbon graph
  obtained from $T$ by a handle-slide of $e$ along $K$ with the color
  $\Omega_g$ of $K$ replaced by a Kirby color $\Omega_h$ of degree
  $h=\dg{\gcolor}(e)g$.  If $\lkG(K,\dg\gcolor)=1$ and $h\notin\X$
  then $F'(T')=F'(T)$.
\end{lemma}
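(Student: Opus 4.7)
The plan is to prove invariance under the handle-slide by inserting an auxiliary parallel copy of $K$ along the slid edge and then applying the Fusion Lemma (Lemma \ref{L:fusion}) to reorganize the colors. Concretely, I would proceed in four steps, the last one being the most delicate.

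First, I would reinterpret the handle-slide topologically. Near the region where the slide occurs, the slid edge $e$ in $T'$ is obtained from $e$ in $T$ by attaching a parallel copy $K_\sslash$ of $K$ (following the framing of $K$). Introduce an intermediate ribbon graph $\widehat T$ which coincides with $T$ away from the slide region, has the extra component $K_\sslash$ colored by the original Kirby color $\Omega_g$, and has $e$ replaced by $e$ running alongside $K_\sslash$. Topologically $\widehat T$ is isotopic to $T'$, but the color on the retained circle $K$ is still $\Omega_g$ rather than $\Omega_h$. The goal is thus to show $F'(\widehat T)=F'(T')$ and $F'(\widehat T)=F'(T)$.

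Second, to compare $\widehat T$ with $T$, I would observe that $K_\sslash$ can be slid off and isotoped to coincide with the original circle $K$, giving two parallel copies of $K$, each colored by $\Omega_g$. Using Lemma \ref{L:lifts2} together with the hypothesis $\lkG(K,\dg\gcolor)=1$, these two parallel $\Omega_g$-colored circles can be merged back into a single copy without changing $F'$; this will give $F'(\widehat T)=F'(T)$.

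Third, to compare $\widehat T$ with $T'$, I would apply the Fusion Lemma on a small segment where $e$ and $K_\sslash$ run parallel. For each simple summand $V$ of $\Omega_g$ on $K_\sslash$, the identity on $\gcolor(e)\otimes V$ decomposes as $\sum_W\sum_i\qd(W)\,x^{W,i}\circ x_{W,i}$, where $W$ runs over representatives of simple $\TT$-orbits in $\cat_h$ (which is semi-simple since $h=\dg\gcolor(e)g\notin\X$, an essential hypothesis). Inserting this decomposition locally fuses the two parallel strands: the $\gcolor(e)$-strand continues along $e$, while the $V$-strand continues back around the $K_\sslash$-loop. Closing the $V$-branch via the loop $K_\sslash$ then produces a summation which, after using the freeness of the $\TT$-action and the hypothesis $\lkG(K,\dg\gcolor)=1$ to identify contributions from different $\TT$-orbit representatives (Lemmas \ref{L:lifts} and \ref{L:lifts2}), reassembles exactly into a single circle $K$ colored by the Kirby color $\Omega_h$ of degree $h$.

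The main obstacle I expect is in this last step: one must verify that, after summing over $V\in\Omega_g$ with weight $\qd(V)$ and over $W$ with weight $\qd(W)$, the combined weighted sum faithfully produces the Kirby color $\Omega_h$ (whose definition is as a sum over simple $\TT$-orbits in $\cat_h$) rather than a $\TT$-shifted or multiplicity-inflated version. This requires a careful bookkeeping of $\TT$-orbit representatives, leveraging that $\qd$ is constant on simple $\TT$-orbits and that the condition $\lkG(K,\dg\gcolor)=1$ makes Lemma \ref{L:lifts2} available to identify contributions from $V$ and $V\otimes\ve^t$, ensuring independence from the representatives chosen for $\Omega_g$ and $\Omega_h$.
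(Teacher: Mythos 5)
There is a genuine gap, and it lies in the architecture of your argument rather than in the fusion computation itself. Your intermediate graph $\widehat T$ is not isotopic to $T'$: the handle-slide replaces $e$ by the band sum of $e$ with a push-off of $K$, producing a graph with the \emph{same} number of circle components as $T$, whereas $\widehat T$ carries an extra closed component colored by $\Omega_g$. More seriously, your Step 2 asserts that two parallel circles each colored by $\Omega_g$ can be ``merged'' into a single $\Omega_g$-colored circle via Lemma \ref{L:lifts2}. That lemma only permits changing the color of a single circle from $V$ to $V\otimes\ve^t$ (a twist by the transparent objects $\ve^t$); it says nothing about collapsing two Kirby-colored circles into one, and no such move preserves $F'$: expanding both Kirby colors gives a double sum $\sum_{V,V'}\qd(V)\qd(V')(\cdots)$ involving $V\otimes V'$, which is not the single sum $\sum_V\qd(V)(\cdots)$. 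So the claimed equality $F'(\widehat T)=F'(T)$ does not hold, and the proof does not close.

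Your Step 3 does contain the correct core idea, and it is essentially what the paper does --- but the paper runs it as a \emph{single} local computation with no intermediate graph. Starting from $T$, expand $\Omega_g=\sum_{V\in O(g)}\qd(V)V$; where $e$ (colored $U$) meets $K$, apply the Fusion Lemma to $U\otimes V$, writing $\Id_{U\otimes V}=\sum_{W\in O(h)}\sum_{t\in\TT}\sum_i\qd(W)\,x^{W\otimes\ve^t,i}\circ x_{W\otimes\ve^t,i}$; then use Lemma \ref{L:lifts} (this is exactly where $\lkG(K,\dg\gcolor)=1$ enters) to insert an $\ve^{-t}$-colored parallel circle and, via Lemma \ref{L:Tacts}, transfer the $\TT$-twist from the internal $W$-branch onto the $V$-branch, re-indexing the double sum over $(V,t)$ as a sum over \emph{all} simples $V\otimes\ve^{-t}$ of $\cat_g$ while $W$ runs only over orbit representatives; an isotopy plus a relabeling of coupons then realizes the slide, and a final application of the Fusion Lemma in reverse reassembles the circle as $K$ colored by $\Omega_h=\sum_{W\in O(h)}\qd(W)W$. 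This re-indexing is precisely the ``careful bookkeeping'' you flag as the main obstacle; you correctly identify the needed ingredients ($\qd$ constant on $\TT$-orbits, freeness of the action, $\lkG=1$), but the proposal neither executes it nor supplies a valid route from $T$ to the configuration on which the fusion is performed.
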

\begin{proof}
  Let $O(g)$ and $O(h)$ be the sets of colors that appear in
  $\Omega_g$ and $\Omega_h$, respectively.  Since
  $\lkG(K,\dg\gcolor)=1$, Lemma \ref{L:lifts2} implies that $T$ can be
  modified by adding a parallel copy of $K$ colored by $\ve^t$ without
  changing its value under $F'$.  We use this fact with the fusion of
  Lemma \ref{L:fusion} to prove the lemma.  In particular, let $U$ be
  one of the modules appearing in $\gcolor(e)$.  We will show that the
  edge $e$ colored by $U$ can slide over $K$, then the result follows
  since $\gcolor(e)$ is a the formal linear combination of modules.
  In the following equation the symbol $\eg$ displays the equality of
  the values of the corresponding local diagrams under $F'$.
  \begin{align*}
    \sum_{V\in O(g)}\qd(V)\epsh{K2c}{14ex}\put(-68,5){\ms U}\put(-45,5){\ms V}
    &\eg\sum_{\ms{
        \begin{array}{c}
          (V,W)\in O(g)\times O(h)\\
          t\in\Z,\,W_t=W\otimes\ve^{t}
        \end{array}}}\hspace{-7ex}\qd(V)\qd(W)\sum_i
  \epsh{K2b}{14ex}\put(-68,22){\ms U}\put(-68,-22){\ms U}
    \put(-45,22){\ms V}\put(-66,0){\ms{W_t}}
    \put(-57,16){\ms{x_i}}\put(-57,-15){\ms{x^i}}
    \\
    &\eg\sum_{\ms{
        \begin{array}{c}
          (V,W)\in O(g)\times O(h)\\
          t\in\Z,\,W_t=W\otimes\ve^{t}
        \end{array}}}\hspace{-7ex}\qd(V)\qd(W)\sum_i
    \epsh{K2a}{14ex}\put(-68,22){\ms U}\put(-68,-22){\ms U}
    \put(-52,27){\ms V}\put(-66,0){\ms{W_t}}\put(-40,0){\ms{\ve^{-t}}}
    \put(-57,16){\ms{x_i}}\put(-57,-15){\ms{x^i}}
    \\
    &\eg\sum_{\ms{
        \begin{array}{c}
          (V,W)\in O(g)\times O(h)\\
          t\in\Z,\,V_t=V\otimes\ve^{-t}
        \end{array}}}\hspace{-7ex}\qd(V)\qd(W)\sum_i
    \epsh{K2b}{14ex}\put(-68,22){\ms U}\put(-68,-22){\ms U}
    \put(-44,22){\ms{V_t}}\put(-64,0){\ms{W}}
    \put(-57,16){\ms{y_i}}\put(-57,-15){\ms{y^i}}
    \\
    &\eg\sum_{\ms{
        \begin{array}{c}
          (V,W)\in O(g)\times O(h)\\
          t\in\Z,\,V_t=V\otimes\ve^{-t}
        \end{array}}}\hspace{-7ex}\qd(V)\qd(W)\sum_i
    \epsh{K2e}{14ex}\put(-50,30){\ms U}\put(-66,3){\ms U}
    \put(-50,2){\ms{V_t}}\put(-43,17){\ms{W}}
    \put(-57,16){\ms{z_i}}\put(-57,-15){\ms{z^i}}
    \\
    &\eg\sum_{W\in O(h)}\qd(W)
    \epsh{K2d}{14ex}\put(-70,-20){\ms U}\put(-50,2){\ms{W}}
  \end{align*}
  In these sums, $\{x_i\}_i$ and $\{x^i\}_i$ are arbitrary dual bases
  of the multiplicity modules $\Hom(W_t, U\otimes V)$ and
  $\Hom(U\otimes V,W_t)$ respectively; $\{y_i\}_i$ is a base deduced
  from $\{x_i\}_i$ using Lemma \ref{L:Tacts}, $\{y^i\}_i$ is its dual
  base; $\{z_i\}_i$ is a basis obtained from $\{y^i\}_i$ using the
  isomorphisms of the multiplicity modules and $\{z^i\}_i$ denotes its
  dual bases.  The first and last equality are obtained from
  Lemma~\ref{L:fusion}, the second from Lemma~\ref{L:lifts}, the third
  comes from Lemma~\ref{L:Tacts} and the fourth is an isotopy plus a
  local modification of the coupons.
\end{proof}

\begin{lemma}\label{L:Stab}
  There exists scalars $\Delta_{\pm}\in\FK$ such that for any module
  $V$ of degree $g\in \Gr\setminus \X$ we have
  \begin{equation}\label{E:FTwistedH}
    F\left(\epsh{fig1}{12ex}\put(-33,-2){\ms{\Omega_g}}\put(-26,-20){\ms{V}}
    \right)=\Delta_-\Id_V\quad\text{ and }\quad
    F\left(\epsh{fig2}{12ex}\put(-33,-2){\ms{\Omega_g}}\put(-26,-20){\ms{V}}
    \right)=\Delta_+\Id_V.
  \end{equation}
\end{lemma}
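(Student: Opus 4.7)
\bigskip

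\noindent\emph{Proof plan.} The plan is to first reduce to the simple case, then prove independence of $V$ in stages. By condition~\eqref{I6:DefGmodularCat}, any object $V\in\cat_g$ with $g\in\Gr\setminus\X$ decomposes as a finite direct sum of simple objects. The endomorphisms produced by the diagrams in Figure~\ref{F:Delta+-} are natural in $V$ and compatible with direct sums (if $V\simeq\bigoplus V^{(j)}$ with inclusions $\iota_j$ and projections $\pi_j$, then $F$ of the diagram applied to $V$ is the direct sum of its values on the $V^{(j)}$). Hence it suffices to prove the lemma for simple $V$, in which case $\End_\cat(V)=\FK\Id_V$ by definition of simplicity, so $F$ of either diagram equals a scalar $\Delta_\pm(V)\Id_V$.

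The remaining and main task is to show $\Delta_\pm(V)$ is independent of the simple $V$ chosen. First I would check that $\Delta_\pm(V)$ does not depend on the choice of representatives $Y$ of the simple $\TT$-orbits used to form $\Omega_g=\sum_{U\in Y}\qd(U)\,U$. Replacing a representative $U\in Y$ by $U\otimes\ve^t$ leaves $\qd(U\otimes\ve^t)=\qd(U)$ unchanged by the lemma preceding Proposition~\ref{P:FvsF'}, while the corresponding contribution to $F$ of the diagram is modified by an extra parallel $\ve^t$-strand. Using Lemma~\ref{L:lifts} and the braiding identity \eqref{E:skeineps} from condition~\eqref{I3:DefGmodularCat}, one shows the braiding factors produced by the $\ve^t$-strand cancel in pairs once one accounts for the framing of the loop in Figure~\ref{F:Delta+-}, so the two contributions agree.

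Next I would show $\Delta_\pm(V)=\Delta_\pm(V')$ whenever $V,V'$ are simple objects of the same $\cat_g$. The strategy is a handle-slide argument: place the $V$-strand with its $\Omega_g$-loop next to the $V'$-strand with its $\Omega_g$-loop. Using Lemma~\ref{L:fusion} on an auxiliary tensor product of $V$ with a well-chosen simple object, one may realize $V'$ as a subobject of $V\otimes U$ for some $U\in\cat_1$. Combining this with Lemma~\ref{L:handle-slide}, slid over an auxiliary $\Omega_{h}$-loop whose degree $h\in\Gr\setminus\X$ is furnished by condition~\eqref{I4:DefGmodularCat} (which ensures such $h$ always exist; the translates $g^{-1}\X$ do not cover $\Gr$), both diagrams can be transformed into the same standard configuration. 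Finally, the same handle-slide technique extends this equality across different degrees $g,g'\in\Gr\setminus\X$: applied to $V\in\cat_g$ and $V'\in\cat_{g'}$, one introduces an auxiliary loop $\Omega_{(gg')^{-1}}$ (again valid by condition~\eqref{I4:DefGmodularCat} and $\X^{-1}=\X$), slides everything together, and uses semisimplicity of $\cat_g,\cat_{g'}$ to produce the identity $\Delta_\pm(V)=\Delta_\pm(V')$.

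The hardest part will be the handle-slide step across distinct gradings, because each intermediate Kirby color must have degree outside $\X$ for Lemma~\ref{L:handle-slide} to apply and for condition~\eqref{I5:DefGmodularCat} to guarantee an ambidextrous pairing. The flexibility to choose such auxiliary degrees is precisely the content of condition~\eqref{I4:DefGmodularCat}, so one must be careful to invoke it whenever an auxiliary Kirby color is introduced; the bookkeeping of the various twists and braidings in these manipulations is the main technical burden of the argument.
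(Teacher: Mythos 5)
Your overall architecture (reduce to simple objects by semi-simplicity of $\cat_g$, then prove independence of the scalar by handle-slide arguments with auxiliary Kirby-colored loops) is the right one, and your final direct-sum reduction is exactly the paper's closing step. But the key comparison step contains a genuine gap. For simple objects $V\in\cat_g$ and $V'\in\cat_{g'}$ you introduce an auxiliary loop of the \emph{specific} degree $(gg')^{-1}$ and justify this by Condition \eqref{I4:DefGmodularCat}. That condition does not furnish any particular degree: it only guarantees that $\Gr$ is not covered by finitely many translates of $\X$, i.e.\ that \emph{some} degree avoiding a prescribed finite list of translates exists. The element $(gg')^{-1}$ is forced on you and may lie in $\X$ (for instance $g'=g^{-1}$ gives $(gg')^{-1}=1$, and the unit degree lies in $\X$ in the $\slt$ example, where $\X=\Z/2\Z\ni 0$); then $\cat_{(gg')^{-1}}$ is not semi-simple, no Kirby color of that degree exists, and Lemma \ref{L:handle-slide} does not apply. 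The paper's proof avoids this by choosing $h\in\Gr\setminus(\X\cup g^{-1}\X\cup{g'}^{-1}\X)$ --- which exists precisely by Condition \eqref{I4:DefGmodularCat} --- taking a simple $W\in\cat_h$, and comparing both $V$ and $V'$ to $W$ through the chain $F(T_1)=F(T_2)=F(T_3)$, where $T_1$ is the $\Omega_g$-encircled $V$-strand juxtaposed with a free $W$-strand, $T_2$ is the $\Omega_{gh}$-encircled $(V\otimes W)$-strand, and $T_3$ is a free $V$-strand juxtaposed with the $\Omega_h$-encircled $W$-strand; the equalities come from Lemma \ref{L:handle-slide} read off as morphism identities via Proposition \ref{P:FvsF'}, and this single chain handles same-degree and cross-degree comparisons uniformly.

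A secondary under-justified point: your same-degree step asserts that $V'$ is a subobject of $V\otimes U$ for some $U\in\cat_1$. This is true (take $U=V^*\otimes V'$, note $\Hom_\cat(V\otimes U,V')\cong\End_\cat(V^*\otimes V')\neq0$, and invoke semi-simplicity of $\cat_g$), but it does not follow from the naive splitting $V'\to V\otimes V^*\otimes V'\to V'$ via (co)evaluation, since that composite is $\qdim(V)\,\Id_{V'}$ and the categorical dimension of objects of $\A$ vanishes in all the motivating examples. In any case this detour, and your separate treatment of the choice of orbit representatives in $\Omega_g$, become unnecessary once the paper's three-term handle-slide chain is in place, since Lemma \ref{L:handle-slide} already allows an arbitrary Kirby color of the target degree.
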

\begin{proof}
  Suppose first $V\in \A$ and let $\Delta_{\pm}(V)\in\FK$ be the
  quantities determined by Equation \eqref{E:FTwistedH}.  We want to
  show that these quantities do not depend on $V$.  We will do this
  for $\Delta_-$; a similar argument implies the result for
  $\Delta_+$.  If $g'\in \Gr\setminus\X$ then there exists
  $h\in\Gr\setminus (\X\cup (g^{-1}\X)\cup ({g'}^{-1}\X))$.  Let $W$
  be a simple object of $\cat_h$.  Now for any $V\in\A$ of degree $g$,
  we can use the handle slide property of Lemma \ref{L:handle-slide}
  and Proposition \ref{P:FvsF'} to obtain
 \begin{align*}
   F\left(\epsh{fig1}{12ex}\put(-33,-2){\ms{\Omega_g}}\put(-26,-20){\ms{V}}
     \epsh{fig10}{12ex}\put(-16,-2){\ms{W}}\right)
   =F\left(\epsh{fig1}{12ex}\put(-37,-2){\ms{ \Omega_{gh} }}
  \put(-16,-20){\ms{V\hspace{-2pt}\otimes\hspace{-2pt} W}}\right)
 =F\left(\epsh{fig10}{12ex}\put(-15,-2){\ms{V}}\epsh{fig1}{12ex}
  \put(-33,-2){\ms{\Omega_h}}\put(-27,-20){\ms{W}}\right).
  \end{align*}
  The left (resp. right) hand side of the last equation is equal to
  $\Delta_{-}(V)\Id_{V\otimes W}$ (resp. $\Delta_{-}(W)\Id_{V\otimes
    W}$) and so $\Delta_{-}(V)=\Delta_{-}(W)$.  Similarly, for any
  $V'\in \A$ of degree $g'$ we have $\Delta_{-}(V')=\Delta_{-}(W)$ and
  so $\Delta_{-}(V)=\Delta_{-}(V')$.

  Now let $V$ be any object of the semi-simple category $\cat_g$ and
  let $f_V\in\End_\cat(V)$ be the endomorphism represented by the
  first ribbon graph in the lemma.  Then $V=\bigoplus_i V_i$ with
  $V_i\in \A$.  Let $\alpha_i:V_i\to V$ and $\beta_i:V\to V_i$ for
  $i=1,\dots, n$ such that $\Id_V=\sum_{i=1}^n\alpha_i\beta_i$.  Then
  $f_V=\sum_{i=1}^nf_V\alpha_i\beta_i=
  \sum_{i=1}^n\alpha_i\Delta_-(V_i)\beta_i=\Delta_-\Id_V$.
\end{proof}
Remark that in the previous Lemma $V$ can be a tensor product of $n$
homogeneous objects.  The corresponding diagrammatic relation can be
obtained by replacing the edge colored by $V$ in Lemma \ref{L:Stab} by
$n$ parallel strands.
\begin{lemma}\label{L:val_coh}
  Let $(M,T,\coh)$ be a compatible triple and consider $\coh$ as a map
  on $H_1(M\setminus T,\Z)$ with values in $\Gr$.  Suppose that
  $g\in\Gr$ is a value of $\coh$, then there exists a surgery
  presentation $L\cup T$ of $(M,T,\coh)$ for which the $\Gr$-color of
  a component of $L$ is $g$.
\end{lemma}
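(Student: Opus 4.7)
The plan is to enlarge an arbitrary surgery presentation by a cancelling pair of components chosen so that one of them has the prescribed $\Gr$-color, using the slam-dunk move of Kirby calculus discussed in Subsection~\ref{sub:kirbygompf}.

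Pick any surgery presentation $L_0\cup T$ of $(M,T,\coh)$ and write $g=\coh(\xi)$ for some $\xi\in H_1(M\setminus T;\Z)$. Since the meridians of the components of $L_0$ together with the meridians of the edges of $T$ generate $H_1(M\setminus T;\Z)$, the class $-\xi$ can be represented by an oriented embedded loop $\gamma\subset S^3\setminus(L_0\cup T)$; by construction (in multiplicative notation for $\Gr$) one has $\coh([\gamma])=g^{-1}$. Now form $L:=L_0\sqcup\gamma\sqcup K$, where $\gamma$ is equipped with an arbitrary integer framing (say $0$), and $K$ is a small $0$-framed unknot placed in a ball disjoint from $L_0\cup T$ as a meridian of $\gamma$, oriented so that $\lk(\gamma,K)=1$. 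Because $K$ is a $0$-framed meridian of $\gamma$, the slam-dunk move implies that surgery on the pair $(\gamma,K)$ is topologically trivial; consequently $S^3_L$ is canonically diffeomorphic to $S^3_{L_0}=M$, the embedded copy of $T$ is unchanged, and $L\cup T$ is a surgery presentation of the \emph{same} triple $(M,T,\coh)$.

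It remains to evaluate $g_\coh(K)=\coh(\mu_K)$, where $\mu_X$ denotes the meridian of a component $X$. The $K$-row of the linking matrix of $L$ (its only non-zero off-diagonal entry is the one opposite $\gamma$) gives the relation $\mu_\gamma=0$; substituting this into the $\gamma$-row then yields
\[
\mu_K=-\sum_{L_i\subset L_0}\lk(\gamma,L_i)\,\mu_{L_i}-\sum_{e\subset T}\lk(\gamma,e)\,\mu_e
\]
in $H_1(M\setminus T;\Z)$. A Seifert surface for $\gamma$ in $S^3$, intersected transversally with $S^3\setminus(L_0\cup T)$, realises the right-hand side as exactly $-[\gamma]$. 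Hence $\coh(\mu_K)=\coh(-[\gamma])=\coh([\gamma])^{-1}=g$, so $K$ is a component of $L$ with $\Gr$-color $g$, as required.

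The main technical subtlety is to ensure that the enlarged presentation realises the same compatible triple $(M,T,\coh)$ and not merely the manifold $M$ together with $T$. This is essentially automatic once the slam-dunk identification $S^3_L\cong M$ is in hand: the class $\coh\in H^1(M\setminus T;\Gr)$ is intrinsic to $M\setminus T$, and its values on the two new meridians $\mu_\gamma$ and $\mu_K$ are forced by the surgery relations displayed above—so no independent compatibility check is needed beyond the linking-matrix computation already performed.
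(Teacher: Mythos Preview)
Your argument is correct. You add a cancelling two-component pair $(\gamma,K)$ to an arbitrary presentation and read off $g_\coh(K)$ from the $\gamma$-row of the enlarged linking matrix; the computation $[\mu_K]=-[\gamma]$ and hence $\coh(\mu_K)=g$ is sound, and the cancelling pair ensures the new link still presents the same triple.

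This is a genuinely different route from the paper's proof. The paper adds a single $+1$-framed disjoint unknot $U$ (a blow-up), whose initial $\Gr$-color is $1$, and then performs handle-slides of the edges $e_i$ over $U$: each slide multiplies the color of $U$ by $g_\coh(e_i)^{\pm1}$, so after an appropriate sequence of slides the color of $U$ becomes $g$. Thus the paper's proof uses exactly the two moves (blow-up and handle-slide) that drive the rest of Section~\ref{S:ProofsT-coh-adm}, and tracks the $\Gr$-color through them; it adds only one new component. Your proof instead freezes the picture and does a purely homological computation in the enlarged presentation, at the cost of adding two components and invoking the cancellation of a $0$-framed meridian pair. (A minor point: that cancellation is not what Subsection~\ref{sub:kirbygompf} actually states---it discusses handle-slides, blow-ups and blow-downs---but the cancellation does follow from those moves, so the appeal is legitimate.) Each approach has its virtue: the paper's is shorter and meshes with the color-tracking machinery already in place; yours is more self-contained in that it never needs to know how colors transform under slides.
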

\begin{proof}
  Start with any presentation $L'\cup T$ of $(M,T,\coh)$.
  $H_1(M\setminus T,\Z)$ is generated by the meridians around the
  edges of $L'\cup T$.  So there exists integers $n_i\in\Z$ such that
  $\coh(\sum_i n_im_i)=\prod_i g_\coh(e_i)^{n_i}=g$ where $\{m_i\}_i$
  is the set of meridians around the edges $\{e_i\}_i$ of $L'\cup T$.
  We add to $L'$ a disjoint unknot $U$ with framing $1$.  The
  $\Gr$-color of $U$ is $1\in\Gr$. Then we can slide $n_i$ times the
  edge $e_i$ on $U$ so that the resulting $\Gr$-coloring of $U$
  becomes $g_\coh(U)=g$.  For $L=L'\cup U$, we get the desired
  presentation of $(M,T,\coh)$.
\end{proof}

\subsection{Proofs.}
\begin{proof}[Proof of Theorem \ref{T:T_adm}.]
  Since $(M,T,\coh)$ is $T$-admissible then $T\neq \emptyset$ and has
  an edge $e$ colored by $\beta\in \A$.  Pick a surgery presentation
  of $(M,T,\coh)$ over a link $L\subset S^3$.  If $L=\emptyset$ then
  since $e\in T$ is colored by $\beta\in \A$, then $T\cup L$ is
  computable.  Suppose now that the presentation is not computable.
  Then $L\neq \emptyset$; let $L_i,\  i=1,...,n$ be the components of $L$ and $S=\{i\in \{1,...,n\}\ {\rm s.t.}\ g_\coh(L_i)\in \X\}$.
  Apply a $H$-stabilization along $e$ to create a new component $m$
  colored by $\alpha\in \A$ where $\alpha$ is chosen so that
  $\dg{\alpha}g_\coh(L_i)\notin \X$ for all $i\in S$.  Then perform
  handle-slides to slide $m$ over all the components $L_i$ with $i\in
  S$.
  The resulting presentation is computable and this proves the first
  statement.

  Let $T'$ be $T$ or a $H$-stabilization of $T$ such that there are
computable presentations $(L\cup T')$ and $(L'\cup T')$ of $(M,T',\coh')$. Next we prove that $L$ and
  $L'$ give the same
  invariant; then at the end of the proof we will show that two different $H$-stabilizations of $T$ yield the same invariant.

  \newcommand{\Th}{{T_H}} \newcommand{\cohh}{{\omega_H}}
Let $\Th=T'\cup m$ be a graph obtained from a $H$-stabilization over an edge $e\subset T'$ colored by $\beta  \in \A$.    Here $m$ is colored by $\alpha \in \A$.   Then $(L\cup \Th)$ and  $(L'\cup \Th)$ are computable surgery presentations of  $(M,\Th,\cohh)$ where $\cohh$ is given in the definition of a $H$-stabilization.  In both presentations, we assumed that the component $m$ is a standard meridian of $e$ in $S^3\setminus  (L\cup T')$ and $S^3\setminus (L'\cup T')$ respectively.
Furthermore, one has
  $$F'(L\cup \Th)=\bk H F'(L\cup T')\text{ and }
  F'(L'\cup \Th)=\bk H F'(L'\cup T'),$$ 
  where $\langle H\rangle$ is the value of the long Hopf-link whose
  closed component is colored by $\alpha$ and whose long component is
  colored by $\beta$.  Definition \ref{D:GmodularCat} implies $\langle
  H\rangle\neq 0$ so it is enough to show that there exists an
  $\alpha$ for which
  $$ 
  \frac{F'(L\cup \Th)}{\langle
    H\rangle\Delta_+^r\Delta_-^s}=\frac{F'(L'\cup \Th)}{\langle
    H\rangle\Delta_+^{r'}\Delta_-^{s'}}.
  $$
  where $r,s$ (resp. $r',s'$) the number of positive and negative
  eigenvalues of the linking form of $L$ (resp. $L'$).  For this, we
  are going to show that for a suitable value of $\alpha$, the two
  presentation $L\cup \Th$ and $L'\cup \Th$ are related through
  computable presentations by elementary moves.  Then applying Lemmas
  \ref{L:handle-slide} and \ref{L:Stab} will give the desired equality
  of invariants.

  Let $L=L^0\stackrel{s_1}{\to}\ldots \stackrel{s_k}{\to}L^k=L'$ be a
  sequence of handle-slides, blow-up moves and blow-down moves
  connecting the two presentations and inducing a diffeomorphism $f$
  between $S^3_{L}\setminus \Th$ and $S^3_{L'}\setminus \Th$ such that
  $f^*(\cohh)=\cohh$. 
  %N:
  It may happen that $L_i$ is not computable for some $i$.  We will
  use handle-slides of $m$ to modify $L_i$ to a computable
  presentation.  We choose $\alpha$ ``generically'' with respect to
  the sequence $L^0\stackrel{s_1}{\to}\ldots \stackrel{s_k}{\to}L^k$
  where generically means that $\alpha$ is chosen by the finite list
  of conditions given below.
  
  Without loss of generality we can assume that all the blow-ups
  are at the beginning of the sequence and all the blow-downs at its end.  So suppose $s_1,...,s_t$ are blow-ups,
  $s_{t+1},...,s_n$ are handle-slides and $s_{n+1},...,s_k$ are
  blow-downs.

  Suppose the blow-up $s_1$ is on an edge $e_1$ of $L^{0}\cup T$
  colored by an object of degree $g_\cohh(e_1)\in \Gr\setminus \X$.
  Then $L^1$ is a computable surgery presentation.  Then we keep the
  move unchanged.  If instead $g_\cohh(e_1)\in \X$ then $L^1$ is not
  computable.  Suppose that $\alpha$ is such that $\dg{\alpha},\dg{\alpha}
  g_\cohh(e_1)\in \Gr\setminus \X$; this imposes two conditions on
  $\alpha$ which are part of our genericity hypothesis.  Then we can
   do the blow-up on $m$ and then slide the edge $e$ on the newly
  created component.  This leads to a computable surgery presentation
  similar to $L^1$ except for the position of $m$ (which is now linked with the component containing $e_1$ in $L^1$) and the color of the
  new component which is now $\dg{\alpha} g_\cohh(e_1)$.  This
  presentation is obtained from $L^0$ by two elementary moves between
  three computable presentations (a computable blow up and a computable sliding).
  Similarly, for $i=2,...,t$, the presentations $L^{i-1}$ and $L^{i}$
  can be made computable by sliding $m$ on the created
  components.

  Suppose now the handle-slide $s_{t+1}$ is on an edge of $L^{t}\cup T$
  colored by $\gamma$
  over a component $L^t_i$ of $L^t$ colored by $g_\cohh(L^t_i)\in
  {\Gr}$ (recall that $g_\cohh(L^t_i)\in {\Gr}$ was defined as the
  value of $\cohh$ on the meridian of $L^t_i$; see Definition
  \ref{def:adm}). If $g_\cohh(L^{t}_i)\dg{\gamma}\in \Gr\setminus \X$
  then $L^{t+1}$ is a computable presentation.
    Otherwise, suppose that $\alpha$ is such that
  $\dg{\alpha} g_\cohh(L^{t}_i), \dg{\alpha}
  g_\cohh(L^{t}_i)\dg{\gamma}\in \Gr\setminus \X$; this imposes two
  conditions on $\alpha$ which are part of our genericity hypothesis.
  Slide $m$ over $L^t_i$ and perform the handle slide $s_{t+1}$.  The
  result is $L^{t+1}$ where $m$ has moved but is still colored by $\alpha$ and
  where the color of $L^{t+1}_i$ changed to
  $g_{\coh_1}(L^{t+1}_i)\dg{\alpha}$; the rest of $L^{t+1}$ is unchanged. 
  
  Proceed as above to follow the rest of the sequence of handle-slides
  $s_{t+2},...,s_n$: each time we need to slide $m$ over a component
  to make a handle-slide computable we add some conditions on $\alpha$
  to our list. The list of conditions imposed will be finite and by
  Hypothesis \eqref{I4:DefGmodularCat} of Definition
  \ref{D:GmodularCat} there exists an $\alpha$ such that all of them
  are fulfilled.

%N1:  
  Thus, after performing the last handle-slide $s_n$ we arrive at a
  computable presentation $L_{n}$ (note here we may have used $m$).
  Since $s_{n+1}$ is a blow-down then $L_{n+1}$ is computable.
  Similarly, we can perform all the rest of the blow-downs.

  After the last blow-down one gets a computable presentation via a
  link $L'$ of the compatible triple $(M,\Th,\cohh)$ where $\Th$ is
  $T'$ together with the new component $m$ colored by $\alpha$.
  %N1:
  The new component $m$ could be linked with $L'$, but  
  since $m$ is by construction isotopic
  to a meridian of $e$ in $M$ one can find an isotopy of $m$ in $M$
  which brings it in the position of a small meridian around $e$ in
  the presentation $L'$ where $L'$ is colored by $g_{\cohh}$.  This
  isotopy can be decomposed into a sequence $L'\cup
  m\stackrel{h_1}{\to}\ldots \stackrel{h_l}{\to}L'\cup m$ of handle
  slides of $m$ over the components of $L'$.  At each step $s\in
  \{1,\dots ,l\}$ the color of a component $L'_i$ of $L'$ will be of the
  form $g_{\cohh}(L'_i)\dg{\alpha}^{x^i _s}$ for some $x^i_s\in \Z$
  depending on both the component $L'_i$ and the step $s$.  At the end
  of the isotopy the color of the component $L'_i$ of $L'$ is
  $g_{\cohh}(L'_i)\in {\Gr}\setminus\dg{X}$ and so $x^i_l=0$ for all $i$.  Thus, in order to be
  able to perform the sequence of slidings $h_s$ of $m$ over the
  components of $L'$ bringing $m$ back to its position of meridian of
  $e$ it is sufficient\footnote{see erratum in Appendix \ref{A:err}} to add to the above list of genericity
  conditions the conditions
  $$g_{\cohh}(L'_i)\dg{\alpha}^{x^i_s}\notin \dg{X}$$
  for every component $L'_i$ of $L$ and for all $s\in \{1,...,l-1\}$.
  The union of these conditions and those found precedingly can be
  satisfied for a suitable choice of $\alpha$ since no union of
  finitely many translates of $\dg{X}$ covers ${\Gr}$ (see Definition
  \ref{D:GmodularCat}).  Thus, we obtain a sequence of moves
  connecting the two presentations provided by $L$ and $L'$ through
  computable presentations and the two computable
  presentations give the same invariant:
  \begin{equation}
    \frac{F'(L\cup T')}{\Delta_+^r\Delta_-^s}=
    \frac{F'(L'\cup T')}{\Delta_+^{r'}\Delta_-^{s'}}.
  \end{equation}
  
  This proves that the invariant of a $T$-admissible triple
  $(M,T',\coh')$ is well defined as soon as there is a computable
  surgery presentation.  Now starting with a $T$-admissible triple
  $(M,T,\coh)$, we consider two different $H$-stabilizations over two
  edges $e$ and $e'$, both leading to a computable presentation.  One
  can apply a $H$-stabilization to both $e$ and $e'$.  A computable
  presentation of this new triple can be obtained from either
  of the two
  computable presentation by adding a meridian to $e'$ or $e$.  
  Since the  invariants of these two presentations 
  are equal, one sees that
  the values of $\dfrac{\Nr(M,T_H,\coh_H)}{\bk H}$ are the same for
  the two $H$-stabilizations.
  \renewcommand{\qedsymbol}{\fbox{\ref{T:T_adm}}}
\end{proof}
\renewcommand{\qedsymbol}{\fbox{\theteo}}

\begin{proof}[Proof of Proposition \ref{P:coh_adm}.]
  By Lemma \ref{L:val_coh}, there exists a surgery presentation $L\cup
  T$ of $(M, T,\coh)$ with an edge $e$ such that for each $x\in \X$
  there exists $n(x)\in \Z$ such that $xg_\coh(e)^{n(x)}\notin \X$.
  If $L_i$ is a component of $L$ such that $g_\coh(L_i)\in \X$ then
  sliding $e$ over $L_i$ $n(g_\coh(L_i))$-times the color of $L_i$ is
  changed to a color not in $\X$. Thus, by sliding $e$ when needed we
  obtain a computable presentation.
  \renewcommand{\qedsymbol}{\fbox{\ref{P:coh_adm}}}
\end{proof}
\renewcommand{\qedsymbol}{\fbox{\theteo}}

\begin{proof}[Proof of Theorem \ref{T:coh_adm}.]
  If $L$ and $L'$ are two computable presentations of $(M, T,\coh)$
  then there exists a sequence of handle-slides, blow-up moves and
  blow-down moves $L=L^0\stackrel{s_1}{\to},\ldots
  \stackrel{s_k}{\to}L^k=L'$ connecting them and inducing a
  diffeomorphism $f$ such that $f^*(\coh)=\coh$.  
 As in the proof of Theorem \ref{T:T_adm}, if each $L_i$ is computable then since each $s_i$ is an elementary move we have that Lemmas \ref{L:handle-slide}
  and  \ref{L:Stab} imply that the invariants associated to $L$ and $L'$ are equal.  However, it may happen that $L_i$ is not computable for some $i$.  Now if $(M, T,\coh)$ is $T$-admissible then Theorem \ref{T:T_adm} implies that  invariants of associated to $L$ and $L'$ are equal.  The general idea of the proof here is to create a $T$-admissible diagram from $L$, apply Theorem \ref{T:T_adm} then ``undo'' the $T$-admissible part we created.

  Pick a component $L_0^0$ of $L^0$ colored by $g_0$ and apply two
  blow-up moves to create meridians $m_+$ and $m_-$ with framing
  $\pm1$, respectively.  
  Here both meridians $m_{\pm}$ are colored by $g_0\in {\Gr}\setminus
  \dg{X}$ 
  and oriented as in Lemma \ref{L:Stab}.
  Since we did a positive and negative blow-up the framing of $L_0^0$
  is unchanged.  Without loss of generality one can assume that, if
  $L^0_0$ is destroyed by a blow-down move during the sequence, then
  this happens at the very last step $s_k$.  
  In the case when $s_k$ is a blow-down, we can replace $L'$ with $L^{k-1}$ which also gives a
  computable presentation and then Lemma \ref{L:Stab} together with the
  general case will imply the theorem.

Thus, we can assume $L^0_0$ is not destroyed by a blow-down move during the sequence.
Follow the sequence ``ignoring'' $m_+$ and $m_-$: when a component
$L_i$ slides over $L_0^0$ it gets linked (or unlinked) with $m_+$ and
$m_-$.  At the end of the sequence we have a link $L'\cup m_+ \cup
m_-$ where the components of $L'$ are colored by $g_\coh(L'_i)$ and
both $m_+$ and $m_-$ are colored by $g_0$.  The Kirby color
$\Omega_{g_0}$ of $m_\pm$ is a formal finite linear combination of 
colors: $\Omega_{g_0}=\sum_{\alpha}\qd(V_\alpha)V_\alpha$,
where $\alpha\in \A$ because $g_0\notin \X$.  Therefore, $F'(L\cup
T\cup m_+ \cup
m_-)=\sum_{\alpha,\beta}\qd(V_\alpha)\qd(V_\beta)F'(L\cup T\cup
m^{\alpha}_+\cup m^{\beta}_-),$ where $m^\alpha_+$ and $m^{\beta}_-$
are the meridians $m_+$ and $m_-$ colored by $\alpha$ and $\beta$,
respectively.  Let $T_{\alpha,\beta}=T\cup m^{\alpha}_+\cup
m^{\beta}_- \subset M$ and $\omega''$ be the compatible cohomology
class on $M\setminus T_{\alpha,\beta}$ induced by $\omega$ and $g_0$.
Since $L$ and $L'$ are both computable presentations of the
$T$-admissible triple $(M,T_{\alpha,\beta},\coh'')$ then
Theorem~\ref{T:T_adm} (proved above) implies $\frac{F'(L\cup T\cup
  m^{\alpha}_+\cup m^{\beta}_-)}{\Delta_+^r\Delta_-^s}=
\Nr(M,T_{\alpha,\beta},\coh'')= \frac{F'(L'\cup T\cup m^{\alpha}_+\cup
  m^{\beta}_-)}{\Delta_+^{r'}\Delta_-^{s'}}$.  Summing over $\alpha$
and $\beta$ with coefficients $\qd(\alpha)\qd(\beta)$ one gets
$\frac{F'(L\cup T\cup m_+ \cup
  m_-)}{\Delta_+^{r}\Delta_-^{s}}=\frac{F'(L' \cup T\cup m_+ \cup
  m_-)}{\Delta_+^{r'}\Delta_-^{s'}}$ where $ m_+$ and $ m_-$ are
colored by the Kirby color $\Omega_{g_0}$.  Now in the graph of
$L'\cup T\cup m_+ \cup m_-$, both of the meridians $m_+$ and $m_-$
bounds discs intersecting some components of $L'$.  So $m_+$ can be
linked with several strings of $L'$, then Lemma \ref{L:Stab} implies
the blow-down move to eliminate $m_+$ introduce a negative full twist
of these strings and a multiple $\Delta_+$ (remark that the lemma can
be applied as the overall color of the strands linked with $m_+$ is 
$g_0\notin \X$).  Similarly, eliminating $m_-$ introduce a positive full
twist of these strings and a multiple $\Delta_-$.  Thus,
$\frac{F'(T\cup L)}{\Delta_+^r\Delta_-^s}=\frac{F'(T\cup
  L')}{\Delta_+^{r'}\Delta_-^{s'}}$.
\renewcommand{\qedsymbol}{\fbox{\ref{T:coh_adm}}}
\end{proof}
\renewcommand{\qedsymbol}{\fbox{\theteo}}

\section{Proofs of the theorems of Section
  \ref{S:sl2NrInv}} \label{S:ProofsOfNrManInv}

\subsection{A quantization of $\slt$ and its associated ribbon
  category}\label{SS:QUantSL2H}
In this subsection we consider a ribbon category which underlies the
combinatorial invariants defined in Section \ref{S:sl2NrInv}.
Fix a positive integer $r$ and let $q=e^\frac{\pi\sqrt{-1}}{r}$ be a
$2r^{th}$-root of unity.  We use the notation
$q^x=e^{\frac{\pi\sqrt{-1} x}{r}}$.  Here we give a slightly
generalized version of quantum $\slt$.  Let $\UsltH$ be the
$\C(q)$-algebra given by generators $E, F, K, K^{-1}, H$ and
relations:
\begin{align*}
  HK&=KH, & HK^{-1}&=K^{-1}H, & [H,E]&=2E, & [H,F]&=-2F,\\
  KK^{-1}&=K^{-1}K=1, & KEK^{-1}&=q^2E, & KFK^{-1}&=q^{-2}F, &
  [E,F]&=\frac{K-K^{-1}}{q-q^{-1}}.
\end{align*}
The algebra $\UsltH$ is a Hopf algebra where the coproduct, counit and
antipode are defined by
\begin{align*}
  \Delta(E)&= 1\otimes E + E\otimes K, 
  &\varepsilon(E)&= 0, 
  &S(E)&=-EK^{-1}, 
  \\
  \Delta(F)&=K^{-1} \otimes F + F\otimes 1,  
  &\varepsilon(F)&=0,& S(F)&=-KF,
    \\
  \Delta(H)&=H\otimes 1 + 1 \otimes H, 
  & \varepsilon(H)&=0, 
  &S(H)&=-H,
  \\
  \Delta(K)&=K\otimes K
  &\varepsilon(K)&=1,
  & S(K)&=K^{-1},
    \\
  \Delta(K^{-1})&=K^{-1}\otimes K^{-1}
  &\varepsilon(K^{-1})&=1,
  & S(K^{-1})&=K.
\end{align*}
Define $\Ubar$ to be the Hopf algebra $\UsltH$ modulo the relations
$E^\ro=F^\ro=0$.

Let $V$ be a finite dimensional $\Ubar$-module.  An eigenvalue
$\lambda\in \C$ of the operator $H:V\to V$ is called a \emph{weight}
of $V$ and the associated eigenspace is called a \emph{weight space}.
We call $V$ a \emph{weight module} if $V$ splits as a direct sum of
weight spaces and $\qr^H=K$ as operators on $V$.  Let $\cat$ be the
tensor Ab-category of finite dimensional weight $\Ubar$-modules (here
the ground ring is $\C$).

We will now recall that the category $\cat$ is a ribbon Ab-category.
Recall the notation of Section \ref{S:sl2NrInv}.  Let $V$ and $W$ be
objects of $\cat$.  Let $\{v_i\}$ be a basis of $V$ and $\{v_i^*\}$ be
a dual basis of $V^*$.  Then
\begin{align*}
  b_{V} :& \C \rightarrow V\otimes V^{*}, \text{ given by } 1 \mapsto \sum
  v_i\otimes v_i^* & d_{V}: & V^*\otimes V\rightarrow \C, \text{ given by }
  f\otimes w \mapsto f(w)
\end{align*}
are duality morphisms of $\cat$.  
In \cite{Oh} Ohtsuki defines an $R$-matrix operator defined on
$V\otimes W$ by
\begin{equation}
  \label{eq:R}
  R=\qr^{H\otimes H/2} \sum_{n=0}^{\ro-1} \frac{\{1\}^{2n}}{\{n\}!}\qr^{n(n-1)/2}
  E^n\otimes F^n.
\end{equation}
where $q^{H\otimes H/2}$ is the operator given by  
$$q^{H\otimes H/2}(v\otimes v') =q^{\lambda \lambda'/2}v\otimes v'$$
for weight vectors $v$ and $v'$ of weights of $\lambda$ and $\lambda'$.  Thus,
the action of $R$ on the tensor product of two objects of $\cat$ is well
defined and induces an endomorphism on such a tensor product.  Moreover, $R$
gives rise to a braiding $c_{V,W}:V\otimes W \rightarrow W \otimes V$ on
$\cat$ defined by $v\otimes w \mapsto \tau(R(v\otimes w))$ where $\tau$ is the
permutation $x\otimes y\mapsto y\otimes x$ (see \cite{jM,Oh}).  Also, in
\cite{Oh} Ohtsuki defines an operator $\theta$ given by
\begin{equation}
\theta=K^{\ro-1}\sum_{n=0}^{\ro-1}
\frac{\{1\}^{2n}}{\{n\}!}\qr^{n(n-1)/2} S(F^n)\qr^{-H^2/2}E^n
\end{equation}
where $q^{-H/2}$ is an operator defined by on a weight vector $v_\lambda$ by
$q^{-H^2/2}.v_\lambda = q^{-\lambda^2/2}v_\lambda.$ The twist
$\theta_V:V\rightarrow V$ in $\cat$ is defined by $v\mapsto \theta^{-1}v$ (see
\cite{jM,Oh}).

Remark that the ribbon structure on $\cat$ induce right duality morphisms 
\begin{equation}\label{E:d'b'}
  d'_{V}=d_{V}c_{V,V^*}(\theta_V\otimes\Id_{V^*})\text{ and }b'_V
  =(\Id_{V^*}\otimes\theta_V)c_{V,V^*}b_V
\end{equation}
which are compatible with the left duality morphisms $\{b_V\}_V$ and
$\{d_V\}_V$.

\subsection{The invariant of oriented trivalent framed graphs $\Nr$
  through $\UsltH$}\label{SS:NfromSl2}
In this subsection show that the categories define in the previous
subsection gives rise to an invariant of ribbon graphs which recovers
the invariants of trivalent graphs defined in Subsection
\ref{S:axiom}.
We say a simple weight module is \emph{typical} if its highest weight
minus $(r-1)$ is in the set $(\C \setminus \Z) \cup \{kr : k\in
\Z\}=\srcol$, otherwise we say it is \emph{atypical}.  A typical
module is $r$ dimensional.  For $\alpha\in\srcol$, we denote
$V_\alpha$ by the simple weight module with highest weight
$\alpha+\ro-1$.

Let $F$ be the usual ribbon functor from $Rib_\cat$ to $\cat$.  Let
$\A$ be the set of typical modules.  Let $\qd:\A\rightarrow \C$ given
by $\qd(V_\alpha)=\qd(\alpha)$ where $\qd(\alpha)$ is defined in
Equation \eqref{E:Def_qd}.  In \cite{GPT}, it is shown that map
$F':\{\text{$\A$-graphs}\}\rightarrow \C$ given by Equation
\eqref{E:Def_of_F'} is a well defined invariant.  In particular,
$(\A,\qd)$ is an ambidextrous pair.

Next we will show that $F'$ can be used to define $\Nr$.  In
particular, we will recall how $F'$ extends to an invariant of
trivalent framed graphs whose edges are colored by element of $\srcol$
(for more details see \cite{GPT2}).  This extension requires the
choice of a certain family of morphisms in $\cat$.  Such a family is
given in \cite{GP} when $r$ is odd.  Here we will show that another
family can be deduced from the computation of \cite{CM} for any
$r\ge2$.

Let $U$ be the quantization of quantum $\slt$ considered in \cite{CM}.
The algebras $U$ and $\UsltH$ have the same underlying structure.
However, they differ in two main ways: 1) the element $K$ in $U$
should be considered the square root of the corresponding element $K$
in $\UsltH$, 2) $\UsltH$ has the additional generator $H$.  The first
difference essentially has no effect on the corresponding topological
invariants.  As explained in Subsection \ref{SS:QUantSL2H}, the
generator $H$ allows the category $\cat$ to be braided.

Let $U$-cat be the category of weight $U$-modules.  Consider the
functor $\cat \to U$-cat which is the identity at the level of vector
spaces and linear maps and sends a weight $\Ubar$-module to the weight
$U$-module determined by the action of the generators $K', E', F'$ of
$U$ given by $q^{H/2}, q^{-H/2}E, Fq^{H/2}$, respectively.  This
functor sends $V_\alpha$ to $V^a$ where $V^a$ is the highest weight
$U$-module of highest weight $a=(\alpha+r-1)/2$ considered in
\cite{CM}.  Let $\cap_{a,r-1-a}:V^a\otimes V^{r-1-a}\to\C$ be the map
defined in Equation (1.2) of \cite{CM}.  Let
$\cap_{\alpha,-\alpha}:V_\alpha\otimes V_{-\alpha}\to\C$ be the
corresponding morphism in $\cat$.  For $\alpha \in \srcol$ define the
isomorphism $w_\alpha: V_\alpha \to V^*_{-\alpha}$ by
$$w_\alpha=\big(\cap_{\alpha,-\alpha}\otimes\Id_{V^{*}_{-\alpha}}\big)
\circ\big(\Id_{V_\alpha}\otimes b_{V_{-\alpha}}\big).$$
\begin{prop}
  The family $\{w_\alpha\}_{\alpha \in \srcol}$ 
 satisfy 
\begin{equation}
  \label{eq:turnw}
  d_{V_\alpha}\circ\big(w_{-\alpha}\otimes\Id_{V_\alpha}\big)=
  d'_{V_{-\alpha}}\circ\big(\Id_{V_{-\alpha}}\otimes w_{\alpha}\big)
  \in\Hom_\cat(V_{-\alpha}\otimes V_{\alpha},\C)
\end{equation}
for all $\alpha \in \srcol$.
\end{prop}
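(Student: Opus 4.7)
The plan is to show that both sides of \eqref{eq:turnw} coincide with the cap morphism $\cap_{-\alpha,\alpha}\in\Hom_\cat(V_{-\alpha}\otimes V_\alpha,\C)$, which by simplicity of $V_{\pm\alpha}$ spans this one-dimensional space. The LHS reduces to $\cap_{-\alpha,\alpha}$ by a routine snake-identity calculation: substituting the definition $w_{-\alpha}=(\cap_{-\alpha,\alpha}\otimes\Id_{V^{*}_\alpha})\circ(\Id_{V_{-\alpha}}\otimes b_{V_\alpha})$ into the composition and applying the zig-zag identity $(\Id_{V^*_\alpha}\otimes d_{V_\alpha})\circ (b_{V_\alpha}\otimes\Id_{V^*_\alpha})=\Id_{V^*_\alpha}$ collapses the expression directly to $\cap_{-\alpha,\alpha}$.

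For the RHS, I would first expand $d'_{V_{-\alpha}}$ using formula \eqref{E:d'b'} and substitute the definition of $w_\alpha$. Applying naturality of the braiding and a snake identity to eliminate the $b_{V_{-\alpha}}$ introduced by $w_\alpha$ brings the RHS into the form
\[
\cap_{\alpha,-\alpha}\circ c_{V_{-\alpha},V_\alpha}\circ(\theta_{V_{-\alpha}}\otimes \Id_{V_\alpha}).
\]
The proof thus reduces to establishing the single identity
\[
\cap_{-\alpha,\alpha}=\cap_{\alpha,-\alpha}\circ c_{V_{-\alpha},V_\alpha}\circ(\theta_{V_{-\alpha}}\otimes \Id_{V_\alpha}),
\]
which is the statement that the Chen--Murakami cap maps $\{\cap_{\beta,-\beta}\}_\beta$ are compatible with the ribbon structure on $\cat$, i.e.\ they form a coherent pivotal family on the subcategory of typical modules.

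The main obstacle is verifying this last identity. I would do so by evaluating both sides on a single pair $v\otimes v'$, where $v$ is the highest weight vector of $V_{-\alpha}$ and $v'$ is the lowest weight vector of $V_\alpha$ (or another convenient weight pair). Using the explicit formula \eqref{eq:R} for $R$, the explicit formula for $\theta$ given in Subsection~\ref{SS:QUantSL2H}, and Equation~(1.2) of \cite{CM} for $\cap_{\alpha,-\alpha}$, the identity reduces to matching scalar factors built from quantum factorials and half-integer powers of $q$; the $K^{r-1}q^{-H^2/2}$ contribution from $\theta$ combines with the $q^{H\otimes H/2}$ contribution from $R$ to produce exactly the normalization of $\cap_{-\alpha,\alpha}$, as already encoded in Chen--Murakami's conventions. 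The assumption $\alpha\in\srcol$ ensures that the chosen weight vectors are nonzero, and since both sides of \eqref{eq:turnw} live in the one-dimensional space $\Hom_\cat(V_{-\alpha}\otimes V_\alpha,\C)$, equality of scalars on one such pair implies the required equality of morphisms.
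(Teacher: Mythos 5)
Your reduction is exactly the paper's: both sides of \eqref{eq:turnw} collapse (via the snake identity, the formula \eqref{E:d'b'} for $d'$, and naturality of the braiding) to the single identity $\cap_{-\alpha,\alpha}=\cap_{\alpha,-\alpha}\circ c_{V_{-\alpha},V_{\alpha}}\circ(\theta_{V_{-\alpha}}\otimes \Id_{V_\alpha})$. The only divergence is how that identity is established: the paper transports it to the category of $U$-modules, where it reads $\cap_{b,a}=q^{2ab}\cap_{a,b}\circ({_{b}^{a}R})$ with $b=r-1-a$, and cites Proposition 3.1 of \cite{CM} (note the authors are Costantino--Murakami, not Chen--Murakami), whereas you propose a direct verification on the highest weight vector of $V_{-\alpha}$ tensored with the lowest weight vector of $V_\alpha$. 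That computation is viable and in effect reproves the cited result: on such a pair every term $E^n\otimes F^n$ with $n\ge1$ in \eqref{eq:R}, and every term with $n\ge1$ in $\theta$, acts by zero, so both sides reduce to explicit powers of $q$ that one matches against the normalization of Equation (1.2) of \cite{CM}. One point should be stated more carefully: the reason agreement on a single pair suffices is not that the chosen weight vectors are nonzero, but that the generator $\cap_{-\alpha,\alpha}$ of the one-dimensional space $\Hom_\cat(V_{-\alpha}\otimes V_\alpha,\C)$ is itself nonzero on that pair; this holds because the pairing is nondegenerate and, by weight considerations, the only weight space of $V_\alpha$ that can pair nontrivially with the highest weight vector of $V_{-\alpha}$ is its (one-dimensional) lowest weight space. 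With that remark supplied, your argument is complete and slightly more self-contained than the paper's, at the cost of an explicit $q$-power bookkeeping that the paper avoids by citation.
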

\begin{proof}
  Using the naturality of the braiding and the formula for
  $d'_{V_{-\alpha}}$ given in Equation \eqref{E:d'b'} we can rewrite
  Equation \eqref{eq:turnw} as
  $$\cap_{-\alpha,\alpha}=\cap_{\alpha,-\alpha}c_{V_{-\alpha},V_{\alpha}}
  (\theta_{V_{-\alpha}}\otimes \Id_{V_\alpha}).$$
 This equation is equivalent to the following equation in $U$-mod:
 $$\cap_{b,a}=q^{2ab}\cap_{a,b}\circ ({_{b}^aR})$$
 where $b=r-1-a$ and ${_{b}^aR}$ is the map defined in Equation (1.4)
 of \cite{CM}.  The last equality is an easy consequence of
 \cite[Proposition 3.1]{CM}.
\end{proof}
Consider the map $Y_c^{a,b} : V^c \to V^a\otimes V^b$ define in
Theorem 1.7 of \cite{CM} where $a,b,c\in \C\setminus \frac12\Z$ and
$a+b-c\in \{0,1,...,r-1\}$.  Using the functor above the map
$Y_c^{a,b}$ corresponds to a non-zero morphism
$Y_{-\gamma}^{\alpha,\beta}:V_{-\gamma}\to V_{\alpha}\otimes
V_{\beta}$ in $\cat$ where $ \alpha,\beta,\gamma\in \srcol$ with
$\alpha+\beta+\gamma\in\Hr$.  Define $W^{\alpha,\beta,\gamma}:\C\to
V_\alpha\otimes V_\beta\otimes V_\gamma$ as the morphism
$W^{\alpha,\beta,\gamma}=(Y_{-\gamma}^{\alpha,\beta}\otimes
w_{\gamma}^{-1})b_{V_{-\gamma}}$.  Now Lemma 1.8 of \cite{CM} implies
that
\begin{equation}
  \label{eq:turnW2}
  \big(d_{V_\alpha}\otimes\Id_{V_\beta\otimes V_\gamma\otimes V_\alpha}\big) \circ 
  \big(\Id_{V_\alpha^*}\otimes W^{\alpha,\beta,\gamma}\otimes 
  \Id_{V_\alpha}\big)\circ b'_{V_\alpha}=W^{\beta, \gamma,\alpha}.
\end{equation}

Since the family $\{V_\delta, w_\delta\}_{\delta\in \srcol}$ satisfies
Equation \eqref{eq:turnw} then in the terminology of \cite{GPT2} it is
called \emph{basic data}.  Moreover, with this basic data the pair
$(\srcol, \qd)$ is \emph{trivalent-ambidextrous} (see Definition 1 of
\cite{GPT2}).  Thus, as explained after Lemma 2 in \cite{GPT2} the
invariant $F'$ extends to an invariant of oriented trivalent framed
graphs whose edges are colored by elements of the set $\srcol$.  This
extension can be summarized as follows.

Let $\Gamma$ be an oriented trivalent framed graph in $S^{3}$ and
$\Sigma$ be a thickening of $\Gamma$ to a surface using the framing.
We construct a framed $\cat$-colored ribbon graph with coupons
$\Gamma'$ embedded in $\Sigma$.  First we decompose $\Sigma$ in bands,
discs and annuli corresponding respectively to the edges, 3-valent
vertices and loops of $\Gamma$.  Then we replace a band colored by
$\alpha$ by two edges colored by $V_\alpha$ and $V_{-\alpha}$ and a
bivalent coupon filled with $w_\alpha$ as shown in Figure
\ref{F:triv}a.  Then we put in each disk a coupon filled with one of
the morphisms $W^{\alpha,\beta,\gamma}$ as shown in Figure
\ref{F:triv}b.  The color of an $\alpha$-colored loop is replaced by
$V_\alpha$.  The construction of $\Gamma'$ involve some choice but the
quantity $F'(\Gamma')$ does not depend of these choices.  Define
$\Nr(\Gamma)=F'(\Gamma')$.
\begin{figure}
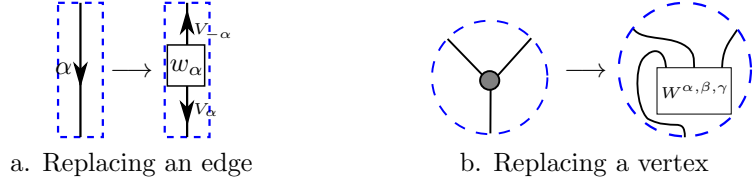

 \begin{minipage}[b]{.4\linewidth}
  \centering
    $\epsh{fig33}{12ex} \put(-18,1){$\alpha$} \longrightarrow
    \epsh{fig34}{12ex}\put(-6,-14){\ms{V_\alpha}}\put(-6,16){\ms{V_{-\alpha}}}
    \put(-15,2){${w_\alpha}$}
    $
  \\[1ex]{a. Replacing an edge}
 \end{minipage} %\hfill
 \begin{minipage}[b]{.4\linewidth}
  \centering
  $\epsh{fig35}{12ex} \longrightarrow \epsh{fig36}{12ex}
  \put(-34,-8){\ms{W^{\alpha,\beta,\gamma}}}$
  \\[1ex]{b. Replacing a vertex}
 \end{minipage}
 \caption{Construction of a $\cat$-colored ribbon graph with a
   trivalent graph.\label{F:triv}}
\end{figure}

By definition the extension of $F'$ can be computed using the formulas
of \cite{CM} via the functor $\cat\to U$-mod discussed above.  In
particular, the relations of Section \ref{S:axiom} are then
consequences of computations given in \cite{CM} and of properties of
$F'$.  Let use be more precise.  Suppose $T$ and $T'$ are two 1-1
$\cat$-colored ribbon graphs whose colors belongs to $\A$ such that
$F(T)=F(T')$ then Proposition \ref{P:FvsF'} implies
$F'(\tr(T))=F'(\tr(T'))$ where $\tr$ is the braid closure of the
tangle.  An analogous relation between $F$ and $\Nr$ exists.  We now
explain why the axioms of Section \ref{S:axiom} hold.

The invariant $F$ satisfies a property analogous to Axiom
\eqref{eq:ch-orient} and so this axiom is a direct consequence of this
fact.
The endomorphism set $\Hom_\cat(\C,V_\alpha\otimes V_\beta\otimes
V_\gamma)$ is zero if $\alpha+\beta+\gamma\notin\Hr$ and so Axiom
\eqref{eq:heights} follows.

The endomorphism represented by $T$ in Axiom \eqref{eq:con-sum} is in
$\Hom_\cat(V_\alpha, V_\beta)$ which is equal to $\{0\}$ if
$\alpha\neq\beta$ and $\C\cdot\Id_{V_\alpha}$ if $\alpha=\beta$.  In
the latter case, $F(T)$ is a scalar times $\Id_{V_\alpha}$ and this
scalar is by definition $\qd(\alpha)^{-1}$ times $\Nr(\hat T)$ where
$\hat T$ is the closure of $T$ which appears in the right hand side of
the axiom.  Replacing $T$ by this scalar times a strand representing
the identity of $V_\alpha$, we get the equality in \eqref{eq:con-sum}.

The first identity of \eqref{eq:unknot-Theta} follows directly from
the definition of $F'$.  Lemma 1.10 of \cite{CM} implies
$\Nr\left(\epsh{fig13}{4ex}\right)=1$ and thus proves the second
equalities in Axiom \eqref{eq:unknot-Theta}.  The last equality
follows from the fact that $\Hom_\cat(V_\alpha,\C)=\{0\}$.

The invariant $F$ vanishes on any closed graph $T$ colored by modules
in the set $\{V_\alpha\}_{\alpha\in\srcol}$.  This implies that $F'$
and $\Nr$ vanish on any split graph $T\sqcup T'$ where both $T$ and
$T'$ are colored by the modules in the set
$\{V_\alpha\}_{\alpha\in\srcol}$ and thus Property
\eqref{I:zeroSplitGraphs} in the list of axioms holds.

The space $\Hom_\cat(\C,V_\alpha\otimes V_\beta\otimes V_\gamma)$ is
one dimensional and generated by $W^{\alpha,\beta,\gamma}$ (assuming
that $\alpha+\beta+\gamma\in\Hr$).  Hence the morphisms represented by
$T$ and $T'$ in \eqref{eq:con-sumv} are equal to
$\lambda.W^{\alpha,\beta,\gamma}$ and
$\lambda'.W^{-\alpha,-\beta,-\gamma}$ respectively.  The scalars
$\lambda$ and $\lambda'$ are the factors of the right hand side
because $\Nr$ is $1$ one the $\Theta$ graphs.

Equation (3.1) of \cite{CM} implies Axioms \eqref{eq:twist} and
\eqref{eq:twistv}.  A computation of $F'$ of the Hopf link can be
found in \cite{GPT}.  Equivalently, Axiom \eqref{eq:hopf} can be
deduced from the other axioms.

Axiom\eqref{eq:fusion} follows from the decomposition of modules
$V_\alpha\otimes V_\beta\simeq\oplus_{k\in\Hr}V_{\alpha+\beta+k}$.
Here we use that if $\gamma=\alpha+\beta+k$, then
$\Hom_\cat(V_\gamma,V_\alpha\otimes V_\beta)$ is generated by the
element corresponding to $W^{\alpha,\beta,-\gamma}$ through the
isomorphism $\Hom_\cat(V_\gamma,V_\alpha\otimes V_\beta)\cong
\Hom_\cat(\C,V_{\alpha}\otimes V_{\beta}\otimes V_{-\gamma})$ and
similarly for $\Hom_\cat(V_\alpha\otimes V_\beta,V_\gamma)\cong
\Hom_\cat(\C,V_{-\beta}\otimes V_{-\alpha}\otimes V_\gamma)$.
Finally, the $6j$-symbols computed in \cite{CM} are by definition the
coefficients that appear in \eqref{eq:6j}.

\subsection{The relative $\Gr$-modular structure on $\cat$}
Here we show that the categories considered earlier in this section
are relative $\Gr$-modular categories.
Let $\Gr$ be the additive group $\C/2\Z$,
$\X=\mathbb{Z}/2\mathbb{Z}\subset \Gr$ and $\TT=\Z$ (here we use
additive notation).  We will now show that $\cat$ is a relative
$\Gr$-modular category relative to $\X$ with modified dimension $\qd$
and periodicity group $\TT$.  To do this we will show that Conditions
(1)--\eqref{I8:DefGmodularCat} of Definition \ref{D:GmodularCat} hold.

For $g\in\Gr$, define $\cat_{g}$ as the full sub-category of weight
modules with weights congruent to $g$ mod $2$.  Then it is easy to see
that $\{\cat_g\}_{g\in \Gr}$ is a $\Gr$-grading in $\cat$.  Moreover,
$\X^{-1}=\X$ and $\Gr$ can not be covered by a finite number of
translated copies of $\X$.  Thus, Conditions \eqref{I1:DefGmodularCat}
and \eqref{I4:DefGmodularCat} are satisfied.
    
Recall that $\A$ is the set of typical modules and $\qd:\A\rightarrow
\C$ is the function given by $\qd(V_\alpha)=\qd(\alpha)$ where
$\qd(\alpha)$ is defined in Equation \eqref{E:Def_qd}.  Also, in
\cite{GPT} it is shown that $(\A,\qd)$ is an ambidextrous pair.  Thus,
if $g\in \Gr\setminus \X$ then by definition the simple modules of
$\cat_g$ are all typical and Condition \eqref{I5:DefGmodularCat}
holds.  Moreover, it follows that the category $\cat_g$ is semi-simple
if $g\in \Gr\setminus \X$ (see Lemma \ref{L:Dg_semi-simple} where we
prove a general statement).

For $t\in\Z$, let $\ve^{t}$ be the one dimensional vector space $\C$ endowed
with the $\Ubar$-action determined by
$$Ev=Fv=0,\quad Kv=v,\quad Hv=2\ro t v$$
for any $v\in \ve^{t}$.  Then $\ve^{t}$ is a weight module in
$\cat_{0}$.  Since, the action of $E$ and $F$ on $\ve^t$ is zero, it
is easy to see that $\{\ve^{t}\}_{t\in \Z}$ is commutative set of
objects in $\cat$ and $\ve^{t}\otimes\ve^{t'}=\ve^{t+t'}$.  Moreover,
\begin{equation}\label{E:ve_otimesV}
  \ve^{t}\otimes V_\alpha=V_\alpha\otimes\ve^{t}=V_{\alpha+2\ro t}
\end{equation}
and it follows that $\{\ve^{t}\}_{t\in\TT}$ is a free realization of
$\Z$ in $\cat$, i.e. Condition \eqref{I2:DefGmodularCat} holds.  For
$g\in \Gr$, the simple modules of $\cat_g$ are all the typical modules
$V_\alpha$ such that $\alpha+r-1 \equiv g \mod 2$.  Equation
\eqref{E:ve_otimesV} implies this set of typical modules is the union
the simple $\Z$-orbits $\wt{V}_{\alpha +i}$ where $i$ runs over the
set $\{0,1,...,2\ro-1\}$ and $\alpha$ is a complex number such that
$\alpha \equiv g \mod 2$.  Therefore, Condition
\eqref{I6:DefGmodularCat} holds.

Let $\Gr\times \Z\rightarrow \C^*$ be the map given by $(g,t)\mapsto
q^{2rt\alpha}$ where $\alpha$ is any complex number such that $\alpha
+r-1 \equiv g \mod 2$ Note this this map is well define since $q$ is a
$2r$th root of unity.  If $V$ is a weight module then
\begin{equation}
  \label{eq:ve}
  c_{V,\ve^{t}}=\tau\circ({K^{\ro t}\otimes\Id})
  \quad\text{ and }\quad
  c_{\ve^{t},V}=\tau\circ({\Id\otimes K^{\ro t}})
\end{equation}
where $\tau$ is the flip map $x\otimes y\mapsto y\otimes x$.  Thus, Condition
\eqref{I3:DefGmodularCat} holds.  

Finally, the computations given at the end of Subsection \ref{S:axiom}
shows that Condition \eqref{I7:DefGmodularCat} holds.  A similar
computation is given in \cite{GPT} to show that $F(H(V,W))\neq 0$
where $H(V,W)$ is the long Hopf link whose long edge is colored by an
object $V\in \A$ and whose circle component is colored an object $W\in
\A$.  Thus, Condition \eqref{I8:DefGmodularCat} holds.

\subsection{The 3--manifold invariant $\Nr$}\label{S:NrMsl2}
In this subsection we prove Theorems \ref{T:sl2CompSurgInv},
\ref{T:T_adm-Nr} and \ref{T:N0sl2} and Proposition
\ref{P:ExistCompSurgSl2}.
In Section \ref{S:sl2NrInv} we only considered compatible triples
$(M,T,\coh)$ where $T$ was a framed trivalent graph in a 3--manifold
$M$ whose edges were colored by elements of $\srcol$.  We only
consider such triples $(M,T,\coh)$ in this section.  If $T\neq
\emptyset$ then by definition $(M,T,\coh)$ is $T$-admissible.  Also,
if $(M,T,\coh)$ is computable as defined in Section \ref{S:sl2NrInv}
then $(M,T,\coh)$ is computable as defined in Section
\ref{SS:MainResults}.

In the preceding section we showed that $\cat$ was a relative $\Gr$-modular
category.  Therefore, the general theory of Subsection
\ref{SS:MainResults} imply the existence of 3--manifold invariants
$\Nr$ and $\Nr^0$.  The main results of Section \ref{S:sl2NrInv}
follow from this general theory and the fact that $\Nr$ is defined as
an extension of $F'$.  In particular,
\begin{align*}
  \text{Theorem } \,
  & \text{\ref{T:coh_adm}  implies Theorem \ref{T:sl2CompSurgInv} } \\
  \text{Theorem } \, & \text{\ref{T:T_adm} implies Theorem  \ref{T:T_adm-Nr} }\\
   \text{Theorem } \, & \text{\ref{T:N0} implies Theorem  \ref{T:N0sl2} }
\end{align*}
Note here that if $H=H(\alpha,\beta)$ is the long Hopf-link in Theorem
\ref{T:T_adm-Nr} then
$\brk{H}=F'(H)/\qd(\beta)=(-1)^{\ro-1}rq^{\alpha\beta}/\qd(\beta)$.
\begin{proof}[Proof of Proposition \ref{P:ExistCompSurgSl2}.] 
  Let $(M,T,\coh)$ be a compatible triple and $L$ be a link which
  gives rise to a surgery presentation of $M$.  Then the image of
  $\coh\in\Hom(H_1(M\setminus T,\Z),\C/2\Z)$ is generated by the
  values of $\coh$ on the meridian of $L\cup T$.  As $\coh$ is not
  integral, its image is not contained in $\Z/2\Z$, and $L\cup T$ has
  an edge $e$ with $g_\coh(e) \in \C/2\Z\setminus \Z/2\Z$.

  Suppose $L_i$ is a component of $L$ such that $g_\coh(L_i)\in
  \X=\Z/2\Z$.  If we slide the component $e$ over $L_i$, after this
  sliding the $\Gr$-color of $L_i$ is $g_\coh(e)g_\coh(L_i)$ which is
  not in $\X$ (this is imposed by the fact that the cohomology class
  must be unchanged after the sliding).  Thus, by sliding $e$ when
  needed we obtain a computable presentation.
  \renewcommand{\qedsymbol}{\fbox{\ref{P:ExistCompSurgSl2}}}
\end{proof}
\renewcommand{\qedsymbol}{\fbox{\theteo}}

\begin{rem}
  $\Nr^0(M,T,\coh)$ can be extended by allowing the tangle part $T$ to
  contain any $\cat$-colored link.  In particular, let $L^J$ denote the
  coloring of a link by the two dimensional representation with highest weight
  $1$.  Then $\Nr^0(S^3,L^J,0)$ is just the Kauffman bracket of $L$ evaluated
  at $A^2=q$ and for a 3--manifold, $\Nr^0(M,L^J,0)$ is a generalization of it
  for links in 3--manifolds. 
\end{rem}

\subsection{Proof of Proposition  \ref{prop:encircling}}
\begin{proof}[Proof of Proposition \ref{prop:encircling}.]
  \newcommand{\Ma}{f_a}
  Let $W=V_\alpha\otimes V_\beta\otimes V_\gamma$ and
  $\Ma=F\left(\epsh{fig31}{6ex}\put(-19,-3){\ms{\Omega_{a}}}
  \right)\in\End_{U_q(sl_2)}(W)$.  We will show the image of $\Ma$ is
  the trivial module.  By Lemma
  \ref{L:handle-slide} we can do handle-slides, blow-up moves and blow-down
  moves on the circle component of the graph representing $\Ma\otimes
  \Id_{V_0}$ to obtain via Proposition \ref{P:FvsF'} the equality of
  morphisms:
  \begin{equation}
    \label{eq:transparent}
    c_{W,V_0}\circ (\Ma\otimes Id_{V_0})=c_{V_0,W}^{-1}\circ (\Ma\otimes Id_{V_0})
  \end{equation}
  where the braidings $c_{W,V_0}, c_{V_0,W}^{-1}: W\otimes V_0\to
  V_0\otimes W$ are given by
  $$c_{W,V_0}=\tau\circ R =  \tau\circ q^{H\otimes H/2}(\Id \otimes \Id + 
  (q-q^{-1})E\otimes F + \cdots )$$ and
  $$c_{V_0,W}^{-1}=R^{-1}\circ \tau = (\Id \otimes \Id + (-q+q^{-1})E\otimes F 
  + \cdots )q^{-H\otimes H/2}\circ\tau. $$ 
  Here the dots ``$\cdots$'' are linear combination of power
  $(E\otimes F)^k$ with $k\ge2$.
  \\
  Let $x\in V_\alpha\otimes V_\beta\otimes V_\gamma$ and set
  $y=\Ma(x)$.  The module $V_0$ has its weights in $\Hr$ and, as $r$
  is odd, $V_0$ has a non zero weight vector $v_0$ of of weight $0$.
  The vectors $\{E^k.v_0, F^k.v_0:k=1\cdots\frac{r-1}2\}$ form with
  $v_0$ a basis of $V_0$.  Let $V'$ be the vector space generated by
  $\{E^k.v_0, F^k.v_0:k=2\cdots\frac{r-1}2\}$.  Applying
  \eqref{eq:transparent} to $x\otimes v_0$ we have
  $$c_{W,V_0}(y\otimes v_0)=c_{V_0,W}^{-1}(y\otimes v_0)$$
  with
  \begin{align}
    c_{W,V_0}(y\otimes v_0) & =  \tau\circ q^{H\otimes H/2}(y\otimes v_0 
    + (q-q^{-1})Ey\otimes Fv_0 +Y_1')\\
    & = v_0\otimes y + (q-q^{-1})Fv_0\otimes K^{-1}Ey+Y_2'
  \end{align}
  where $Y_1'\in W\otimes V'$ and $Y'_2\in V'\otimes W$. The last
  equality comes from the facts that $Hv_0=0$ and $HFv_0=-2Fv_0$.
  Similarly,
  \begin{align}\label{E:cMa1}
    c_{V_0,W}^{-1}(y\otimes v_0)&= v_0\otimes y-(q-q^{-1})Ev_0\otimes Fy + Y'_3
  \end{align}
  where $Y'_3\in V'\otimes W$.  Setting the above equations equal we
  have $K^{-1}Ey=Fy=0$.  So $Ey=0$ and also,
  $(q-q^{-1})(EF-FE)y=(K-K^{-1})y=0$.  Thus, $K^2y=y$, $Ey=0$ and
  $Fy=0$ and this holds for any choice of $a$ and of $x$; but, since
  $K$ acts as $q^H$ and the weights of $W$ are in $2\Z$, we have that
  the eigenvalues of $K$ are in $q^{2\Z}\not\ni-1$.  Thus $Ky=y$ and
  $\Ma(x)$ is an invariant vector of $W$.

  Then as $\Hom_{U_q(sl_2)}(\C,V_\alpha\otimes V_\beta\otimes
  V_\gamma)\simeq\Hom_{U_q(sl_2)}(V_\alpha\otimes V_\beta\otimes
  V_\gamma,\C)\simeq\C$, 
    there exists $\lambda\in\C$ such that   
  \begin{equation}\label{eq:encircling2}
    F\left(\epsh{fig31}{8ex}\put(-25,-4){\ms{\Omega_{a}}}
      \put(-17,-18){\ms{\alpha}}\put(-13,-18){\ms{\beta}}
      \put(-9,-18){\ms{\gamma}}\right)=\lambda F\left(\epsh{fig32}{6ex}\right).
  \end{equation}
  To compute $\lambda$, we consider the value by $\Nr$ of the braid closure of
  the graphs in this equality.  The braid closure of the right side is
  $\lambda$ times the $\Theta$-graph on which $\Nr$ has value $1$.  Thus
  $\lambda$ is equal to $\Nr$ of the braid closure of the graph in the left
  hand side.  But this graph is a connected sum of 3 Hopf links and its value
  is thus given by
  $$\lambda=\sum_{k\in\Hr}
  \qd(a+k)^{-1}r^3q^{(a+k)\alpha}q^{(a+k)\beta}q^{(a+k)\gamma}=
  \sum_{k\in\Hr} \frac{r^3}{\qd(a+k)}.$$ 
  But for any $b\in\srcol$, we have
  $\qd(b)^{-1}=r^{-1}\sum_{l\in\Hr}q^{lb}$ thus
  $$\lambda=r^2\sum_{k\in\Hr}\sum_{l\in\Hr}q^{l(a+k)}=
  r^2\sum_{l\in\Hr}q^{la}\sum_{k\in\Hr}q^{lk}.$$
  Now if $l\in \Hr\setminus\{0\}$,  then $\sum_{k\in\Hr}q^{lk}=0$ thus only the
  term for $l=0\in\Hr$ contributes and $\lambda=r^3$.  
  %B:added
  Finally because of Proposition \ref{P:FvsF'}, Equation
  \eqref{eq:encircling2} also holds for any closure and $F$ replaced with
  $\Nr$.  
\renewcommand{\qedsymbol}{\fbox{\ref{prop:encircling}}}
\end{proof}
\renewcommand{\qedsymbol}{\fbox{\theteo}}

\section{The other quantum groups}\label{S:otherQG}
In this section we recall the results of \cite{GP2} and show that they imply
the existence of relative $\Gr$-modular categories associated with the quantum
group of any simple Lie algebra. 

Let $\g$ be a simple finite-dimensional complex Lie algebra of rank $\rk$ and
dimension $2N+\rk$ with a root system.  Fix a set of simple roots
$\{\alpha_1,\ldots,\alpha_\rk\}$ and let $\roots^{+}$ be the corresponding set
of positive roots.  Also, let $A=(a_{ij})_{1\leq i,j\leq \rk}$ be the Cartan
matrix corresponding to these simple roots.  There exists a diagonal matrix
$D=\text{diag}(d_1,\ldots,d_\rk)$ such that $DA$ is symmetric and positive
definite and $\min\{d_i\}=1$. 
Let $\h$ be the Cartan subalgebra of $\g$ generated by the vectors
$H_1,\ldots,H_\rk$ where $H_j$ is determined by $\alpha_i(H_j)=a_{ji}$.  Let
$L_R$ be the root lattice which is the $\Z$-lattice generated by the simple
roots $\{\alpha_i\}$.  Let $\brk{\;,\;}$ be the form on $L_R$ given by
$\brk{\alpha_i,\alpha_j}=d_ia_{ij}$.  Let $L_W$ be the weight lattice which is
the $\Z$-lattice generated by the elements of $\h^*$ which are dual to the
elements $H_i$, $i=1\cdots\rk$.  Let
$\rho=\frac12\sum_{\alpha\in\roots^{+}}\alpha\in L_W$.

Let $\ro$ be an odd integer such that $\ro\geq 3$ and $\ro\notin3\Z$
if $\g=G_2$.
Let $q=\e^{2i\pi/\ro}$ and for $i=1,\ldots,\rk$, let $q_i=q^{d_i}$.
For $x\in \C$ and $k,l\in \N$ we use the notation:
$$
q^x=\e^{\frac{2i\pi x}\ro},\quad \qn x_q=q^x-q^{-x},\quad \qN x_q=\frac{\qn
  x_q}{\qn 1_q},\quad\qN k_q!=\qN1_q\qN2_q\cdots\qN k_q,\quad{k\brack
  l}_q=\frac{\qN{k}_q!}{\qN{l}_q!}.
$$    
Remark for $x\in\C$, $\qn x=0$ if and only if $ x\in\frac\ro{2}\Z$.

The \emph{unrolled quantum group} $\UqgH$ is the algebra generated by
$K^\beta, X_i,X_{-i},H_i$ for $\beta\in \La, \,i=1,\ldots,\rk$ with
relations
\begin{eqnarray}\label{eq:rel1}
  & K^0=1, \quad K^\beta K^\gamma=K^{\beta+\gamma},\, \quad K^\beta X_{\sigma
    i}K^{-\beta}=q^{\sigma \brk{\beta,\alpha_i}}X_{\sigma i}, & \\
  \label{eq:rel2} &[X_i,X_{-j}]=
  \delta_{ij}\frac{K^{\alpha_i}-K^{-\alpha_i}}{q_i-q_i^{-1}}, &  \\
  \label{eq:rel3} & \sum_{k=0}^{1-a_{ij}}(-1)^{k}{{1-a_{ij}} \brack
    k}_{q_i} X_{\sigma i}^{k} X_{\sigma j} X_{\sigma i}^{1-a_{ij}-k} =0,
  \text{ if }i\neq j & 
\end{eqnarray}
\begin{equation}
  \label{eq:relH}
  [H_i,X_{\epsilon j}]=\sigma a_{ij}X_{\sigma j},\quad[H_i,H_j]=[H_i,K^\beta]=0
\end{equation}
where $\sigma=\pm 1$.

The algebra $\UqgH$ is a Hopf algebra with coproduct $\Delta$, counit
$\epsilon$ and antipode $S$ defined by
\begin{align*}
  \Delta(X_i)&= 1\otimes X_i + X_i\otimes K^{\alpha_i}, &
  \Delta(X_{-i})&=K^{-\alpha_i} \otimes
  X_{-i} + X_{-i}\otimes 1,\\
  \Delta(K^\beta)&=K^\beta\otimes K^\beta, & \epsilon(X_i)&=
  \epsilon(X_{-i})=0, &
  \epsilon(K^{\alpha_i})&=1,\\
  S(X_i)&=-X_iK^{-\alpha_i}, & S(X_{-i})&=-K^{\alpha_i}X_{-i}, &
  S(K^\beta)&=K^{-\beta},\\
  \Delta(H_i)&= 1\otimes H_i + H_i\otimes 1,& \epsilon(H_i)&=0,&
  S(H_i)&=-H_i.
\end{align*}
$\UqgH$ has an Hopf ideal $I$ which contains the
$\ro$\textsuperscript{th} powers of the roots vectors (see
\cite{GP2}).

Also in \cite{GP2}, a full subcategory $\D^\theta$ of the category of
finite dimensional representations of $\UqgH$ is shown to be ribbon.
Let us describe briefly its modules.

A weight vector of weight $\lambda\in\h^{*}$ in a $\UqgH$-module is a
vector on which $H_i$ acts by $\lambda(H_i)$.  A weight vector $v$ is
an highest weight vector if $E_i.v=0,\,\forall i=1\cdots\rk$.  A
weight module is a $\UqgH$-module which satisfy:
\begin{enumerate}
\item it is finite dimensional over $\C$,
\item it has a base of weight vectors,
\item the elements $K^{\sum_i\lambda_i\alpha_i}$ act on it as
  $q^{\sum_i\lambda_i H_i}$,
\item elements of $I$ vanish on it.  
\end{enumerate}
Any weight module $V$ has an highest weight vector and it is unique
(up to a scalar) if $V$ is irreducible.  Moreover the set of
isomorphic classes of irreducible weight modules is in bijection with
$\h^{*}$.  We will write $V_\lambda$ for an irreducible module with
highest weight $\lambda+(\ro-1)\rho$.

$\D^\theta$ is a full sub-tensor category of the category $\D$ of weight
modules (conjecturally, $\D^\theta=\D$).  The category $\D$ (and also
$\D^\theta$) is $\Gr$-graded where $\Gr=\h^{*}/L_R\cong(\C^*)^\rk$.  The weights
of a module in $\D_g$ are all the same modulo $L_R$.  For any $\lambda\in
L_W$, $K^{\ro\lambda}$ acts as the same scalar denoted $g(K^{\ro\lambda})$ on
any module of $\D_{g}$.

Let $\A$ be the family of irreducible weight modules with highest weight
$\lambda$ such that $q^{2\brk{\lambda+\rho,\beta}+m\brk{\beta,\beta}}\neq1$
for all $\beta\in\roots^{+} $ and $m\in \{0,\ldots\ro-1\}$.  Modules in $\A$
are called typical, they are the $\ro^{N}$-dimensional simple modules, they
all belongs to $\D^\theta$ and their categorical dimension vanishes.
\begin{lemma}\label{L:Dg_semi-simple}
  Typical modules are the simple projective modules of $\D$.  Hence the
  subcategory $\D_g$ is semi-simple iff all its simple modules are typical.
\end{lemma}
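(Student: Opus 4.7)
The plan is to deduce both statements from a single key fact: an object in $\D$ is projective if and only if it is a direct sum of typical modules. Granting this, the second statement follows from a general principle: a finite abelian category with enough projectives is semi-simple if and only if every simple object is projective. So once we know the projective simples are exactly the typical ones, $\D_g$ is semi-simple iff all its simple objects are typical.

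For the nontrivial direction (typical implies projective), I would argue as follows. Fix a typical simple $V_\lambda$ of highest weight $\lambda+(\ro-1)\rho$. By the typicality condition, the Shapovalov-type form on the "baby Verma" module $M_\lambda$ induced from a one-dimensional representation of the Borel subalgebra of the restricted unrolled quantum group is non-degenerate, so $M_\lambda$ is simple and thus equal to $V_\lambda$. Since baby Verma modules are projective in the subcategory of weight modules with a fixed action of the generators of $I$ (as they are free over the "nilpotent" part by construction), $V_\lambda$ is projective. Concretely, given any surjection $E\twoheadrightarrow V_\lambda$, a highest weight vector of $V_\lambda$ lifts to a weight vector in $E$ killed by the positive root vectors (using dimension counting and the absence of weights adjacent to $\lambda+(\ro-1)\rho$ that could create obstructions), giving the required splitting via the universal property of $M_\lambda$.

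For the converse direction (non-typical simples are not projective), I would exhibit an explicit non-split extension. If $V_\mu$ is non-typical, then by definition there exist $\beta\in\roots^+$ and $m\in\{0,\ldots,\ro-1\}$ with $q^{2\brk{\mu+\rho,\beta}+m\brk{\beta,\beta}}=1$; this "singular" condition is exactly what allows the Shapovalov form on $M_\mu$ to degenerate, producing a non-trivial sub- or quotient module and hence a non-split extension with some simple $V_{\mu'}$. This prevents $V_\mu$ from being projective (and, dually, injective). Alternatively, and more in the spirit of the references, one invokes the corresponding results from \cite{GP2} where the projectivity dichotomy for typical vs. atypical modules is established.

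The main obstacle will be making the projectivity argument fully precise in the unrolled setting, since the unrolled generators $H_i$ do not belong to $I$ and one must be careful that weight modules satisfying the relation $K^{\alpha_i}=q^{H_i}$ behave correctly under lifting. The cleanest way around this is to verify that the forgetful functor to the category of modules over the restricted quantum group (dropping the $H_i$) preserves projectivity on the typical locus — and then rely on the known projectivity of typical modules over the restricted quantum group. Once this identification is in place, both halves of the lemma reduce to standard facts cited from \cite{GP2}.
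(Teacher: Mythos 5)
There is a genuine gap at the heart of your argument: neither of your two justifications for ``typical $\Rightarrow$ projective'' actually works. First, the claim that baby Verma modules are projective ``as they are free over the nilpotent part by construction'' proves too much: \emph{every} $\wb M(\lambda)$ is free over $N^-/(I\cap N^-)$, including the atypical ones, and those are indecomposable non-simple highest-weight modules, hence certainly not projective. Freeness over the negative part does not descend to projectivity over the whole algebra (the inducing one-dimensional Borel module is not projective over the Borel). Second, your lifting argument -- that a highest weight vector of $V_\lambda$ lifts to a weight vector of $E$ killed by the positive root vectors ``by dimension counting and the absence of weights adjacent to $\lambda+(\ro-1)\rho$'' -- fails because an arbitrary epimorphism $E\twoheadrightarrow V_\lambda$ places no restriction on the weights of $E$: take $E=V_\lambda\oplus V_\mu$ with $\mu$ large, or a tensor product; the naive lift of $v_+$ need not be annihilated by the $X_i$. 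Your fallback of citing \cite{GP2} does not help either: that reference concerns the non-restricted (De Concini--Kac) quantum group, and the projectivity of the continuous family of typical modules in the unrolled/restricted setting is precisely what this lemma must establish, not a fact one can import.

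The missing idea is the following mechanism, which the paper uses. Let $x_+$ (resp.\ $x_-$) be the one-dimensional top (resp.\ bottom) of $N^+/(I\cap N^+)$ (resp.\ $N^-/(I\cap N^-)$), both of dimension $\ro^N$. Typicality of $V_\lambda$ is exactly the condition that $x_-.v_+\in\C^*v_-$ and $x_+.v_-\in\C^*v_+$, so $x_+x_-.v_+$ is a \emph{nonzero} multiple of $v_+$. Given $W\twoheadrightarrow V_\lambda$, lift $v_+$ to any $w$ and set $w_+=x_+x_-.w$: this vector is automatically a highest weight vector (since $X_ix_+\in I\cap N^+$), and it maps to a nonzero multiple of $v_+$. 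The universal property of $\wb M(\lambda)=V_\lambda$ then produces the splitting. Your handling of the converse direction (degeneration of the Shapovalov form on $\wb M(\mu)$ for atypical $\mu$ yields an indecomposable non-semisimple module with $V_\mu$ as quotient) is essentially the paper's argument and is fine, but without a correct proof of the forward direction the lemma is not established.
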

\begin{proof}
  Every simple module $V_\lambda\in\D_g$ is a quotient of a
  $\ro^{N}$-dimensional module $\wb M(\lambda)$ (which are the ``Verma
  module'' for $\D$ see \cite[Section 3.1]{DK}) generated by an highest weight
  vector of weight $\lambda$.  In particular, if $V_\lambda$ is not typical,
  then $\wb M(\lambda)$ is not semi-simple and the category $\cat_g\ni
  V_\lambda$ is not semi-simple.
  \\
  Let $N^+$ (resp $N^-$) be the subalgebra of $\UqgH$ generated by the
  elements $X_i$ (resp $X_{-i}$) for $i=1\cdots\rk$.  Then $N^+/(I\cap N^+)$
  (resp $N^-/(I\cap N^-)$) possess an unique highest weight vector $x_+$ (resp
  an unique lowest weight vector $x_-$).  As $\dim_\C(N^+/(I\cap
  N^+))=\dim_\C(N^-/(I\cap N^-))=\ro^N$, we have that for a typical module
  $V_\lambda$ with highest weight vector $v_+$ and lowest weight vector $v_-$,
  $x_-.v_+\in\C^*v_-$ and $x_+.v_-\in\C^*v_+$.  Now if $W\to V_\lambda$ is an
  epimorphism, it is surjective and $v_+$ has a preimage $w$.  Let
  $w_+=x_+x_-.w$.  Then $w_+$ is an highest weight vector of $W$ (by
  maximality of $x_+$) which is sent to a non zero multiple of $v_+$.  The
  usual property of Verma modules apply to $\wb M(\lambda)$ and there is a
  unique map $V_\lambda=\wb M(\lambda)\to V$ which sends the highest weight
  vector of $V_\lambda$ to $w_+$.  This map gives a section of the epimorphism
  and thus the typical module $V_\lambda$ is projective.  When all simple
  modules of $\D_g$ are typical, by an easy induction (considering an
  irreducible quotient) we get that any finite dimensional module of $\D_g$ is
  completely reducible.
\end{proof}
In particular this is true if $g\notin \X$ where $\X\subset\h^{*}/L_R$ is
formed by the weights $\lambda$ such that $\exists \beta\in\roots^+$ with
$2\brk{\lambda,\beta}\in\Z$.  Remark that for these $g$, $\D_g\subset\D^\theta$
because $\D^\theta$ contains all typical modules. 

If $V_\lambda\in\A$ has highest weight $\lambda+(\ro-1)\rho$, let 
$$\qd(V_\lambda)=\prod_{\alpha\in\roots^+}\frac{
  \qn{\brk{\lambda,\alpha}}}{ \qn{\ro\brk{\lambda,\alpha}}}$$ then
$(\A,\qd)$ is an ambi pair.  The ingredient in \cite{GP2} to compute
$\qd$ is the computation of the image by $F'$ of the Hopf link $H$
colored by $V_\lambda$,$V_\mu$ which is
$F'(H)=q^{2\brk{\lambda,\mu}}$.

For $t\in\TT=\ro L_R\cong(\ro\Z)^\rk$, let $\ve^t$ be the vector space
$\C$ endowed with the action of $\UqgH$ given by $X_{\pm i}=0$, and by
being a weight space of weight $t$.  Then $(\ve^t)_{t\in\TT}$ is a
free realization of $\TT$ in $\D^\theta_1$ and if $V\in\D^\theta_g$,
the square of braiding on $V\otimes \ve^t$ is given by $g\pow
t=q^{2\brk{t,\lambda}}$ for any weight $\lambda\in V$ (two weights in
$V$ differ by an element of $L_R$ and $\brk{\TT,L_R}\subset\ro\Z$).

Finally, the twist on $V_\lambda$ is given by the scalar
$q^{\brk{\lambda,\lambda}-(\ro-1)^2\brk{\rho,\rho}}$ so we have after
a computation similar to that of Section \ref{S:axiom}:
$$\Delta_+=q^{-2(\ro-1)^2\brk{\rho,\rho}} 
\sum_{k\in L_R/(\ro
  L_R)}q^{\brk{k,k+2\rho}}=q^{-3(\ro-1)^2\brk{\rho,\rho}} 
\sum_{k\in L_R/(\ro
  L_R)}q^{\brk{k+(1-\ro)\rho,k+(1-\ro)\rho}}$$
$$
\Delta_+=q^{-3(\ro-1)^2\brk{\rho,\rho}}\sum_{k\in L_R/(\ro
  L_R)}q^{\brk{k,k}}.
$$ 
Thus at least if $\ro$ is coprime with $\det(a_{i,j})$ then
$|\Delta_+|=\ro^{\frac\rk2}\neq0$.  In general we have the following theorem:
\begin{teo}
  If $\displaystyle{\sum_{k\in L_R/(\ro L_R)}q^{\brk{k,k}}\neq0}$ then $\D^\theta$ is
  $\Gr$-modular relative to $\X$ with modified dimension $\qd$ and periodicity
  group $\TT$.
\end{teo}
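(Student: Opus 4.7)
The plan is to verify that $\D^\theta$ satisfies each of the eight conditions in Definition \ref{D:GmodularCat}; most of the work has already been set up in the surrounding discussion, and the content of the theorem is essentially that the hypothesis on the Gauss-type sum is exactly what forces condition \eqref{I7:DefGmodularCat} to hold. First, I would record the choices: $\Gr = \h^*/L_R$, $\TT = \ro L_R$, $\X$ is the subset of classes $\lambda\in\h^*/L_R$ such that $2\brk{\lambda,\beta}\in\Z$ for some $\beta\in\roots^+$, the family $\A$ is the typical modules, and $\qd$ is the modified dimension displayed before the theorem.

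Conditions \eqref{I1:DefGmodularCat}, \eqref{I2:DefGmodularCat}, and \eqref{I3:DefGmodularCat} are already explicit in the discussion preceding the theorem: the grading comes from weights modulo $L_R$, the modules $\ve^t$ form a free realization in $\D^\theta_1$ (freeness follows from the fact that the highest weight of $V\otimes\ve^t$ differs from that of $V$ by $t\in\ro L_R\neq 0$), and the scalar $g\pow{t}=q^{2\brk{t,\lambda}}$ is well defined because two weights of $V\in\D^\theta_g$ differ by an element of $L_R$ and $\brk{\ro L_R,L_R}\subset \ro\Z$. For \eqref{I4:DefGmodularCat}, $\X=-\X$ is immediate from linearity, and $\Gr\cong(\C^*)^\rk$ cannot be covered by finitely many translates of $\X$ because $\X$ is a finite union of ``hyperplane'' subgroups $\{\lambda:2\brk{\lambda,\beta}\in\Z\}$ and any translate of such a subgroup is a proper (codimension $1$) subset of the complex torus $\Gr$. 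Conditions \eqref{I5:DefGmodularCat} and \eqref{I6:DefGmodularCat} are recorded in the text: $(\A,\qd)$ is ambidextrous by the results of \cite{GP2}, and Lemma \ref{L:Dg_semi-simple} gives the semi-simplicity of $\D_g$ for $g\notin\X$, with the $\TT$-orbit decomposition following since $V_\lambda\otimes\ve^t\simeq V_{\lambda+t}$ and $\h^*/(L_R+\TT)$ is finite on each graded piece. Condition \eqref{I8:DefGmodularCat} is immediate from the explicit value $F(H(V_\lambda,V_\mu))=q^{2\brk{\lambda,\mu}}\Id_{V_\mu}$ stated above, which is a root of unity and in particular non-zero.

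The heart of the proof is condition \eqref{I7:DefGmodularCat}. Here I would reproduce the computation sketched just before the theorem: picking any $g\in\Gr\setminus\X$ and using the formula for the twist $\theta_{V_\lambda}=q^{\brk{\lambda,\lambda}-(\ro-1)^2\brk{\rho,\rho}}\Id$, together with the Hopf-link value $q^{2\brk{\lambda,\mu}}$ and the modified dimension $\qd(V_\lambda)$, the colored sum collapses to
\[
\Delta_+ \;=\; q^{-3(\ro-1)^2\brk{\rho,\rho}}\sum_{k\in L_R/(\ro L_R)} q^{\brk{k,k}},
\]
after the shift $k\mapsto k+(1-\ro)\rho$ used in the text. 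Since the prefactor is a root of unity, the hypothesis of the theorem is precisely the statement $\Delta_+\neq 0$. For $\Delta_-$, the analogous computation with the opposite crossing produces the complex conjugate expression (equivalently, $q\leftrightarrow q^{-1}$), whose modulus equals $|\Delta_+|$, so $\Delta_-\neq 0$ as well.

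The main obstacle here is purely notational rather than conceptual: one must carefully track how the shift by $(\ro-1)\rho$ between ``highest weight'' and the indexing of $V_\lambda$ propagates through the twist, Hopf-link, and $\qd$ factors to reach the clean quadratic Gauss sum above. Once that bookkeeping is done, the equivalence between the hypothesis and the non-vanishing of $\Delta_\pm$ is direct, and the remaining seven conditions of Definition \ref{D:GmodularCat} are assembled from the machinery already developed in the section, completing the verification that $\D^\theta$ is $\Gr$-modular relative to $\X$ with modified dimension $\qd$ and periodicity group $\TT$.
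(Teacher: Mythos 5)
Your proposal is correct and follows essentially the same route as the paper, whose ``proof'' of this theorem is precisely the preceding discussion of Section \ref{S:otherQG}: you verify the eight conditions of Definition \ref{D:GmodularCat} using the grading by $\h^*/L_R$, the realization $\{\ve^t\}_{t\in\ro L_R}$, Lemma \ref{L:Dg_semi-simple}, the Hopf-link value $q^{2\brk{\lambda,\mu}}$, and the Gauss-sum computation of $\Delta_\pm$, exactly as the paper does. The only blemish is notational: the count of simple $\TT$-orbits in a graded piece is governed by the finiteness of $L_R/\ro L_R$ rather than of ``$\h^*/(L_R+\TT)$'' (which equals $\Gr$ since $\TT\subset L_R$), but the intended argument is clearly the correct one.
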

\newpage
\appendix
\section{Erratum : correction to the proof of Theorem \ref{T:T_adm}%4.10
}\label{A:err}
Marco De Renzi wrote a survey (\cite{A-mdR}) on the quantum invariants
defined in the present paper and on the fact that they classify lens
spaces proved in a later publication (\cite{A-BCGP}) by Christian
Blanchet and the authors.  This led him to point us a mistake in the
proof of Theorem \ref{T:T_adm} %4.10
 of this paper.
%\cite{A-CGP1}. 
We stress that the statement of the theorem is correct as it is;
furthermore also the proof is correct in all the explicit examples we
know of (i.e. when $\cat$ is the category of representations of an
unrolled quantum group). Still, for general $G$-modular categories
relative to $X$ with modified dimension $d$ and periodicity group
$\TT$ (Definition \ref{D:GmodularCat}%4.2
) our proof contains a mistake in the following
statements at page 36 ``it is sufficient to add to the above list of
genericity conditions the condition $g_{\coh_H}
(L'_i)\tilde{\alpha}^{x^i _s} ̃\notin \tilde{X}$, for every component
$L'_i$ of $L$ and for all $s\in \{1,\ldots, l-1\}$. The union of these
conditions and those found previously can be satisfied for a suitable
choice of $\alpha$ since no union of finitely many translates of
$\tilde{X}$ covers $G$ (see Definition 4.2).''  Indeed in our
construction, $F'$ is defined by a $G$-graded ribbon category where
$G$ is an abelian group which contains a bad set $\X\subset G$.  The
assumption is that $\X$ is small in the sense that no finite number of
translated copies of $\X$ can cover $G$:
\begin{equation}
  \label{eq:X1}
  \forall g_1,\ldots,g_N\in G, \bigcup_{i=1}^Ng_i\X\neq G,
\end{equation}
 while in the above sentence we were implicitly
using the following stronger hypothesis
\begin{equation}
  \label{eq:X2}
  \forall n\in\N,\,\forall g_1,\ldots,g_N\in G, 
  \bigcup_{i=1}^Ng_i\X\neq G_n=\{x^n:x\in G\}.
\end{equation}
Remark that this stronger hypothesis is satisfied for the main known
application coming from quantum groups.  But the original hypothesis
was already used in a previous paper of the last two author and
Vladimir Turaev to construct 3-manifold invariant ``\`a la
Turaev-Viro'' (\cite{A-GPT2,A-GP2}).

We provide here below an account of how to correct this; in short the idea of the proof is the same : i.e. use $H$-stabilizations to modify possible critical Kirby moves, but, as opposed to what we wrote, in general one needs more than one $H$-stabilization. 

%\begin{teo}[Theorem 4.10]\label{T:T_adm}
%  If $(M,T,\coh)$ is $T$-admissible then there exists a
%  $H$-stabilization of $(M,T,\coh)$ admitting a computable
%  presentation.  Let $(M,T_H,\coh_H)$ be such a $H$-stabilization and
%  let $L$ be a link which gives rise to a computable surgery
%  presentation of $(M,T_H,\coh_H)$ then
%  $$\Nr(M,T,\coh)=\dfrac{F'(L\cup T_H)}{\brk{H}\Delta_+^{p}\ \Delta_-^{s}}$$
%  is a well defined topological invariant (i.e. depends only of the
%  diffeomorphism class of the triple $(M,T,\coh)$), where $(p,s)$ is
%  the signature of the linking matrix of the surgery link $L$, each
%  component $L_i$ is colored by a Kirby color
%  $\Omega_{g_{\coh}(L_i)}$, $H=H(\alpha,\beta)$ is the long Hopf-link
%  used in the stabilization and $\brk{H}$ is defined by the equality
%  $F(H)=\brk{H}Id_{\beta}$.  Moreover, if $(M,T,\coh)$ has a
%  computable presentation, then the invariant of this theorem is equal
%  to the invariant of Theorem 4.7.
%\end{teo}

We give in the last section of this appendix an alternative proof of this theorem
(essentially reproducing the first part of the proof of the existence
of a computable presentation that was not affected by the mistake).
In the next section we also take the chance to give a list of related works that appeared
after the publication of the present paper.

\subsection{Related works}
\newcommand{\Nc}{{\mathsf N}_\cat}
For a fixed abelian group $G$, and $\cat$ a $G$-modular category
relative to $\X\subset G$, we have the notion compatible triple
$(M,T,\coh)$ where $T$ is a ribbon graph in the closed 3-manifold $M$,
$\coh\in H^1(M\setminus T,G)$ and the edges of $T$ are colored by objects of
$\cat$ of degree equal to the value of the cohomology class on
meridians of these edges.

The most studied example is the $\C/2\Z$-modular category
of weight modules of the unrolled quantum group $\UsltH$ at
$q=\e^{i\pi/r}$ where $r\in\Z_{\ge2}$, $r\notin4\Z$.  But it is shown
that for any simple Lie algebra $\g$ and for many root of unity $q$,
the unrolled quantum group of $\g$ at $q$ gives a relative $G$-modular
category and invariants of compatible triple.

The set of compatible triples splits as the disjoint union of the set
of admissible triples (having a computable presentation) and its
complement formed by non admissible presentations.  
%In \cite{A-CGP1},
In the present paper,
two invariants $\Nc$ and $\Nc^0$ are defined on respectively
the set of admissible triples and the set of all triples.  It should
be noticed that in most examples, $\Nc^0$ vanishes on the set of
admissible presentations so the two invariants are in a sense
complementary.  For the preferred example of $\UsltH$ at
$q=\e^{i\pi/r}$, $\Nc$ is just denoted by $\Nr$.
\begin{enumerate}
\item In \cite{A-CGP2}, it is conjectured and shown in many cases that
  the invariant $\Nr^0$ is equivalent to the Kirby-Melvin version of
  the Witten-Reshetikhin-Turaev invariants for rational homology
  spheres.
\item The paper \cite{A-CGP3}, is a survey of known and new algebraic
  results on the category of weight modules of the unrolled quantum
  group $\UsltH$.  Some of these results are used in \cite{A-BCGP}.
\item The paper \cite{A-BCGP} is building and studying the remarkable
  properties of the TQFT associated to $\Nr$.  This TQFT is obtained
  by using a universal construction process.
\item In \cite{A-BCGP2}, it is shown that the category of weight modules
  of the unrolled quantum group $\UsltH$ at $q=\e^{i\pi/r}$ where
  $r\in4\Z_{>0}$ leads to similar families of invariants $\Nr$, $\Nr^0$
  but for compatible triple where the cohomology class $\coh$ is
  replaced by a kind of spin structure.
\item Finally, \cite{A-mdR} is a survey of results of %\cite{A-CGP1,A-BCGP} 
  this paper and \cite{A-BCGP}.
\end{enumerate}

\subsection{Corrected proof of Theorem \ref{T:T_adm}%4.10
}
For a computable presentation $P=(L\cup T,g_\coh)$ of a compatible
triple $(M,T,\coh)$, let $$\Nc(P)=\dfrac{F'(L\cup T)}{\Delta_+^{p}\
  \Delta_-^{s}}.$$

The idea of the proof is to modify a sequence of moves relating two
computable presentations to a path relating them through computable
presentations.  This is not always possible using only Kirby moves and
we need %to introduce 
another move, called $H$-stabilization in the text, 
that changes the compatible triple
presented but does not change the value by $\Nc$ of the presentations
up to a controlled constant, as follows.
Starting from a link presentation of a compatible triple with an edge of
$T$ colored by $\beta\in\A$, one can remove a 3-ball containing a
portion of this edge and replace it with the $(\alpha,\beta)$ long
Hopf link whose long strand is colored by $\beta$, and whose circle
component is colored by $\alpha\in \A$.  We then say that the second
link is obtained from the first by {\em a positive
  $(\alpha,\beta)$-Hopf move}.  We call the inverse move a negative
$(\alpha,\beta)$-Hopf move.  Remark that a positive Hopf move gives a
link presentation of a $H$-stabilization of the original compatible
triple.
\\
Also, if a presentation $P_1$ is computable, and a presentation $P_2$
is obtained from $P_1$ by a positive $(\alpha,\beta)$-Hopf move then
clearly the presentation $P_2$ is computable and
$$\Nc(P_2)=\brk{H(\alpha,\beta)}\Nc(P_1).$$
In particular, as $P_2$ is a presentation of a $H$-stabilization of
the triple, this will prove the last part of the theorem.
\\

The proof of the fact that there exists a computable presentation of
an $H$-stabilization of a $T$-admissible three-uple $(M,T,\coh)$ is
correct.  Let's rapidly recall it for the sake of self-containedness:
since $(M,T,\coh)$ is $T$-admissible then $T\neq \emptyset$ and has an
edge $e$ colored by $\beta\in \A$.  Pick a surgery presentation $P$ of
$(M,T,\coh)$ over a link $L\subset S^3$.  If $L=\emptyset$ then since
$e\in T$ is colored by $\beta\in \A$, then $T\cup L$ is computable.
If the presentation is not computable then there are components of $L$
whose color is in $\X$.  Let $C$ be this finite set of colors.  Apply
a positive $(\alpha,\beta)$-Hopf move on $P$ and slide the
$\alpha$-colored circle on each component of $L$ that have a color in
$C$.  These Kirby II moves change the colors of the involved
components of $L$ to colors in $\wb \alpha C$ where $\wb\alpha$ is the
degree of $\alpha$.  Hence as soon as $\wb\alpha\notin\bigcup_{c\in
  C}c^{-1}\X\neq G$, the resulting presentation is a computable
presentation of a $H$-stabilisation of $(M,T,\coh)$.

Next, we want to prove that $\Nc$ takes the same value on two computable
presentations $P,P'$ of the same compatible triple.  Let first prove
the theorem in the case where $P$ and $P'$ are ``isotopic in $S^3_L$'':
they have the same surgery link $L$ and their $T$-part graphs are related by a
sequence of Kirby II moves corresponding to an isotopy of the graph in
the surgered manifold $S^3_L$.  In this case, the sequence is formed
by isotopy in $S^3\setminus L$ and sliding of edges of the $T$-graph on
components of $L$ and the proof given in the text is correct as is; let's us sketch it here for the sake of completeness.  
During this sequence, $L$ is unchanged but the
$G$-colors of the components of $L$ are multiplied by the degree of
the objects coloring the edges of the graph sliding on them.  Let
$C\subset G$ be the finite set of colors of components of $L$ that
appear during this sequence.  If $C\cap \X=\emptyset$, then the
sequence passes only through computable presentations and there is no problem. 
Else, we first apply a positive $(\alpha,\beta)$-Hopf
move on $P$.  Then we slide the newly created $\alpha$-colored
circle on each component of $L$.  This multiplies the colors of the
components of $L$ by the degree $\wb \alpha$ of $\alpha$.  Then we
apply the sequence of Kirby II moves ignoring $\alpha$ and finally we
slide back the $\alpha$-colored circle from each component of $L$ to get
a presentation which is related to $P'$ by a negative
$(\alpha,\beta)$-Hopf move.  The set of $G$-colors that appear during
this process is $\wb\alpha.C$.  So it is enough to choose $\alpha$
such that $\wb\alpha\notin\bigcup_{c\in C}c^{-1}\X$ to ensure that the
new path is done among computable presentations.  And it follows that
$\Nc(P)=\Nc(P')$.

We now consider the general case where two presentations
$P=L\cup T$ and $P'=L'\cup T$  are two
computable presentations of diffeomorphic triples through a diffeomorphism $f$.
Then let $L=L^0\stackrel{s_1}{\to}\ldots \stackrel{s_k}{\to}L^k=L'$ be
a sequence of handle-slides, blow-up moves and blow-down moves
connecting the two presentations and inducing the diffeomorphism $f$
between $S^3_{L}\setminus T$ and $S^3_{L'}\setminus T$.
% there exists a sequence of
% handle-slides, blow-up moves and blow-down moves on the components of
% $L_1$ as well as handle slides moving an edge of $T_1$ over a
% component of $L_1$ and blow-up and blow-down moves around edges of
% $T_1$, transforming $L_1\sqcup T_1$ into $L_2\sqcup T_2$ and such that
% the induced diffeomorphism between $M_1=S^3_{L_1}$ and $M_2=S^3_{L_2}$
% is isotopic to $f$.  
Now we modify the moves $s_i$ for $i$ from $1$ to $k$ in the following
way:
\\
1) If $s_i$ is a Kirby II move generating a color in $\X$ for a
component $L_j$ of $L$, we first do a positive $(\alpha,\beta)$-Hopf
move and slide the $\alpha$-colored circle on $L_i$.  Here $\alpha$ is
chosen so that no color in $\X$ appear during this Kirby II move.  In
the sequel, we just ``ignore" the component $\alpha$.
\\
2) If $s_i$ is a blow up (positive Kirby I) move creating a $\pm1$
framed meridian component around an edge with color $x\in \X$, then the
compatibility condition implies that the $G$ color of this new
component of the surgery link is $x$.  In this case, we first do a
positive $(\alpha,\beta)$-Hopf move for some $\alpha$ with $\wb
\alpha\notin \X\cup x^{-1}\X$, then do the blow up around the new
$\alpha$-colored edge creating a meridian $m$ and then slide the
$x$-colored edge on $m$.  The result is a presentation similar to the
result of $s_i$ except for the additional $\alpha$-colored circle
colored by $\alpha$.  The condition on $\wb\alpha$ ensures that the new
presentation is computable.
\\
3) If $s_i$ is a blow down (negative Kirby I) move removing a $\pm1$
framed unknot $m$, first remark that this unknot might be linked with
more than one edge: indeed it is possible that some of the components
created by the positive H-stabilizations are linked with $m$.  Hence to
perform this blow down we might need first to move some component of
the graph by sliding them on $m$ to unlink them from $m$. But this process
changes the $G$-color of $m$ that might pass through elements of $\X$.
To solve this problem we first do a positive $(\alpha,\beta)$-Hopf move,
slide the edges linked to $m$ on $m$ to unlink them and slide the
$\alpha$-colored circle on $m$. This final presentation is computable
for any $\alpha$ such that $\wb \alpha\notin \X$ but the intermediate
steps might not.  Nevertheless, the starting and final presentations
are ``isotopic in $S^3_L$'' thus related through computable presentation as proved above.

At the end of this sequence of moves, we get the presentation $P'$
with many circle components added by the positive Hopf moves.  But
this presentation is ``isotopic in $S^3_L$'' to a presentation obtained by
performing some positive $(\alpha,\beta)$-Hopf moves on $P'$ (for some
values of $\alpha$ in the same subset of $\A$).

Lemmas %5.9 and 5.10
\ref{L:handle-slide} and \ref{L:Stab} ensure that the Kirby moves do
not change the invariant while positive and negative
$(\alpha,\beta)$-Hopf moves multiply it by the inverse quantity.
Hence we get that $\Nc(P)=\Nc(P')$.
\renewcommand{\refname}{References of the appendix} 

\newpage
\renewcommand{\refname}{References}

\end{document}